\title{A Bismut-Elworthy inequality for a Wasserstein diffusion on the circle}
\author{Victor Marx
\thanks{Université Côte d’Azur, CNRS, Laboratoire J.A. Dieudonné UMR 7351, France. \newline
Universität Leipzig, Fakultät für Mathematik und Informatik, Augustusplatz 10, 04109 Leipzig, Germany. \newline
E-mail: vm.victor.marx@gmail.com}
}
\date{July 2021}
\begin{document}

\maketitle

\begin{center}
\shadowbox{
\begin{minipage}{14cm}
\large
\textbf{Abstract.}
We introduce in this paper a strategy  to prove  gradient estimates for some infinite-dimensional diffusions on  $L_2$-Wasserstein spaces. For a specific example of a diffusion on the $L_2$-Wasserstein space of the torus, we get a Bismut-Elworthy-Li formula up to a remainder term and deduce  a gradient estimate with a rate of blow-up of order $\mathcal O(t^{-(2+\eps)})$. 
\end{minipage}
}
\end{center}

\textbf{Keywords:} Bismut-Elworthy-Li formula, Wasserstein diffusion,  interacting particle system, Lions' differential calculus. 

\textbf{AMS MSC 2020:} Primary 60H10, 60H07,  Secondary 60H30, 60K35, 60J60.

\section{Introduction}

In~\cite{vrenessesturm09}, von Renesse and Sturm introduced a diffusion process on the $L_2$-Wasserstein space $\mathcal P_2(\R)$, satisfying following properties: the large deviations in small time are given by the Wasserstein distance and  the martingale term which arises when expanding any smooth function~$\phi$ of the measure argument along the process has exactly the square norm of the Wasserstein gradient of~$\phi$ as local quadratic variation. 
There are several examples of diffusions on Wasserstein spaces, either constructed via Dirichlet forms~\cite{vrenessesturm09,andresvonrenesse10, konar_vren_coalescing_fragmentating},  as limits of particle systems~\cite{konarovskyi17system, konarovskyi17behavior} or  as a system of infinitely many particles~\cite{marx18,marx20}. 
Some diffusive properties  of those processes have already been proved, as e.g.\! a large deviation principle~\cite{konarovskyivonrenesse18} or as a restoration of uniqueness for McKean-Vlasov equations~\cite{marx20}. 

We prove in this paper another well-known diffusive property. Indeed, we control 
the gradient of the semi-group associated to a diffusion process on the $L_2$-Wasserstein space.
For finite-dimensional diffusions, this gradient estimate can be obtained from a Bismut-Elworthy-Li integration by parts formula. 
Bismut, Elworthy and Li showed that the gradient of the  semi-group $P_t \phi$ associated to the stochastic differential equation   $
\mathrm d X_t = \sigma(X_t) \mathrm d W_t + b(X_t) \mathrm dt$ on $\R^n$ can be expressed as follows
\begin{align*}
\nabla (P_t \phi)_{x_0} (v_0) = \frac{1}{t} \E{\phi(X_t) \int_0^t \langle V_s, \sigma(X_s) \mathrm dW_s \rangle},
\end{align*}
where $V_s$ is a certain stochastic process starting at $v_0$ (see~\cite{bismut81, elworthy92, elworthyli94}).
In particular, that equality shows that $\|\nabla (P_t \phi)_{x_0} (v_0)\|$ is of order $t^{-1/2}$ for small times. Important domains of application of Bismut-Elworthy-Li formulae are among others geometry~\cite{thalmaier97, thalmaierwang98, arnaudonthalmaierwang06}, non-linear PDEs~\cite{delarue03,Zhang05} or finance~\cite{fournie_et_al_99, malliavin_thalmaier06}. 
Recent interest has emerged for similar results in infinite dimension. First, Bismut-Elworthy-Li formulae were proved for Kolmogorov equations on Hilbert spaces and for reaction-diffusion systems in bounded domains of $\R^n$, see~\cite{daprato_elworthy_zabczyk, cerrai01, daprato04}. Recently, Crisan and McMurray~\cite{crisanmcmurray} and Ba\~{n}os~\cite{banos18} proved Bismut-Elworthy-Li formulae for McKean-Vlasov equations $\mathrm dX_t = b(t, X_t, \mu_t) \mathrm dt + \sigma(t, X_t, \mu_t)  \mathrm dW_t$, with $\mu_t = \mathcal L(X_t)$. 
For other recent smoothing results on McKean-Vlasov equations and mean-field games, see also~\cite{buck_li_peng_rainer, chaudru20, chaudrufrikha18, chassagneux_cr_del}.

\subsection{A gradient estimate for a Wasserstein diffusion on the torus}

In this paper, we construct a system of infinitely many particles moving on the one-dimensional torus $\tor=\mathds S^1$, identified with the interval $[0,2\pi]$. 
Considering for each time the empirical measure associated to that system, we get a diffusion process on~$\mathcal P(\tor)$, space of probability measures on~$\tor$. Then we average out that process over the realizations of an additive noise $\beta$.  This averaging increases the regularity of the process and leads to a gradient estimate of the associated semi-group. 

To state more precisely the main result of the paper, let us introduce the following equation
\begin{align}
\label{intro:sde_beta}
x_t^g(u)= g(u) + \sum_{k \in \Z} f_k \int_0^t \Re ( e^{-ik x_s^g(u)} \mathrm d W_s^k)+\beta_t, \quad t \geq 0,\ u \in [0,1].
\end{align}
Hereabove, $\beta$, $(W^{\Re,k})_{k \in \Z}$ and $(W^{\Im,k})_{k \in \Z}$ are independent standard  real-valued Brownian motions,   the notation $\Re$ denotes the real part of a complex number and $W^k_\cdot:=W^{\Re,k}_\cdot + i W^{\Im,k}_\cdot$. The sequence  $(f_k)_{k \in \Z}$ is fixed and real-valued, typically of the form $f_k= \frac{C}{(1+k^2)^{\alpha/2}}$. 
Lastly, the initial condition $g:[0,1]\to \R$ is a $\mathcal C^1$-function with positive derivative satisfying $g(1)=g(0)+2\pi$, so that $g$ is seen as the quantile function of a probability measure $\nu_0^g$  on  $\tor$. 

For each $u \in [0,1]$, $(x_t^g(u))_{t \in [0,T]}$  represents the trajectory of the stochastic particle starting at $g(u)$, which is  driven both by a common noise $W=(W^k)_k$ and by an idiosyncratic noise~$\beta$, taking over the terminology usually used for McKean-Vlasov equation. This terminology is justified since equation~\eqref{intro:sde_beta} can be seen as the counterpart on the torus $\tor$ of the equation on the real line studied in~\cite{marx20} and defined by
\begin{align*}
y_t^g (u) = g(u) +  \int_\R f(k) \int_0^t \Re( e^{-ik y_s ^g(u) }\mathrm dw(k,s))+\beta_t, \quad t \geq 0,\ u \in \R,
\end{align*}
where $(w(k,t))_{k \in \R, t\in [0,T]}$ is a complex-valued Brownian sheet on $ \R \times [0,T]$. 

Moreover, we will show that the cloud of all particles is spread over the whole torus. More precisely, for each $t \in [0,T]$, the probability measure $\nu_t^g= \operatorname{Leb}_{[0,1]} \circ (x_t^g)^{-1}$ has a density $q_t^g$ w.r.t.\!  Lebesgue measure on $\tor$ such that $q_t^g(x)>0$ for all $x \in \tor$. 
Instead of studying the process $(\nu_t^g)_{t \in [0,T]}$, we consider a more regular process defined by averaging over the realizations of $\beta$:
\begin{align}
\label{intro:def mutg}
\mu_t^g= (\Leb_{[0,1]} \otimes \mathbb P^\beta) \circ (x_t^g)^{-1},
\end{align}
\textit{i.e.\!} $\mu_t^g$ is the probability measure on $\tor$ with density $p_t^g(x):=\mathbb E^\beta \left[ q_t^g (x)\right]$, $x\in \tor$, w.r.t.\! Lebesgue measure\footnote{We assume that $\beta$ and $W$ are defined on  probability spaces $(\Omega^\beta, \mathcal G^\beta,  \mathbb P^\beta)$ and $(\Omega^W, \mathcal G^W,  \mathbb P^W)$, respectively.}.
In other words, since we assume that $\beta$ and $(W^k)_{k \in \Z}$ are independent, $\mu_t^g$ is the conditional law of $x_t^g$ given $(W^k)_{k \in \Z}$. 

The semi-group associated to $(\mu_t^g)_{t \in [0,T]}$ is defined, for any bounded and continuous function $\phi:\mathcal P_2(\tor) \to \R$, by $P_t \phi (\mu_0^g):= \mathbb E \left[ \phi(\mu_t^g) \right]$. Alternatively,  denoting the lift of $\phi$ by 
$\widehat{\phi}(X):= \phi((\Leb_{[0,1]} \otimes \mathbb P^\beta) \circ X^{-1})$ for any random variable $X \in L_2([0,1]\times \Omega^\beta)$, we can also define the semi-group by
$\widehat{P_t \phi}(g) =  \mathbb E \left[  \widehat{\phi}\left( x_t^g \right) \right]$.

The main theorem of this paper states the following upper bound for the Fréchet derivative of $g \mapsto \widehat{P_t \phi}(g)$ depending only on the $L_\infty$-norm of $\phi$.  Assume that $g \in \mathcal C^{3+\theta}$ with $\theta >0$ and let $h$ be a 1-periodic $\mathcal C^1$-function. If $\phi$ is sufficiently regular and if $f_k= \frac{C}{(1+k^2)^{\alpha/2}}$, $k \in \Z$, with $\alpha \in \left( \frac{7}{2},\frac{9}{2} \right)$, then there is $C_g$ independent of $h$ such that
\begin{align}
\label{main_result}
\left| D \widehat{P_t\phi}(g) \cdot h \right| 
=\left|\frac{\mathrm d}{\mathrm d\rho}_{\vert \rho=0} P_t \phi (\mu_0^{g+\rho  h})\right| 
\leq C_g \frac{\| \phi \|_{L_\infty}}{t^{2+\theta}}\| h\|_{\mathcal C^1}, 
\end{align}
for any $t \in (0,T]$. 
The precise assumptions over $\phi$ and the precise statement of this theorem are given below by Definition~\ref{def:phi assumptions} and Theorem~\ref{theo:Bismut pour phi assumptions}, respectively. Moreover,  $C_g$ depends polynomially on $\| g'''\|_{L_\infty}$, $\| g''\|_{L_\infty}$, $\| g'\|_{L_\infty}$ and $\| \frac{1}{g'}\|_{L_\infty}$.

\subsection{Comments on the main result}

The order $\alpha$ of the polynomial decrease of $(f_k)_{k \in \Z}$ has a key role in this paper. In equation~\eqref{intro:sde_beta}, the diffusion coefficient in front of the noise $W$ is written as a Fourier series, $(f_k)_{k \in \Z}$ being the sequence of Fourier coefficients. Therefore, it should not be surprising that the larger $\alpha$ is, the more regular the solution to~\eqref{intro:sde_beta} is\footnote{see Proposition~\ref{prop:differentiability} below.}.
Nevertheless, when we  apply Girsanov's Theorem with respect to $W$, which is part of a standard method introduced by Thalmaier and Wang~\cite{thalmaier97, thalmaierwang98}, we need $\alpha$ to be sufficiently small, in order to be able to invert the Fourier series\footnote{see Lemma~\ref{lemme:decomposition Fourier} below.}. 
So there is a balance regarding the choice of $\alpha$, which explains why we assume in our main result $\alpha$ to be bounded from above and from below. 

Moreover, the question of the order $\alpha$ is highly related to the rate $t^{-(2+\theta)}$ appearing in~\eqref{main_result}. Usually, we expect a rate of $t^{-1/2}$ for diffusions. 
As we have already mentioned, this rate follows directly from a Bismut-Elworthy-Li integration by parts formula. However, adapting the usual strategy based on Kunita's expansion as in~\cite{thalmaier97, thalmaierwang98}, we do not get an \textit{exact} integration by parts formula here. 
Indeed, the failure of the latter strategy in our case comes from the fact that it is impossible to choose $\alpha$ which is simultaneously large enough to ensure a sufficient regularity of the solution and small enough to be able to invert the Fourier series. We refer to remark~\ref{rem:balance} below for a justification of that claim. 

Therefore, the main new strategy introduced in this paper is to regularize the derivative of the solution\footnote{More exactly, we regularize the function $A^g$ defined by~\eqref{def:Agt} into a  convolution $A^{g,\eps}$ defined by~\eqref{def:Aeps}.}. By doing this, we get an \textit{approximate} integration by parts formula, in the sense that there is an additional remainder term appearing in the formula:
\begin{align*}
\frac{\mathrm d}{\mathrm d\rho}_{\vert \rho=0} P_t \phi (\mu_0^{g+\rho g'  h}) 
=\frac{1}{t} \Ewb{\phi(\mu_t^g) \sum_{k \in \Z} \int_0^t \Re( \overline{\lambda_s^{k,\eps}} \mathrm dW_s^k)}
+\mathcal O(\eps),
\end{align*}
where $\lambda^{k,\eps}_\cdot$ is a stochastic process\footnote{defined in Lemma~\ref{lemme:decomposition Fourier}.}. 
Controlling the remainder term leads us, by a final bootstrap argument, to  the desired upper bound on $| D \widehat{P_t\phi}(g) \cdot h |$, at the prize of worsening the rate of blow-up. We are not claiming that the rate of $t^{-(2+\theta)}$ is sharp, but we expect that a rate of $t^{-1/2}$ is unachievable for this process. Let us mention that the  author improves this rate of blow-up to $t^{-(1+\theta)}$, at the prize of assuming $\mathcal C^{4+\theta}$-regularity of $g$ and $h$. 
Since the proof is long and technical, it is not included in this paper but we refer to~\cite[Chapter IV]{marx_thesis} for all the details and for an application to a gradient estimate for an inhomogeneous SPDE with Hölder continuous source term.

\medskip

Furthermore, the idiosyncratic noise $\beta$ is important as well. 
Of course, the addition of~$\beta$ does not change dramatically the dynamics of the process, since it acts as a rotation on the circle of the whole system. Nevertheless, as it has already been pointed out, the diffusion process $(\mu_t^g)_{t \in [0,T]}$ is defined by an average over the realizations of~$\beta$. 
The importance of that averaging is consistent with SPDE theory. Indeed, $(\mu_t^g)_{t \in [0,T]}$  solves the following equation:
\begin{align*}
\label{intro_eq_for_mu}
\mathrm d \mu_t^g -  \frac{1+ \sum_{k \in \Z} f_k^2}{2} 
\Delta (\mu_t^g) \mathrm dt
+\partial_x \bigg( \sum_{k \in \Z} f_k \Re \big( e^{-ik \; \cdot \;} \mathrm dW_t^k   \big)\;  \mu_t^g \bigg) &=0 , 
\end{align*}
with initial condition $\mu_t^g |_{t=0} = \mu_0^g$. 
The  noise $\beta$ manifests in the additional term $\frac{1}{2}$ in front of $\Delta (\mu_t^g)$. 
On the level of the densities,  $(p_t^g)_{t\in [0,T]}$ solves the following equation
\begin{align*}
\mathrm d p_t^g (v)=
-\partial_v \bigg( p_t^g(v) \sum_{k \in \Z} f_k \Re ( e^{-ikv} \mathrm dW_t^k )  \bigg)
+\lambda   (p_t^g)''(v) \mathrm dt,
\end{align*}
with $\lambda= \frac{1+ \sum_{k \in \Z} f_k^2}{2}$. Denis and Stoica showed in~\cite{denis_stoica,denis_matoussi_stoica} that the above equation is well-posed -- and they also gave energy estimates -- if $\lambda$ is strictly larger than the critical threshold  $\lambda_{\operatorname{crit}}=\frac{\sum_{k \in \Z} f_k^2}{2}$. If we  considered equation~\eqref{intro:sde_beta} \textit{without} $\beta$, we would exactly obtain the above equation with $\lambda=\lambda_{\operatorname{crit}}$. Therefore, it seems that adding a level of randomness is crucial to get our estimate. Precisely, in our above-described strategy, the Brownian motion $\beta$ plays a key role in controlling the remainder term.

In addition, let us note  that we study a process on the one-dimensional circle $\tor$, and not on the real line as e.g.\! in~\cite{konarovskyi17system, marx20}. We made this choice rather for technical reasons, in order to deal with processes of compactly supported measures having a positive density on the whole space. The main result is  restricted to functions $g$ which have a strictly positive derivative, meaning that the associated measure has a density with respect to Lebesgue measure on the torus. The constant $C_g$ tends to infinity when $\min_{u \in [0,1]} g'(u)$ gets closer to zero. The assumptions on the regularity of $g$ and $h$ seem reasonable since our model is close to the process $(\mu^{\mathrm{MMAF}}_t)_{t}$ called \textit{modified massive Arratia flow} introduced in~\cite{konarovskyi17system}, which  has highly singular coefficients. 
Indeed, Konarovskyi and von Renesse showed in~\cite{konarovskyivonrenesse18} that $(\mu^{\mathrm{MMAF}}_t)_{t}$, which is almost surely of finite support for all $t>0$, solves the following SPDE:
\[ \mathrm d\mu^{\mathrm{MMAF}}_t = \Gamma(\mu^{\mathrm{MMAF}}_t)\mathrm dt + \mathrm{div} (\sqrt{\mu^{\mathrm{MMAF}}_t}\mathrm d W_t),\]
where $\Gamma$ is defined by $<f,\Gamma(\nu)>=\frac{1}{2} \sum_{x \in \mathrm{Supp}(\nu)} f''(x)$ for any  $f \in  \mathcal C^2_\mathrm b (\R)$.

\subsection*{Organization of the paper}

The goal of Section~\ref{sec:statement} is to define properly equation~\eqref{intro:sde_beta} and to state the main result of the paper.  The proof of the theorem is then divided into four main steps. 
 We start in the short Section~\ref{sec:preparation} 
 by splitting the gradient of the semi-group into two parts, one regularized term and one remainder term, which are separately studied in sections~\ref{sec:analysis_I1} and~\ref{sec:analysis_I2}, respectively. 
Finally in Section~\ref{sec:end_proof}, we complete the proof by a bootstrap argument.

\section{Statement of the main result}
\label{sec:statement}
 
The main result of the paper is stated in Paragraph~\ref{parag:stat_main_theo}. Before, we define precisely the diffusion on the torus (Paragraph~\ref{parag:diff_torus}), its associated semi-group (Paragraph~\ref{parag:semi_group}) and the assumptions on the test functions (Paragraph~\ref{parag:assump_test_function}).

\subsection{A diffusion on the torus}
\label{parag:diff_torus}

In this paper, we study the following  stochastic differential equation on a fixed time interval $[0,T]$
\begin{align}
\label{eq_SDE_diff_torus}
\mathrm dx_t^g(u) = \sum_{k \in \Z} f_k\; \Re \left(e^{-ik x_t^g(u)} \mathrm dW^k_t \right)+ \mathrm d\beta_t, \quad t \in [0,T], \; u \in \R,
\end{align}
with initial condition $x^g_0=g$. 
In this paragraph, we first define the assumptions made on $W^k$, $\beta$, $f_k$ and $g$, where we  emphasise the interpretation of $(x^g_t)_{t \in [0,T]}$ as a diffusion on the torus. Then we state existence, uniqueness and some important properties of solutions to equation~\eqref{eq_SDE_diff_torus}.

Let $\beta$, $(W^{\Re,k})_{k \in \Z}$, $(W^{\Im,k})_{k \in \Z}$ be a collection of independent standard  real-valued Brownian motions. Thus $W^k_\cdot=W^{\Re,k}_\cdot + i W^{\Im,k}_\cdot$ denotes a $\C$-valued Brownian motion. The notation $\Re$ denotes the real part of a complex number, so that equation~\eqref{eq_SDE_diff_torus} can alternatively be written as follows
\begin{align*}
\mathrm dx_t^g(u) 
&= \sum_{k \in \Z} f_k \cos(kx_t^g(u)) \mathrm dW_t^{\Re,k} + \sum_{k \in \Z} f_k \sin(kx_t^g(u)) \mathrm dW_t^{\Im,k}+\mathrm d\beta_t.
\end{align*}

\begin{defin}
\label{defin_f}
We say that $f:=(f_k)_{k \in \Z}$ is of order $\alpha>0$ if there are $c>0$ and $C>0$ such that $\frac{c}{\crochetk^\alpha}\leq  |f_k| \leq \frac{C}{\crochetk^\alpha}$ for every $k \in \Z$, where $\crochetk:=(1+|k|^2)^{1/2}$. 
\end{defin}

Note that if $f$ is of order $\alpha > \frac{1}{2}$, then for each $u \in \R$, the particle $(x^g_t(u))_{t\in [0,T]}$ has a finite quadratic variation equal to $\langle x^g(u),x^g(u) \rangle_t= \Big(\sum_{k\in \Z} f_k^2+1\Big) t$.

Let $\tor$ be the one-dimensional torus, that we  identify with the interval $[0,2\pi]$.  
 $\mathcal P(\tor)$ denotes the space of probability measures on the torus. We  consider the $L_2$-Wasserstein metric $W_2^\tor$ on $\mathcal P(\tor)$, defined by
 $
W_2^\tor (\mu,\nu):=\inf_{\pi \in \Pi(\mu,\nu)} 
\left( \int_{\tor^2} d^\tor (x,y)^2 \; \mathrm d\pi(x,y) \right)
^{1/2} $,
where $\Pi(\mu,\nu)$ is the set of probability measures on $\tor^2$ with first marginal $\mu$ and second marginal~$\nu$ and where $d^\tor$ is the distance on the torus defined by $d^\tor(x,y):=\inf_{k \in \Z} |x-y-2k\pi|$, where  $x,y \in \R$.

\begin{defin}
\label{defin_g}
Let  $\mathscr G^1$ be the set of $\mathcal C^1$-functions $g:\R \to \R$  such that for every $u \in \R$, $g'(u)>0$ and  $g(u+1)=g(u)+2\pi$. 
Let $\sim$ be the following equivalence relation on $\mathscr G^1$: $g_1 \sim g_2$ if and only if there exists $c\in \R$ such that $g_2(\cdot)=g_1(\cdot +c)$.  
We denote by $\mathbf G^1$ the set of equivalence classes $\mathscr G^1 / \sim$.
\end{defin}

An interpretation for Definition~\ref{defin_g} is that the initial condition $g$ is seen as the quantile function (or inverse c.d.f.\!  function) associated with the measure $\nu_0^g \in \mathcal P(\tor)$ with density $p(x)=\frac{1}{g'(g^{-1}(x))}$, $x \in [0,2\pi]$, with respect to Lebesgue measure on $\tor$. 
There is a one-to-one correspondence between $\mathbf G^1$ and the set of positive densities on the torus, see Paragraph~\ref{subsec:density and quantile} in appendix for more details. 

Existence, uniqueness, continuity and differentiability of solutions to~\eqref{eq_SDE_diff_torus} depend on the order $\alpha$ of $f$ and on the regularity of $g$, as shown by the following two propositions. The proofs, which are classical, are left to appendix.

\begin{prop}
\label{prop:exist, uniq, cont, growth}
Let $g \in \mathscr G^1$ and $f$ be of order $\alpha > \frac{3}{2}$. Then for each $u\in \R$, strong existence and pathwise uniqueness in $\mathcal C(\R \times [0,T])$ hold for equation~\eqref{eq_SDE_diff_torus}.  Moreover  almost surely, for every $u\in [0,1]$, $(x_t^g(u))_{t\in [0,T]}$ satisfies equation~\eqref{eq_SDE_diff_torus}  and  for every $t\in [0,T]$, $u \mapsto x^g_t(u)$ is strictly increasing. 
\end{prop}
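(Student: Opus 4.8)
The plan is to treat~\eqref{eq_SDE_diff_torus} as a standard It\^o equation with an $\ell^2$-valued diffusion coefficient, to build a jointly continuous solution through a Kolmogorov continuity argument, and then to read off the monotonicity in $u$ from the facts that the equation is driftless and that its coefficient degenerates only at collisions. First I would write the noise term as $\Re\big(\sum_{k\in\Z} f_k e^{-ik x}\,\mathrm dW^k\big)$ with driving map $\Sigma(x)=(f_k e^{-ikx})_{k\in\Z}$: since $f$ is of order $\alpha>\tfrac32$ one has $\sum_k f_k^2<\infty$ and $\sum_k\crochetk^2 f_k^2<\infty$, so $\Sigma$ is bounded, $\|\Sigma(x)\|_{\ell^2}\le\|f\|_{\ell^2}$, and globally Lipschitz, $\|\Sigma(x)-\Sigma(y)\|_{\ell^2}\le\big(\sum_k\crochetk^2 f_k^2\big)^{1/2}|x-y|$ (using $|e^{-ikx}-e^{-iky}|\le|k|\,|x-y|$). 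For each fixed $u\in\R$, the classical existence-and-uniqueness theorem for It\^o equations with Lipschitz coefficients (or a Picard iteration in $L^2(\Omega;\mathcal C([0,T]))$) then produces a unique strong solution $(x_t^g(u))_{t\le T}$ started at $g(u)$; since $\Sigma$ is bounded, Burkholder--Davis--Gundy and Gronwall also give $\E{\sup_{t\le T}|x_t^g(u)-g(u)|^{2p}}\le C_pT^p$ for every $p\ge1$.

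Next I would establish joint continuity in $(u,t)$. For $u,v\in\R$ and $0\le s\le t\le T$, splitting $x_t^g(u)-x_s^g(v)=(x_t^g(v)-x_s^g(v))+(x_t^g(u)-x_t^g(v))$, the time increment satisfies $\E{|x_t^g(v)-x_s^g(v)|^{2p}}\le C_p|t-s|^p$ (bounded coefficient, BDG), while the difference $\delta_r:=x_r^g(u)-x_r^g(v)$ solves a driftless Lipschitz equation with $\delta_0=g(u)-g(v)$, so BDG and Gronwall give $\E{\sup_{r\le T}|\delta_r|^{2p}}\le C_p|g(u)-g(v)|^{2p}\le C_p\|g'\|_{L_\infty}^{2p}|u-v|^{2p}$. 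Hence $\E{|x_t^g(u)-x_s^g(v)|^{2p}}\le C_p(|t-s|^p+|u-v|^{2p})$, and for $p$ large the Kolmogorov continuity criterion yields a modification that is a.s.\! continuous on $\R\times[0,T]$ (extended to all of $\R$ by localization). I would then verify that this continuous version still solves~\eqref{eq_SDE_diff_torus} for \emph{every} $u$ at once: fix $u$, take rationals $u_n\to u$ along which the equation holds a.s., and pass to the limit using the continuity of $g$ for the initial datum and the $L^2(\Omega\times[0,T];\ell^2)$-continuity of the stochastic integral (via the Lipschitz bound on $\Sigma$) for the noise term. Pathwise uniqueness in $\mathcal C(\R\times[0,T])$ is then immediate from the fixed-$u$ uniqueness.

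For the strict monotonicity, fix $u<v$ and set $D_t:=x_t^g(v)-x_t^g(u)$. This is a continuous local martingale (the $\beta$-terms cancel and there is no drift) with $D_0=\int_u^v g'(w)\,\mathrm dw>0$ and $\langle D\rangle_t=\int_0^t\sum_k f_k^2|e^{-ikx_s^g(v)}-e^{-ikx_s^g(u)}|^2\,\mathrm ds\le\big(\sum_k\crochetk^2 f_k^2\big)\int_0^t D_s^2\,\mathrm ds$. With $\tau_\eps:=\inf\{t:D_t\le\eps\}$, It\^o's formula gives
\[ \log D_{t\wedge\tau_\eps}=\log D_0+M_{t\wedge\tau_\eps}-\tfrac12\int_0^{t\wedge\tau_\eps}\frac{\mathrm d\langle D\rangle_s}{D_s^{2}}, \]
where $M$ is a local martingale with $\langle M\rangle_{t\wedge\tau_\eps}=\int_0^{t\wedge\tau_\eps}D_s^{-2}\,\mathrm d\langle D\rangle_s\le\big(\sum_k\crochetk^2 f_k^2\big)T$ uniformly in $\eps$; thus $M_{\cdot\wedge\tau_\eps}$ is a genuine $L^2$-bounded martingale and the last term is bounded below by $-\tfrac12\big(\sum_k\crochetk^2 f_k^2\big)T$. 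Consequently $\log D_{t\wedge\tau_\eps}$ stays bounded below uniformly in $\eps$ and $t\le T$ on an a.s.\! event, which forces $\tau_0:=\inf\{t:D_t=0\}>T$ almost surely, i.e.\! $D_t>0$ for all $t\le T$. Intersecting over rational pairs $u<v$ and using the joint continuity above, $u\mapsto x_t^g(u)$ is a.s.\! non-decreasing, and strictly so because any interval contains two rationals, for every $t\in[0,T]$. The same uniqueness argument together with $e^{-ik(x+2\pi)}=e^{-ikx}$, $k\in\Z$, and $g(u+1)=g(u)+2\pi$ also gives $x_t^g(u+1)=x_t^g(u)+2\pi$.

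The only non-routine point is the last step: $D$ is a driftless martingale whose diffusion coefficient vanishes exactly at collisions, so one must rule out that it ever reaches $0$; the logarithmic transform does this, and it is precisely the bound $\langle D\rangle_t\le\big(\sum_k\crochetk^2 f_k^2\big)\int_0^t D_s^2\,\mathrm ds$ -- hence the hypothesis $\alpha>\tfrac32$ -- that makes $\int_0^{\cdot}D_s^{-2}\,\mathrm d\langle D\rangle_s$ finite. Making the limiting argument over a dense set of indices $u$ in the continuity step fully rigorous (commuting limits with stochastic integrals) also needs a little care but is standard.
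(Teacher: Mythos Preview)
Your proposal is correct and follows essentially the same route as the paper's own proof, which (in the appendix) simply refers to~\cite{marx20}: a fixed-point argument for existence and uniqueness using the Lipschitz bound $\|\Sigma(x)-\Sigma(y)\|_{\ell^2}^2\le\big(\sum_k k^2 f_k^2\big)|x-y|^2$, Kolmogorov's lemma for a jointly continuous version, and the study of the difference process $D_t=x_t^g(v)-x_t^g(u)$ for monotonicity. Your logarithmic-transform argument for strict positivity of $D$ is exactly the right mechanism and is equivalent to writing $D$ as a Dol\'eans--Dade exponential; the crucial ingredient, namely $\mathrm d\langle D\rangle_t\le\big(\sum_k k^2 f_k^2\big)D_t^2\,\mathrm dt$, is precisely where $\alpha>\tfrac32$ enters. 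One minor remark: your sentence ``$\log D_{t\wedge\tau_\eps}$ stays bounded below uniformly in $\eps$ and $t$ on an a.s.\ event'' should be read as a \emph{random} lower bound --- the point is that $\langle M\rangle_{\tau_0-}\le CT<\infty$ forces $M$ to extend continuously to $\tau_0$, so on $\{\tau_0\le T\}$ the limit $\log D_{\tau_0-}$ would be simultaneously finite and $-\infty$; this is exactly what you intend, but it would be worth spelling out in a final version. The periodicity $x_t^g(u+1)=x_t^g(u)+2\pi$ that you mention at the end is established separately in the paper (Proposition~\ref{prop:stabilite}) rather than as part of this result.
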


\begin{proof}
See Paragraph~\ref{app_well_posed} in appendix. 
\end{proof}

For every $j\in \N$ and $ \theta \in [0,1)$, let $\mathcal C^{j+\theta}$ denote the set of $\mathcal C^j$-functions whose $j^\mathrm{th}$ derivative is $\theta$-Hölder continuous.
By extension,  $\mathscr G^{j+\theta} \subseteq \mathscr G^1$ and $\mathbf G^{j+\theta} \subseteq \mathbf G^1$,  are the subsets of $\mathscr G^1$ and of $\mathbf G^1$ consisting  of all $\mathcal C^{j+\theta}$-functions and  $\mathcal C^{j+\theta}$-equivalence classes, respectively.

\begin{prop}
\label{prop:differentiability}
Let $j \geq 1$,  $\theta \in (0,1)$,  $g \in \mathscr G^{j+\theta}$ and $f$ be of order $\alpha> j + \frac{1}{2} + \theta$. Then almost surely, for every $t\in [0,T]$, the map $u \mapsto x_t^g(u)$ is a $\mathcal C^{j+\theta'}$-function for every $\theta'< \theta$. Moreover, its first derivative satisfies  almost surely
\begin{align}
\label{exponential explicit partial xtg}
\partial_u x_t^g(u) =g'(u) \exp\bigg( \sum_{k \in \Z} f_k \int_0^t  \Re \left(-ik e^{-ik x_s^g(u)} \mathrm dW^k_s \right)-\frac{t}{2} \sum_{k \in \Z} f_k^2 k^2 \bigg), \quad u\in \R, t\in [0,T].
\end{align}
\end{prop}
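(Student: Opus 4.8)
\emph{Proof proposal.}
The plan is to reduce the statement to the regularity of the stochastic flow of a scalar SDE and then to apply a Kolmogorov-type continuity argument. First I would note that, since the field $\xi\mapsto\sum_{k\in\Z}f_k\,\Re(e^{-ik\xi}\,\cdot\,)$ is smooth in $\xi$ and its spatial derivative of order $i$ has quadratic variation $t\sum_{k\in\Z}f_k^2 k^{2i}$, which is finite as soon as $\alpha>i+\tfrac12$, the scalar SDE
\begin{align*}
\mathrm d\xi_t=\sum_{k\in\Z}f_k\,\Re\!\left(e^{-ik\xi_t}\,\mathrm dW^k_t\right)+\mathrm d\beta_t,\qquad \xi_0=a\in\R ,
\end{align*}
is well posed (the coefficient being Lipschitz in $\xi$ with constant $\big(\sum_k f_k^2 k^2\big)^{1/2}<\infty$ because $\alpha>\tfrac32$, as in Proposition~\ref{prop:exist, uniq, cont, growth}); denote by $a\mapsto\Phi_t(a)$ its solution map. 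Since $e^{-ik\,\cdot}$ is $2\pi$-periodic for $k\in\Z$, one has $\Phi_t(a+2\pi)=\Phi_t(a)+2\pi$, and by strong uniqueness $x_t^g(u)=\Phi_t(g(u))$ for all $u$, a.s. Hence it suffices to prove that $\Phi$ admits a modification that is jointly continuous in $(t,a)$ and $\mathcal C^{j+\theta'}$ in $a$ for every $\theta'<\theta$, with $j$-th spatial derivative solving the SDE obtained by formally differentiating the flow equation $j$ times. The statement then follows from the chain rule applied to $\Phi_t\circ g$, which is $\mathcal C^{j+\theta'}$ because $g\in\mathscr G^{j+\theta}$; in particular $\partial_u x_t^g(u)=(\partial_a\Phi_t)(g(u))\,g'(u)$, and since $J_t(a):=\partial_a\Phi_t(a)$ is real-valued it solves the linear scalar SDE $\mathrm dJ_t=J_t\,\mathrm dM_t$, $J_0=1$, with $M_t=\sum_{k}f_k\int_0^t\Re(-ik e^{-ik\Phi_s}\,\mathrm dW^k_s)$ and $\langle M\rangle_t=t\sum_k f_k^2 k^2$, whence $J_t=\exp(M_t-\tfrac12\langle M\rangle_t)$; evaluating along $\Phi_s=x_s^g(u)$ and multiplying by $g'(u)$ yields exactly~\eqref{exponential explicit partial xtg}.

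For the flow regularity I would argue by induction on $j$. At step $j$ one has the SDE solved by $\partial_a^{\,j-1}\Phi_t$, whose coefficients are polynomial in the lower-order derivatives $\partial_a^{\,i}\Phi_t$ ($i\leq j-1$) and in the fields $\sum_k f_k\,\Re((-ik)^i e^{-ik\Phi_t}\,\mathrm dW^k)$, and one estimates, for $p\geq2$ and $a,a'$ in a bounded set, the $L^p$-moments of (i) the increments of the already constructed derivatives, (ii) the candidate next derivative $\partial_a^{\,j}\Phi_t$ defined through its linear SDE, and (iii) the difference between the difference quotient of $\partial_a^{\,j-1}\Phi$ and that candidate. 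Using the Burkholder--Davis--Gundy and Hölder inequalities together with Gronwall's lemma, these moments are bounded by $|a-a'|^{p}$ for the Lipschitz part and by $|a-a'|^{p\theta}$ for the Hölder part, the latter requiring $\sum_{k\in\Z}f_k^2 k^{2j+2\theta}<\infty$, i.e.\ precisely $\alpha>j+\tfrac12+\theta$: the factor $k^{2j}$ comes from the $j$ successive differentiations and the factor $k^{2\theta}$ from interpolating $|e^{-ika}-e^{-ika'}|\lesssim\min(2,|k|\,|a-a'|)$. Kolmogorov's continuity theorem then produces a modification of $\Phi$ that is jointly continuous in $(t,a)$ on $[0,T]\times\R$ and $\mathcal C^{j+\theta'}$ in $a$ for every $\theta'<\theta$, simultaneously for all $t\in[0,T]$; in particular, passing from ``$\mathcal C^{j+\theta'}$ for every $t$, a.s.'' to ``a.s., for every $t$, $\mathcal C^{j+\theta'}$'' is immediate from the joint continuity, and composition with $g$ concludes.

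The main obstacle is the verification step (iii): establishing that the iterated difference quotients converge to the processes defined by the linearized flow equations \emph{with a quantitative rate}, while carefully tracking the powers of $k$ generated by each differentiation so that the summability threshold $\alpha>j+\tfrac12+\theta$ is respected. This is also the point where one must justify interchanging the infinite sum over $k$ with differentiation and with stochastic integration (a stochastic Fubini argument), which is again licensed by the same summability. The remaining ingredients—BDG, Gronwall, Kolmogorov—are routine once the bookkeeping of $k$-powers is set up.
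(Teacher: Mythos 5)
Your proposal is correct, and it reorganizes the argument in a slightly different but mathematically equivalent way. The paper differentiates directly in the label $u$ of the initial datum: it introduces the candidate derivative $z_t(u)$ solving the formally differentiated equation with initial condition $g'(u)$, proves Hölder continuity of $u\mapsto z_\cdot(u)$, and then shows by a Gronwall estimate that the difference quotients $\frac{x^g_\cdot(u+\eps)-x^g_\cdot(u)}{\eps}$ converge to $z_\cdot(u)$ in $L_2(\Omega,\mathcal C[0,T])$ at rate $|\eps|^{\theta}$, from which the almost-sure identity $\partial_u x_t^g(u)=z_t(u)$ and the exponential form follow (the case $j\geq 2$ is delegated to the thesis). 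You instead factor $x_t^g(u)=\Phi_t(g(u))$ through the scalar stochastic flow $\Phi_t(a)$ of the parametric SDE~\eqref{eq_SDE_z}, prove $\mathcal C^{j+\theta'}$-regularity of $a\mapsto\Phi_t(a)$ by induction with BDG, Gronwall and Kolmogorov, and then compose with $g$; the Jacobian $J_t(a)=\partial_a\Phi_t(a)$ satisfies a linear scalar SDE and equals the Dol\'eans--Dade exponential, which upon evaluation at $a=g(u)$ and multiplication by $g'(u)$ gives~\eqref{exponential explicit partial xtg}. This packaging has the advantage of cleanly decoupling the regularity of the flow (governed by $\alpha$) from the regularity of the initial datum $g$ (entering only via composition), and it is closer in spirit to the appendix material around Proposition~\ref{prop_para} and~\ref{prop:kunita_identities}. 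The estimates behind the two proofs are the same, and your tally of the summability threshold $\sum_k f_k^2 k^{2j+2\theta}<\infty$, i.e.\ $\alpha>j+\frac{1}{2}+\theta$, is exactly the one the paper relies on. One small ingredient you could have made explicit, which the paper uses to pass from $L_2$-convergence of the difference quotient to the pointwise derivative, is the representation $\frac{x_t^g(u+\eps)-x_t^g(u)}{\eps}=\int_0^1 z_t(u+\lambda\eps)\,\mathrm d\lambda$ combined with continuity of $z$; in your framing the analogous device is Kolmogorov continuity of $a\mapsto J_t(a)$, which you do invoke.
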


\begin{proof}
See Paragraph~\ref{app_well_posed} in appendix. 
\end{proof}

The next proposition states that the flow of the SDE preserves the equivalence classes of quantile functions that we  introduced in Definition~\ref{defin_g}.

\begin{prop}
\label{prop:stabilite}
Let $\theta \in (0,1)$,  $g \in \mathscr G^{1+\theta}$ and $f$ be of order $\alpha>  \frac{3}{2}+ \theta$. 
Then almost surely, for every $t\in [0,T]$, the map $u \mapsto x_t^g(u)$ belongs to $\mathscr G^1$. 
Moreover, if $g_1 \sim g_2$, then almost surely, $x_t^{g_1} \sim x_t^{g_2}$ for every $t \in [0,T]$. 
\end{prop}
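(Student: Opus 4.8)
The plan is to verify directly that $x_t^g$ satisfies the three defining conditions of $\mathscr G^1$ — being $\mathcal C^1$, having positive derivative, and satisfying the periodicity relation $x_t^g(u+1) = x_t^g(u) + 2\pi$ — and then to deduce the statement about equivalence classes from the uniqueness of solutions together with the $2\pi$-periodicity of the coefficients. First I would note that the $\mathcal C^1$-regularity of $u \mapsto x_t^g(u)$ and the explicit formula~\eqref{exponential explicit partial xtg} for $\partial_u x_t^g(u)$ are already supplied by Proposition~\ref{prop:differentiability}, applied with $j=1$ (the hypothesis $\alpha > \frac32 + \theta$ is exactly what is needed). From that formula, $\partial_u x_t^g(u)$ is $g'(u)$ multiplied by an exponential, hence strictly positive for every $u$ and $t$ since $g \in \mathscr G^{1+\theta} \subseteq \mathscr G^1$ gives $g'(u) > 0$.

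The heart of the argument is the periodicity $x_t^g(u+1) = x_t^g(u) + 2\pi$. I would fix $u \in \R$ and compare the two processes $t \mapsto x_t^g(u+1)$ and $t \mapsto x_t^g(u) + 2\pi$. The latter satisfies, by~\eqref{eq_SDE_diff_torus},
\begin{align*}
\mathrm d\bigl(x_t^g(u) + 2\pi\bigr) = \sum_{k \in \Z} f_k\, \Re\!\left(e^{-ik x_t^g(u)} \mathrm dW_t^k\right) + \mathrm d\beta_t = \sum_{k \in \Z} f_k\, \Re\!\left(e^{-ik(x_t^g(u)+2\pi)} \mathrm dW_t^k\right) + \mathrm d\beta_t,
\end{align*}
where the last equality uses $e^{-ik \cdot 2\pi} = 1$ for all $k \in \Z$. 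Hence $t \mapsto x_t^g(u) + 2\pi$ solves the same SDE~\eqref{eq_SDE_diff_torus} as $t \mapsto x_t^g(u+1)$, and both start from $g(u+1) = g(u) + 2\pi$ at time $0$ (using $g \in \mathscr G^1$). By the pathwise uniqueness statement in Proposition~\ref{prop:exist, uniq, cont, growth}, the two processes coincide almost surely; since the exceptional null set can be chosen simultaneously for all $u$ by continuity of $(u,t) \mapsto x_t^g(u)$, we get that almost surely, for every $t \in [0,T]$ and every $u$, $x_t^g(u+1) = x_t^g(u) + 2\pi$. Combined with the two properties from the previous paragraph, this shows $x_t^g \in \mathscr G^1$.

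For the statement on equivalence classes, suppose $g_1 \sim g_2$, say $g_2(\cdot) = g_1(\cdot + c)$ for some $c \in \R$. I would check that $t \mapsto x_t^{g_1}(u+c)$ solves~\eqref{eq_SDE_diff_torus} with initial condition $g_1(u+c) = g_2(u)$: this is immediate since the SDE has no explicit $u$-dependence in its coefficients, only through the current position $x_t(u)$, so shifting the label $u \mapsto u+c$ produces another solution driven by the same noise. By pathwise uniqueness again (Proposition~\ref{prop:exist, uniq, cont, growth}), $x_t^{g_2}(u) = x_t^{g_1}(u+c)$ almost surely for all $t$ and $u$, which is exactly $x_t^{g_2} \sim x_t^{g_1}$. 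The main (and only real) obstacle is the bookkeeping of null sets: pathwise uniqueness is stated for each fixed $u$, so I would invoke the continuity of $u \mapsto x_t^g(u)$ — guaranteed by Proposition~\ref{prop:exist, uniq, cont, growth} — to upgrade "for each $u$, almost surely" to "almost surely, for all $u$", which is what is needed to conclude that $x_t^g$ lies in $\mathscr G^1$ and that the relations hold as identities of functions.
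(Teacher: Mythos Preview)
Your proof is correct and follows essentially the same approach as the paper: the paper likewise deduces $\mathcal C^1$-regularity and positivity of the derivative from Propositions~\ref{prop:exist, uniq, cont, growth} and~\ref{prop:differentiability}, then defines $y_t^g(u):=x_t^g(u+1)-2\pi$, observes it solves the same SDE with the same initial data (using $e^{-2\pi i k}=1$), and invokes pathwise uniqueness in $\mathcal C(\R\times[0,T])$; the equivalence-class statement is handled identically. The only cosmetic difference is that the paper applies uniqueness directly at the level of the jointly continuous process (so no separate ``upgrade from each $u$ to all $u$'' step is needed), whereas you argue pointwise in $u$ and then pass to all $u$ by continuity---both are valid.
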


\begin{proof}
By propositions~\ref{prop:exist, uniq, cont, growth} and~\ref{prop:differentiability}, it is clear that $u \mapsto x_t^g(u)$ belongs to $\mathcal C^1$ and that $\partial_u x_t^g (u)>0$ for every $u\in \R$. 
Furthermore, let $(y_t^g)_{t\in [0,T]}$ be the process defined by $y_t^g(u):=x_t^g(u+1)-2\pi$. 
By definition of $\mathscr G^{1+\theta}$, $g(u+1)-2\pi =g(u)$, thus for every $t\in [0,T]$ and $u\in \R$, 
$y_t^g(u)= g(u) + \sum_{k \in \Z} f_k \int_0^t \Re \left(e^{-ik y_s^g(u)} \mathrm dW^k_s \right)+\beta_t$. 
Therefore $(y_t^g(u))_{t\in [0,T], u\in \R}$ and $(x_t^g(u))_{t\in [0,T], u\in \R}$ satisfy the same equation and belong to $\mathcal C(\R \times [0,T])$. By Proposition~\ref{prop:exist, uniq, cont, growth}, there is a unique solution in this space. Thus for every $t\in [0,T]$ and every $u\in \R$, $x_t^g(u+1)-2\pi =x_t^g(u)$.  We deduce that $x_t^g$ belongs to $\mathscr G^1$ for every $t\in [0,T]$. 

The proof of the second statement is similar; if there is $c\in \R$ such that $g_2(u)=g_1(u+c) $ for every $u \in \R$, then the processes $(x_t^{g_2}(u))_{t\in [0,T], u\in \R}$ and $(x_t^{g_1}(u+c))_{t\in [0,T], u\in \R}$ satisfy the same equation and  are equal. 
\end{proof}

By Proposition~\ref{prop:stabilite}, we are able to give a meaning to  equation~\eqref{eq_SDE_diff_torus} with initial value $g$ in $\mathbf G^{1+\theta}$. Indeed, for each $t\in [0,T]$, the solution $x_t^g$ will take its values in $\mathbf G^{1+\theta'}$ for every $\theta' < \theta$. 
More generally, by Proposition~\ref{prop:differentiability}, if the initial condition $g$ belongs to $\mathbf G^{j+\theta}$ for $j \geq 1$ and $\theta \in (0,1)$, then for each $t\in [0,T]$, $x_t^g$ belongs to $\mathbf G^{j+\theta'}$ for every $\theta' < \theta$.

Furthermore, the $L_p$-norms (in the space variable) of the derivatives $\partial_u^{(j)} x_t^g$, $j \geq 1$, and of  $\frac{1}{\partial_u x_t^g}$ can be easily  controlled with respect to the initial conditions. All the inequalities that will be needed later in this paper were listed and proved in Lemma~\ref{lem_control_moments} in appendix.

To conclude this paragraph, let us mention that the solution to equation~\eqref{eq_SDE_diff_torus} can be equivalently seen as a solution to the following parametric SDE
\begin{align}
\label{eq_SDE_z}
Z_t^x = x+ \sum_{k \in \Z} f_k \int_0^t \Re \left(e^{-ik Z_s^x} \mathrm dW^k_s \right)+ \beta_t, \quad x \in \R.
\end{align}
Under the same assumptions over $f$, well-posedness and regularity of solutions to equation~\eqref{eq_SDE_z} can be shown, see Proposition~\ref{prop_para} in appendix. Moreover, $Z_t$ is closely related to $x_t^g$ by the following identities.

\begin{prop}
\label{prop:kunita_identities}
Let $g\in \mathbf G^{1+\theta}$ and $f$ be of order $\alpha> \frac{3}{2}+\theta$ for some $\theta \in (0,1)$. Then 
almost surely, for every $t\in [0,T]$ and  $u\in [0,1]$,
\begin{align}
\label{egalite Zx}
Z_t^{g(u)}&=x_t^g(u), \\
\label{liens entre les dérivées}
\partial_x Z_t^{g(u)} &= \frac{\partial_u x_t^g(u)}{g'(u)}.
\end{align}
\end{prop}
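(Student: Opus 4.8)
\textbf{Proof strategy for Proposition~\ref{prop:kunita_identities}.}

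The plan is to prove the two identities by a uniqueness-of-solution argument, exactly in the spirit of the proof of Proposition~\ref{prop:stabilite}. For~\eqref{egalite Zx}, fix $u \in [0,1]$ and consider the process $t \mapsto Z_t^{g(u)}$. By the well-posedness of the parametric SDE~\eqref{eq_SDE_z} (Proposition~\ref{prop_para}), this process satisfies, almost surely for every $t \in [0,T]$,
\begin{align*}
Z_t^{g(u)} = g(u) + \sum_{k \in \Z} f_k \int_0^t \Re\left(e^{-ik Z_s^{g(u)}} \mathrm dW^k_s\right) + \beta_t.
\end{align*}
On the other hand, by Proposition~\ref{prop:exist, uniq, cont, growth}, the process $t \mapsto x_t^g(u)$ satisfies the same one-dimensional SDE with the same initial datum $g(u)$. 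Since $\alpha > \frac32 + \theta > \frac32$, pathwise uniqueness holds for this SDE (the diffusion coefficient $x \mapsto (f_k \Re(e^{-ikx}))_{k}$ is Lipschitz in $x$ uniformly in the realizations of the noise, because $\sum_k f_k^2 k^2 < \infty$), so the two processes coincide almost surely. The only mild care needed is to upgrade from "for each fixed $u$, almost surely" to "almost surely, for every $u$": this follows from the joint continuity in $(u,t)$ of both sides — $x_t^g(u)$ is continuous in $(u,t)$ by Proposition~\ref{prop:exist, uniq, cont, growth}, and $Z_t^{g(u)}$ is continuous in $(u,t)$ because $x \mapsto Z_t^x$ is continuous (indeed $\mathcal C^1$) by Proposition~\ref{prop_para} and $g$ is continuous — so agreement on a countable dense set of $u$ extends to all $u$.

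For~\eqref{liens entre les dérivées}, I would differentiate~\eqref{egalite Zx} with respect to $u$. By Proposition~\ref{prop:differentiability}, $u \mapsto x_t^g(u)$ is $\mathcal C^1$ and~\eqref{exponential explicit partial xtg} holds; by Proposition~\ref{prop_para}, $x \mapsto Z_t^x$ is $\mathcal C^1$ with an analogous exponential formula for $\partial_x Z_t^x$. Applying the chain rule to the identity $x_t^g(u) = Z_t^{g(u)}$ gives $\partial_u x_t^g(u) = \partial_x Z_t^{g(u)} \cdot g'(u)$, and dividing by $g'(u) > 0$ yields~\eqref{liens entre les dérivées}. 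Alternatively, one can verify this directly by comparing the explicit exponential expression~\eqref{exponential explicit partial xtg} for $\partial_u x_t^g(u)$ with the corresponding formula for $\partial_x Z_t^{g(u)}$ and using~\eqref{egalite Zx} to identify the integrands $e^{-ik Z_s^{g(u)}} = e^{-ik x_s^g(u)}$; the only difference between the two expressions is precisely the prefactor $g'(u)$.

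The argument is entirely routine; the main (very minor) obstacle is the measure-theoretic bookkeeping of the null sets, i.e.\ ensuring that the exceptional set can be chosen independently of $u$. This is handled, as above, by invoking the pathwise continuity of the flow $x \mapsto Z_t^x$ (Proposition~\ref{prop_para}) together with that of $u \mapsto x_t^g(u)$ (Proposition~\ref{prop:differentiability}), which allows one to pass from a countable dense set of parameters to all $u \in [0,1]$.
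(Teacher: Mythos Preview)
Your proof is correct and essentially identical to the paper's: both establish~\eqref{egalite Zx} by pathwise uniqueness for each fixed $u$ and then use the almost-sure continuity of $u \mapsto x_t^g(u)$ and $x \mapsto Z_t^x$ to get the statement for all $u$ simultaneously, and both obtain~\eqref{liens entre les dérivées} by comparing the explicit exponential expressions for $\partial_u x_t^g(u)$ and $\partial_x Z_t^{g(u)}$ (your ``alternatively'' paragraph is exactly the paper's argument).
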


\begin{proof}
See end of Paragraph~\ref{app_well_posed} in appendix. 
\end{proof}

\subsection{Semi-group averaged out by idiosyncratic noise}
\label{parag:semi_group}

According to Proposition~\ref{prop:stabilite}, $u \in [0,1] \mapsto x_t^g(u)$ is for each fixed $t$ a quantile function of the measure $\nu_t^g \in \mathcal P(\tor)$ defined by $\nu_t^g := \Leb_{[0,1]} \circ (x_t^g)^{-1}$. 
However, the stochastic process $(\nu_t^g)_{t \in [0,T]}$ is not regular enough to obtain a gradient estimate for the associated semi-group. Therefore, we average out the realization of the noise $\beta$ by defining $\mu_t^g := (\Leb_{[0,1]} \otimes \mathbb P^\beta) \circ (x_t^g)^{-1}$. 
In terms of densities, if $q_t^g$ is the density of $\nu_t^g$, then $p_t^g(x):=\mathbb E^\beta \left[ q_t^g (x)\right]$, $x\in \tor$,  is the density of $\mu_t^g$.

To be more precise, we define three sources of randomness, for the noises $W^k$, $\beta$ and the initial condition $g$, respectively. 
Let $(\Omega^W, \mathcal G^W, (\mathcal G^W_t)_{t\in [0,T]}, \mathbb P^W)$ and $(\Omega^\beta, \mathcal G^\beta, (\mathcal G^\beta_t)_{t\in [0,T]}, \mathbb P^\beta)$ be  filtered probability spaces satisfying usual conditions, on which we define a $(\mathcal G^W_t)_{t\in [0,T]}$-adapted collection $((W^{k}_t)_{t\in [0,T]})_{k \in \Z}$ of independent   $\C$-valued Brownian motions  and a $(\mathcal G^\beta_t)_{t\in [0,T]}$-adapted standard Brownian motion $(\beta_t)_{t\in [0,T]}$, respectively. 
Let $(\Omega^0, \mathcal G^0, \mathbb P^0)$ be another probability space rich enough to support $\mathbf G^1$-valued random variables with any possible distribution.
We  denote by $\mathbb E^B$, $\mathbb E^W$ and $\mathbb E^0$ the expectations  associated to $\mathbb P^\beta$, $\mathbb P^W$ and~$\mathbb P^0$, respectively.
Let $(\Omega, \mathcal G, (\mathcal G_t)_{t\in [0,T]}, \mathbb P)$ be the filtered probability space defined by $\Omega:=\Omega^W \times \Omega^\beta \times\Omega^0$, $\mathcal G:= \mathcal G^W \otimes \mathcal G^\beta \otimes \mathcal G^0$, $\mathcal G_t:= \sigma ((\mathcal G^W_s)_{s\leq t},(\mathcal G^\beta_s)_{s\leq t},\mathcal G^0)$ and $\mathbb P:= \mathbb P^W \otimes \mathbb P^\beta \otimes \mathbb P^0$. Without loss of generality, we assume the filtration $(\mathcal G_t)_{t\in [0,T]}$ to be complete and, up to adding negligible subsets to $\mathcal G^0$,  we assume that $\mathcal G_0=\mathcal G^0$.

\begin{defin}
\label{def:mesure_mu}
Fix $t\in [0,T]$ and $\omega \in\Omega^W \times \Omega^0$.
Then $x_t^g(\omega)$ is a random variable from $[0,1] \times \Omega^\beta$ to $\R$. 
We denote  by $\mu_t^g(\omega)$  its law, that is:
\begin{align*}
\mu_t^g(\omega) := \left( \Leb_{[0,1]} \otimes \mathbb P^\beta  \right) \circ \left( x_t^g(\omega) \right)^{-1}.
\end{align*} 
In particular, $\mu_t^g$ is  a random variable defined on $\Omega^W \times \Omega^0$ with values in $\mathcal P_2(\R)$.  
\end{defin}

Define now the  semi-group $(P_t)_{t \in [0,T]}$ associated to $(\mu^g_t)_{t\in [0,T]}$.
Let $\phi:\mathcal P_2(\R)\to \R$ be a bounded and continuous function. Let  $\widehat{\phi}: L_2([0,1]\times \Omega^\beta ) \to \R$ be the lifted function of $\phi$, defined by 
$\widehat{\phi}(X):= \phi((\Leb_{[0,1]} \otimes \mathbb P^\beta) \circ X^{-1})$. 
In other words, $\widehat{\phi}(X)= \phi(\mathcal L_{[0,1] \times \Omega^\beta}(X))$, where $\mathcal L_{[0,1] \times \Omega^\beta}(X)$ denotes the law of the random variable $X:[0,1] \times \Omega^\beta \to \R$. 

\begin{defin}
\label{defin:semi-group P_t}
For every $t \in \R$ and $\mu \in \mathcal P_2(\R)$, 
\begin{align*}
P_t\phi(\mu):= \mathbb E^W \left[ \widehat{\phi}(Z_t^X) \right], 
\end{align*}
where $(Z_t^x)$ is the solution to SDE~\eqref{eq_SDE_z} and $\mu=\Leb_{[0,1]} \circ X^{-1}$.  
\end{defin}

\begin{prop}
$P_t \phi$ is well-defined and for every $t\in [0,T]$ and $g \in \mathbf G^1$,
\begin{align}
\label{def:semi-group P_t bis}
P_t \phi (\mu_0^g)  = \mathbb E^W \left[  \widehat{\phi}\left( x_t^g \right) \right]= \mathbb E^W \left[ \phi (\mu_t^g)\right].
\end{align}
\end{prop}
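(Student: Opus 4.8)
The plan is to verify that the definition of $P_t\phi(\mu)$ in Definition~\ref{defin:semi-group P_t} does not depend on the choice of representative $X$ with $\mu = \Leb_{[0,1]}\circ X^{-1}$, and then to identify $P_t\phi(\mu_0^g)$ with $\mathbb E^W[\widehat\phi(x_t^g)]$ using the Kunita-type identity of Proposition~\ref{prop:kunita_identities}. So first I would address well-posedness: given two random variables $X_1, X_2 \in L_2([0,1])$ with the same law $\mu$, one must show $\mathbb E^W[\widehat\phi(Z_t^{X_1})] = \mathbb E^W[\widehat\phi(Z_t^{X_2})]$. The point is that $Z_t^x$ depends on $x$ only through the parametric SDE~\eqref{eq_SDE_z}, so $Z_t^{X}$ is obtained by composing the (random, $W$-measurable) flow map $x \mapsto Z_t^x$ with $X$; moreover $\widehat\phi(Z_t^X) = \phi\big((\Leb_{[0,1]}\otimes\mathbb P^\beta)\circ(Z_t^X)^{-1}\big)$ depends only on the joint law of $(X,\beta)$ under $\Leb_{[0,1]}\otimes\mathbb P^\beta$, which in turn — since $X$ and $\beta$ are independent and $\beta$ is fixed — depends only on the law of $X$, namely $\mu$. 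Hence the pushforward measure $(\Leb_{[0,1]}\otimes\mathbb P^\beta)\circ(Z_t^X)^{-1}$ is the same for $X_1$ and $X_2$, $\mathbb P^W$-a.s., which gives the claim. The boundedness and continuity of $\phi$, together with the integrability of $Z_t^x$ from Proposition~\ref{prop_para}, ensure the expectation $\mathbb E^W[\widehat\phi(Z_t^X)]$ is finite.

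Next I would prove the first equality in~\eqref{def:semi-group P_t bis}. Take $g\in\mathbf G^1$ and let $X := g$ viewed as a random variable on $[0,1]$ equipped with $\Leb_{[0,1]}$; by the interpretation following Definition~\ref{defin_g}, $\mu_0^g = \Leb_{[0,1]}\circ g^{-1}$, so $g$ is an admissible representative of $\mu_0^g$ and $P_t\phi(\mu_0^g) = \mathbb E^W[\widehat\phi(Z_t^g)]$. By the identity~\eqref{egalite Zx} in Proposition~\ref{prop:kunita_identities}, $Z_t^{g(u)} = x_t^g(u)$ for a.e.\ $u\in[0,1]$, almost surely (jointly in $W$ and $\beta$). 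Therefore, as random variables from $[0,1]\times\Omega^\beta$ to $\R$, $Z_t^g$ and $x_t^g$ coincide $\Leb_{[0,1]}\otimes\mathbb P^\beta$-a.e., for $\mathbb P^W$-a.e.\ realization of $W$. Since $\widehat\phi$ depends on its argument only through the law of that random variable, $\widehat\phi(Z_t^g) = \widehat\phi(x_t^g)$ $\mathbb P^W$-a.s., and taking $\mathbb E^W$ yields $P_t\phi(\mu_0^g) = \mathbb E^W[\widehat\phi(x_t^g)]$.

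Finally, the second equality in~\eqref{def:semi-group P_t bis} is essentially a matter of unwinding definitions: by Definition~\ref{def:mesure_mu}, for $\omega\in\Omega^W\times\Omega^0$ (here only the $W$-coordinate matters) the measure $\mu_t^g(\omega)$ is precisely $(\Leb_{[0,1]}\otimes\mathbb P^\beta)\circ(x_t^g(\omega))^{-1}$, which is exactly $\mathcal L_{[0,1]\times\Omega^\beta}(x_t^g(\omega))$, the law appearing in the definition of $\widehat\phi$. Hence $\widehat\phi(x_t^g) = \phi(\mu_t^g)$ pointwise in $\omega$, and taking $\mathbb E^W$ gives $\mathbb E^W[\widehat\phi(x_t^g)] = \mathbb E^W[\phi(\mu_t^g)]$. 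One should check measurability of $\omega\mapsto\mu_t^g(\omega)$ as a $\mathcal P_2(\R)$-valued map and of $\mu\mapsto\phi(\mu)$ along it, but this follows from continuity of $\phi$ and measurability of the flow. The main (and only genuine) obstacle is the well-posedness step: one must argue carefully that the construction $X\mapsto\widehat\phi(Z_t^X)$ factors through the law of $X$, exploiting that the flow $x\mapsto Z_t^x$ is a fixed $W$-measurable map independent of the particular coupling and that $\beta$ is independent of $X$; everything else is bookkeeping with the identities already established.
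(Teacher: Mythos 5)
Your proposal is correct and follows essentially the same route as the paper's own proof: you invoke strong well-posedness of the parametric SDE~\eqref{eq_SDE_z} (Proposition~\ref{prop_para}) to argue that $\widehat\phi(Z_t^X)$ factors through the law of $X$ (via the product structure of $\Leb_{[0,1]}\otimes\mathbb P^\beta$), then use the Kunita identity~\eqref{egalite Zx} from Proposition~\ref{prop:kunita_identities} to replace $Z_t^g$ by $x_t^g$, and finally unwind Definition~\ref{def:mesure_mu} to read $\widehat\phi(x_t^g)=\phi(\mu_t^g)$ pointwise in $\omega^W$. The paper phrases the well-posedness step more tersely (``if $X,X'$ have the same law then $\mathcal L_{[0,1]\times\Omega^\beta}(Z_t^X)=\mathcal L_{[0,1]\times\Omega^\beta}(Z_t^{X'})$ a.s.''), while you spell out why — the $W$-measurable flow $x\mapsto Z_t^x$ is composed with $X$, and the joint law of $(X,\beta)$ under $\Leb_{[0,1]}\otimes\mathbb P^\beta$ is the product $\mu\otimes\mathcal L(\beta)$ — but these are the same argument.
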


\begin{proof}
By Proposition~\ref{prop_para}, the parametric SDE~\eqref{eq_SDE_z}  is strongly well-posed.
Thus, if $X,X' \in L_2[0,1]$ have same law, \textit{i.e.\!} $\mathcal L_{[0,1]}(X)=\mathcal L_{[0,1] }(X')$, then $\mathbb P^W$-almost surely for every $t\in [0,T]$, $\mathcal L_{[0,1] \times \Omega^\beta }(Z_t^X)=\mathcal L_{[0,1]\times \Omega^\beta}(Z_t^{X'})$.
It follows that $\mathbb E^W \left[ \widehat{\phi}(Z_t^X) \right]=\mathbb E^W \left[ \widehat{\phi}(Z_t^{X'}) \right]$ for all $t \in [0,T]$, so $\widehat{P_t \phi} (X):= \mathbb E^W \left[ \widehat{\phi}(Z_t^X) \right]$ does not depend on the representative $X$ of the law $\mu$. 

Moreover $\mathbb P^W$-almost surely , $\widehat{\phi} (x_t^g)= \phi(\mu_t^g)$. Furthermore by Proposition~\ref{prop:kunita_identities}, $\mathbb P^W \otimes \mathbb P^\beta$-almost surely, for every $t\in [0,T]$ and for every $u\in [0,1]$, $Z_t^{g(u)}=x_t^g(u)$. In particular, $\mathbb P^W$-almost surely and for every $t\in [0,T]$,  $Z_t^{g(u)}=x_t^g(u)$  holds true $\Leb_{[0,1]} \otimes  \mathbb P^\beta$-almost surely. Therefore, 
\begin{align*}
P_t \phi (\mu_0^g) = \widehat{P_t \phi}(g) = \mathbb E^W \left[  \widehat{\phi}\left( Z_t^g \right) \right] = \mathbb E^W \left[  \widehat{\phi}\left( x_t^g \right) \right]= \mathbb E^W \left[ \phi (\mu_t^g)\right],
\end{align*}
which proves~\eqref{def:semi-group P_t bis}.
\end{proof}

\subsection{Assumptions on the test functions}
\label{parag:assump_test_function}

The semi-group $(P_t)_{t \in [0,T]}$ acts on bounded and continuous functions $\phi:\mathcal P_2(\R)\to \R$. We will assume the assumptions on $\phi$ defined hereafter.

\begin{defin}
\label{defin:tor_stable}
We define an equivalence class on $\mathcal P_2(\R)$ by: $\mu \sim \nu$ if and only if $\mu(A+2\pi \Z)=\nu(A+2\pi \Z)$ for any $A \in \mathcal B[0,2\pi]$. A function $\phi: \mathcal P_2(\R) \to \R$ is said to be \emph{$\tor$-stable} if $\phi(\mu)=\phi(\nu)$ for any   $\mu \sim \nu$. 
 In particular,  $\phi$ induces a map from $\mathcal P(\tor)$ to $\R$.
 \end{defin}

In particular, for a $\tor$-stable function $\phi$ and $X \in L_2(\Omega)$, $\widehat{\phi}(X)= \widehat{\phi}(\accolade{X})$, where $\accolade{x}$ is the unique number in $[0,2\pi)$ such that $x-\accolade{x} \in 2\pi\Z$.
Let us mention two important classes of examples of $\tor$-stable functions:
\begin{itemize}
\item[-] if $h: \R \to \R$ is a $2\pi$-periodic function, the map $\phi:\mu \in \mathcal P_2(\R) \mapsto \int_\R h(x) \mathrm d\mu(x)$ is $\tor$-stable. The $2\pi$-periodicity condition ensures that $\widehat{\phi}(X)=\E{h(X)}=\E{h(\accolade{X})}=\widehat{\phi}(\accolade{X})$. 
\item[-] if $h: \R \to \R$ is a $2\pi$-periodic function, the map $\phi:\mu \in \mathcal P_2(\R) \mapsto \int_{\R^2} h(x-y) \mathrm d(\mu \otimes \mu)(x,y)$ is also $\tor$-stable. 
\end{itemize}

If the reader is not familiar with the L-derivative $\partial_\mu \phi$, we refer to Paragraph~\ref{app_diff_calculus} in appendix for a short introduction. 

\begin{defin}
\label{def:phi assumptions}
A function $\phi:\mathcal P_2(\R)\to \R$ is said to satisfy the \emph{$\phi$-assumptions} if the following three conditions hold:
\begin{itemize}
\item[$(\phi 1)$] $\phi$ is $\tor$-stable, bounded and continuous on $\mathcal P_2(\R)$. 
\item[$(\phi 2)$] $\phi$ is $L$-differentiable  and $\sup_{\mu \in \mathcal P_2(\R)} \int_\R |\partial_\mu \phi (\mu) (x) |^2 \mathrm d\mu(x) <+\infty$.
\item[$(\phi 3)$] The Fréchet derivative $D\widehat{\phi}$ is Lipschitz-continuous: there is $C>0$ such that
\begin{align*}
\E{|D \widehat{\phi}(X)-D \widehat{\phi}(Y)|^2} \leq C\E{ |X-Y|^2}, \quad X,Y \in L_2(\Omega).
\end{align*}
\end{itemize}
\end{defin}

\begin{rem}
\label{rem:phi2_implique_lipschitz}
Assumption $(\phi 2)$ implies that $\widehat{\phi}$ is  Lipschitz-continuous.
Therefore $\widehat{\phi}$ belongs to $\mathcal C_{\mathrm b}^{1,1}(L_2(\Omega))$, the space of bounded and Lipschitz continuous functions on $L_2(\Omega)$ whose Fréchet  derivative is also bounded and Lipschitz continuous on $L_2(\Omega)$. 
Let us mention that the inf-sup convolution method introduced by Lasry and Lions allows to construct, for each bounded uniformly continuous function $\varphi$ defined on $L_2(\Omega)$, a sequence $(\varphi_n)_n$ of $\mathcal C_{\mathrm b}^{1,1}(L_2(\Omega))$-functions converging uniformly to~$\varphi$ on $L_2(\Omega)$, see~\cite{lasrylions}. 
\end{rem}

The following statement shows that the class of functions satisfying the $\phi$-assumptions  is stable under the action of $(P_t)_{t \in [0,T]}$.

\begin{prop}
\label{prop:phi assumptions Pt phi}
Assume that $f$ is of order $\alpha> \frac{5}{2}$. 
Let $\phi:\mathcal P_2(\R)\to \R$ be a function satisfying the $\phi$-assumptions. Then for every $t\in [0,T]$, $P_t \phi:\mathcal P_2(\R)\to \R$ also satisfies the $\phi$-assumptions. 
Moreover, for any fixed $t\in [0,T]$, the Fréchet derivative of $g \mapsto P_t\phi (\mu_0^g)$ is given by
\begin{align}
\label{frechet_derivative}
\frac{\mathrm d}{\mathrm d\rho}_{\vert \rho=0} P_t \phi (\mu_0^{g+\rho h}) = D \widehat{P_t\phi}(g) \cdot h
&= \Ewb{\int_0^1 D\widehat{\phi}(x_t^g)_u \; \frac{\partial_u x_t^g(u)}{g'(u)}
  h(u) \mathrm du} \notag \\
&= \Ewb{ \int_0^1 \partial_\mu \phi(\mu_t^g) (x_t^g(u)) \frac{\partial_u x_t^g(u)}{g'(u)}
  h(u) \mathrm du }.
\end{align}
\end{prop}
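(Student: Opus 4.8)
The plan is to prove the two claims of Proposition~\ref{prop:phi assumptions Pt phi} separately: first the stability of the $\phi$-assumptions under $P_t$, and then the formula~\eqref{frechet_derivative} for the Fréchet derivative, the latter being where the analytic work concentrates.

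\textbf{Stability of the $\phi$-assumptions.}
For $(\phi 1)$, boundedness of $P_t\phi$ is immediate from $|P_t\phi|\leq \|\phi\|_{L_\infty}$; continuity on $\mathcal P_2(\R)$ follows from continuity of $\phi$ together with continuous dependence of the flow $Z_t^X$ on the initial condition $X$ in $L_2$, which comes from the strong well-posedness and standard Gronwall estimates for~\eqref{eq_SDE_z} (Proposition~\ref{prop_para}) plus dominated convergence in $\mathbb E^W$. The $\tor$-stability of $P_t\phi$ is inherited from that of $\phi$: if $X\sim X+2\pi$ (in the sense of Definition~\ref{defin:tor_stable}) then by uniqueness $Z_t^{X+2\pi}=Z_t^X+2\pi$, hence $\widehat\phi(Z_t^{X+2\pi})=\widehat\phi(Z_t^X)$, and one takes $\mathbb E^W$. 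For $(\phi 2)$ and $(\phi 3)$ it is cleaner to work at the level of the lift $\widehat{P_t\phi}=\mathbb E^W[\widehat\phi(x_t^g)]$: one differentiates under the expectation the map $g\mapsto \widehat\phi(x_t^g)$, using that $\widehat\phi\in\mathcal C_{\mathrm b}^{1,1}$ (Remark~\ref{rem:phi2_implique_lipschitz}) and that $g\mapsto x_t^g$ is Fréchet differentiable with derivative $h\mapsto (\partial_u x_t^g/g')\,h$ — this is exactly the differentiability of the flow, which follows from Proposition~\ref{prop:differentiability} and~\eqref{liens entre les dérivées} (the directional derivative solves the linearized SDE whose solution is the exponential factor). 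Boundedness of $D\widehat{P_t\phi}$ and its Lipschitz property then reduce, via the chain rule and the explicit expression~\eqref{exponential explicit partial xtg}, to $L_2$-moment bounds on $\partial_u x_t^g$, on $1/\partial_u x_t^g$, and on their differences in the initial condition, which the excerpt says are collected in Lemma~\ref{lem_control_moments}; the order condition $\alpha>5/2$ is what makes $\sum_k f_k^2 k^2<\infty$ and makes those moments finite (here $j=1$, $\theta$ small, $j+1/2+\theta<\alpha$).

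\textbf{The derivative formula.}
The core is to justify differentiating $\rho\mapsto \mathbb E^W\mathbb E^\beta[\widehat\phi(x_t^{g+\rho h})]$ under both expectations at $\rho=0$ and identifying the derivative. Write $\Phi(\rho):=\widehat\phi(x_t^{g+\rho h})$. Pathwise (in $\omega^W,\omega^\beta$), the map $\rho\mapsto x_t^{g+\rho h}$ is differentiable in $L_2([0,1])$ with derivative $\dot x_t(u)=\frac{\partial_u x_t^g(u)}{g'(u)}h(u)$ by the argument above — note $g+\rho h$ need not stay in $\mathscr G^1$ for all $\rho$, but differentiating at $\rho=0$ only requires a two-sided difference quotient and the flow map extends smoothly to a neighbourhood in the relevant Banach space, or one restricts to $|\rho|$ small enough that $g+\rho h$ still has positive derivative since $h\in\mathcal C^1$ and $g'$ is bounded below. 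Since $\widehat\phi$ is Fréchet differentiable with bounded derivative, the chain rule gives $\Phi'(0)=D\widehat\phi(x_t^g)\cdot \dot x_t = \int_0^1 D\widehat\phi(x_t^g)_u\,\frac{\partial_u x_t^g(u)}{g'(u)}h(u)\,\mathrm du$; the identity $D\widehat\phi(X)_u=\partial_\mu\phi(\mathcal L(X))(X(u))$ is the standard link between the Fréchet derivative of the lift and the L-derivative, giving the second line of~\eqref{frechet_derivative}. To pass the derivative through $\mathbb E^W\mathbb E^\beta$ one needs a dominated-convergence / uniform-integrability bound on the difference quotients $\frac{1}{\rho}(\Phi(\rho)-\Phi(0))$: using $|\widehat\phi(x)-\widehat\phi(y)|\leq \mathrm{Lip}(\widehat\phi)\|x-y\|_{L_2}$ one bounds this by $\mathrm{Lip}(\widehat\phi)\sup_{|\rho|\leq\rho_0}\|\partial_\rho x_t^{g+\rho h}\|_{L_2([0,1]\times\Omega^\beta)}$, and the latter is controlled uniformly in $\rho$ by $\|h\|_{\mathcal C^1}$ times $L_2$-moments of $\partial_u x_t^{g+\rho h}/g'$ from Lemma~\ref{lem_control_moments} (uniform over $\rho_0$ small because the constants depend polynomially on $\|(g+\rho h)'\|_\infty$ and $\|1/(g+\rho h)'\|_\infty$, both controlled for small $\rho$).

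\textbf{Main obstacle.}
The principal difficulty is not the algebra of the chain rule but the interchange of differentiation and the double expectation together with the simultaneous control of $\partial_u x_t^g$ and $1/\partial_u x_t^g$: one must ensure the difference quotients are dominated in $L_1(\mathbb P^W\otimes\mathbb P^\beta)$ uniformly in $\rho$, which forces the order condition $\alpha>5/2$ (so $\sum_k f_k^2k^2<\infty$, making the exponential in~\eqref{exponential explicit partial xtg} integrable with all moments) and the $\mathcal C^1$-regularity of $h$ and the positivity of $g'$ (so that the perturbed initial data remain admissible and their inverse derivatives stay bounded for small $\rho$). Everything else — continuity, $\tor$-stability, and the algebraic identification of the two forms of the derivative — is routine given Propositions~\ref{prop:differentiability}, \ref{prop:kunita_identities}, the appendix moment bounds, and the standard Lions-calculus dictionary.
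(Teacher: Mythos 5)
Your sketch of $(\phi 1)$ agrees with the paper's, and the Gâteaux derivative $\frac{\mathrm d}{\mathrm d\rho}\big|_{\rho=0} P_t\phi(\mu_0^{g+\rho h})$ in a fixed $\mathcal C^1$-direction $h$ is computed the same way. But your route leaves a genuine gap for the rest of the statement, and it mislocates where the difficulty actually lies.

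The proposition claims that $P_t\phi$ again satisfies $(\phi 2)$ and $(\phi 3)$, i.e.\ that $\widehat{P_t\phi}$ is Fréchet differentiable on $L_2$ with uniformly bounded and Lipschitz derivative. Your plan — differentiate $\rho\mapsto\widehat\phi(x_t^{g+\rho h})$ under the expectation and restrict to $|\rho|$ small so $g+\rho h$ stays in $\mathbf G^1$ — only produces directional derivatives in $\Delta^1$; it says nothing about perturbations $Y\in L_2[0,1]$, for which $g+Y$ is not even an admissible quantile function. The paper sidesteps this by never perturbing the quantile flow at all: it works with the parametric flow $X\mapsto Z_t^X$ (Proposition~\ref{prop_para}), using $x_t^g(u)=Z_t^{g(u)}$. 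Because $Z_t^{X+Y}(u)=Z_t^{X(u)+Y(u)}$ is a pointwise composition, an arbitrary $L_2$ perturbation of $X$ makes sense, and the Fréchet derivative on $L_2$ is simply $Y\mapsto\partial_x Z_t^X\cdot Y$. This is the key structural move you are missing, and it also explains why your citation of Proposition~\ref{prop:differentiability} is off: that result concerns regularity of $u\mapsto x_t^g(u)$, whereas the relevant input here is the $x$-regularity of $Z_t^x$ from Proposition~\ref{prop_para}.

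Concretely, the Fréchet differentiability with Lipschitz derivative is established by bounding the Taylor remainder
\[
\widehat{P_t \phi}(X + Y)-\widehat{P_t \phi}(X)-\int_0^1 \Ewb{D \widehat{\phi} (Z_t^X)_u \, \partial_x Z_t^{X(u)} } Y(u)\,\mathrm du
\]
by $C\|Y\|_{L_2}^2$, splitting it into a term involving $D\widehat\phi(Z_t^{X+\lambda Y})-D\widehat\phi(Z_t^X)$ (controlled by assumption $(\phi 3)$ on $\phi$ and~\eqref{différence Ztx Zty}) and a term involving $\partial_x Z_t^{(X+\lambda Y)(u)}-\partial_x Z_t^{X(u)}$ (controlled by~\eqref{différence pZtx pZty} with $\theta=1$). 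The hypothesis $\alpha>\tfrac52$ enters precisely here, to make $\partial_x Z_t^x$ Lipschitz in $x$; your stated reason ($\sum_k f_k^2 k^2<\infty$) only needs $\alpha>\tfrac32$. The uniform bound in $(\phi 2)$ is then obtained by taking $Y=D\widehat{P_t\phi}(X)$ in the derivative formula and applying Cauchy--Schwarz together with~\eqref{borne sur pZtx} — a specific trick, not a routine moment bound. Finally, your identified main obstacle, the ``simultaneous control of $\partial_u x_t^g$ and $1/\partial_u x_t^g$,'' plays no role here: $1/\partial_u x_t^g$ never appears in the paper's proof of this proposition, because everything is phrased through $\partial_x Z_t^x$, which is bounded with all moments without any lower bound on $g'$. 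That control is needed later, for the main theorem, not here.
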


Note that $D\widehat{\phi}(x_t^g)$ is an element of the dual of $L_2([0,1] \times \Omega^\beta)$, identified here with an element of $L_2([0,1] \times \Omega^\beta)$. The proof of Proposition~\ref{prop:phi assumptions Pt phi} is given in Paragraph~\ref{subsec:functions_proba_measures} in appendix.

\subsection{Statement of the main theorem}
\label{parag:stat_main_theo}

The main result of this paper is a gradient estimate for the semi-group $(P_t)_{t \in [0,T]}$ associated to $(\mu_t^g)_{t \in [0,T]}$, which is given at points $g \in \mathbf G^{3+\theta}$ and for directions of perturbations $h$ defined as follows.

\begin{defin}
We denote by $\Delta^1$ the set of $1$-periodic $\mathcal C^1$-functions $h:\R\to \R$.
We define the following norm on $\Delta^1$: $ \|h\|_{\mathcal C^1}:= \sup_{u \in [0,1]} |h(u)| + \sup_{u \in [0,1]} |h'(u)|$.
\end{defin}

A simple computation shows that for $|\rho|\ll 1$,  $g+\rho h$ still belongs to $\mathbf G^1$. Let us state the main theorem. 

\begin{theo}
\label{theo:Bismut pour phi assumptions}
Let $\phi:\mathcal P_2(\R)\to \R$ satisfy the $\phi$-assumptions.
Let $\theta \in (0,1)$ and $f$ be of order $\alpha=\frac{7}{2}+\theta$.
Let $g \in \mathbf G^{3+\theta}$ and $h \in \Delta^1$ be two deterministic functions.
 Then there is $C_g$ independent of $h$ such that for every $t \in (0,T]$
\begin{align}
\label{Bismut phi assumptions}
\left|\frac{\mathrm d}{\mathrm d\rho}_{\vert \rho=0} P_t \phi (\mu_0^{g+\rho  h})\right| 
\leq C_g \frac{\| \phi \|_{L_\infty}}{t^{2+\theta}}\| h\|_{\mathcal C^1},
\end{align}
where $C_g$ is bounded when $\|g'''\|_{L_\infty}+\|g''\|_{L_\infty}+\|g'\|_{L_\infty} + \| \textstyle\frac{1}{g'}\|_{L_\infty}$ is bounded. 
\end{theo}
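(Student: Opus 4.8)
The plan is to implement the Thalmaier--Wang strategy in the Wasserstein setting, starting from the exact Fréchet derivative formula~\eqref{frechet_derivative}, but with a regularization twist to handle the mismatch between the regularity of the flow and the invertibility of the Fourier series. Recall that by Proposition~\ref{prop:phi assumptions Pt phi},
\[
D \widehat{P_t\phi}(g) \cdot h = \Ewb{\int_0^1 D\widehat{\phi}(x_t^g)_u \; \frac{\partial_u x_t^g(u)}{g'(u)}\, h(u)\, \mathrm du}.
\]
The difficulty is that $D\widehat{\phi}(x_t^g)$ depends on $\phi$ through a derivative, so this bound controls $\|D\widehat{\phi}\|$ but not $\|\phi\|_{L_\infty}$. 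To get a bound in terms of $\|\phi\|_{L_\infty}$ we must integrate by parts against the noise $W$. First I would reduce to perturbation directions of the special form $h = g' \tilde h$ with $\tilde h \in \Delta^1$ (writing a general $h$ as $g' \cdot (h/g')$ and noting $h/g'$ is still $\mathcal C^1$-controlled by $C_g \|h\|_{\mathcal C^1}$); for such directions the quotient $\partial_u x_t^g(u)/g'(u)$ pairs naturally with the explicit exponential of Proposition~\ref{prop:differentiability}. Using the defining SDE and Proposition~\ref{prop:kunita_identities} I would then express $\partial_u x_t^g(u)/g'(u) = \partial_x Z_t^{g(u)}$ and set up the Bismut--Elworthy integration by parts: choose an adapted, suitably normalized variation $V_s$ of the driving noise so that $\frac1t\int_0^t V_s\,\mathrm ds$ realizes the perturbation of the initial condition, and push the derivative off $x_t^g$ onto the Brownian motions $(W^k)$ via Girsanov/Malliavin duality. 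The key algebraic point is the Fourier inversion step (Lemma~\ref{lemme:decomposition Fourier} in the paper): the derivative of $Z_t^x$ in $x$ couples to $\sum_k f_k \Re(-ik e^{-ikZ_s^x}\mathrm dW^k_s)$, and to recover a clean integrand $\sum_k \int_0^t \Re(\overline{\lambda_s^k}\mathrm dW_s^k)$ one must invert this, which needs $\alpha$ small — hence the upper bound $\alpha < 9/2$.

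The main new ingredient, which I would introduce next, is the regularization: replace the relevant function $A^g$ (the object encoding the spatial derivative, defined via~\eqref{def:Agt} in the paper) by a mollification $A^{g,\eps}$ at scale $\eps$. With the regularized object the Fourier inversion becomes legitimate and one obtains an \emph{approximate} integration by parts formula
\[
\frac{\mathrm d}{\mathrm d\rho}_{\vert \rho=0} P_t \phi (\mu_0^{g+\rho g' h}) = \frac{1}{t}\Ewb{\phi(\mu_t^g)\sum_{k\in\Z}\int_0^t \Re(\overline{\lambda_s^{k,\eps}}\,\mathrm dW_s^k)} + R_\eps,
\]
where $\lambda^{k,\eps}$ is an explicit adapted process and $R_\eps$ is a remainder. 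I would split the analysis accordingly, mirroring Sections~\ref{sec:analysis_I1} and~\ref{sec:analysis_I2}: for the main term, bound it by $\|\phi\|_{L_\infty}$ times $\frac1t (\Ewb{|\sum_k \int_0^t \Re(\overline{\lambda^{k,\eps}_s}\mathrm dW^k_s)|^2})^{1/2}$ via Cauchy--Schwarz and Itô isometry, then estimate $\Ewb{\int_0^t \sum_k |\lambda_s^{k,\eps}|^2 \mathrm ds}$. This is where the idiosyncratic noise $\beta$ is essential: the presence of $\beta$ shifts the heat-type smoothing constant strictly above the critical threshold $\lambda_{\mathrm{crit}}$, so that the densities $q_t^g$, $p_t^g$ have enough regularity/positivity (uniform lower bounds on the density, controlled derivatives) to make the inversion integrable, at the cost of negative powers of $t$ and of $\eps$. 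A careful bookkeeping of these blow-ups, using the moment bounds of Lemma~\ref{lem_control_moments} and the $\mathcal C^{3+\theta}$-regularity of $g$, should give something like $|{\rm main\ term}| \le C_g \|\phi\|_{L_\infty} t^{-a}\eps^{-b}\|h\|_{\mathcal C^1}$ for explicit $a,b$ depending on $\theta$.

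For the remainder term $R_\eps$, the point is that it still involves $D\widehat{\phi}$ (it is what is left of the exact formula~\eqref{frechet_derivative} after subtracting the regularized part), so it is bounded by a constant times $\eps^{c}$ times a quantity controlled by the Lipschitz norm of $D\widehat\phi$ and the $\mathcal C^1$-norms of $g,h$ — crucially \emph{not} by $\|\phi\|_{L_\infty}$, but by $\|D\widehat\phi\|$, hence ultimately (after one application of the semigroup) by $\sup_s \|D\widehat{P_s\phi}\|$. This sets up a self-improving inequality: writing $M(t) := \sup_{0<s\le t} s^{2+\theta}\,|D\widehat{P_s\phi}(g)\cdot h| / (C_g\|h\|_{\mathcal C^1})$ or a similar quantity, the two bounds combine (using the semigroup property $P_t = P_{t/2}\circ P_{t/2}$ to feed the Lipschitz estimate for $D\widehat{P_{t/2}\phi}$ into the remainder) to yield an estimate of the form $\|D\widehat{P_t\phi}\| \le A\|\phi\|_{L_\infty} t^{-a}\eps^{-b} + B\,\eps^{c}\, \|D\widehat{P_{t/2}\phi}\|$. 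Optimizing over $\eps$ and iterating the bootstrap finitely many times (each iteration improving the dependence on $\|D\widehat\phi\|$ toward a pure $\|\phi\|_{L_\infty}$ bound while worsening the power of $t$) produces the final rate $t^{-(2+\theta)}$. I expect the main obstacle to be precisely this bootstrap: getting the exponents $a,b,c$ to line up so that the iteration closes and converges to a finite power of $t$, while tracking that $C_g$ depends only polynomially on $\|g'''\|_{L_\infty}, \|g''\|_{L_\infty}, \|g'\|_{L_\infty}, \|1/g'\|_{L_\infty}$ throughout — in particular ensuring no hidden dependence on higher derivatives of $g$ or on $\phi$ creeps in through the density estimates for $p_t^g$.
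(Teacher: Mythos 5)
Your proposal correctly captures the high-level architecture — the split into a regularized main term and a remainder via mollification of $A^g$, the Fourier inversion and the constraint $\alpha<9/2$ it imposes, the Girsanov shift with respect to the common noise $W$ to obtain a Bismut-Elworthy-type identity, and the need for a bootstrap to pass from a derivative-dependent remainder estimate to a pure $\|\phi\|_{L_\infty}$ bound. But two central steps are misidentified, and without them the argument does not close.

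First, you place the role of the idiosyncratic noise $\beta$ in the analysis of the \emph{main} term, as a density-regularization device making the Fourier inversion integrable. In fact, the Fourier inversion in Lemma~\ref{lemme:decomposition Fourier} does not use $\beta$ at all; it only uses that $A_t^{g,\eps}$ is a smooth $2\pi$-periodic function. The Brownian motion $\beta$ enters the proof in the analysis of the \emph{remainder} $I_2$, through a second and distinct Girsanov transformation: one perturbs the initial condition along a direction $\xi$ whose derivative is $\frac{1}{t-s}\mathds 1_{[s,t]}$, and shifts $\beta$ accordingly (Lemma~\ref{lemme:traitement de I2}). The effect is to replace the spatial derivative $\partial_v\{\frac{\delta\phi}{\delta m}(\mu_t^g)\}$ appearing in $I_2$ by $\big[\frac{\delta\phi}{\delta m}\big](\mu_t^g)$ itself, multiplied by the stochastic integral $\frac{1}{t-s}\int_s^t\partial_u x_r^g\,\mathrm d\beta_r$. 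This is a genuine integration by parts against $\beta$, not a positivity-of-density argument; it costs a $(t-s)^{-1/2}$ singularity which must be integrated in $s$, and it is precisely what converts a bound involving $D\widehat\phi$ into one involving the zero-average functional derivative $[\delta\phi/\delta m]$. Without this step, $I_2$ remains controlled only by the operator norm $\|D\widehat\phi\|$, and your bootstrap then has no mechanism to turn that into a $\|\phi\|_{L_\infty}$-bound: iterating $\|D\widehat{P_t\phi}\|\le A\|\phi\|_\infty t^{-a}\eps^{-b}+B\eps^c\|D\widehat{P_{t/2}\phi}\|$ requires the constants $A,B$ (which hide $C_1(g)$, $C_2(g)$) to be uniform over the trajectory, and this is not available if the recursion is run at an arbitrary point.

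Second, the bootstrap in the paper is not an iteration that ``worsens the power of $t$ finitely many times.'' The quantity that is iterated is
\[
S_t:=\sup_{k\in\mathcal K_{t_0-t}}\E{\Big|\int_0^1\Big[\frac{\delta P_t\phi}{\delta m}\Big](\mu_{t_0-t}^g)(x_{t_0-t}^g(u))\,k(u)\,\mathrm du\Big|^2}^{1/2},
\]
which is an expectation \emph{along the trajectory} $(\mu_s^g)_s$ paired against $\mathcal G_{t_0-t}$-measurable kernels $k$ normalized in $L_\infty$. Evaluating along the trajectory is what makes the constants uniform: one replaces $C_3(g)$ by $\E{C_3(x_s^g)^2}^{1/2}$, which is controlled by Lemma~\ref{lem_control_moments}. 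Moreover, $\eps$ is not optimized over; it is chosen proportional to $\sqrt t$ precisely so that the coefficient of $S_t$ on the right-hand side becomes $\frac{1}{2^{3+\theta}}$, a fixed contraction factor. With that choice, the main-term bound $\frac{\|\phi\|_\infty}{\eps^{3+2\theta}\sqrt t}$ becomes $\frac{\|\phi\|_\infty}{t^{2+\theta}}$ — this, rather than an accumulation of worsening exponents over iterations, is where the rate $t^{-(2+\theta)}$ comes from. The theorem then follows by a fixed-point argument on $\mathbf S=\sup_{t\le t_0}t^{2+\theta}S_t$, using that $\mathbf S$ is known a priori finite. These two ingredients — the Girsanov shift against $\beta$ and the trajectory-indexed quantity $S_t$ with $\eps\sim\sqrt t$ — are the parts of the argument your proposal does not supply and would need to discover.
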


In the following section, we split the l.h.s.\! of~\eqref{Bismut phi assumptions} into two terms, $I_1$ and $I_2$, which will be  studied separately in sections~\ref{sec:analysis_I1} and~\ref{sec:analysis_I2}, respectively. 

\section{Preparation for the proof}
\label{sec:preparation}

To start this paragraph, we rewrite the gradient of the semi-group in terms of the L-derivative of $P_t \phi$ and of the linear functional derivative of $P_t \phi$. We refer to paragraph~\ref{app_diff_calculus} in appendix for a short remainder about definitions and relationships between the different types of derivatives.

For convenience, the following lemma is written for a perturbation $g'h$ instead of $h$ (the corresponding result for $h$ can be naturally obtained by applying the following formula to $\frac{h}{g'}$ instead of $h$). For later purposes,  the lemma is stated for random functions $g$ and $h$, 
with a   $\mathcal G_0$-measurable randomness. Recall that within this framework, $g$ and $h$ are independent of $((W^k)_{k \in \Z}, \beta)$.

\begin{lemme}
\label{lemme:derivee_du_semigroupe}
Let $\phi$, $\theta$ and $f$ be as in Theorem~\ref{theo:Bismut pour phi assumptions}. 
Let $g$ and $h$ be $\mathcal G_0$-measurable random variables with values respectively in $\mathbf G^{3+\theta}$ and~$\Delta^1$. Then for every $t\in [0,T]$, 
\begin{align}
\label{formules_derivees_semigroupe}
\frac{\mathrm d}{\mathrm d\rho}_{\vert \rho=0} P_t \phi (\mu_0^{g+\rho g'  h}) 
&= \int_0^1 \partial_\mu (P_t\phi) (\mu_0^g) (g(u)) \; g'(u) h(u) \;\mathrm du  \notag\\
&=\Ewb{ \int_0^1 \partial_\mu \phi(\mu_t^g) (x_t^g(u)) \; \partial_u x_t^g(u) h(u) \mathrm du } \\
&=-\int_0^1 \frac{\delta P_t \phi}{\delta m}  (\mu_0^g)(g(u)) \; h'(u) \;\mathrm du. \notag
\end{align}
\end{lemme}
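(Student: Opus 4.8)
The three expressions in~\eqref{formules_derivees_semigroupe} are, respectively, the L-derivative representation, the dynamic (forward) representation, and the linear functional derivative representation of the same quantity. The plan is to establish the first by applying Proposition~\ref{prop:phi assumptions Pt phi} directly, the second by combining Proposition~\ref{prop:phi assumptions Pt phi} with the Kunita identities of Proposition~\ref{prop:kunita_identities}, and the third by recalling the standard relation $\partial_\mu (P_t\phi)(\mu)(x) = \partial_x \left[ \frac{\delta P_t\phi}{\delta m}(\mu)(x) \right]$ between the L-derivative and the spatial derivative of the linear functional derivative, followed by an integration by parts in the $u$-variable. Throughout, one conditions on $\mathcal G_0$ so as to treat $g$ and $h$ as deterministic, applies the deterministic formula, and integrates back over $\mathcal G_0$; this is legitimate because $g,h$ are $\mathcal G_0$-measurable and hence independent of $((W^k)_k,\beta)$, and because $\alpha = \frac72+\theta > \frac52$ so Proposition~\ref{prop:phi assumptions Pt phi} applies.

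First I would handle the first equality. Since $\phi$ satisfies the $\phi$-assumptions and $\alpha>\frac52$, Proposition~\ref{prop:phi assumptions Pt phi} tells us $P_t\phi$ also satisfies the $\phi$-assumptions, so in particular it is $L$-differentiable; applying the Fréchet-derivative formula~\eqref{frechet_derivative} with $P_t\phi$ in place of $\phi$ at time $0$ (where $x_0^g = g$, $\partial_u x_0^g = g'$, $\mu_0^g = \mu_0^g$) and with the perturbation direction $g'h$ in place of $h$ gives
\begin{align*}
\frac{\mathrm d}{\mathrm d\rho}_{\vert \rho=0} P_t \phi (\mu_0^{g+\rho g'  h})
= \int_0^1 \partial_\mu (P_t\phi)(\mu_0^g)(g(u))\, \frac{g'(u)}{g'(u)}\, g'(u) h(u)\, \mathrm du
= \int_0^1 \partial_\mu (P_t\phi)(\mu_0^g)(g(u))\, g'(u) h(u)\, \mathrm du.
\end{align*}
For the second equality, I would instead apply~\eqref{frechet_derivative} to $\phi$ itself at time $t$ with perturbation direction $g'h$: this yields $\Ewb{\int_0^1 \partial_\mu \phi(\mu_t^g)(x_t^g(u)) \frac{\partial_u x_t^g(u)}{g'(u)} g'(u) h(u)\,\mathrm du} = \Ewb{\int_0^1 \partial_\mu \phi(\mu_t^g)(x_t^g(u))\, \partial_u x_t^g(u)\, h(u)\,\mathrm du}$, which is exactly the middle line. (One checks that $g+\rho g' h \in \mathbf G^{3+\theta}$ for $|\rho|\ll 1$ since $(g'h)$ is $\mathcal C^{2+\theta}$ and the derivative condition $(g+\rho g'h)' = g'(1+\rho h)+\rho g' h' > 0$ holds for small $\rho$.)

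For the third equality, I would use that $P_t\phi$ is $\tor$-stable (from Proposition~\ref{prop:phi assumptions Pt phi}), hence descends to a function on $\mathcal P(\tor)$, so its linear functional derivative $\frac{\delta P_t\phi}{\delta m}(\mu_0^g)(\cdot)$ is a $2\pi$-periodic function on $\R$ and $\partial_\mu(P_t\phi)(\mu_0^g)(x) = \partial_x\big[\frac{\delta P_t\phi}{\delta m}(\mu_0^g)(x)\big]$. Substituting into the first expression and integrating by parts on $[0,1]$, with $v(u) := \frac{\delta P_t\phi}{\delta m}(\mu_0^g)(g(u))$ so that $v'(u) = \partial_x\big[\frac{\delta P_t\phi}{\delta m}(\mu_0^g)\big](g(u))\, g'(u) = \partial_\mu(P_t\phi)(\mu_0^g)(g(u))\, g'(u)$, gives
\begin{align*}
\int_0^1 \partial_\mu (P_t\phi)(\mu_0^g)(g(u))\, g'(u) h(u)\, \mathrm du
= \int_0^1 v'(u) h(u)\, \mathrm du
= \big[ v(u) h(u) \big]_0^1 - \int_0^1 v(u) h'(u)\, \mathrm du.
\end{align*}
The boundary term vanishes: $h$ is $1$-periodic so $h(1)=h(0)$, and $v(1) = \frac{\delta P_t\phi}{\delta m}(\mu_0^g)(g(1)) = \frac{\delta P_t\phi}{\delta m}(\mu_0^g)(g(0)+2\pi) = \frac{\delta P_t\phi}{\delta m}(\mu_0^g)(g(0)) = v(0)$ by $2\pi$-periodicity of the linear functional derivative. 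This leaves $-\int_0^1 \frac{\delta P_t\phi}{\delta m}(\mu_0^g)(g(u))\, h'(u)\,\mathrm du$, as claimed.

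The main obstacle I anticipate is the bookkeeping around the random initial condition and the measurability/regularity needed to justify differentiating under $\mathbb E^W\mathbb E^\beta$ and to guarantee $\frac{\delta P_t\phi}{\delta m}$ is well-defined and $\mathcal C^1$ in space — i.e., checking that the $\tor$-stability and regularity transferred by Proposition~\ref{prop:phi assumptions Pt phi} really give a $2\pi$-periodic, differentiable linear functional derivative with $\partial_\mu(P_t\phi)(\mu)(x) = \partial_x \frac{\delta P_t\phi}{\delta m}(\mu)(x)$, so that the integration by parts is licit. The analytic content is light; it is entirely a matter of assembling the already-established structural facts and being careful with the periodic boundary terms.
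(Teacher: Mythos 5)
Your proof follows the same route as the paper's: fix $\omega^0$ to reduce to deterministic $g,h$; obtain the first equality from the definition of the L-derivative (which is what your ``apply~\eqref{frechet_derivative} at time~$0$'' amounts to, since $P_0\psi=\psi$, $x_0^g=g$, $\partial_u x_0^g=g'$); obtain the second directly from~\eqref{frechet_derivative} with direction $g'h$; and obtain the third by the relation $\partial_\mu(P_t\phi)(\mu)=\partial_v\{\frac{\delta P_t\phi}{\delta m}(\mu)\}$ from Proposition~\ref{prop:link_derivatives}, followed by integration by parts, with the boundary term killed by $2\pi$-periodicity of $\frac{\delta P_t\phi}{\delta m}(\mu_0^g)(\cdot)$ (Proposition~\ref{prop:integrale nulle}) and $1$-periodicity of $h$. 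One small slip: $(g+\rho g'h)'=g'+\rho(g''h+g'h')$, not $g'(1+\rho h)+\rho g'h'$ — but positivity for $|\rho|\ll1$ still holds, so the conclusion is unaffected.
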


\begin{proof}
Fix $\omega^0$ in an almost-sure event of $\Omega^0$ such that $g=g(\omega^0)$ belongs to $\mathbf G^{3+\theta}$ and $h=h(\omega^0)$ belongs to $\Delta^1$. 
Since $g'$ is $1$-periodic and positive, $\rho_0:=\frac{1}{\|h'\|_{L_\infty}} \inf_{u \in \R} g'(u)$ is positive and $g+\rho h$ belongs to~$\mathbf G^1$ for every $\rho \in (-\rho_0,\rho_0)$. 
The first equality in~\eqref{formules_derivees_semigroupe} follows from the definition of the L-derivative:
\begin{multline}
\label{derivées}
\frac{\mathrm d}{\mathrm d\rho}_{\vert \rho=0} P_t \phi (\mu_0^{g+\rho g'  h})
= \frac{\mathrm d}{\mathrm d\rho}_{\vert \rho=0} \widehat{P_t \phi} (g+\rho g' h)
=  D \widehat{P_t\phi}(g) \cdot (g'h )
= \int_0^1 D \widehat{P_t\phi}(g)_u \; g'(u)h(u) \mathrm du  \\
=\int_0^1 \partial_\mu (P_t\phi) (\mu_0^g) (g(u))\; g'(u) h(u) \mathrm du. 
\end{multline}
The second equality in~\eqref{formules_derivees_semigroupe} was already stated in Proposition~\ref{prop:phi assumptions Pt phi}. 
For the third equality in~\eqref{formules_derivees_semigroupe}, we use the relationship between the L-derivative and the functional linear derivative (see Proposition~\ref{prop:link_derivatives})
\begin{multline*}
\int_0^1 \partial_\mu (P_t\phi) (\mu_0^g) (g(u))\; g'(u) h(u) \mathrm du
= \int_0^1 \partial_v \left\{ \frac{\delta P_t \phi}{\delta m}(\mu_0^g) \right\} (g(u)) g'(u)\; h(u) \mathrm du \\\begin{aligned}
&= \int_0^1 \partial_u \left\{ \frac{\delta P_t \phi}{\delta m} (\mu_0^g) (g(\cdot)) \right\}(u) \;h(u) \mathrm du \\
&=\left[ \frac{\delta P_t \phi}{\delta m} (\mu_0^g) (g(u)) \; h(u) \right]_0^1 -\int_0^1 \frac{\delta P_t \phi}{\delta m} (\mu_0^g) (g(u)) \;h'(u) \mathrm du,
\end{aligned}
\end{multline*}
by an integration by parts formula. 
Furthermore,  $v \mapsto \frac{\delta P_t \phi}{\delta m} (\mu_0^g) (v)$ is $2\pi$-periodic. This  follows from Proposition~\ref{prop:integrale nulle} in appendix, because on the one hand $P_t \phi$ satisfies the $\phi$-assumptions by Proposition~\ref{prop:phi assumptions Pt phi} and on the other hand the probability measure $\mu_0^g$ has   density $g'$ on the torus, which is strictly positive everywhere.  
 It follows that $\frac{\delta P_t \phi}{\delta m} (\mu_0^g) (g(1))= \frac{\delta P_t \phi}{\delta m} (\mu_0^g) (g(0)+2\pi)= \frac{\delta P_t \phi}{\delta m} (\mu_0^g) (g(0))$. 
 Since $h$ is $1$-periodic, we conclude that $\left[ \frac{\delta P_t \phi}{\delta m} (\mu_0^g) (g(u))  h(u) \right]_0^1=0$.
\end{proof}

By Proposition~\ref{prop:stabilite}, $x_t^g$ belongs to $\mathbf G^1$ for every $t\in[0,T]$. In particular, $u \mapsto x_t^g(u)$ is invertible and its inverse $F_t^g:=(x_t^g)^{-1}$ satisfies $F_t^g(x+2\pi)=F_t^g(x)+1$. 
We define $(A_t^g)_{t\in [0,T]}$ by
\begin{align}
\label{def:Agt}
A_t^g:= (\partial_u x_t^g)(F_t^g(\cdot)) \; h(F_t^g(\cdot)).
\end{align}
and $(A_t^{g,\eps})_{t\in [0,T]}$ by
\begin{align}
\label{def:Aeps}
A_t^{g,\eps} := A_t^g \ast \varphi_\eps = \int_\R A_t^g (\cdot-y) \varphi_\eps (y) \mathrm dy, 
\end{align}
where $\varphi(x)=\frac{1}{\sqrt{2\pi}}e^{-x^2/2}$  and  $\varphi_\eps(x)=\frac{1}{\eps}\varphi(\frac{x}{\eps})$.  

\begin{lemme}
For any $t \in [0,T]$ and $\eps >0$, 
$A_t^g$ is a $2\pi$-periodic $\mathcal C^1$-function and $A_t^{g,\eps}$ is a $2\pi$-periodic $\mathcal C^\infty$-function.
\end{lemme}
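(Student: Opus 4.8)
The claim has two parts: regularity of $A_t^g$ itself, and then improved regularity of the mollified version $A_t^{g,\eps}$. The plan is to treat them in that order, since the second follows from the first by standard properties of convolution with a Gaussian.

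\textbf{Regularity of $A_t^g$.} First I would recall what is known about the ingredients. By Proposition~\ref{prop:stabilite}, almost surely and for every $t\in[0,T]$, $x_t^g \in \mathscr G^1$, so $u\mapsto x_t^g(u)$ is a strictly increasing $\mathcal C^1$-bijection of $\R$ with $x_t^g(u+1)=x_t^g(u)+2\pi$ and $\partial_u x_t^g(u)>0$. Since $g\in\mathbf G^{3+\theta}$ with $\alpha=\tfrac72+\theta> 2+\tfrac12+\theta$, Proposition~\ref{prop:differentiability} (applied with $j=2$) gives that $u\mapsto x_t^g(u)$ is in fact $\mathcal C^{2+\theta'}$ for every $\theta'<\theta$; in particular $\partial_u x_t^g$ is $\mathcal C^1$. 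The inverse $F_t^g=(x_t^g)^{-1}$ is then a $\mathcal C^{2}$-function (its derivative is $1/(\partial_u x_t^g)(F_t^g(\cdot))$, which is continuous and differentiable since $\partial_u x_t^g$ is $\mathcal C^1$ and nowhere zero), and $F_t^g(x+2\pi)=F_t^g(x)+1$. Next, $h\in\Delta^1$ is a $1$-periodic $\mathcal C^1$-function. Now $A_t^g=(\partial_u x_t^g)(F_t^g(\cdot))\,h(F_t^g(\cdot))$ is a composition/product of $\mathcal C^1$-functions: $\partial_u x_t^g\in\mathcal C^1$, $h\in\mathcal C^1$, $F_t^g\in\mathcal C^2\subset\mathcal C^1$, so $A_t^g\in\mathcal C^1$ by the chain and product rules. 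For the $2\pi$-periodicity: using $F_t^g(x+2\pi)=F_t^g(x)+1$, the $1$-periodicity of $u\mapsto\partial_u x_t^g(u)$ (which holds because $x_t^g(u+1)=x_t^g(u)+2\pi$, differentiate in $u$) and the $1$-periodicity of $h$, one gets $A_t^g(x+2\pi)=(\partial_u x_t^g)(F_t^g(x)+1)\,h(F_t^g(x)+1)=(\partial_u x_t^g)(F_t^g(x))\,h(F_t^g(x))=A_t^g(x)$.

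\textbf{Regularity of $A_t^{g,\eps}$.} Since $A_t^g$ is continuous and $2\pi$-periodic, it is bounded, so the convolution $A_t^{g,\eps}=A_t^g\ast\varphi_\eps$ is well-defined, and because $\varphi_\eps\in\mathcal C^\infty$ with all derivatives integrable, differentiation under the integral sign gives $\partial_x^{(n)}A_t^{g,\eps}=A_t^g\ast\varphi_\eps^{(n)}$ for every $n$, hence $A_t^{g,\eps}\in\mathcal C^\infty$. Periodicity is preserved by convolution with any kernel: for a $2\pi$-periodic $g$-argument, $A_t^{g,\eps}(x+2\pi)=\int_\R A_t^g(x+2\pi-y)\varphi_\eps(y)\,\mathrm dy=\int_\R A_t^g(x-y)\varphi_\eps(y)\,\mathrm dy=A_t^{g,\eps}(x)$.

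\textbf{Main obstacle.} There is no serious obstacle here — this is a bookkeeping lemma. The one point requiring a little care is verifying that $F_t^g$ inherits enough differentiability from $x_t^g$ (inverse function theorem for $\mathcal C^{2+\theta'}$-maps with nonvanishing derivative) and, more importantly, checking the periodicity bookkeeping for $A_t^g$, i.e. that all three factors $\partial_u x_t^g$, $h$ and the shift structure of $F_t^g$ combine to give exact $2\pi$-periodicity rather than mere quasi-periodicity; I would write out the one-line computation $A_t^g(x+2\pi)=A_t^g(x)$ explicitly as above.
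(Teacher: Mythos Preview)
Your proof is correct and follows essentially the same approach as the paper's: periodicity from the $1$-periodicity of $h$ and $\partial_u x_t^g$ together with $F_t^g(x+2\pi)=F_t^g(x)+1$, and $\mathcal C^1$-regularity from Proposition~\ref{prop:differentiability} combined with the inverse function theorem and the chain/product rule. The paper actually invokes Proposition~\ref{prop:differentiability} with $j=3$ (using $g\in\mathbf G^{3+\theta'}$ for $\theta'<\theta$ and $\alpha=\tfrac72+\theta>\tfrac72+\theta'$) to get $\partial_u x_t^g\in\mathcal C^{2+\theta'}$, whereas you use $j=2$; your weaker conclusion $\partial_u x_t^g\in\mathcal C^1$ is already sufficient here.
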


\begin{proof}
The periodicity property follows from the fact that $h$ and $\partial_u x_t^g$ are $1$-periodic and from $F_t^g(x+2\pi)=F_t^g(x)+1$. Furthermore, by Proposition~\ref{prop:differentiability}, $u \mapsto x_t^g(u)$  belongs  to $\mathcal C^{3+\theta'}$ for some $\theta' <\theta$, thus $\partial_u x_t^g \in \mathcal C^{2+\theta'}$. 
Therefore,  $A_t^g$ is a $\mathcal C^1$-function.
The properties of $A_t^{g,\eps}$  follows. 
\end{proof}

We conclude this paragraph by splitting the derivative of $P_t\phi$ into two terms. $I_1$ involves the regularized function $A_t^{g,\eps}$, whereas $I_2$ is a remainder term for which we have to show that it is small with respect to $|\eps|$. 

\begin{prop}
\label{prop_split}
Under the same assumptions as in Lemma~\ref{lemme:derivee_du_semigroupe} and for every $t\in [0,T]$ 
\begin{align*}
\frac{\mathrm d}{\mathrm d\rho}_{\vert \rho=0} P_t \phi (\mu_0^{g+\rho g'  h}) 
=I_1+I_2,
\end{align*}
where
\begin{align*}
I_1&:= \frac{1}{t} \Ewb{ \int_0^1 \!\!\int_0^t \partial_\mu \phi(\mu_t^g) (x_t^g(u)) \frac{\partial_u x_t^g(u)}{\partial_u x_s^g(u)} A_s^{g,\eps}(x_s^g(u)) \mathrm ds \mathrm du};\\
I_2&:= \frac{1}{t} \Ewb{ \int_0^1 \!\!\int_0^t \partial_\mu \phi(\mu_t^g) (x_t^g(u)) \frac{\partial_u x_t^g(u)}{\partial_u x_s^g(u)} (A_s^g-A_s^{g,\eps})(x_s^g(u)) \mathrm ds \mathrm du } .
\end{align*}
\end{prop}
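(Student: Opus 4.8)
The plan is to start from the second equality in Lemma~\ref{lemme:derivee_du_semigroupe}, namely
\[
\frac{\mathrm d}{\mathrm d\rho}_{\vert \rho=0} P_t \phi (\mu_0^{g+\rho g'  h})
= \Ewb{ \int_0^1 \partial_\mu \phi(\mu_t^g) (x_t^g(u)) \; \partial_u x_t^g(u)\, h(u) \,\mathrm du },
\]
and to rewrite the factor $\partial_u x_t^g(u)\, h(u)$ as a time integral over $[0,t]$ of a process that, after a change of variables $u \mapsto x_s^g(u)$, is precisely $A_s^{g}$ evaluated at $x_s^g(u)$. The key elementary identity is that, writing $\partial_u x_t^g(u) = \partial_u x_s^g(u)\cdot \frac{\partial_u x_t^g(u)}{\partial_u x_s^g(u)}$ and using the definition~\eqref{def:Agt} of $A_s^g$ together with $F_s^g = (x_s^g)^{-1}$, we have $A_s^g(x_s^g(u)) = \partial_u x_s^g(u)\, h(u)$, so that
\[
\partial_u x_t^g(u)\, h(u) = \frac{\partial_u x_t^g(u)}{\partial_u x_s^g(u)}\, A_s^g(x_s^g(u))
\]
for \emph{every} $s \in [0,t]$; averaging this identity in $s$ over $[0,t]$ and dividing by $t$ yields
\[
\partial_u x_t^g(u)\, h(u) = \frac{1}{t}\int_0^t \frac{\partial_u x_t^g(u)}{\partial_u x_s^g(u)}\, A_s^g(x_s^g(u)) \,\mathrm ds.
\]
Substituting this into the expectation and then splitting $A_s^g = A_s^{g,\eps} + (A_s^g - A_s^{g,\eps})$ gives exactly $I_1 + I_2$.

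The steps, in order, are: (i) recall from Proposition~\ref{prop:differentiability} and Proposition~\ref{prop:stabilite} that for $g \in \mathbf G^{3+\theta}$ and $f$ of order $\alpha = \tfrac{7}{2}+\theta > 1 + \tfrac12 + \theta$, the map $u \mapsto x_s^g(u)$ is $\mathcal C^1$ with strictly positive derivative, hence invertible with inverse $F_s^g$, so that $A_s^g$ is well-defined and the substitution $A_s^g(x_s^g(u)) = \partial_u x_s^g(u)\, h(u)$ is legitimate; (ii) check that $\partial_u x_s^g(u) > 0$ almost surely so that division by it is harmless, and that $\frac{\partial_u x_t^g(u)}{\partial_u x_s^g(u)}$ together with $A_s^{g,\eps}(x_s^g(u))$ are integrable against $\mathrm ds\,\mathrm du\, \mathbb P^W \otimes \mathbb P^\beta$ — this uses the explicit exponential formula~\eqref{exponential explicit partial xtg} and the $L_p$-moment bounds collected in Lemma~\ref{lem_control_moments} in the appendix, together with $\|\partial_\mu \phi(\mu_t^g)(x_t^g(\cdot))\|$ being controlled by assumption $(\phi2)$; (iii) insert the time-averaging identity above, apply Fubini to exchange $\int_0^t \mathrm ds$ with the expectation and $\int_0^1 \mathrm du$, and split off the remainder.

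The only genuinely delicate point is the integrability in step (ii): one must make sure that the integrand in $I_2$ (and in $I_1$) is absolutely integrable so that Fubini applies and the splitting $A_s^g = A_s^{g,\eps} + (A_s^g - A_s^{g,\eps})$ is meaningful term by term. Here $\frac{\partial_u x_t^g(u)}{\partial_u x_s^g(u)}$ may have negative moments of $\partial_u x_s^g$ as $s\to t$ is not an issue, but moments of $1/\partial_u x_s^g$ uniformly in $s$ are needed; these follow from~\eqref{exponential explicit partial xtg}, since $1/\partial_u x_s^g(u)$ is again an exponential of a Gaussian-type martingale plus a bounded drift, so all its moments are finite and bounded uniformly on $[0,T]$ (this is recorded in Lemma~\ref{lem_control_moments}). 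Likewise $\|A_s^g\|_{L_\infty} \le \|h\|_{L_\infty}\,\|\partial_u x_s^g\|_{L_\infty}$ and $\|A_s^{g,\eps}\|_{L_\infty} \le \|A_s^g\|_{L_\infty}$ since $\varphi_\eps$ is a probability density, giving the required domination. Once integrability is in hand, everything else is a direct substitution, and the proposition follows.
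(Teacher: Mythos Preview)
Your proposal is correct and follows essentially the same route as the paper's proof: both use the identity $A_s^g(x_s^g(u)) = \partial_u x_s^g(u)\,h(u)$ (equivalently $h(u) = A_s^g(x_s^g(u))/\partial_u x_s^g(u)$), average it in $s$ over $[0,t]$, substitute into the formula from Lemma~\ref{lemme:derivee_du_semigroupe}, and split $A_s^g = A_s^{g,\eps} + (A_s^g - A_s^{g,\eps})$. You are more explicit than the paper about the integrability needed for Fubini and the splitting, which the paper leaves implicit.
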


\begin{proof}
By definition of $(A_t^g)_{t\in [0,T]}$, 
\begin{align*}
h(u)= \frac{1}{t} \int_0^t h(u) \mathrm ds 
&= \frac{1}{t} \int_0^t \frac{\partial_u x_s^g(F_s^g(x_s^g(u))) h(F_s^g(x_s^g(u))) }{\partial_u x_s^g(u)}  \mathrm ds  = \frac{1}{t} \int_0^t \frac{ A^g_s(x_s^g(u))}{\partial_u x_s^g(u)} \mathrm ds.
\end{align*}
Therefore, equation~\eqref{formules_derivees_semigroupe} rewrites
\begin{align*}
\frac{\mathrm d}{\mathrm d\rho}_{\vert \rho=0} P_t \phi (\mu_0^{g+\rho g'  h}) 
= \Ewb{ \int_0^1 \partial_\mu \phi(\mu_t^g) (x_t^g(u)) \partial_u x_t^g(u) \left(\frac{1}{t} \int_0^t \frac{A_s^g(x_s^g(u))}{\partial_u x_s^g(u)} \mathrm ds \right)\mathrm du }.
\end{align*}
The r.h.s.\! of the latter equality is clearly equal to $I_1+I_2$. 
\end{proof}

The proof of Theorem~\ref{theo:Bismut pour phi assumptions} is now divided into three main steps. In the following two sections, we will study separately $I_1$ and $I_2$. This will lead to the  estimate   stated in Corollary~\ref{coro:estim_I}. 
In Section~\ref{sec:end_proof}, we conclude the proof by iterating the result of Corollary~\ref{coro:estim_I} over successive time intervals.

\section{Analysis of \texorpdfstring{$I_1$}{I1}}
\label{sec:analysis_I1}

In order to control $I_1$, we adapt in this section a method of proof introduced in~\cite{thalmaier97}. To follow that strategy, we  take benefit from the fact that $A_t^{g,\eps}$, in contrast to $A_t^g$, is as  regular as needed. The drawback is that the control on $I_1$ blows up when $\eps$ goes to 0, which is the reason why the explosion rate is $t^{-2-\theta}$ in Theorem~\ref{theo:Bismut pour phi assumptions} and not $t^{-1/2}$ as in~\cite{thalmaier97}.

Let us define
\begin{align*}
a_t(u)=\int_0^t  \frac{g'(u)}{\partial_u x_s^g(u)} A_s^{g,\eps}(x_s^g(u)) \mathrm ds. 
\end{align*}
Using that notation, $I_1$ rewrites as follows
\begin{align*}
I_1=\frac{1}{t} \Ewb{ \int_0^1 \partial_\mu \phi(\mu_t^g) (x_t^g(u)) \frac{\partial_u x_t^g(u)}{g'(u)} a_t(u)    \mathrm du}. 
\end{align*}
The goal of this section is to prove the following inequality:
\begin{prop}
\label{prop:Girsanov-Fourier}
Let $\phi$, $\theta$ and $f$ be as in Theorem~\ref{theo:Bismut pour phi assumptions}. 
Let $g$ and $h$ be $\mathcal G_0$-measurable random variables with values respectively in $\mathbf G^{3+\theta}$ and~$\Delta^1$. Then there is $C>0$ independent of $g$, $h$ and $\theta$ such that for every $t\in [0,T]$, for every $\eps \in (0,1)$,
\begin{align}
\label{mainestim1}
|I_1|
&\leq  C \; \frac{\| \phi \|_{L_\infty}}{\eps^{3+2\theta}\sqrt{t}}    C_1(g) \|h\|_{\mathcal C^1}.
\end{align}
where 
$C_1(g)=1+\|g'''\|_{L_4}^2 + \|g''\|_{L_\infty}^{6} + \|g'\|_{L_\infty}^8 + \left\| \frac{1}{g'} \right\|_{L_\infty}^{8}$.
\end{prop}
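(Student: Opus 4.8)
The plan is to follow the Bismut--Elworthy--Thalmaier--Wang scheme: represent the directional derivative $I_1$ as an expectation against a stochastic integral obtained via Girsanov's theorem, and then estimate that stochastic integral. The key point is that $a_t(u)$ defined above is built from the regularized function $A_s^{g,\eps}$, so all the manipulations below are legitimate at the price of factors of $\eps^{-1}$.

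\textbf{Step 1: Recognize $a_t(u)$ as a Malliavin-type weight and set up Girsanov.} First I would write, using the explicit exponential formula~\eqref{exponential explicit partial xtg} for $\partial_u x_t^g$ and the identity $\partial_x Z_t^{g(u)} = \partial_u x_t^g(u)/g'(u)$ from Proposition~\ref{prop:kunita_identities}, the term $I_1$ purely in terms of the parametric flow $Z_t^x$. The goal is to exhibit a perturbation of the Brownian motions $(W^k)$ whose Girsanov density, when differentiated, produces exactly $\partial_\mu\phi(\mu_t^g)(x_t^g(u))\,\partial_x Z_t^{g(u)}\,a_t(u)$ after integration against $\phi(\mu_t^g)$. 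Concretely, one perturbs $W^k_s \mapsto W^k_s + \int_0^s \ell^k_r\,\mathrm dr$ for a suitable adapted process $\ell^k$, chosen so that the induced first-order variation of the trajectory $s \mapsto Z_s^{g(u)}$ matches $a_\cdot(u)$; the linearized equation for the variation of $Z$ involves the derivative $-ik e^{-ikZ_s}$, so one must invert this linear relation in Fourier space. This is the content of \emph{Lemma~\ref{lemme:decomposition Fourier}}: writing the target drift as a Fourier series and solving $\sum_k f_k \Re(-ik e^{-ikZ_s}\ell_s^k) = (\text{variation})$ for the coefficients $\lambda_s^{k,\eps}$. Here is where the order $\alpha$ matters: inverting the Fourier series costs derivatives of $A_s^{g,\eps}$ (each $\partial_x$ of $A^{g,\eps}$ is bounded by $\eps^{-1}\|A^g\|_\infty$-type quantities), and the smoothing by $\varphi_\eps$ is exactly what makes this inversion summable.

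\textbf{Step 2: Apply Bismut's identity.} Once the perturbation is in place, Girsanov plus integration by parts on Wiener space gives
\[
I_1 = \frac{1}{t}\,\Ewb{\phi(\mu_t^g)\sum_{k\in\Z}\int_0^t \Re\big(\overline{\lambda_s^{k,\eps}}\,\mathrm dW_s^k\big)}.
\]
The appearance of $\phi(\mu_t^g)$ rather than $\partial_\mu\phi$ is the whole point: $\|\phi\|_{L_\infty}$ controls the first factor, so by Cauchy--Schwarz
\[
|I_1| \leq \frac{\|\phi\|_{L_\infty}}{t}\,\left(\Ewb{\Big|\sum_{k\in\Z}\int_0^t\Re\big(\overline{\lambda_s^{k,\eps}}\,\mathrm dW_s^k\big)\Big|^2}\right)^{1/2}.
\]
By the Itô isometry (and independence of the $W^k$), the second factor equals $\big(\sum_k \Ewb{\int_0^t |\lambda_s^{k,\eps}|^2\,\mathrm ds}\big)^{1/2}$ up to constants.

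\textbf{Step 3: Estimate $\sum_k\int_0^t \E|\lambda_s^{k,\eps}|^2\,\mathrm ds$.} This is the main technical obstacle. One must bound the Fourier coefficients $\lambda_s^{k,\eps}$ coming from Lemma~\ref{lemme:decomposition Fourier} in terms of the $\mathcal C^1$ (or higher) norms of $A_s^{g,\eps}$ and of $s\mapsto Z_s$. Since $a_t(u)$ is an integral over $[0,t]$ of bounded quantities, $|a_s(u)|\lesssim t\cdot(\text{moments of }\partial_u x, 1/\partial_u x, A^{g,\eps})$, which is where the $1/t$ from Bismut partially cancels, leaving $\sqrt t$ rather than $1/\sqrt t$ in the denominator of~\eqref{mainestim1}. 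The $\eps$-dependence: inverting the Fourier series loses roughly two powers of $k$ that must be absorbed by $\langle k\rangle^{-\alpha}$ with $\alpha = 7/2+\theta$, and the regularization $A^{g,\eps}=A^g*\varphi_\eps$ contributes $\|\partial_x^{(j)}A^{g,\eps}\|_\infty \lesssim \eps^{-j}\|A^g\|_\infty$ for the derivatives that don't fit under $\alpha$; carefully tracking these gives the exponent $\eps^{-(3+2\theta)}$. The $g$-dependence is then collected into $C_1(g)$ using the moment bounds of Lemma~\ref{lem_control_moments} for $\|\partial_u^{(j)}x_s^g\|_{L_p}$ and $\|1/\partial_u x_s^g\|_{L_p}$, and the bound $\|A^g\|_{\mathcal C^1}\lesssim \|h\|_{\mathcal C^1}\cdot(\text{polynomial in }\|x_t^g\|_{\mathcal C^2}, \|1/\partial_u x_t^g\|_\infty)$, which feeds the powers of $\|g'''\|_{L_4}$, $\|g''\|_{L_\infty}$, $\|g'\|_{L_\infty}$, $\|1/g'\|_{L_\infty}$ into $C_1(g)$. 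Assembling Steps 1--3 yields~\eqref{mainestim1}.

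The hardest part, by far, is Step 3 --- the quantitative control of the Fourier inversion in Lemma~\ref{lemme:decomposition Fourier} and the bookkeeping of how many powers of $\eps^{-1}$ and which polynomial in the norms of $g$ are needed. The conceptual Girsanov/Bismut step (Steps 1--2) is standard once one commits to regularizing $A^g$; it is precisely because the unregularized $A^g$ would force $\alpha$ to be simultaneously $>7/2$ (for regularity) and small enough to invert the series that the detour through $A^{g,\eps}$ is unavoidable, as explained in Remark~\ref{rem:balance}.
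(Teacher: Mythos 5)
Your proposal follows the paper's strategy exactly: Fourier-invert the regularized $A^{g,\eps}$ (Lemma~\ref{lemme:decomposition Fourier}), use Girsanov to obtain the Bismut identity $I_1 = \frac{1}{t}\,\Ewb{\phi(\mu_t^g)\sum_{k\in\Z}\int_0^t\Re(\overline{\lambda_s^k}\,\mathrm dW_s^k)}$ (Proposition~\ref{prop:bismut}), and conclude by Cauchy--Schwarz, It\^o isometry, and the $L^2$-estimate~\eqref{somme lambda ks carré} on $\lambda^k$. The one step you pass over lightly --- ``Girsanov plus integration by parts on Wiener space'' --- is where the paper invests most of its effort (an auxiliary process $Y^{\rho,M}$, Kunita's expansion, stopping-time localization to satisfy Novikov's condition, and a uniform-integrability passage to the limit $M\to\infty$), but the conceptual architecture you describe is the right one.
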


The proof of the  proposition is based on writing the SDE satisfied by $(Z_t^{g(u)+\rho a_t(u)})_{t\in [0,T]}$, where $Z_t^x$ is the solution to~\eqref{eq_SDE_z}. We recall this expansion, known as Kunita's theorem, in the following lemma:

\begin{lemme}
Let $f$ be of order $\alpha> \frac{3}{2}+\theta$ for some $\theta \in (0,1)$. 
Let $(\zeta_t)_{t\in [0,T]}$ be a $(\mathcal G_t)_{t\in [0,T]}$-adapted process such that $t \mapsto \zeta_t$ is absolutely continuous, $\zeta_0=0$ almost surely and $\E{\int_0^T |\zeta_t| \mathrm dt }$ is finite.
Then almost surely, for every $x\in \R$,  $t \in [0,T]$ and  $\rho \in \R$, 
\begin{align*}&
Z_t^{x+\rho \zeta_t}
=Z_0^x+ \sum_{k \in \Z} f_k \int_0^t \Re \left( e^{-ik Z_s^{x+\rho \zeta_s}} \mathrm dW^k_s \right) + \beta_t + \rho \int_0^t \partial_x Z_s^{x+\rho \zeta_s} \;\dot{\zeta}_s \;\mathrm ds.
\end{align*}
\label{lem_Kunita}
\end{lemme}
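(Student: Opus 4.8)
The goal is to prove Lemma~\ref{lem_Kunita}, i.e.\ Kunita's expansion of the flow $Z_t^x$ along a time-dependent, absolutely continuous shift of the initial condition. I will treat $\rho$ as a fixed parameter and set $Y_t := Z_t^{x+\rho\zeta_t}$. The strategy is to start from the known flow property of the parametric SDE~\eqref{eq_SDE_z} and differentiate with respect to the moving base point, using the chain rule for It\^o processes composed with an absolutely continuous (hence finite-variation) perturbation.

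\medskip

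\textbf{Step 1: Regularity of the stochastic flow.} By Proposition~\ref{prop_para} (cited from the appendix) and Proposition~\ref{prop:differentiability}, under the hypothesis $\alpha > \frac32 + \theta$ the map $(x,t)\mapsto Z_t^x$ admits a modification which is $\mathcal C^1$ in $x$, with $x\mapsto \partial_x Z_t^x$ continuous, and the explicit exponential formula for $\partial_x Z_t^x$ holds. In particular, for fixed $\omega$ and $t$, $x\mapsto Z_t^x$ is a $\mathcal C^1$-diffeomorphism of $\R$. I would first record that $\partial_x Z^x$ solves the linearized equation
\[
\partial_x Z_t^x = 1 + \sum_{k\in\Z} f_k \int_0^t \Re\bigl(-ik\, e^{-ikZ_s^x}\,\partial_x Z_s^x\,\mathrm dW_s^k\bigr),
\]
which, together with the continuity of $(x,s)\mapsto Z_s^x$, is what makes the substitution $x\rightsquigarrow x+\rho\zeta_s$ legitimate inside the stochastic integral.

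\medskip

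\textbf{Step 2: It\^o--Wentzell / chain rule for the composition.} The field $F(s,x) := Z_s^x$ is, for each $x$, an It\^o process with decomposition
\[
\mathrm dF(s,x) = \sum_{k\in\Z} f_k\, \Re\bigl(e^{-ikx}\,\mathrm dW_s^k\bigr)\Big|_{x=F(s,x)}\ \text{(via SDE~\eqref{eq_SDE_z})},
\]
more precisely $\mathrm d_s F(s,x) = b(s,x)\,\mathrm ds + \sum_k g^k(s,x)\,\mathrm dW_s^k$ with $b$ coming from $\mathrm d\beta_s$ handled jointly, and I compose it with the finite-variation curve $s\mapsto x+\rho\zeta_s$. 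Since $\zeta$ has absolutely continuous paths with $\dot\zeta\in L^1$, it contributes no quadratic variation and no cross-variation with $W$ or $\beta$; hence the It\^o--Wentzell formula (or, more elementarily here, a first-order Taylor expansion in the spatial variable combined with the fact that $Z_s^x$ has the \emph{same} martingale structure $\Re(e^{-ikZ_s^x}\mathrm dW_s^k)$ for every base point, which is the special feature of~\eqref{eq_SDE_z}) gives
\[
\mathrm d Y_t = \Bigl[\mathrm d_s F(s,x)\Bigr]_{x = x+\rho\zeta_s} + \rho\,\partial_x Z_s^{x+\rho\zeta_s}\,\dot\zeta_s\,\mathrm ds .
\]
Because the diffusion coefficient of~\eqref{eq_SDE_z} evaluated at the point $Z_s^{x+\rho\zeta_s}$ is exactly $f_k\Re(e^{-ikZ_s^{x+\rho\zeta_s}}\mathrm dW_s^k)$ and the drift is $\mathrm d\beta_s$, the first bracket equals $\sum_k f_k\Re(e^{-ikY_s}\mathrm dW_s^k) + \mathrm d\beta_s$. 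Integrating from $0$ to $t$ and using $Y_0 = Z_0^x = x$ yields the claimed identity.

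\medskip

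\textbf{Step 3: Justifying the swap and the finiteness of the $\mathrm ds$-integral.} Two points need care: (i) one must check that $\int_0^t \partial_x Z_s^{x+\rho\zeta_s}\,|\dot\zeta_s|\,\mathrm ds < \infty$ a.s., which follows from the a.s.\ continuity (hence local boundedness in $s$) of $s\mapsto \partial_x Z_s^{x+\rho\zeta_s}$ — itself a consequence of the joint continuity from Step 1 and the continuity of $s\mapsto\zeta_s$ — together with the hypothesis $\E{\int_0^T|\dot\zeta_s|\mathrm ds}<\infty$; (ii) one must upgrade the identity from ``fixed $x,t,\rho$, a.s.'' to ``a.s., simultaneously for all $x,t,\rho$'', which is done by continuity in $(x,t,\rho)$ of both sides (both are continuous processes in these parameters, using Kolmogorov's continuity criterion / the flow regularity) and density of rationals. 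I expect the genuine work to be in Step 2, namely writing down cleanly why the It\^o--Wentzell correction terms vanish except for the $\partial_x Z\cdot\dot\zeta$ term — here the decisive simplification is the affine-in-noise, state-independent-drift structure of~\eqref{eq_SDE_z}, which makes the "composition of the flow with a moving point'' reduce to an ordinary chain rule with a single extra drift term; the remaining integrability bookkeeping in Step 3 is routine given the cited appendix propositions.
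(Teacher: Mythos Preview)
Your proposal is correct and takes essentially the same approach as the paper: the paper's proof is a one-line citation of Kunita's Theorem~3.3.1 (Chapter~III of \cite{kunita90}), and what you have written is precisely an unpacking of that theorem in the present setting. One minor slip: in Step~3(i) you invoke ``the hypothesis $\E{\int_0^T|\dot\zeta_s|\,\mathrm ds}<\infty$'', but the stated hypothesis is on $|\zeta_t|$, not $|\dot\zeta_t|$; this is harmless, since absolute continuity of $t\mapsto\zeta_t$ on $[0,T]$ already gives $\dot\zeta\in L^1([0,T])$ almost surely, which combined with the a.s.\ continuity of $s\mapsto\partial_x Z_s^{x+\rho\zeta_s}$ yields the finiteness you need.
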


\begin{proof}
This is an application of Theorem 3.3.1 in~\cite[Chapter III]{kunita90}. 
\end{proof}

\subsection{Fourier inversion on the torus}
\label{parag:fourier}

A key ingredient in the study of $I_1$ is the following Fourier inversion of  $A_t^{g,\eps}$, which we will later use in order to apply Girsanov's theorem.

\begin{lemme}
\label{lemme:decomposition Fourier}
Let $\theta \in (0,1)$, $g \in \mathbf G^{3+\theta}$ and $f$ be of order $\alpha=\frac{7}{2}+\theta$. Fix $\eps \in (0,1)$.  
Then there is a collection of $(\mathcal G_t)_{t\in [0,T]}$-adapted $\C$-valued processes $((\lambda_t^k)_{t\in [0,T]})_{k \in \Z}$ such that for every $t\in [0,T]$, the following equality holds 
\begin{align}
\label{eq:decomposition Fourier}
 A_t^{g,\eps} (y)  = \sum_{k \in \Z}  f_k  e^{-ik y}   \lambda_t^k,
\end{align}
and such that there is a constant $C>0$ independent of $\eps$ satisfying for each $t\in [0,T]$
\begin{align}
\label{somme lambda ks carré}
\Ewb{\sum_{k \in \Z} \int_0^t   |\lambda_s^k|^2 \mathrm ds} 
\leq C \frac{t}{\eps^{6+4\theta}} \|h\|_{\mathcal C^1}^2 C_1(g)^2,
\end{align}
where $C_1(g)=1+\|g'''\|_{L_4}^2 + \|g''\|_{L_\infty}^{6} + \|g'\|_{L_\infty}^{6} + \left\| \frac{1}{g'} \right\|_{L_\infty}^{8}$. 
\end{lemme}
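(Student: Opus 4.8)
I want to produce, for each fixed $t\in[0,T]$, a decomposition of the smooth $2\pi$-periodic function $A_t^{g,\eps}$ as $\sum_k f_k e^{-iky}\lambda_t^k$ and control the expected $L_2$-norm in time of the coefficients. The natural idea is: expand $A_t^{g,\eps}$ itself in Fourier series, $A_t^{g,\eps}(y)=\sum_m c_m(t) e^{-imy}$ with $c_m(t)=\frac{1}{2\pi}\int_0^{2\pi} A_t^{g,\eps}(y)e^{imy}\mathrm dy$, and then simply set $\lambda_t^m := c_m(t)/f_m$ (legitimate since $f$ is of order $\alpha$, hence $f_m\neq 0$ and $|f_m|\geq c\langle m\rangle^{-\alpha}$). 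This immediately gives \eqref{eq:decomposition Fourier}; the content is entirely in proving \eqref{somme lambda ks carré}, i.e.\ that $\sum_m |c_m(t)|^2/|f_m|^2 \leq \sum_m \langle m\rangle^{2\alpha}|c_m(t)|^2/c^2$ is finite and has the stated size after integrating in time and taking expectation.

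\textbf{Key steps.} First, since $\alpha=\frac72+\theta$, the weight $\langle m\rangle^{2\alpha}$ is $\langle m\rangle^{7+2\theta}$; by Parseval, $\sum_m \langle m\rangle^{7+2\theta}|c_m(t)|^2$ is (up to constants) the squared $H^{(7+2\theta)/2}$-Sobolev norm of $A_t^{g,\eps}$ on the circle, which I would bound by an integer-order norm, say $\|A_t^{g,\eps}\|_{H^4}^2$ or even $\|(A_t^{g,\eps})^{(4)}\|_{L_2}^2 + \|A_t^{g,\eps}\|_{L_2}^2$, using $(7+2\theta)/2 < 4$. Second, the convolution structure $A_t^{g,\eps}=A_t^g * \varphi_\eps$ moves derivatives onto the Gaussian mollifier: $(A_t^{g,\eps})^{(4)} = A_t^g * \varphi_\eps^{(4)}$, and since $A_t^g\in\mathcal C^1$ only, I would put as many derivatives as possible ($1$, from the regularity of $\partial_u x_t^g$, which is $\mathcal C^{2+\theta'}$ so $A_t^g$ is $\mathcal C^1$) onto $A_t^g$ and the remaining three onto $\varphi_\eps$: $\|A_t^g * \varphi_\eps^{(4)}\|_{L_\infty}\leq \|\partial_y A_t^g\|_{L_\infty}\,\|\varphi_\eps'''\|_{L_1} \lesssim \eps^{-3}\|\partial_y A_t^g\|_{L_\infty}$, using $\|\varphi_\eps^{(j)}\|_{L_1}=\eps^{-j}\|\varphi^{(j)}\|_{L_1}$. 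This is where the $\eps^{-3}$ (hence $\eps^{-(6+4\theta)}$ after squaring and accounting for the fractional gap — actually one must be a little careful and interpolate to land exactly on the power $6+4\theta$ rather than $6$) comes from. Third, I must bound $\|\partial_y A_t^g\|_{L_\infty}$ and $\|A_t^g\|_{L_\infty}$ in terms of $\|h\|_{\mathcal C^1}$ and the quantities $\|g'''\|_{L_4}, \|g''\|_{L_\infty}, \|g'\|_{L_\infty}, \|1/g'\|_{L_\infty}$. Here I differentiate \eqref{def:Agt}: $\partial_y A_t^g(y) = \big[(\partial_u^2 x_t^g)(F_t^g(y)) h(F_t^g(y)) + (\partial_u x_t^g)(F_t^g(y)) h'(F_t^g(y))\big]\,\partial_y F_t^g(y)$, and $\partial_y F_t^g = 1/(\partial_u x_t^g)(F_t^g(\cdot))$, so everything reduces to $L_\infty$-control of $\partial_u x_t^g$, $\partial_u^2 x_t^g$, $1/\partial_u x_t^g$, which are exactly the moment bounds collected in Lemma~\ref{lem_control_moments} of the appendix (these produce the polynomial dependence on $g',g'',g'''$ and $1/g'$; note $\partial_u^2 x_t^g$ involves $\partial_u x_s^g$ and the stochastic exponential, and expanding it brings in $g'''$ — whence the $\|g'''\|_{L_4}^2$ term after taking $\mathbb E^W\mathbb E^\beta$). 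Finally, I integrate in $s$ over $[0,t]$, pull the expectation inside, and assemble the constants into $C_1(g)$.

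\textbf{Main obstacle.} The delicate point is not the Fourier bookkeeping but getting the exact exponent $\eps^{-(6+4\theta)}$: naively putting three derivatives on $\varphi_\eps$ gives $\eps^{-6}$ after squaring, and the extra $\eps^{-4\theta}$ must come from interpolating the Sobolev norm of fractional order $(7+2\theta)/2$ between $H^1$-type and $H^4$-type norms of $A_t^{g,\eps}$ — i.e.\ one writes $\|A_t^{g,\eps}\|_{H^{(7+2\theta)/2}} \lesssim \|A_t^{g,\eps}\|_{H^3}^{1-\theta/(\text{something})}\|A_t^{g,\eps}\|_{H^4}^{\theta/(\text{something})}$ and tracks the $\eps$-powers carefully through each mollification estimate $\|A_t^g*\varphi_\eps^{(j+1)}\|_{L_2}\lesssim \eps^{-j}\|\partial_y A_t^g\|_{L_2}$. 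Keeping the stochastic moment bounds (which must be taken in $L_4$ or higher to survive the squaring and the expectation, explaining why $\|g'''\|_{L_4}$ rather than $\|g'''\|_{L_2}$ appears, while the others appear in $L_\infty$ because they enter multiplicatively) correctly coupled to this interpolation is the part requiring genuine care; the rest is routine given Propositions~\ref{prop:differentiability} and the appendix moment lemma.
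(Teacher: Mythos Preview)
Your definition $\lambda_t^k := c_k(A_t^{g,\eps})/f_k$ and the reduction to estimating $\sum_k \langle k\rangle^{7+2\theta}|c_k(A_t^{g,\eps})|^2$ are exactly what the paper does, and your treatment of $\|A_t^g\|_{\mathcal C^1}$ via $\partial_u x_t^g$, $\partial_u^{(2)} x_t^g$, $1/\partial_u x_t^g$ and the appendix moment bounds is correct. The difference lies in how the $\eps$-power is extracted, and this is where you have tangled yourself up.

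Your route is to pass to an integer Sobolev norm in real space and push derivatives onto $\varphi_\eps$. Two issues. First, the inequality $(7+2\theta)/2 < 4$ holds only for $\theta<\tfrac12$; for $\theta\in[\tfrac12,1)$ the $H^4$ norm does \emph{not} dominate $H^{7/2+\theta}$, so you would need $H^5$. Second, and more importantly, your ``main obstacle'' is a phantom: if three derivatives on $\varphi_\eps$ give $\eps^{-6}$ after squaring, that is a \emph{stronger} bound than $\eps^{-(6+4\theta)}$ (since $\eps<1$), not a weaker one. There is no missing factor $\eps^{-4\theta}$ to recover by interpolation; you would simply be proving more than required.

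The paper avoids this detour entirely by staying in Fourier space. Since $A_t^{g,\eps}=A_t^g*\varphi_\eps$ with $\varphi_\eps$ Gaussian, one has $c_k(A_t^{g,\eps})=c_k(A_t^g)\,e^{-k^2\eps^2/2}$. The Gaussian factor satisfies $e^{-k^2\eps^2/2}\leq C_\gamma |k\eps|^{-\gamma}$ for \emph{any} $\gamma>0$; choosing $\gamma=3+2\theta$ gives, together with $|c_k(A_t^g)|\leq C|k|^{-1}\|\partial_y A_t^g\|_{L_\infty}$,
\[
\frac{|c_k(A_t^{g,\eps})|^2}{|f_k|^2}\;\leq\; C\,|k|^{7+2\theta}\cdot\frac{1}{|k|^2}\cdot\frac{1}{|k\eps|^{6+4\theta}}\,\|\partial_y A_t^g\|_{L_\infty}^2
\;=\;\frac{C}{\eps^{6+4\theta}}\cdot\frac{1}{|k|^{1+2\theta}}\,\|\partial_y A_t^g\|_{L_\infty}^2,
\]
and $\sum_{k\neq 0}|k|^{-(1+2\theta)}<\infty$. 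This produces the exponent $6+4\theta$ directly, uniformly in $\theta\in(0,1)$, with no integer rounding and no interpolation. The rest---bounding $\mathbb E^W\mathbb E^\beta[\|A_s^g\|_{\mathcal C^1}^2]$ by $C\|h\|_{\mathcal C^1}^2 C_1(g)^2$ via the moment lemma---is as you describe.
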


\begin{proof}
Fix $t\in (0,T]$. The map $y \mapsto A_t^{g,\eps} (y)$ is a $2\pi$-periodic $\mathcal C^1$-function. Therefore, by Dirichlet's Theorem, that map is equal to the sum of its Fourier series
\begin{align*}
A_t^{g,\eps} (y)  = \sum_{k \in \Z}  c_k(A_t^{g,\eps}) e^{-ik y} ,
\end{align*}
where $c_k(A):=\frac{1}{2\pi} \int_0^{2\pi} A(y) e^{iky} \mathrm dy$ for every $2\pi$-periodic function $A$ and for every $k \in \Z$. 

Let us define $\lambda_t^k:= \frac{c_k(A_t^{g,\eps})}{f_k}$. Since $(A_t^{g,\eps})_{t\in [0,T]}$ is $(\mathcal G_t)$-adapted, it is clear that for each $k \in \Z$, $(\lambda_t^k)_{t\in [0,T]}$ is also $(\mathcal G_t)$-adapted. Equality~\eqref{eq:decomposition Fourier} clearly holds true. 
Moreover, 
\begin{align*}
\sum_{k \in \Z} \int_0^t   |\lambda_s^k|^2 \mathrm ds
= \sum_{k \in \Z} \int_0^t  \left| \frac{c_k(A_s^{g,\eps})}{f_k}\right|^2 \mathrm ds.
\end{align*}
Compute the Fourier coefficient $c_k(A_s^{g,\eps})$:
\begin{align}
\label{calcul_coeff_fourier}
c_k(A_s^{g,\eps})
= c_k(A_s^g \ast \varphi_\eps) 
&=\frac{1}{2\pi} \int_0^{2\pi} \left(\int_\R A_s^g(y-x) \varphi_\eps(x) \mathrm dx \right) e^{iky} \mathrm dy  \notag\\
&=\int_\R \varphi_\eps (x) e^{ikx} \left(\frac{1}{2\pi} \int_0^{2\pi} A_s^g(y-x)e^{ik(y-x)} \mathrm dy \right) \mathrm dx \notag\\
&= c_k(A_s^g) \int_\R \varphi (x) e^{ik \eps x}\mathrm dx.
\end{align}
Since $\int_\R \varphi (x) e^{ik \eps x}\mathrm dx=\frac{1}{\sqrt{2\pi}} \int_\R e^{-x^2/2} e^{ik \eps x}\mathrm dx=e^{-k^2 \eps^2/2}$,  there is in particular $C >0$ such that for every $k\in \Z \backslash\{0\}$ and for every $\eps>0$,  $\left|\int_\R \varphi (x) e^{ik \eps x}\mathrm dx  \right|\leq \frac{C}{|k\eps|^{3+2\theta} }$. 

Moreover, $A_s^g$ is a $\mathcal C^1$-function. 
Thus there is $C$ independent of $k$ and $s$ such that for every $k\in \Z \backslash\{0\}$, $|c_k(A_s^g)| \leq \frac{C}{|k|} \|\partial_x A_s^g \|_{L_\infty}$.
Furthermore, $|c_0(A_s^{g,\eps})|=|c_0(A_s^g)| \leq \| A_s^g \|_{L_\infty}$. 
Since $f$ is of order $\alpha=\frac{7}{2}+\theta$, there is $C$ such that for every $k\in \Z \backslash\{0\}$, $\frac{1}{|f_k|}\leq C |k|^{\frac{7}{2}+\theta}$. 
Thus we have
\begin{align*}
\sum_{k \in \Z} \int_0^t \frac{|c_k(A_s^{g,\eps})|^2}{f_k^2} \mathrm ds
&= \int_0^t \frac{|c_0(A_s^{g,\eps})|^2}{f_0^2} \mathrm ds
+ \sum_{k \neq 0} \int_0^t \frac{|c_k(A_s^{g,\eps})|^2}{f_k^2} \mathrm ds \\
& \leq C \int_0^t \| A_s^g \|^2_{L_\infty} \mathrm ds
+ C \sum_{k \neq 0} \int_0^t |k|^{7+2\theta} \frac{1}{|k\eps|^{6+4\theta}} \frac{1}{|k|^2}\|\partial_x A_s^g \|^2_{L_\infty} \mathrm ds \\
& \leq \frac{C}{\eps^{6+4\theta}} \int_0^t \| A_s^g \|^2_{\mathcal C^1} \mathrm ds,
\end{align*}
because $1 \leq \frac{1}{\eps}$ and because the sum $\sum_{k \neq 0} \frac{1}{|k|^{1+2\theta}}$ converges.
Thus there is a constant $C>0$ independent of $\eps$ satisfying the   $\mathbb P^W \otimes \mathbb P^\beta$-almost surely for every $t\in [0,T]$
\begin{align}
\label{somme lambda carres}
\sum_{k \in \Z} \int_0^t   |\lambda_s^k|^2 \mathrm ds \leq \frac{C}{\eps^{6+4\theta}} \int_0^t \| A_s^g \|^2_{\mathcal C^1} \mathrm ds. 
\end{align}
Let us compute $\Ewb{\| A_s^g \|^2_{\mathcal C^1}}$. 
Recall that $\|A_s^g\|_{L_\infty} \leq \|\partial_u x_s^g\|_{L_\infty}  \|h\|_{L_\infty}$. Thus for every  $s \in [0,T]$, 
\begin{align*}
\Ewb{\left\| A_s^g \right\|^2_{L_\infty}} \leq \|h\|_{L_\infty}^2 \Ewb{\sup_{t\leq T}\|\partial_u x_t^g\|_{L_\infty}^2} \leq C \|h\|_{L_\infty}^2 (1+\|g'' \|_{L_2}^2 + \|g'\|_{L_\infty}^4),
\end{align*}
where we used inequality~\eqref{Lp norm 7} given in appendix. 
Moreover,  the derivative of $A_s^g$ is equal to:
\begin{align*}
\partial_x A_s^g (x)
&=\frac{(h' \;\partial_u x_s^g + h \;\partial^{(2)}_u x_s^g) (F_s^g(x))}{\partial_u x_s^g(F_s^g(x))} =h'  (F_s^g(x)) + \frac{h  (F_s^g(x)) \;\partial^{(2)}_u x_s^g (F_s^g(x))}{\partial_u x_s^g(F_s^g(x))} .
\end{align*}
We deduce that
\begin{align}
\label{norme 1 Asg}
\|\partial_x A_s^g \|_{L_\infty}
\leq C \|h\|_{\mathcal C^1}\left(1+\|\partial^{(2)}_u x_s^g\|_{L_\infty}\left\|\frac{1}{\partial_u x_s^g} \right\|_{L_\infty}\right).
\end{align}
Therefore, for every $s \leq T$, 
\begin{align}
\Ewb{\left\|\partial_x A_s^g \right\|^2_{L_\infty}} 
&\leq  C \|h\|_{\mathcal C^1}^2 \left(1+ \Ewb{\sup_{t \leq T}\| \partial^{(2)}_u x_t^g\|^4_{L_\infty}} +
\Ewb{\sup_{t \leq T}\left\|\frac{1}{\partial_u x_t^g} \right\|^4_{L_\infty}} \right) \notag
\\
&\leq  C \|h\|_{\mathcal C^1}^2
\left( 
1+\|g'''\|_{L_4}^4 + \|g''\|_{L_\infty}^{12} + \|g'\|_{L_\infty}^{16} + \left\| \frac{1}{g'} \right\|_{L_\infty}^{16}
\right),
\label{norme 1 Asg_bis}
\end{align}
by~\eqref{Lp norm 7} and~\eqref{Lp norm 8} and  because  $g$ belongs to $\mathbf G^{3+\theta}$ and $\alpha=\frac{7}{2}+\theta$.
We deduce that
\begin{align*}
\Ewb{\sum_{k \in \Z} \int_0^t   |\lambda_s^k|^2 \mathrm ds} 
\leq \frac{Ct}{\eps^{6+4\theta}} \|h\|_{\mathcal C^1}^2 \left( 
1+\|g'''\|_{L_4}^4 + \|g''\|_{L_\infty}^{12} + \|g'\|_{L_\infty}^{16} + \left\| \frac{1}{g'} \right\|_{L_\infty}^{16}
\right),
\end{align*}
which is inequality~\eqref{somme lambda ks carré}. 
This completes the proof of the lemma.  
\end{proof}

\begin{rem}
\label{rem:balance}
After the proof of Lemma~\ref{lemme:decomposition Fourier}, we can  now explain precisely why a regularization of $A^g$ was needed. Imagine for a while that instead of looking for the Fourier inverse of $A^{g,\eps}$ in~\eqref{eq:decomposition Fourier}, we were looking for the Fourier inverse of $A^g$. In order to prove an inequality like~\eqref{somme lambda ks carré}, we would have to show that 
\begin{align*}
\sum_{k \in \Z} (1+k^2)^\alpha \int_0^t  \left| c_k(A_s^{g})\right|^2 \mathrm ds < \infty.
\end{align*}
The latter sum converges if $\E{\int_0^t \| A_s^g \|^2_{\mathcal C^p} \mathrm ds}$ is bounded for a certain $p > 1+2\alpha$. 
In turn, if the latter expectation is bounded then for almost every $s$, $y \mapsto A_s^g(y)$ is of class $\mathcal C^p$. But we know, by definition~\eqref{def:Agt} of $A_s^g$ and by Proposition~\ref{prop:differentiability}, that $A_s^g \in \mathcal C^p$ if $h \in \mathcal C^p$, $g \in \mathcal C^{p+\theta}$ and $\alpha >p+\frac{1}{2}$. The regularity of $h$ is not a big problem, since we could simply assume higher regularity in the assumptions of Theorem~\ref{theo:Bismut pour phi assumptions}. However, it is impossible to choose $\alpha$ so that both inequalities $p > 1+2\alpha$ and $\alpha >p+\frac{1}{2}$ hold simultaneously. Regularizing $A^g$ allows to work with $A_s^{g,\eps} \in \mathcal C^p$ without having to assume that $\alpha > p +\frac{1}{2}$. 
\end{rem}

\subsection{A Bismut-Elworthy-like formula}

Let us state and prove an integration by parts formula,  close to Bismut-Elworthy formula.

\begin{prop}
\label{prop:bismut}
Let $\theta \in (0,1)$, $g \in \mathbf G^{3+\theta}$ and $f$ be of order $\alpha=\frac{7}{2}+\theta$.
For every $t \in (0,T]$, 
\begin{align}
\label{bismutI1}
I_1
=\frac{1}{t} \Ewb{\phi(\mu_t^g) \sum_{k \in \Z} \int_0^t \Re( \overline{\lambda_s^k} \mathrm dW_s^k)},
\end{align}
where $\overline{\lambda_s^{k}}$ denotes the complex conjugate of $\lambda_s^{k}$.
\end{prop}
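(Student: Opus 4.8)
The strategy is the classical Bismut–Elworthy argument via Girsanov's theorem, adapted to the present infinite-dimensional setting, with the Fourier inversion of Lemma~\ref{lemme:decomposition Fourier} playing the role that the (pseudo-)inverse of the diffusion matrix plays in the finite-dimensional case. Start from the expression
\[
I_1=\frac{1}{t}\,\Ewb{\int_0^1 \partial_\mu\phi(\mu_t^g)(x_t^g(u))\,\frac{\partial_u x_t^g(u)}{g'(u)}\,a_t(u)\,\mathrm du},
\]
and recognize, by the second identity in~\eqref{formules_derivees_semigroupe} applied with the (random, $\mathcal G_0$-independent of the noise) perturbation direction $u\mapsto a_t(u)/g'(u)$ — more precisely by differentiating $\rho\mapsto P_t\phi(\mu_0^{g+\rho a_t})$ — that the inner $u$-integral is the Fréchet derivative of the lifted semigroup in direction $a_t$. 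Concretely, using Proposition~\ref{prop:phi assumptions Pt phi} and Proposition~\ref{prop:kunita_identities}, I would write
\[
\int_0^1 \partial_\mu\phi(\mu_t^g)(x_t^g(u))\,\partial_x Z_t^{g(u)}\,a_t(u)\,\mathrm du
=\frac{\mathrm d}{\mathrm d\rho}\Big|_{\rho=0}\Ewb{\widehat\phi\big(Z_t^{g(\cdot)+\rho a_t(\cdot)}\big)}
=\frac{\mathrm d}{\mathrm d\rho}\Big|_{\rho=0}\Ewb{\widehat\phi\big(Z_t^{g(\cdot)+\rho\, t\,\theta_t(\cdot)}\big)},
\]
where it is natural to introduce a process $\zeta_t(u)$ with $\zeta_0=0$ and $\dot\zeta_s(u)$ chosen so that the perturbation "accumulates" the right amount; the point of writing $a_t(u)=\int_0^t \frac{g'(u)}{\partial_u x_s^g(u)}A_s^{g,\eps}(x_s^g(u))\,\mathrm ds$ is exactly that $a_t$ is an absolutely continuous function of $t$ starting at $0$, so Lemma~\ref{lem_Kunita} (Kunita's expansion) applies to $(Z_t^{g(u)+\rho a_t(u)})_t$.

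\textbf{Key steps.} (i) Differentiate the Kunita expansion of $Z_t^{g(u)+\rho a_t(u)}$ at $\rho=0$: the derivative process $Y_s(u):=\partial_\rho|_{\rho=0}Z_s^{g(u)+\rho a_s(u)}$ solves a linear equation whose homogeneous part is exactly the equation solved by $\partial_x Z_s^{g(u)}$, with source $\partial_x Z_s^{g(u)}\,\dot a_s(u)$, where $\dot a_s(u)=\frac{g'(u)}{\partial_u x_s^g(u)}A_s^{g,\eps}(x_s^g(u))$. Solving by variation of constants and using $\partial_x Z_t^{g(u)}=\partial_u x_t^g(u)/g'(u)$ gives $Y_t(u)=\partial_x Z_t^{g(u)}\int_0^t \frac{\dot a_s(u)}{\partial_x Z_s^{g(u)}}\,\mathrm ds=\frac{\partial_u x_t^g(u)}{g'(u)}\,a_t(u)$, which matches the integrand of $I_1$ — so indeed $t\,I_1=\partial_\rho|_{\rho=0}\Ewb{\widehat\phi(Z_t^{g(\cdot)+\rho a_t(\cdot)})}$. (ii) Now remove the derivative on $\phi$ by Girsanov: the perturbation $\rho a_t(u)$ on the initial/driving data can be absorbed into a shift of the Brownian motions $W^k$. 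Using the Fourier inversion~\eqref{eq:decomposition Fourier}, $A_s^{g,\eps}(x_s^g(u))=\sum_k f_k e^{-ikx_s^g(u)}\lambda_s^k$, so the drift created by the perturbation in the SDE for $Z^{g(u)+\rho a_t(u)}$ has exactly the structure $\rho\sum_k f_k\Re(e^{-ikZ_s}\cdot(\text{something}))$ that a Girsanov shift $\mathrm dW^k_s\mapsto \mathrm dW^k_s+\rho\,\overline{\lambda_s^k}\,\mathrm ds$ (appropriately interpreted on real/imaginary parts) produces — this is the whole reason the Fourier inversion was engineered. Apply Girsanov on $[0,t]$ with density $\mathcal E_t^\rho=\exp(\rho\sum_k\int_0^t\Re(\overline{\lambda_s^k}\mathrm dW_s^k)-\tfrac{\rho^2}{2}\sum_k\int_0^t|\lambda_s^k|^2\mathrm ds)$, whose exponent is in $L^2$ by~\eqref{somme lambda ks carré}; under the shifted measure the law of $(Z_s^{g(u)+\rho a_t(u)})_{s\le t}$ coincides with that of $(Z_s^{g(u)})_{s\le t}$, hence $\Ewb{\widehat\phi(Z_t^{g(\cdot)+\rho a_t(\cdot)})}=\Ewb{\widehat\phi(Z_t^{g(\cdot)})\,\mathcal E_t^\rho}$ (under $\mathbb P^\beta$ the noise $\beta$ is untouched). (iii) Differentiate this identity in $\rho$ at $0$: $\partial_\rho|_{\rho=0}\mathcal E_t^\rho=\sum_k\int_0^t\Re(\overline{\lambda_s^k}\mathrm dW_s^k)$, and since $\widehat\phi(x_t^g)=\phi(\mu_t^g)$, we get
\[
t\,I_1=\Ewb{\phi(\mu_t^g)\sum_{k\in\Z}\int_0^t\Re(\overline{\lambda_s^k}\,\mathrm dW_s^k)},
\]
which is~\eqref{bismutI1}.

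\textbf{Main obstacle.} The delicate points are all about justifying the interchanges of limits. First, differentiating under $\mathbb E^W$ in $\rho$: one needs uniform integrability of the difference quotients $\rho^{-1}(\widehat\phi(Z_t^{g(\cdot)+\rho a_t})-\widehat\phi(Z_t^{g(\cdot)}))$ on one side and $\rho^{-1}(\mathcal E_t^\rho-1)$ times $\widehat\phi(x_t^g)$ (which is bounded, since $\phi\in L_\infty$) on the other — the latter follows from $L^2$-continuity of $\rho\mapsto\mathcal E_t^\rho$ using the moment bound~\eqref{somme lambda ks carré} and boundedness of $\phi$; the former uses the Lipschitz regularity of $\widehat\phi$ (Remark~\ref{rem:phi2_implique_lipschitz}) together with $L^2$-bounds on $\rho^{-1}(Z_t^{g+\rho a_t}-Z_t^g)$, i.e. on the derivative process, which in turn rest on the moment estimates of Lemma~\ref{lem_control_moments} and on $a_t$ being controlled (again via~\eqref{somme lambda ks carré}-type bounds). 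Second, one must check Novikov/Girsanov is legitimate despite the infinite sum over $k$ and the fact that $g$ (hence $\lambda^k$) is $\mathcal G_0$-measurable and independent of $W$: conditioning on $\mathcal G_0$ reduces to the deterministic-$g$ case, and the $L^2$-control $\sum_k\int_0^t|\lambda_s^k|^2\mathrm ds<\infty$ a.s. suffices for a standard exponential-martingale / localization argument. Third, the Kunita expansion of Lemma~\ref{lem_Kunita} requires $\zeta_t=a_t$ to be adapted, absolutely continuous in $t$, with $\zeta_0=0$ and integrable — all of which hold by construction of $a_t$ and the regularity of $A_t^{g,\eps}$ and of the flow $x^g$. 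I expect the $L^2$-differentiability/uniform-integrability bookkeeping (step (i)–(iii) interchanges) to be the main technical burden; the algebraic identification of the Girsanov drift with the Fourier series is essentially immediate once~\eqref{eq:decomposition Fourier} is in hand.
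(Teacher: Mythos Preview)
Your strategy is the same as the paper's --- Kunita expansion, Fourier inversion, Girsanov, differentiate at $\rho=0$ --- but there is a real gap in step (ii). The identity you claim,
\[
\Ewb{\widehat\phi\big(Z_t^{g(\cdot)+\rho a_t(\cdot)}\big)}=\Ewb{\widehat\phi\big(Z_t^{g(\cdot)}\big)\,\mathcal E_t^\rho},
\]
is \emph{false} for $\rho\neq 0$. A Girsanov shift $\mathrm dW_s^k\mapsto \mathrm dW_s^k+\rho\lambda_s^k\,\mathrm ds$ introduces in the SDE the drift $\rho\sum_k f_k\Re\big(e^{-ik\,\bullet}\lambda_s^k\big)=\rho A_s^{g,\eps}(\bullet)$ evaluated at the \emph{solution of the shifted equation}, say $Y_s^\rho(u)$; so what Girsanov actually gives is that $Y^\rho$ under the new measure has the law of $x^g$ under the old one. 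By contrast, the Kunita expansion of $Z_t^{g(u)+\rho a_t(u)}$ has drift $\rho\,\partial_x Z_s^{g(u)+\rho a_s(u)}\,\dot a_s(u)$, which equals $\rho A_s^{g,\eps}(x_s^g(u))$ only at $\rho=0$. The two processes $Z^{g+\rho a}$ and $Y^\rho$ do \emph{not} coincide, and the ``algebraic identification'' you call immediate is exactly where the work lies. The paper makes this precise: it introduces $Y^{\rho,M}$ explicitly (equation~\eqref{comparaison Y}) and proves in Lemma~\ref{lemme:comparaison Z Y} that
\[
\Ewb{\int_0^1|Z_t^{g(u)+\rho a_t^M(u)}-Y_t^{\rho,M}(u)|^2\,\mathrm du}^{1/2}\leq C|\rho|^{5/4},
\]
so that the \emph{derivatives at $\rho=0$} of $\Ewb{\widehat\phi(Z_t^{g+\rho a^M})\mathcal E_t^\rho}$ and $\Ewb{\widehat\phi(Y_t^{\rho,M})\mathcal E_t^\rho}$ agree; only the latter is identically zero by Girsanov.

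Second, the localization by the stopping times $\tau_M$ is not just a Novikov device you can wave away. The comparison estimate above needs uniform control on $\|\partial_u x_s^g\|_{L_\infty}$, $\|1/\partial_u x_s^g\|_{L_\infty}$ and $\|A_s^{g,\eps}\|_{\mathcal C^1}$ along the trajectory (Lemma~\ref{lemme:deux inegalites}), and the almost-sure bound on $\sum_k\int_0^T|\lambda_s^{k,M}|^2\,\mathrm ds$ in Lemma~\ref{lemme:decomposition Fourier M} (as opposed to the merely $L^1$-bound~\eqref{somme lambda ks carré}) is what makes $\mathcal E^\rho$ a genuine martingale. One then passes $M\to\infty$ in the localized identity~\eqref{egalite localisee} via the uniform-integrability checks~\eqref{unifinteg 1}--\eqref{unifinteg 2}. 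Your sketch correctly identifies the moment estimates of Lemma~\ref{lem_control_moments} and the Lipschitz regularity of $\widehat\phi$ as the relevant inputs, but it misses the auxiliary process and the $o(\rho)$ comparison, without which the argument does not close.
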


In view of proving Proposition~\ref{prop:bismut}, let us introduce the following stopping times. 
Let $M_0$ be an integer large enough so that for every $u\in \R$, $\frac{1}{M_0}< g'(u) < M_0$. For every $M \geq M_0$, define:
\begin{align}
\tau_M^1&:= \inf\{ t\geq 0: \left\| \partial_u x_t^g  \right\|_{L_\infty} \geq M \}\wedge T; \notag \\
\tau_M^2&:= \inf\{ t\geq 0: \left\| \textstyle \frac{1}{\partial_u x_t^g(\cdot)} \right\|_{L_\infty} \geq M \}\wedge T; 
\label{def:tau2M}\\
\tau_M&:= \tau_M^1 \wedge \tau_M^2.\notag
\end{align}
Since $g \in \mathbf G^{3+\theta}$ and $f$ is of order $\alpha > \frac{7}{2}$, inequalities~\eqref{Lp norm 7} and~\eqref{Lp norm 8} imply that 
\begin{align}
\label{convergence temps d'arret}
\mathbb P^W \otimes \mathbb P^\beta \left[ \tau_M < T \right] \underset{M \to +\infty}{\longrightarrow} 0.
\end{align}

\begin{lemme}
\label{lemme:decomposition Fourier M}
Let $M \geq M_0$, $\theta \in (0,1)$, $g \in \mathbf G^{3+\theta}$ and $f$ be of order $\alpha=\frac{7}{2}+\theta$. Fix $\eps \in (0,1)$.  
 Then there is a collection of $(\mathcal G_t)_{t\in [0,T]}$-adapted $\C$-valued processes $((\lambda_t^{k,M})_{t\in [0,T]})_{k \in \Z}$ such that for every $t\in [0,T]$, the following equality holds 
\begin{align}
\label{eq:decomposition Fourier M}
\mathds 1_{\{ t \leq \tau_M \}} A_t^{g,\eps} (y)  = \sum_{k \in \Z}  f_k  e^{-ik y}   \lambda_t^{k,M},
\end{align}
and  there is a constant $C_{M,\eps}>0$ such that $\mathbb P^W \otimes \mathbb P^\beta$-almost surely,  $\sum_{k \in \Z} \int_0^T   |\lambda_t^{k,M}|^2 \mathrm dt \leq C_{M,\eps}$. 
\end{lemme}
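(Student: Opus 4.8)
The statement is a localized version of Lemma~\ref{lemme:decomposition Fourier}: we want the same Fourier inversion but with an almost-sure (rather than merely $L^1$-in-expectation) bound on $\sum_k \int_0^T |\lambda_t^{k,M}|^2\,\mathrm dt$, achieved by cutting the process off at the stopping time $\tau_M$. The plan is to take the truncated function $\mathds 1_{\{t\leq\tau_M\}} A_t^{g,\eps}$, which for each fixed $t$ is still a $2\pi$-periodic $\mathcal C^\infty$-function of $y$ (when $t\leq\tau_M$) or identically zero (when $t>\tau_M$), and in either case a $2\pi$-periodic $\mathcal C^1$-function, so Dirichlet's theorem applies and it equals the sum of its Fourier series. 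Then I would set $\lambda_t^{k,M} := \mathds 1_{\{t\leq\tau_M\}}\, c_k(A_t^{g,\eps})/f_k$, exactly mirroring the definition of $\lambda_t^k$ in Lemma~\ref{lemme:decomposition Fourier}; adaptedness is immediate since $\tau_M$ is a stopping time and $(A_t^{g,\eps})$ is adapted, and equality~\eqref{eq:decomposition Fourier M} holds trivially.

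For the quantitative bound, I would reuse the computation~\eqref{calcul_coeff_fourier}--\eqref{somme lambda carres} verbatim: it already gives, $\mathbb P^W\otimes\mathbb P^\beta$-almost surely and for every $t$,
\begin{align*}
\sum_{k\in\Z}\int_0^T |\lambda_t^{k,M}|^2\,\mathrm dt \leq \frac{C}{\eps^{6+4\theta}}\int_0^T \mathds 1_{\{s\leq\tau_M\}}\,\|A_s^g\|^2_{\mathcal C^1}\,\mathrm ds.
\end{align*}
On the event $\{s\leq\tau_M\}$ we have, by definition of $\tau_M=\tau_M^1\wedge\tau_M^2$, both $\|\partial_u x_s^g\|_{L_\infty}\leq M$ and $\|1/\partial_u x_s^g\|_{L_\infty}\leq M$. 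Feeding these into the explicit bounds $\|A_s^g\|_{L_\infty}\leq \|\partial_u x_s^g\|_{L_\infty}\|h\|_{L_\infty}$ and~\eqref{norme 1 Asg} for $\|\partial_x A_s^g\|_{L_\infty}$, one gets $\mathds 1_{\{s\leq\tau_M\}}\|A_s^g\|_{\mathcal C^1}^2 \leq C\|h\|_{\mathcal C^1}^2(1+M^2)^2$ say, where one still needs a pointwise bound on $\|\partial^{(2)}_u x_s^g\|_{L_\infty}$; this does not follow from $\tau_M$ alone, so I would either enlarge the localization to include a third stopping time controlling $\|\partial^{(2)}_u x_t^g\|_{L_\infty}$, or — more in the spirit of the paper — simply observe that for fixed $\omega$ the continuous process $s\mapsto\|\partial^{(2)}_u x_s^g\|_{L_\infty}$ is bounded on $[0,T]$ by some finite (random) constant, which is all that is needed for an almost-sure bound $C_{M,\eps}$ that may depend on $\omega$. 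Integrating over $s\in[0,T]$ then yields the claimed constant $C_{M,\eps}$.

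\textbf{Main obstacle.} The only subtlety is that the stopping time $\tau_M$ as defined controls $\partial_u x^g$ and its reciprocal but \emph{not} the second derivative $\partial^{(2)}_u x^g$, which enters $\|\partial_x A_s^g\|_{L_\infty}$ through~\eqref{norme 1 Asg}. The cleanest fix, and the one I expect the authors use, is to note that since $g\in\mathbf G^{3+\theta}$ and $\alpha>\frac72$, Proposition~\ref{prop:differentiability} guarantees that almost surely $s\mapsto\partial^{(2)}_u x_s^g$ is a continuous $\mathcal C(\R\times[0,T])$-object, hence $\sup_{s\leq T}\|\partial^{(2)}_u x_s^g\|_{L_\infty}<\infty$ almost surely; combined with $\tau_M$ controlling $1/\partial_u x^g$, this makes $\int_0^T\mathds 1_{\{s\leq\tau_M\}}\|\partial_x A_s^g\|_{L_\infty}^2\,\mathrm ds$ almost surely finite. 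Since the lemma only asks for an almost-sure bound $C_{M,\eps}$ (with no uniformity in $\omega$ claimed), this suffices; the role of $M$ is merely to give, later, the uniform integrability needed when passing $M\to\infty$ via~\eqref{convergence temps d'arret}.
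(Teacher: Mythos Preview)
Your definition $\lambda_t^{k,M}:=\mathds 1_{\{t\leq\tau_M\}}\,c_k(A_t^{g,\eps})/f_k$ and the verification of~\eqref{eq:decomposition Fourier M} are correct and match the paper. The issue is the quantitative bound.

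You claim that a \emph{random} constant $C_{M,\eps}=C_{M,\eps}(\omega)$ suffices. It does not. The lemma is phrased as ``there is a constant $C_{M,\eps}>0$ such that almost surely \ldots'', meaning a deterministic constant; more importantly, the only reason this lemma exists is to feed into Novikov's condition in the proof of Lemma~\ref{lemme 31}, where one needs $\Ewb{\exp\big(\frac{\rho^2}{2}\sum_k\int_0^T|\lambda_s^{k,M}|^2\,\mathrm ds\big)}<\infty$ and later $\Ewb{|\mathcal E_t^\rho|^2}\leq e^{\rho_0^2 C_{M,\eps}}$. A merely almost-surely finite random bound gives neither. Your alternative --- adjoining a third stopping time $\tau_M^3$ controlling $\|\partial_u^{(2)}x_t^g\|_{L_\infty}$ --- would indeed work (convergence $\mathbb P[\tau_M^3<T]\to 0$ follows from~\eqref{Lp norm 7}), but this is not what the paper does.

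The paper's route is simpler and avoids the second derivative altogether. Instead of recycling~\eqref{somme lambda carres} with its $\|A_s^g\|_{\mathcal C^1}$, it takes one extra power from the Gaussian factor, $\big|\int_\R\varphi(x)e^{ik\eps x}\,\mathrm dx\big|\leq C|k\eps|^{-(4+2\theta)}$, and uses only the crude bound $|c_k(A_s^g)|\leq\|A_s^g\|_{L_2(\tor)}$ with no gain from differentiation. Since $|k|^{7+2\theta}\cdot|k\eps|^{-(8+4\theta)}=\eps^{-(8+4\theta)}|k|^{-(1+2\theta)}$ is summable, this yields
\[
\sum_{k\in\Z}\int_0^T|\lambda_t^{k,M}|^2\,\mathrm dt
\leq \frac{C}{\eps^{8+4\theta}}\int_0^T\mathds 1_{\{t\leq\tau_M\}}\|A_t^g\|_{L_2(\tor)}^2\,\mathrm dt
\leq \frac{CT}{\eps^{8+4\theta}}\,M^2\|h\|_{L_\infty}^2,
\]
using only $\mathds 1_{\{t\leq\tau_M^1\}}\|A_t^g\|_{L_\infty}\leq M\|h\|_{L_\infty}$. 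The price is a worse $\eps$-dependence, but $C_{M,\eps}$ is allowed to depend on $\eps$, so this is free.
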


\begin{proof}
Define for every $t\in [0,T]$, $\lambda_t^{k,M}:= \mathds 1_{\{ t \leq \tau_M \}} \frac{c_k(A_t^{g,\eps})}{f_k}= \mathds 1_{\{ t \leq \tau_M \}} \lambda_t^k$. 
Similarly as in  the proof of Lemma~\ref{lemme:decomposition Fourier}, there is a constant $C>0$ such that for every $k\in \Z \backslash\{0\}$ and for every $\eps>0$,  $\left|\int_\R \varphi (x) e^{ik \eps x}\mathrm dx  \right|\leq \frac{C}{|k\eps|^{4+2\theta} }$. Furthermore, for every $k \in \Z$, $|c_k(A_s^g)|\leq \| A_s^g \|_{L_2(\tor)}$. 
Thus we have
\begin{align*}
\sum_{k \in \Z} \int_0^T   |\lambda_t^{k,M}|^2 \mathrm dt 
&\leq C \int_0^T \mathds 1_{\{ t \leq \tau_M \}}|c_0(A_t^g)|^2 \mathrm dt
+ \sum_{k \neq 0} \int_0^T \mathds 1_{\{ t \leq \tau_M \}} |k|^{7+2\theta}\frac{1}{|k\eps|^{8+4\theta}} |c_k(A_t^g)|^2 \mathrm dt \\
& \leq \frac{C}{\eps^{8+4\theta}} \int_0^T \mathds 1_{\{ t \leq \tau_M \}} \| A_t^g \|^2_{L_2(\tor)} \mathrm dt.
\end{align*}
By Definition~\eqref{def:Agt}, for every $s\in [0,T]$, 
\begin{align*}
\mathds 1_{\{ t \leq \tau_M \}} \left\| A_t^g \right\|_{L_2(\tor)}
\leq C \mathds 1_{\{ t \leq \tau_M \}} \left\| A_t^g \right\|_{L_\infty(\tor)}
\leq C\mathds 1_{\{ t \leq \tau_M^1 \}}  \left\| h\right\|_{L_\infty} \left\| \partial_u x_t^g\right\|_{L_\infty}
\leq C M  \left\| h\right\|_{L_\infty}.
\end{align*}
Since the constant does not depend on $t$, we deduce the statement of the lemma. 
\end{proof}

Define the $(\mathcal G_t)$-adapted process $(a^M_t)_{t\in [0,T]}$ by $a^M_t= a_{t \wedge \tau_M}$, in other words: 
\begin{align}
\label{def:alpha_t}
a^M_t(u):=\int_0^t \mathds 1_{\{ s \leq \tau_M \}} \frac{g'(u)}{\partial_u x_s^g(u)} A_s^{g,\eps} (x_s^g(u)) \mathrm ds.
\end{align} 
We easily check that for every $u\in \R$, $a^M_0(u)=0$ and that $\dot{a}^M_t(u)=\frac{g'(u)}{\partial_u x_t^g(u)}  \mathds 1_{\{ t \leq \tau_M \}} A_t^{g,\eps} (x_t^g(u))$ is a $1$-periodic and  continuous function of $u\in \R$.

\begin{lemme}
\label{lemme:deux inegalites}
Let $\eps \in (0,1)$. 
For every $M \geq M_0$, there are two constants $C^a_M$ (depending on $T$, $M$, $g'$ and $h$)  and $C_{M,\eps}$ (depending on $T$, $M$, $\eps$ and $h$)  such that for every $t\in [0,T]$ 
\begin{align}
\label{atM norme infini}
 \|a_t^M\|_{L_\infty} &\leq C_M^a ; \\
\int_0^{t \wedge \tau_M}\|A_s^{g,\eps} \|^2_{\mathcal C^1} \mathrm ds &\leq C_{M,\eps}. 
\label{Atgeps norme C1}
\end{align}
\end{lemme}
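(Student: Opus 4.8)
\textbf{Proof plan for Lemma~\ref{lemme:deux inegalites}.}

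The plan is to establish both bounds by combining the explicit definitions~\eqref{def:alpha_t} and~\eqref{def:Aeps} with the control that the stopping times $\tau_M$ provide on $\partial_u x_t^g$ and $1/\partial_u x_t^g$. For~\eqref{atM norme infini}, I would start directly from the integral formula for $a_t^M(u)$. On the event $\{s \le \tau_M\}$ we have $\frac{1}{M} \le \partial_u x_s^g(u) \le M$ and, since $g \in \mathbf G^{3+\theta}$ with $M \ge M_0$, also $\frac{1}{M_0} \le g'(u) \le M_0$, so the prefactor $\frac{g'(u)}{\partial_u x_s^g(u)}$ is bounded by $M M_0 \le M^2$. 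For the remaining factor, $\|A_s^{g,\eps}\|_{L_\infty} = \|A_s^g \ast \varphi_\eps\|_{L_\infty} \le \|A_s^g\|_{L_\infty}$ by Young's inequality (convolution with a probability density cannot increase the sup norm), and $\|A_s^g\|_{L_\infty} \le \|\partial_u x_s^g\|_{L_\infty}\|h\|_{L_\infty} \le M \|h\|_{L_\infty}$ on $\{s \le \tau_M^1\}$. Integrating over $s \in [0,t] \subseteq [0,T]$ then gives $\|a_t^M\|_{L_\infty} \le T M^3 M_0 \|h\|_{L_\infty} =: C_M^a$, which depends only on $T$, $M$, $g'$ (through $M_0$) and $h$, as claimed.

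For~\eqref{Atgeps norme C1}, the point is that $A_s^{g,\eps}$ is a mollification of $A_s^g$, so all its $\mathcal C^1$-norms are controlled, losing a factor of $\eps$ at worst. Concretely, $\|A_s^{g,\eps}\|_{L_\infty} \le \|A_s^g\|_{L_\infty}$ as above, and $\partial_x A_s^{g,\eps} = A_s^g \ast \varphi_\eps'$, so $\|\partial_x A_s^{g,\eps}\|_{L_\infty} \le \|A_s^g\|_{L_\infty} \|\varphi_\eps'\|_{L_1} = \frac{1}{\eps}\|\varphi'\|_{L_1}\|A_s^g\|_{L_\infty}$; alternatively one may write $\partial_x A_s^{g,\eps} = (\partial_x A_s^g) \ast \varphi_\eps$ and use~\eqref{norme 1 Asg}, but on the event $\{s \le \tau_M\}$ either route gives a bound. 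On $\{s \le \tau_M\}$ we have $\|A_s^g\|_{L_\infty} \le M\|h\|_{L_\infty}$, hence $\|A_s^{g,\eps}\|_{\mathcal C^1}^2 \le C \frac{M^2}{\eps^2}\|h\|_{\mathcal C^1}^2$ with $C$ absolute. Integrating $\mathds 1_{\{s \le \tau_M\}}\|A_s^{g,\eps}\|_{\mathcal C^1}^2$ over $[0,t] \subseteq [0,T]$ yields $\int_0^{t\wedge\tau_M}\|A_s^{g,\eps}\|_{\mathcal C^1}^2\,\mathrm ds \le C T \frac{M^2}{\eps^2}\|h\|_{\mathcal C^1}^2 =: C_{M,\eps}$, which depends only on $T$, $M$, $\eps$ and $h$, as required.

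There is no real obstacle here: both estimates are deterministic consequences of the stopping-time definitions in~\eqref{def:tau2M} plus elementary convolution inequalities, and the dependence of the constants on the parameters is exactly what the stopping times were designed to guarantee. The only minor care needed is to note that one should \emph{not} try to keep the $\eps$-dependence uniform in~\eqref{Atgeps norme C1} --- unlike in Lemma~\ref{lemme:decomposition Fourier}, here losing powers of $1/\eps$ is harmless since these bounds serve only to justify the later application of Girsanov's theorem (through finiteness of the relevant quantities), after which one lets $M \to +\infty$ using~\eqref{convergence temps d'arret}.
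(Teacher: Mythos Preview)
Your proposal is correct and follows essentially the same route as the paper: both arguments bound $\|A_s^{g,\eps}\|_{L_\infty}$ via Young's inequality and $\|\partial_x A_s^{g,\eps}\|_{L_\infty}$ via $A_s^g \ast \varphi_\eps'$ with $\|\varphi_\eps'\|_{L_1} \le C/\eps$, then use the stopping-time control $\|\partial_u x_s^g\|_{L_\infty} \le M$ on $\{s \le \tau_M\}$ together with $\|A_s^g\|_{L_\infty} \le \|h\|_{L_\infty}\|\partial_u x_s^g\|_{L_\infty}$. The only cosmetic difference is that the paper writes $C_M^a = T\|g'\|_{L_\infty} M^2 \|h\|_{L_\infty}$ whereas you track $M_0$ explicitly (and there is a harmless slip in your final exponent, $TM^3 M_0$ instead of $TM^2 M_0$).
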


\begin{proof}
By definition of $\tau_M$, $|a^M_t(u)| \leq T \|g'\|_{L_\infty} M \sup_{s \leq \tau_M} \|A_s^{g,\eps}\|_{L_\infty}$ for every $t\in [0,T]$ and  $u\in \R$. 
Since $A_s^{g,\eps}$ is $2\pi$-periodic and $A_s^{g,\eps}=A_s^g \ast \varphi_\eps$, with $\left\|   \varphi_\eps\right\|_{L_1(\R)}=1$, we have  $\left\| A_s^{g,\eps}\right\|_{L_\infty}\leq \left\| A_s^g\right\|_{L_\infty(\tor)}$. Recall that by definition~\eqref{def:Agt}, $\left\| A_s^g\right\|_{L_\infty(\tor)} \leq \left\| h\right\|_{L_\infty} \left\| \partial_u x_s^g\right\|_{L_\infty}$. We deduce that 
\begin{align}
\label{la}
\mathds 1_{\{s \leq \tau_M \}}  \left\|  A_s^{g,\eps}\right\|_{L_\infty(\tor)}
 \leq\left\| h\right\|_{L_\infty} \mathds 1_{\{s \leq \tau_M^1 \}}\left\| \partial_u x_s^g\right\|_{L_\infty}
\leq M\left\| h\right\|_{L_\infty}. 
\end{align}
Therefore, inequality~\eqref{atM norme infini} holds with $C^a_M:= T \|g'\|_{L_\infty} M^2 \|h\|_{L_\infty}$.

For every $t\in [0,T]$,  $ \partial_x A_t^{g,\eps}= A_t^g \ast \partial_x  \varphi_\eps $. Since $\left\| \partial_x  \varphi_\eps\right\|_{L_1(\R)}\leq \frac{C}{\eps}$, we obtain
\begin{align}
\label{lb}
\mathds 1_{\{t \leq \tau_M \}}  \left\| \partial_x A_t^{g,\eps}\right\|_{L_\infty(\tor)} \leq \frac{C}{\eps}\left\| h\right\|_{L_\infty} \mathds 1_{\{t \leq \tau_M^1 \}}\left\| \partial_u x_t^g\right\|_{L_\infty}
\leq \frac{CM}{\eps }\left\| h\right\|_{L_\infty}. 
\end{align}
It follows from~\eqref{la} and~\eqref{lb} that   $\int_0^{t \wedge \tau_M}\|A_s^{g,\eps} \|^2_{\mathcal C^1} \mathrm ds \leq  T \frac{C}{\eps^2}  M^2 \|h\|_{L_\infty}^2$ for every $t\in [0,T]$, whence we obtain~\eqref{Atgeps norme C1}. 
\end{proof}

Using the constant $C_M^a$ appearing in~\eqref{atM norme infini}, we define $\rho_0:=\rho_0(M)=\frac{1}{2 C^a_M}$. 
The following lemma makes use of Kunita's expansion.

\begin{lemme}
\label{lemme:comparaison Z Y}
Let $f$ be of order $\alpha=\frac{7}{2}+\theta$. 
Define the auxiliary process $(Y_t^{\rho,M})_{t\in [0,T]}$ as the solution to:
\begin{align}
\label{comparaison Y}
Y_t^{\rho,M} (u) =  g(u) +\sum_{k \in \Z} f_k \int_0^t \Re \left(e^{-ik Y_s^{\rho,M}(u)} \mathrm dW^k_s  \right)+ \beta_t  + \rho \int_0^t \mathds 1_{\{ s \leq \tau_M \}} A_s^{g,\eps} (Y_s^{\rho,M}(u)) \mathrm ds. 
\end{align}
Then there exists  $C$ depending on $M$, $f$, $g'$, $h$, $T$  and $\eps$ such that for every $\rho \in (-\rho_0,\rho_0)$ and for every $t\in [0,T]$,  
\begin{align}
\Ewb{\int_0^1  |Z_t^{g(u)+\rho a_t^M(u)} - Y_t^{\rho,M} (u)|^2 \mathrm du}^{1/2}\leq C |\rho|^{5/4}.
\end{align}
\end{lemme}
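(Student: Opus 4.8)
The strategy is to compare the two processes $Z_t^{g(u)+\rho a_t^M(u)}$ and $Y_t^{\rho,M}(u)$ by writing each as a perturbed SDE, subtracting them, and running a Gr\"onwall argument in $L_2([0,1]\times\Omega^\beta)$. By Lemma~\ref{lem_Kunita} applied with $\zeta_t=a_t^M$ (which is absolutely continuous, vanishes at $0$, and is integrable by~\eqref{atM norme infini}), we have
\begin{align*}
Z_t^{g(u)+\rho a_t^M(u)}
= g(u) + \sum_{k\in\Z} f_k \int_0^t \Re\!\left(e^{-ikZ_s^{g(u)+\rho a_s^M(u)}}\mathrm dW_s^k\right) + \beta_t
+ \rho \int_0^t \partial_x Z_s^{g(u)+\rho a_s^M(u)}\,\dot a_s^M(u)\,\mathrm ds,
\end{align*}
while $Y_t^{\rho,M}$ satisfies~\eqref{comparaison Y}. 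Denoting $D_t(u):=Z_t^{g(u)+\rho a_t^M(u)}-Y_t^{\rho,M}(u)$, I would split $D_t$ into the martingale part coming from the difference of the $W^k$-integrals, which is Lipschitz in the state variable (the coefficients $y\mapsto f_k e^{-iky}$ have summable-in-$k$ Lipschitz constants because $\alpha>1$, actually $\alpha=\tfrac72+\theta$), and the two drift contributions: the ``genuine'' drift term $\rho\,\partial_x Z_s\,\dot a_s^M$ versus $\rho\,\mathds 1_{\{s\le\tau_M\}}A_s^{g,\eps}(Y_s^{\rho,M})$.

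The key algebraic observation is that $\dot a_s^M(u)=\mathds 1_{\{s\le\tau_M\}}\,\dfrac{g'(u)}{\partial_u x_s^g(u)}\,A_s^{g,\eps}(x_s^g(u))$, and that on $\{s\le\tau_M\}$, by Proposition~\ref{prop:kunita_identities} (identity~\eqref{liens entre les dérivées}), $\partial_x Z_s^{g(u)}=\dfrac{\partial_u x_s^g(u)}{g'(u)}$, i.e.\ $\partial_x Z_s^{g(u)}\,\dot a_s^M(u)=\mathds 1_{\{s\le\tau_M\}}A_s^{g,\eps}(x_s^g(u))=\mathds 1_{\{s\le\tau_M\}}A_s^{g,\eps}(Z_s^{g(u)})$. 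So if $\rho=0$ the two processes coincide with $x_s^g$, and for small $\rho$ the drift discrepancy is, up to the $\partial_x Z_s$ factor evaluated at the perturbed point versus the unperturbed point,
\begin{align*}
\rho\Big[\partial_x Z_s^{g(u)+\rho a_s^M(u)}A_s^{g,\eps}(x_s^g(u))\cdot(\cdots) - \mathds 1A_s^{g,\eps}(Y_s^{\rho,M}(u))\Big],
\end{align*}
which I would bound by $|\rho|$ times (i) the Lipschitz modulus of $A_s^{g,\eps}$ (controlled via~\eqref{Atgeps norme C1}, giving the $\eps$-dependence absorbed into $C$) times $|D_s(u)|$, plus (ii) a term of size $|\rho|$ measuring how far $\partial_x Z_s^{g(u)+\rho a_s^M(u)}$ is from $\partial_x Z_s^{g(u)}$ over the displacement $\rho a_s^M$, which is of order $|\rho|\cdot|\rho|\,\|a_s^M\|_{L_\infty}$ using the $\mathcal C^1$-in-$x$ regularity of $x\mapsto Z_s^x$ (this is where one uses $\alpha>\tfrac32+\theta$ and the $L_p$-bounds of Lemma~\ref{lem_control_moments}, all finite and bounded up to $\tau_M$ by construction of the stopping time). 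Squaring, taking $\Ewb{\cdot}$ and $\sup_{s\le t}$, using Doob and It\^o isometry for the martingale part, one arrives at
\begin{align*}
\Ewb{\sup_{s\le t}\int_0^1 |D_s(u)|^2\,\mathrm du}
\le C_{M,\eps}\int_0^t \Ewb{\sup_{r\le s}\int_0^1|D_r(u)|^2\,\mathrm du}\,\mathrm ds + C_{M,\eps}\,|\rho|^{4},
\end{align*}
so that Gr\"onwall gives $\Ewb{\int_0^1|D_t(u)|^2\mathrm du}\le C|\rho|^4$, hence the square-root bound $C|\rho|^2\le C|\rho|^{5/4}$ for $|\rho|\le1$; in fact one only needs the weaker $|\rho|^{5/4}$, so there is room to be sloppy about the exact power of $|\rho|$ in the remainder (e.g.\ if one only gets $|\rho|^{5/2}$ from the drift discrepancy because $a_s^M$ itself is $O(\rho^0)$ but the displacement argument gives one extra $\rho$, the conclusion still holds).

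\textbf{Main obstacle.} The delicate point is not the Gr\"onwall mechanics but getting the quantitative comparison of $\partial_x Z_s^{x+\delta}$ with $\partial_x Z_s^{x}$ (and of $Z_s^{x+\delta}$ with $Z_s^x$) with a genuine power $\delta$ in front, uniformly up to $\tau_M$ — i.e.\ one needs $\mathcal C^{1}$- (indeed $\mathcal C^{1+}$-)regularity of the stochastic flow $x\mapsto Z_s^x$ with moment bounds, which requires $\alpha>\tfrac32+\theta$ and careful use of~\eqref{exponential explicit partial xtg}/Proposition~\ref{prop_para} together with the stopping-time truncation to make every appearing norm ($\|\partial_u x_s^g\|_{L_\infty}$, $\|1/\partial_u x_s^g\|_{L_\infty}$, $\|a_s^M\|_{L_\infty}$, $\|A_s^{g,\eps}\|_{\mathcal C^1}$) bounded by deterministic constants $C_M$, $C_{M,\eps}$. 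The condition $\rho_0=\tfrac1{2C_M^a}$ ensures the perturbed argument $g(u)+\rho a_t^M(u)$ stays in a controlled range so that the flow estimates apply with uniform constants. All constants are allowed to blow up with $M$ and $\eps$, which is harmless here since $M$ and $\eps$ are fixed in this lemma; the $\eps$-uniformity is recovered only later, in Proposition~\ref{prop:bismut}, after passing $M\to\infty$ via~\eqref{convergence temps d'arret}.
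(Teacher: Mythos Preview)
Your proposal is correct and follows the same line as the paper's proof: apply Kunita's expansion to $Z_t^{g(u)+\rho a_t^M(u)}$, use the identity $\partial_x Z_s^{g(u)}\dot a_s^M(u)=\mathds 1_{\{s\le\tau_M\}}A_s^{g,\eps}(x_s^g(u))$ to split the drift discrepancy into a Lipschitz-in-$D_s$ piece and two remainder pieces (the paper's $E_2$ and $E_3$), and close by Gr\"onwall. The only difference is quantitative: the paper handles the random displacement $\rho a_s^M(u)$ by Kolmogorov's Lemma on $Z$ and $\partial_x Z$ over the fixed interval $J_u=[g(u)-\tfrac12,g(u)+\tfrac12]$ (this is why $\rho_0=\tfrac{1}{2C_M^a}$), obtaining a pathwise H\"older-$\tfrac14$ bound in $L_2$ and hence the exact forcing $C|\rho|^{5/2}$, whereas your $\mathcal C^1$-flow argument would give a slightly better power---either is sufficient.
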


\begin{proof}
Fix $u\in \R$ and write the equation satisfied by $(Z_t^{g(u)+\rho a_t^M(u)})_{t\in [0,T]}$. We apply Kunita's expansion (Lemma~\ref{lem_Kunita}) with $(\zeta_t)_{t\in [0,T]}:=(a_t^M(u))_{t\in [0,T]}$, $x=g(u)$ and $\zeta_t=a_t^M(u)$.  
By inequality~\eqref{atM norme infini}, we have $\E{\int_0^T |\zeta_t|\mathrm dt} \leq C_M$.   Thus   $\mathbb P^W \otimes \mathbb P^\beta$-almost surely, 
\begin{align}
Z_t^{g(u)+\rho a^M_t(u)}
&=Z_0^{g(u)}+ \sum_{k \in \Z} f_k \int_0^t \Re \left( e^{-ik Z_s^{g(u)+\rho a^M_s(u)}} \mathrm dW^k_s \right) + \beta_t + \rho \int_0^t \partial_x Z_s^{g(u)+\rho a_s^M(u)} \dot{a}_s(u) \mathrm ds \notag\\
&=g(u)+ \sum_{k \in \Z} f_k \int_0^t \Re \left( e^{-ik Z_s^{g(u)+\rho a_s^M(u)}} \mathrm dW^k_s \right) + \beta_t +\rho \int_0^t \mathds 1_{\{ s \leq \tau_M \}} A_s^{g,\eps} (Z_s^{g(u)}) \mathrm ds \notag \\
&\quad + \rho \int_0^t (\partial_x Z_s^{g(u)+\rho a_s^M(u)}-\partial_x Z_s^{g(u)}  )\frac{g'(u)}{\partial_u x_s^g(u)} \mathds 1_{\{ s \leq \tau_M \}}A_s^{g,\eps} (x_s^g(u)) \mathrm ds,
\label{comparaison Z}
\end{align}
where we used the identities~\eqref{egalite Zx} and~\eqref{liens entre les dérivées}.

Comparing equation~\eqref{comparaison Z} with equation~\eqref{comparaison Y} satisfied by $(Y_t^{\rho,M} (u))_{t\in [0,T]}$, we have  for every $t\in [0,T]$, 
\begin{align}
\label{eq:I1I2I3}
\Ewb{\int_0^1  |Z_t^{g(u)+\rho a_t^M(u)} - Y_t^{\rho,M} (u)|^2 \mathrm du} \leq 3(E_1+ \rho^2 E_2+\rho^2 E_3),
\end{align}
where
\begin{align*}
E_1&:=  \Ewb{\int_0^1  \Bigg|\sum_{k \in \Z}\int_0^t  f_k \Re \left((e^{-ik Z_s^{g(u)+\rho a_s^M(u)}}-e^{-ik Y_s^{\rho,M}(u)}) \mathrm dW^k_s  \right)\Bigg|^2 \mathrm du} ;   \\
E_2&:=  \Ewb{\int_0^1 \left|  \int_0^t \mathds 1_{\{ s \leq \tau_M \}}  (A_s^{g,\eps} (Z_s^{g(u)}) -  A_s^{g,\eps} (Y_s^{\rho,M}(u)) )  \mathrm ds  \right|^2 \mathrm du}  ;  \\
E_3&:=  \Ewb{\int_0^1 \left|  \int_0^t   (\partial_x Z_s^{g(u)+\rho a_s^M(u)}-\partial_x Z_s^{g(u)}  )\frac{g'(u)}{\partial_u x_s^g(u)} \mathds 1_{\{ s \leq \tau_M \}} A_s^{g,\eps} (x_s^g(u))    \mathrm ds \right|^2 \mathrm du}    . 
\end{align*}

\textbf{Control on $E_1$.} By Itô's isometry and since $y \mapsto e^{-iky}$ is $k$-Lipschitz, 
\begin{align}
\label{eq:I1}
E_1 
&\leq  \Ewb{\int_0^1 \!\!\int_0^t  \sum_{k \in \Z} f_k^2 k^2  \left|Z_s^{g(u)+\rho a_s^M(u)} -  Y_s^{\rho,M}(u)\right|^2 \mathrm ds  \mathrm du} \notag\\
&\leq C   \int_0^t \Ewb{\int_0^1   \left|Z_s^{g(u)+\rho a_s^M(u)} -  Y_s^{\rho,M}(u)\right|^2 \mathrm du} \mathrm ds ,
\end{align}
because $\sum_{k \in \Z}  f_k^2 k^2 <+\infty$, since $\alpha> \frac{3}{2}$. 

\textbf{Control on $E_2$.} By~\eqref{Atgeps norme C1},  there is $C>0$ such that $\int_0^{t \wedge \tau_M}  \left\| \partial_x A_s^{g,\eps}\right\|_{L_\infty(\tor)}^2 \mathrm ds \leq C$. It follows from Cauchy-Schwarz inequality that 
\begin{align*}
E_2 
& \leq  \Ewb{\int_0^1   \int_0^{t \wedge \tau_M}  \left\| \partial_x A_s^{g,\eps}\right\|_{L_\infty(\tor)}^2 \mathrm ds  \; \int_0^t |Z_s^{g(u)} -  Y_s^{\rho,M}(u)|^2  \mathrm ds   \mathrm du} \\
&\leq C \int_0^t \Ewb{\int_0^1    \left|Z_s^{g(u)+\rho a_s^M(u)} -  Y_s^{\rho,M}(u)\right|^2 \mathrm du} \mathrm ds \\
&\quad  +  C
\int_0^1 \!\! \int_0^t \Ewb{\left| Z_s^{g(u)} - Z_s^{g(u)+\rho a_s^M(u)} \right|^2  }\mathrm ds \mathrm du.
\end{align*}
Moreover, by inequality~\eqref{atM norme infini}, $|\rho a_t^M(u)| \leq  \rho_0 C^a_M =\frac{1}{2}$  for every $t\in [0,T]$, $u\in \R$ and $\rho \in (-\rho_0,\rho_0)$. 
Fix  $u\in [0,1]$. Let $J_u$ be the interval $[g(u)-\frac{1}{2}, g(u)+\frac{1}{2}]$. 
By inequality~\eqref{différence Ztx Zty} and by Kolmogorov's Lemma (see~\cite[p.26, Thm I.2.1]{revuzyor13}), it follows that, up to considering a modification of the process $(Z_t^x)_{x\in J_u}$, there is a constant $C_{\operatorname{Kol}}$ independent of $u$ such that 
\begin{align*}
\Ewb{\sup_{x,y \in J_u, x\neq y}  \frac{\sup_{t\leq T} |Z_t^x - Z_t^y |^2}{|x-y|^{1/2}}  } \leq C_{\operatorname{Kol}}.
\end{align*}
We deduce that for every $\rho \in (-\rho_0,\rho_0)$, 
\begin{multline*}
\Ewb{\left| Z_s^{g(u)} - Z_s^{g(u)+\rho a_s^M(u)} \right|^2  } \\
\begin{aligned}
&\leq\mathbb E^W \mathbb E^\beta \bigg[
\mathds 1_{\{\rho  a_t^M(u)\neq 0  \}}  \frac{\sup_{t \leq T} |  Z_t^{g(u)} - Z_t^{g(u)+\rho a_t^M(u)}  |^2 }{|\rho a_t^M(u)|^{1/2}}  |\rho  a_t^M(u)|^{1/2}\bigg]
\leq C  C_{\operatorname{Kol}} |\rho|^{1/2},
\end{aligned}
\end{multline*}
where the constants are independent of $s$ and $u$. 
We conclude that for every $\rho \in (-\rho_0,\rho_0)$
\begin{align}
\label{eq:I2}
E_2 \leq C\int_0^t \Ewb{\int_0^1    \left|Z_s^{g(u)+\rho a_s^M(u)} -  Y_s^{\rho,M}(u)\right|^2 \mathrm du} \mathrm ds + C |\rho|^{1/2}.
\end{align}

\textbf{Control on $E_3$.} By definition~\eqref{def:tau2M} of $\tau^2_M$, for every $s \leq \tau_M$, $\left\| \textstyle \frac{1}{\partial_u x_s^g(\cdot)} \right\|_{L_\infty} \leq M$. Thus
\begin{align*}
E_3 
&\leq \|g'\|_{L_\infty} M \Ewb{\int_0^1 \!\!  \int_0^t   \left|\partial_x Z_s^{g(u)+\rho a_s^M(u)}-\partial_x Z_s^{g(u)}  \right|^2 \mathrm ds \; \int_0^{t\wedge \tau_M} \|A_s^{g,\eps}\|_{L_\infty} ^2    \mathrm ds  \mathrm du} \\
&\leq C \Ewb{\int_0^1 \!\!  \int_0^t   \left|\partial_x Z_s^{g(u)+\rho a_s^M(u)}-\partial_x Z_s^{g(u)}  \right|^2 \mathrm ds  \mathrm du},
\end{align*}
where the last inequality follows from~\eqref{Atgeps norme C1}.
By inequality~\eqref{différence pZtx pZty} and the fact that $f$ is of order $\alpha> \frac{5}{2}$, we can apply as before Kolmogorov's Lemma on $\partial_x Z$ instead of $Z$. We get for every $\rho \in (-\rho_0,\rho_0)$, 
\begin{align*}
\Ewb{   \left|  \partial_x Z_s^{g(u)+\rho a_s^M(u)}-\partial_x Z_s^{g(u)}   \right|^2 } \leq  C C_{\operatorname{Kol}} |\rho|^{1/2}.
\end{align*}
Therefore, for every $\rho \in (-\rho_0,\rho_0)$, $E_3 \leq C |\rho|^{1/2}$. 

\textbf{Conclusion.} Putting together the last inequality with~\eqref{eq:I1I2I3}, \eqref{eq:I1} and~\eqref{eq:I2}, we obtain for every $t\in [0,T]$ 
\begin{multline*}
\Ewb{\int_0^1 |Z_t^{g(u)+\rho a_t^M(u)} - Y_t^{\rho,M} (u)|^2 \mathrm du}\\
\leq  C \int_0^t \Ewb{\int_0^1    \left|Z_s^{g(u)+\rho a_s^M(u)} -  Y_s^{\rho,M}(u)\right|^2 \mathrm du} \mathrm ds +C |\rho|^{5/2}. 
\end{multline*}
By Gronwall's inequality, the proof of Lemma~\ref{lemme:comparaison Z Y} is complete. 
\end{proof}

Finally, the following lemma states a Bismut-Elworthy formula. Remark that the only difference with the  formula of  Proposition~\ref{prop:bismut} is the localization by~$\tau_M$. 
\begin{lemme}
\label{lemme 31}
Let $\theta \in (0,1)$, $g \in \mathbf G^{3+\theta}$ and $f$ be of order $\alpha=\frac{7}{2}+\theta$.
For every $M \geq M_0$ and for every $t\in[0,T]$, 
\begin{align}
\label{egalite localisee}
 \Ewb{\int_0^1 \partial_\mu \phi(\mu_t^g) (x_t^g(u)) \frac{\partial_u x_t^g(u)}{g'(u)} a_t^M(u) \mathrm du}
=\Ewb{\phi(\mu_t^g)  \sum_{k \in \Z} \int_0^t \Re (\overline{\lambda_s^{k,M}} \mathrm dW_s^k)  }.
\end{align} 
\end{lemme}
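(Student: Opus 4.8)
The plan is to use the auxiliary process $(Y_t^{\rho,M})_{t\in[0,T]}$ from Lemma~\ref{lemme:comparaison Z Y} as an intermediary between the perturbed flow $Z_t^{g(u)+\rho a_t^M(u)}$ and the unperturbed flow, and to apply Girsanov's theorem to the drift term $\rho\,\mathds 1_{\{s\leq\tau_M\}}A_s^{g,\eps}$ appearing in~\eqref{comparaison Y}. The key observation is that, by Lemma~\ref{lemme:decomposition Fourier M}, this drift can be written as $\sum_{k\in\Z}f_k e^{-ikY_s^{\rho,M}(u)}\lambda_s^{k,M}$, i.e.\ exactly in the form of the diffusion coefficient of the SDE tested against a finite-energy ($\mathbb P^W\otimes\mathbb P^\beta$-a.s.\ bounded in $L_2$, by the constant $C_{M,\eps}$) adapted process. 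Hence the law of $(Y_t^{\rho,M})_{u}$ under $\mathbb P^W$ coincides with the law of $(Z_t^{g(u)})_u$ under the equivalent measure $\mathbb Q^\rho$ whose density is the Doléans-Dade exponential $\mathcal E\big(\rho\sum_k\int_0^\cdot\Re(\overline{\lambda_s^{k,M}}\,\mathrm dW_s^k)\big)$. Since $\sum_k\int_0^T|\lambda_s^{k,M}|^2\,\mathrm ds$ is bounded, Novikov's condition holds and the exponential is a genuine martingale.

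First I would write, for any $\rho\in(-\rho_0,\rho_0)$, the identity
\begin{align*}
\Ewb{\widehat\phi(Y_t^{\rho,M})}
=\EQ{\widehat\phi(Z_t^{g})}
=\Ewb{\widehat\phi(x_t^g)\,\mathcal E_t^\rho},
\end{align*}
where $\mathcal E_t^\rho=\exp\big(\rho\sum_k\int_0^t\Re(\overline{\lambda_s^{k,M}}\,\mathrm dW_s^k)-\tfrac{\rho^2}{2}\sum_k\int_0^t|\lambda_s^{k,M}|^2\,\mathrm ds\big)$, using that $\widehat\phi(Z_t^g)=\phi(\mu_t^g)$ and the $\tor$-stability/boundedness of $\phi$ so that the expectations are finite. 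Then I would differentiate both sides at $\rho=0$. On the right-hand side, since $\|\widehat\phi\|_\infty<\infty$ and $\partial_\rho\mathcal E_t^\rho|_{\rho=0}=\sum_k\int_0^t\Re(\overline{\lambda_s^{k,M}}\,\mathrm dW_s^k)$, one gets $\Ewb{\phi(\mu_t^g)\sum_k\int_0^t\Re(\overline{\lambda_s^{k,M}}\,\mathrm dW_s^k)}$, with differentiation under the expectation justified by dominated convergence (the energy bound controls all moments of $\mathcal E_t^\rho$ uniformly for $\rho$ near $0$, and $\widehat\phi$ is bounded). On the left-hand side, I would combine the chain rule for the $L$-derivative of the (Lipschitz, $\mathcal C^{1,1}_b$) function $\widehat\phi$ with the approximation $\Ewb{\int_0^1|Z_t^{g(u)+\rho a_t^M(u)}-Y_t^{\rho,M}(u)|^2\,\mathrm du}^{1/2}\leq C|\rho|^{5/4}=o(|\rho|)$ from Lemma~\ref{lemme:comparaison Z Y}: this lets me replace $Y_t^{\rho,M}$ by $Z_t^{g+\rho a_t^M}$ in the derivative at $\rho=0$ without changing it, and then $\partial_\rho|_{\rho=0}\Ewb{\widehat\phi(Z_t^{g+\rho a_t^M})}=\Ewb{\int_0^1\partial_\mu\phi(\mu_t^g)(x_t^g(u))\,\partial_x Z_t^{g(u)}\,a_t^M(u)\,\mathrm du}$, which by~\eqref{liens entre les dérivées} equals $\Ewb{\int_0^1\partial_\mu\phi(\mu_t^g)(x_t^g(u))\,\tfrac{\partial_u x_t^g(u)}{g'(u)}\,a_t^M(u)\,\mathrm du}$. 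Equating the two derivatives gives~\eqref{egalite localisee}.

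The main obstacle I anticipate is the careful justification that the two one-sided derivatives at $\rho=0$ exist and can be exchanged with the expectations, and in particular that the $O(|\rho|^{5/4})$ error in Lemma~\ref{lemme:comparaison Z Y} genuinely vanishes after dividing by $\rho$ when composed with $\widehat\phi$: here I would use that $\widehat\phi$ is globally Lipschitz on $L_2$ (Remark~\ref{rem:phi2_implique_lipschitz}), so $|\widehat\phi(Z_t^{g+\rho a_t^M})-\widehat\phi(Y_t^{\rho,M})|\leq C\,\Ewb{\int_0^1|Z_t^{g(u)+\rho a_t^M(u)}-Y_t^{\rho,M}(u)|^2\mathrm du}^{1/2}\leq C|\rho|^{5/4}$, whence the difference of the two expressions is $o(|\rho|)$. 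A secondary point to handle cleanly is that differentiability of $\rho\mapsto\Ewb{\widehat\phi(Z_t^{g+\rho a_t^M})}$ in the Fréchet/$L$-sense follows from $\widehat\phi\in\mathcal C^{1,1}_b(L_2)$ together with the differentiability of $\rho\mapsto Z_t^{g+\rho a_t^M}$ in $L_2$, which is exactly Kunita's expansion (Lemma~\ref{lem_Kunita}) giving $\partial_\rho Z_t^{g(u)+\rho a_t^M(u)}|_{\rho=0}=\int_0^t\partial_x Z_s^{g(u)}\dot a_s^M(u)\,\mathrm ds$ and, combined with the definition of $a_t^M$ and identity~\eqref{egalite Zx}, reduces at $\rho=0$ to $\partial_x Z_t^{g(u)}a_t^M(u)$ after noticing the telescoping structure $\partial_x Z_t^{g(u)}=\partial_x Z_s^{g(u)}\cdot\partial_x Z_t^{Z_s^{g(u)}}$ of the flow; I would simply invoke this as the same computation already performed inside the proof of Lemma~\ref{lemme:comparaison Z Y}.
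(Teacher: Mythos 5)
Your overall strategy is the same as the paper's (Girsanov with respect to $W$, the Fourier inversion of Lemma~\ref{lemme:decomposition Fourier M}, Kunita's expansion, and the approximation of Lemma~\ref{lemme:comparaison Z Y} combined with the Lipschitz property of $\widehat\phi$), but the Girsanov step is applied in the wrong direction, and the resulting ``identity'' is false.

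You write, for every $\rho$, that $\mathcal L^{\mathbb P^W\otimes\mathbb P^\beta}(Y^{\rho,M})=\mathcal L^{\mathbb Q^\rho}(Z^g)$, where $\mathbb Q^\rho$ has density $\exp\big(\rho\sum_k\int_0^\cdot\Re(\overline{\lambda_s^{k,M}}\,\mathrm dW_s^k)-\tfrac{\rho^2}{2}\sum_k\int_0^\cdot|\lambda_s^{k,M}|^2\mathrm ds\big)$, and you deduce $\Ewb{\widehat\phi(Y_t^{\rho,M})}=\Ewb{\widehat\phi(x_t^g)\,\mathcal E_t^\rho}$. This is not a consequence of Girsanov's theorem. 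Under $\mathbb Q^\rho$, the process $\widehat W_t^k:=W_t^k-\rho\int_0^t\lambda_s^{k,M}\mathrm ds$ is a Brownian motion, and the \emph{fixed} random variable $x^g$ satisfies
$\mathrm dx_t^g(u)=\sum_kf_k\Re(e^{-ikx_t^g(u)}\mathrm d\widehat W_t^k)+\mathrm d\beta_t+\rho\,\mathds 1_{\{t\leq\tau_M\}}A_t^{g,\eps}(x_t^g(u))\,\mathrm dt$; here the drift coefficient $A_t^{g,\eps}$ (and $\tau_M$) is a functional of $x^g$ \emph{itself}, so this is a self-referencing equation. The SDE~\eqref{comparaison Y} for $Y^{\rho,M}$ under $\mathbb P^W\otimes\mathbb P^\beta$ has the same drift coefficient but driven by the drift-free solution $x^g$, i.e.\ it is coupled to an auxiliary process, not self-referencing; the two laws do not coincide. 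A toy computation makes this concrete: take scalar diffusion coefficient $1$ and $\lambda_t=W_t$; then $Y_t^\rho=W_t+\rho\int_0^tW_s\,\mathrm ds$ has variance $t+\rho t^2+\tfrac13\rho^2 t^3$, while $x_t=W_t$ under $\mathbb Q^\rho$ is an Ornstein--Uhlenbeck-type process with variance $(e^{2\rho t}-1)/(2\rho)=t+\rho t^2+\tfrac23\rho^2 t^3+\mathcal O(\rho^3)$, differing already at order $\rho^2$. The two sides of your identity only agree at order $\rho^1$, and that first-order agreement is exactly the statement~\eqref{egalite localisee} you want to prove, so differentiating a false identity at $\rho=0$ is circular.

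The correct Girsanov direction (the one the paper uses) is the one in which the drift term disappears entirely: using~\eqref{eq:decomposition Fourier M}, $Y^{\rho,M}$ solves the drift-free SDE driven by $\widetilde W_t^k:=W_t^k+\rho\int_0^t\lambda_s^{k,M}\mathrm ds$ and $\beta$; under $\mathbb P^\rho$ with density $\mathcal E_T^\rho=\exp\big(-\rho\sum_k\int_0^T\Re(\overline{\lambda_s^{k,M}}\mathrm dW_s^k)-\tfrac{\rho^2}{2}\sum_k\int_0^T|\lambda_s^{k,M}|^2\mathrm ds\big)$, the $\widetilde W^k$ are Brownian motions, so by uniqueness in law of~\eqref{eq_SDE_diff_torus}, $\mathcal L^{\mathbb P^\rho}(Y^{\rho,M})=\mathcal L^{\mathbb P^W\otimes\mathbb P^\beta}(x^g)$. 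This gives $\Ewb{\widehat\phi(Y_t^{\rho,M})\,\mathcal E_t^\rho}=\Ewb{\widehat\phi(x_t^g)}$, whose right-hand side is trivially $\rho$-independent. Differentiating the left-hand side at $\rho=0$ — where both $Y^{\rho,M}$ and $\mathcal E_t^\rho$ contribute — and then replacing $Y^{\rho,M}$ by $Z^{g+\rho a_t^M}$ via Lemma~\ref{lemme:comparaison Z Y} (keeping the factor $\mathcal E_t^\rho$, whose $L_2$-norm is bounded) yields the lemma. The rest of your plan (the $|\rho|^{5/4}$ error estimate, the chain rule for $\widehat\phi\in\mathcal C_{\mathrm b}^{1,1}$, Kunita's expansion, the identity~\eqref{liens entre les dérivées}) carries through once the Girsanov identity is corrected.
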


\begin{proof}
Take the real part of equality~\eqref{eq:decomposition Fourier M}  with $y=Y_s^{\rho,M}(u)$. Recall that $A_s^{g,\eps}$ and $f_k$ are real-valued.  We obtain  for every $M \geq M_0$, for every $u\in \R$, for every $\rho \in (-\rho_0(M),\rho_0(M))$ and for every $s \in [0,T]$, 
\begin{align*}
\mathds 1_{\{ s \leq \tau_M \}} A_s^{g,\eps} (Y_s^{\rho,M}(u))  = \sum_{k \in \Z}  f_k \Re \left(e^{-ik Y_s^{\rho,M}(u)}   \lambda_s^{k,M} \right).
\end{align*}
Thus, we rewrite equality~\eqref{comparaison Y} in the following way:  for every $t\in [0,T]$ 
\begin{align*}
Y_t^{\rho,M} (u) =  g(u) +\sum_{k \in \Z}\int_0^t  f_k \Re \left(e^{-ik Y_s^{\rho,M}(u)} (\mathrm dW^k_s  +\rho \lambda_s^{k,M} \mathrm ds)  \right)+ \beta_t.
\end{align*}
Recall that  $\lambda_s^{k,M}$ is complex-valued. Define for every $t\in [0,T]$ 
\begin{align*}
\mathcal E^\rho_t :=\exp \Bigg(-\rho \sum_{k \in \Z} \int_0^t \Re (\overline{\lambda_s^{k,M}} \mathrm dW_s^k)  -\frac{\rho^2}{2}  \sum_{k \in \Z} \int_0^t |\lambda_s^{k,M}|^2 \mathrm ds   \Bigg).
\end{align*}
Recall that by Lemma~\ref{lemme:decomposition Fourier M},  there is a constant $C_{M,\eps}>0$ such that $\mathbb P^W \otimes \mathbb P^\beta$-almost surely,  $\sum_{k \in \Z} \int_0^T   |\lambda_s^{k,M}|^2 \mathrm ds \leq C_{M,\eps}$. It follows from Novikov's condition that the process $(\mathcal E_t^\rho)_{t\in [0,T]}$ is a $\mathbb P^W \otimes \mathbb P^\beta$-martingale.
Let $\mathbb P^\rho$ be the probability measure on $\Omega^W \times \Omega^\beta$ such that $\mathbb P^\rho$ is absolutely continuous with respect to $\mathbb P^W \otimes\mathbb P^\beta$ with  density $\frac{\mathrm d \mathbb P^\rho}{\mathrm d (\mathbb P^W \otimes\mathbb P^\beta)}=\mathcal E^\rho_T$.
By Girsanov's Theorem, $((W_t^k +\rho \lambda_t^{k,M})_{t\in [0,T]})_{k \in \Z}$ is a collection of independent $\mathbb P^\rho$-Brownian motions, independent of $(\beta,\mathcal G_0)$. By uniqueness in law of equation~\eqref{eq_SDE_diff_torus}, the law of $(Y_t^{\rho,M})_{t\in [0,T]}$ under $\mathbb P^\rho$ is equal to the law of $(x_t^g)_{t\in [0,T]}$ under $\mathbb P^W \otimes \mathbb P^\beta$.

Fix $t\in [0,T]$. 
Recall that $\widehat{\phi}(Y):= \phi(\mathcal L_{[0,1] \times \Omega^\beta}(Y))$ for  every $Y \in L_2([0,1]\times \Omega^\beta )$. 
Then
\begin{align*}
\Ewb{\widehat{\phi}(Y_t^{\rho,M} ) \mathcal E_t^\rho }= \Ewb{\widehat{\phi}(Y_t^{\rho,M} ) \frac{\mathrm d \mathbb P^\rho}{\mathrm d(\mathbb P^W \otimes\mathbb P^\beta)} } = \Ewb{\widehat{\phi}(x_t^g)}.
\end{align*}
The r.h.s.\! does not depend on $\rho$, so we have
\begin{align}
\label{derivee_nulle}
\frac{\mathrm d}{\mathrm d\rho}_{\vert \rho=0} \Ewb{\widehat{\phi}(Y_t^{\rho,M} ) \mathcal E_t^\rho } =0.
\end{align}
Let us now prove that $\frac{\mathrm d}{\mathrm d\rho}_{\vert \rho=0} \Ewb{\widehat{\phi}(Z_t^{g+\rho a_t^M}) \mathcal E_t^\rho } =0$. 
By assumption $(\phi 2)$, $\widehat{\phi}$ is a Lipschitz-continuous function. 
By Lemma~\ref{lemme:comparaison Z Y}, we have for every $\rho \in (-\rho_0, \rho_0)$
\begin{multline*}
\left|\Ewb{\widehat{\phi}(Z_t^{g+\rho a_t^M}) \mathcal E_t^\rho } -\Ewb{\widehat{\phi}(Y_t^{\rho,M} ) \mathcal E_t^\rho }  \right| \\
\begin{aligned}
&\leq \mathbb E^W \left[|\widehat{\phi}(Z_t^{g+\rho a_t^M})  - \widehat{\phi}(Y_t^{\rho,M} )  |^2\right]^{1/2} \Ewb{\left| \mathcal E_t^\rho\right|^2}^{1/2} \\
&\leq  \|\widehat{\phi}\|_{\operatorname{Lip}} \mathbb E^W \left[\|Z_t^{g+\rho a_t^M}  - Y_t^{\rho,M}\|_{{L_2([0,1] \times \Omega^\beta)}}^2\right]^{1/2} \Ewb{\left| \mathcal E_t^\rho \right|^2}^{1/2} \\
&\leq C_{M,\eps} |\rho|^{5/4}\Ewb{\left| \mathcal E_t^\rho \right|^2}^{1/2}.
\end{aligned}
\end{multline*}
Moreover, recalling that $\sum_{k \in \Z} \int_0^T   |\lambda_s^{k,M}|^2 \mathrm ds \leq C_{M,\eps}$ (see Lemma~\ref{lemme:decomposition Fourier M})
\begin{align*}
\Ewb{\left| \mathcal E_t^\rho \right|^2}
&\leq e^{\rho_0^2 C_{M,\eps}} \; \Ewb{ \textstyle \exp \Big(\! -2\rho \sum_{k \in \Z} \int_0^t \Re (\overline{\lambda_s^{k,M}} \mathrm dW_s^k)  -\frac{(2\rho)^2}{2}  \sum_{k \in \Z} \int_0^t |\lambda_s^{k,M}|^2 \mathrm ds   \Big)} \\
&=e^{\rho_0^2 C_{M,\eps}},
\end{align*}
since the exponential term is a $\mathbb P^W \otimes \mathbb P^\beta$-martingale. Therefore, 
\begin{align}
\label{derivee_nulle_bis}
\left|\Ewb{\widehat{\phi}(Z_t^{g+\rho a_t^M}) \mathcal E_t^\rho } -\Ewb{\widehat{\phi}(Y_t^{\rho,M} ) \mathcal E_t^\rho }  \right|
\leq C_{M,\eps}|\rho|^{5/4}. 
\end{align}
It follows from~\eqref{derivee_nulle} and~\eqref{derivee_nulle_bis} that $\frac{\mathrm d}{\mathrm d\rho}_{\vert \rho=0} \Ewb{\widehat{\phi}(Z_t^{g+\rho a_t^M}) \mathcal E_t^\rho } =0$. 
By~\eqref{frechet_derivative}, we compute
\begin{align*}
0&= \frac{\mathrm d}{\mathrm d\rho}_{\vert \rho=0} \Ewb{\widehat{\phi}(Z_t^{g+\rho a_t^M}) \mathcal E_t^\rho }\\
&= \Ewb{\int_0^1 D\widehat{\phi}(Z_t^g)_u \; \partial_x Z_t^{g(u)} a_t^M(u) \mathrm du}
- \Ewb{ \widehat{\phi}(Z_t^g)  \sum_{k \in \Z} \int_0^t \Re (\overline{\lambda_s^{k,M}} \mathrm dW_s^k)  } \\
&= \Ewb{\int_0^1 \partial_\mu \phi(\mu_t^g) (x_t^g(u)) \frac{\partial_u x_t^g(u)}{g'(u)} a_t^M(u) \mathrm du}
- \Ewb{ \phi(\mu_t^g)  \sum_{k \in \Z} \int_0^t \Re (\overline{\lambda_s^{k,M}} \mathrm dW_s^k)  }.
\end{align*}
We used Proposition~\ref{prop:kunita_identities} for the last equality. Therefore, equality~\eqref{egalite localisee} holds true. 
\end{proof}

Finally, we prove Proposition~\ref{prop:bismut}. 

\begin{proof}[Proof (Proposition~\ref{prop:bismut})]
We want to prove
\begin{align*}
  \Ewb{ \int_0^1 \partial_\mu \phi(\mu_t^g) (x_t^g(u)) \frac{\partial_u x_t^g(u)}{g'(u)} a_t(u)    \mathrm du}
=\Ewb{ \phi(\mu_t^g)  \sum_{k \in \Z} \int_0^t \Re (\overline{\lambda_s^{k}} \mathrm dW_s^k)  }.
\end{align*}
which is equivalent to~\eqref{bismutI1}.
In order to obtain that equality, it is sufficient to pass to the limit when $M \to +\infty$ in~\eqref{egalite localisee}.
Recall that by~\eqref{convergence temps d'arret}, 
$\mathbb P^W \otimes \mathbb P^\beta \left[ \tau_M < T \right] \to_{M \to +\infty} 0$. Since $\{ \tau_M < T\}_{M \geq M_0}$ is a non-increasing sequence of events, it follows that $\mathbb P^W \otimes \mathbb P^\beta$-almost surely, $\mathds 1_{\{ t \leq \tau_M \}} \to \mathds 1_{\{ t \leq T \}}$.
Thus, it only remains to prove uniform integrability of both members of equality~\eqref{egalite localisee}. Precisely, we want to prove:
\begin{align}
\sup_{M \geq M_0}  \Ewb{\int_0^1 \bigg( \partial_\mu \phi(\mu_t^g) (x_t^g(u)) \frac{\partial_u x_t^g(u)}{g'(u)} a_t^M(u) \bigg)^{3/2} \mathrm du} &<+\infty ;
\label{unifinteg 1} \\
\sup_{M \geq M_0} \Ewb{\bigg(\phi(\mu_t^g)  \sum_{k \in \Z} \int_0^t \Re (\overline{\lambda_s^{k,M}} \mathrm dW_s^k)\bigg)^{2} } &<+\infty.
\label{unifinteg 2}
\end{align}

\textbf{Proof of~\eqref{unifinteg 1}. }
For every $M \geq M_0$, by Hölder's inequality
\begin{multline*}
\Ewb{\int_0^1 \bigg( \partial_\mu \phi(\mu_t^g) (x_t^g(u)) \frac{\partial_u x_t^g(u)}{g'(u)} a_t^M(u) \bigg)^{3/2} \mathrm du} \\
\leq \Ewb{\int_0^1 \bigg( \partial_\mu \phi(\mu_t^g) (x_t^g(u))  \bigg)^2 \mathrm du}^{3/4}\Ewb{\int_0^1 \bigg|  \frac{\partial_u x_t^g(u)}{g'(u)} a_t^M(u) \bigg|^6 \mathrm du}^{1/4}.
\end{multline*}
By assumption $(\phi 2)$, $\Ewb{\int_0^1 \big( \partial_\mu \phi(\mu_t^g) (x_t^g(u))  \big)^2 \mathrm du}$ is  bounded. 
Moreover, by inequality~\eqref{Lp norm 1}, 
\begin{align*}
\Ewb{\int_0^1 \bigg|  \frac{\partial_u x_t^g(u)}{g'(u)} a_t^M(u) \bigg|^6 \mathrm du}  
\leq C \left\| \frac{1}{g'} \right\|^6_{L_\infty}  \|g'\|^6_{L_{12}} \; \Ewb{\int_0^1 \big|   a_t^M(u) \big|^{12} \mathrm du}^{1/2}. 
\end{align*}
By definition~\eqref{def:alpha_t} of $a_t^M$, we have
\begin{align*}
\Ewb{\int_0^1   |a_t^M(u) |^{12} \mathrm du}
&\leq \Ewb{\int_0^1 T^{11} \int_0^t  \left| \frac{g'(u)}{\partial_u x_s^g(u)} A_s^{g,\eps} (x_s^g(u))  \right|^{12} \mathrm ds \mathrm du} .
\end{align*}
Remark that for every $s\in [0,T]$, $\|A_s^{g,\eps}\|_{L_\infty}\leq \|A_s^g\|_{L_\infty} \leq \|\partial_u x_s^g\|_{L_\infty}  \|h\|_{L_\infty}$. 
Thus
\begin{multline*}
\Ewb{\int_0^1   |a_t^M(u) |^{12} \mathrm du}
\leq C \|g'\|^{12}_{L_\infty}  \|h\|^{12}_{L_\infty}  \Ewb{\sup_{t\leq T}\|\partial_u x_t^g\|^{12}_{L_\infty} \sup_{t\leq T}\left\| \textstyle \frac{1}{\partial_u x_t^g(\cdot)} \right\|^{12}_{L_\infty}} \\
\leq C \|g'\|^{12}_{L_\infty}  \|h\|^{12}_{L_\infty}  \Ewb{\sup_{t\leq T}\|\partial_u x_t^g\|^{24}_{L_\infty}}^{1/2} \Ewb{ \sup_{t\leq T}\left\| \textstyle \frac{1}{\partial_u x_t^g(\cdot)} \right\|^{24}_{L_\infty}}^{1/2} \leq C,
\end{multline*}
where the constant $C$ does not depend on $M$.  The last inequality is obtained by inequalities~\eqref{Lp norm 7} and~\eqref{Lp norm 8}, because $g \in \mathbf G^{2+\theta}$ and $\alpha>\frac{5}{2}+\theta$. 
We deduce~\eqref{unifinteg 1}.

\textbf{Proof of~\eqref{unifinteg 2}. }
For every $M \geq M_0$ and $t\in [0,T]$
\begin{align*}
\Ewb{\bigg(\phi(\mu_t^g)  \sum_{k \in \Z} \int_0^t \Re (\overline{\lambda_s^{k,M}} \mathrm dW_s^k)\bigg)^{2} }
&\leq \|\phi\|^2_{L_\infty} \Ewb{\sum_{k \in \Z} \int_0^t |\lambda_s^{k,M}|^2  \mathrm ds  }\notag\\
&\leq \|\phi\|^2_{L_\infty} \Ewb{\sum_{k \in \Z} \int_0^t |\lambda_s^k|^2  \mathrm ds  },
\end{align*}
since $\lambda_s^{k,M}= \mathds 1_{\{ s \leq \tau_M \}} \lambda_s^k$. 
By inequality~\eqref{somme lambda carres}, $\Ewb{\sum_{k \in \Z} \int_0^t |\lambda_s^k|^2  \mathrm ds  }$ is bounded, so we deduce~\eqref{unifinteg 2}. 
It completes the proof of Proposition~\ref{prop:bismut}. 
\end{proof}

\subsection{Conclusion of the analysis}

Putting together Lemma~\ref{lemme:decomposition Fourier} and Proposition~\ref{prop:bismut}, we conclude the  proof of Proposition~\ref{prop:Girsanov-Fourier}. 
\begin{proof}[Proof (Proposition~\ref{prop:Girsanov-Fourier})]
By Cauchy-Schwarz inequality applied to~\eqref{bismutI1}, 
\begin{align*}
|I_1| \leq 
 \frac{1}{t}\| \phi \|_{L_\infty} \Ewb{\sum_{k \in \Z} \int_0^t |\lambda_s^k |^2 \mathrm ds}^{1/2} 
 \leq  \frac{C}{t}\| \phi \|_{L_\infty} \frac{\sqrt{t}}{\eps^{3+2\theta}} C_1(g) \|h\|_{\mathcal C^1},
\end{align*}
where we applied inequality~\eqref{somme lambda ks carré}. 
\end{proof}

\begin{rem}
\label{rem_approx_bismut}
Note that we have proved a Bismut-Elworthy-Li integration by parts formula up to a remainder term. Indeed, by propositions~\ref{prop_split} and~\ref{prop:bismut}, we proved that
\begin{align*}
\frac{\mathrm d}{\mathrm d\rho}_{\vert \rho=0} P_t \phi (\mu_0^{g+\rho g'  h}) 
=\frac{1}{t} \Ewb{\phi(\mu_t^g) \sum_{k \in \Z} \int_0^t \Re( \overline{\lambda_s^k} \mathrm dW_s^k)}
+I_2,
\end{align*}
where it should be recalled that $((\lambda_t^k)_{t\in [0,T]})_{k \in \Z}$, defined by~\eqref{eq:decomposition Fourier}, depends on $\eps$. In the next section, we prove that $I_2$ is of order $\mathcal O(\eps)$. 
\end{rem}

\section{Analysis of \texorpdfstring{$I_2$}{I2}}
\label{sec:analysis_I2}

In this section, we look for an upper bound of $|I_2|$. Define $H_s^{g,\eps}(u):= \frac{(A_s^g-A_s^{g,\eps})(x_s^g(u))}{\partial_u x_s^g(u)}$. Then $I_2$ rewrites as follows

\begin{align}
\label{I_2_avec_H}
I_2= \frac{1}{t} \Ewb{ \int_0^1 \!\!\int_0^t \partial_\mu \phi(\mu_t^g) (x_t^g(u)) \;\partial_u x_t^g(u) \; H_s^{g,\eps}(u) \;\mathrm ds \mathrm du }.
\end{align}
Moreover, let $(K_t^{g,\eps})_{t\in [0,T]}$ be the process defined by:
\begin{align}
\label{defin:Ktgeps}
K_t^{g,\eps}(u):=
\int_0^t H_s^{g,\eps}(u) \frac{1}{t-s}\int_s^t \partial_u x_r^g(u) \mathrm d\beta_r \; \mathrm ds.
\end{align}
We also introduce the  notation $\left[ \frac{\delta \psi}{\delta m} \right]$, denoting the zero-average linear functional derivative.  For every $\mu \in \mathcal P_2(\R)$ and $v \in \R$, 
\begin{align}
\label{notation moyenne}
\left[ \frac{\delta \psi}{\delta m} \right] (\mu) (v):= \frac{\delta \psi}{\delta m} (\mu) (v) - \int_\R \frac{\delta \psi}{\delta m} (\mu) (v') \mathrm d\mu (v').
\end{align}

The main result of this section is the following proposition. 
\begin{prop}
\label{prop:estim 2}
Under the same assumptions as Proposition~\ref{prop:Girsanov-Fourier}, there is $C>0$ independent of $g$, $h$ and $\theta$ such that for every $t\in [0,T]$ and $\eps \in (0,1)$,
\begin{align}
\label{mainestim2}
|I_2|
&\leq  \frac{C}{\sqrt{t}}\;\eps \|h\|_{\mathcal C^1}   C_2(g) \;
\Ewb{\left| \int_0^1 \left[ \frac{\delta \phi}{\delta m} \right] (\mu_t^g) (x_t^g(u)) \frac{K_t^{g,\eps} (u)}{\left\| K_t^{g,\eps} \right\|_{L_\infty}}\mathrm du\right|^2   }^{1/2}.
\end{align}
where $C_2(g) =   1+ \|g'''\|_{L_8}^3 + \|g''\|_{L_\infty}^{12}+\|g'\|_{L_\infty}^{12}+ \left\| \frac{1}{g'} \right\|^{24}_{L_\infty}$. 
\end{prop}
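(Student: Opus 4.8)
## Proof plan for Proposition~\ref{prop:estim 2}

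\textbf{Overview of the strategy.} The plan is to estimate $I_2$ as written in~\eqref{I_2_avec_H} by performing an integration by parts \emph{in the idiosyncratic noise $\beta$} rather than in the common noise $W$. This is exactly the point where $\beta$ plays its crucial role, as announced in the introduction. The term $I_2$ contains the ``regularization error'' $A_s^g - A_s^{g,\eps}$, which is of order $\mathcal{O}(\eps)$ in a suitable norm since $A_s^g$ is $\mathcal C^1$ and $\varphi_\eps$ is an approximation of the identity; the difficulty is that this error is composed with $x_s^g(u)$ and weighted by $\partial_\mu\phi(\mu_t^g)(x_t^g(u))$, so one cannot simply pull out the $\eps$ and must first move the $\partial_\mu\phi$ factor into something controllable. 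The idea is to recognize $\int_0^t H_s^{g,\eps}(u)(\cdots)\,\mathrm ds$ as (almost) the time-average of an expression that, after an integration by parts against $\beta$, converts the $\partial_\mu\phi$ evaluated at time $t$ into $\left[\frac{\delta\phi}{\delta m}\right](\mu_t^g)$ (the zero-average linear functional derivative), at the cost of producing the Gaussian weight $\frac{1}{t-s}\int_s^t \partial_u x_r^g(u)\,\mathrm d\beta_r$ — which is precisely the process $K_t^{g,\eps}$ defined in~\eqref{defin:Ktgeps}.

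\textbf{Key steps.} First I would use the tower property: conditionally on $\mathcal G^W$ and on $\mathcal G_0$, and conditionally further on the Brownian increments of $\beta$ up to various intermediate times, the map $\beta \mapsto \phi(\mu_t^g)$ can be differentiated. Concretely, for fixed $s$, one writes $\partial_\mu\phi(\mu_t^g)(x_t^g(u))\,\partial_u x_t^g(u)$ as (a version of) $\frac{\mathrm d}{\mathrm d\eta}\big|_{\eta=0}$ of $\widehat\phi$ evaluated along a perturbation of the $\beta$-path on the interval $[s,t]$, and then applies a Bismut-Elworthy identity \emph{for the finite-dimensional noise $\beta$} on $[s,t]$; the natural bridge-type weight $\frac{1}{t-s}\int_s^t \partial_u x_r^g(u)\,\mathrm d\beta_r$ appears. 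Summing (integrating) over $s\in[0,t]$ and dividing by $t$ reproduces $K_t^{g,\eps}(u)$ from~\eqref{defin:Ktgeps}, and the $\partial_\mu\phi$ at the endpoint gets replaced by the zero-average derivative $\left[\frac{\delta\phi}{\delta m}\right]$ because the constant in $\beta$-direction (a pure rotation of the whole circle) is annihilated — this is why the \emph{zero-average} version is the right object and why the assumption that $\mu_0^g$ has a strictly positive density (so Proposition~\ref{prop:integrale nulle} applies) is used. This yields an identity of the shape
\begin{align*}
I_2 = \frac{1}{t}\,\Ewb{\int_0^1 \left[\frac{\delta\phi}{\delta m}\right](\mu_t^g)(x_t^g(u))\, K_t^{g,\eps}(u)\,\mathrm du}.
\end{align*}
Then I factor out $\|K_t^{g,\eps}\|_{L_\infty}$ and apply Cauchy--Schwarz to split off the normalized factor $K_t^{g,\eps}/\|K_t^{g,\eps}\|_{L_\infty}$, which is exactly the expectation on the right-hand side of~\eqref{mainestim2}. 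It remains to bound $\frac1t\,\Ewb{\|K_t^{g,\eps}\|_{L_\infty}^2}^{1/2}$ by $\frac{C}{\sqrt t}\,\eps\,\|h\|_{\mathcal C^1}\,C_2(g)$.

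\textbf{Bounding $\|K_t^{g,\eps}\|_{L_\infty}$.} From~\eqref{defin:Ktgeps}, $|K_t^{g,\eps}(u)| \le \int_0^t |H_s^{g,\eps}(u)|\,\frac{1}{t-s}\big|\int_s^t \partial_u x_r^g(u)\,\mathrm d\beta_r\big|\,\mathrm ds$. The term $\frac{1}{t-s}\int_s^t \partial_u x_r^g(u)\,\mathrm d\beta_r$ is a Gaussian-type average with $L^2(\mathbb P^\beta)$-norm of order $(t-s)^{-1/2}\|\partial_u x^g\|$, which upon integrating $\frac{\mathrm ds}{\sqrt{t-s}}$ produces the global $\sqrt t$ and, after the $1/t$ prefactor, the announced $t^{-1/2}$. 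The factor $\eps$ comes from $\|H_s^{g,\eps}(u)\| \le \|(A_s^g - A_s^{g,\eps})\circ x_s^g\|\,\|\tfrac{1}{\partial_u x_s^g}\|_{L_\infty} \le \|A_s^g - A_s^g\ast\varphi_\eps\|_{L_\infty}\,\|\tfrac{1}{\partial_u x_s^g}\|_{L_\infty}$, and since $A_s^g \in \mathcal C^1$ one has the standard convolution estimate $\|A_s^g - A_s^g\ast\varphi_\eps\|_{L_\infty} \le \eps\,\big(\int_\R |y|\varphi(y)\,\mathrm dy\big)\,\|\partial_x A_s^g\|_{L_\infty}$. Plugging in the pointwise bound~\eqref{norme 1 Asg} for $\|\partial_x A_s^g\|_{L_\infty}$ in terms of $\|h\|_{\mathcal C^1}$, $\|\partial_u^{(2)} x_s^g\|_{L_\infty}$ and $\|\tfrac{1}{\partial_u x_s^g}\|_{L_\infty}$, then using Minkowski's integral inequality in $L^2(\mathbb P^W\otimes\mathbb P^\beta)$, the Burkholder--Davis--Gundy and Cauchy--Schwarz inequalities to handle the product of the stochastic integral in $\beta$ with the $x^g$-derivative factors, and finally the moment bounds~\eqref{Lp norm 7}--\eqref{Lp norm 8} from the appendix (valid since $g\in\mathbf G^{3+\theta}$ and $\alpha = \tfrac72+\theta$), collects all the norms of $g',g'',g'''$ and $1/g'$ into the constant $C_2(g)$, with the stated exponents arising from the repeated applications of Hölder's inequality.

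\textbf{Main obstacle.} The delicate step is the $\beta$-integration by parts itself: one must justify that the map $\beta\mapsto\widehat\phi(x_t^g)$ is differentiable in the right directions with the Malliavin/Bismut weight $\frac{1}{t-s}\int_s^t\partial_u x_r^g\,\mathrm d\beta_r$ appearing, and — more subtly — that the endpoint derivative is genuinely the \emph{zero-average} $\left[\frac{\delta\phi}{\delta m}\right]$ and not $\frac{\delta\phi}{\delta m}$; this is where the invariance of $\phi$ under the rotation action of $\beta$ (the $\tor$-stability from $(\phi1)$, via Proposition~\ref{prop:integrale nulle}) is essential and must be invoked carefully. A secondary technical nuisance is that $K_t^{g,\eps}$ is not adapted and the weight $\frac{1}{t-s}$ is singular at $s=t$, so one needs to argue (e.g.\ by a localization in $s$ away from $t$, or by a Fubini argument exploiting that the singularity $\frac{1}{\sqrt{t-s}}$ is integrable) that all the manipulations and the interchange of $\mathbb E^\beta$ with the $\mathrm ds$-integral are legitimate; the $\mathcal C^1$-regularity of $A_s^g$ and the finiteness of the relevant moments of $\partial_u x^g$ make this routine but it should be spelled out.
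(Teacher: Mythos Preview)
Your plan is largely on target and matches the paper's route: the key identity $I_2 = \tfrac{1}{t}\,\Ewb{\int_0^1\left[\frac{\delta\phi}{\delta m}\right](\mu_t^g)(x_t^g(u))\,K_t^{g,\eps}(u)\,\mathrm du}$ is exactly what the paper obtains (Lemma~\ref{lemme:traitement de I2} combined with~\eqref{reprendre_egalite}), and your subsequent Cauchy--Schwarz factoring and moment bound for $\|K_t^{g,\eps}\|_{L_\infty}$ mirror Paragraph~\ref{parag:conclusion_I2}.

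There is, however, a conceptual slip in how you describe the $\beta$-integration by parts that would block you if taken literally. You propose to differentiate ``the map $\beta\mapsto\phi(\mu_t^g)$'' (equivalently, ``$\widehat{\phi}$ evaluated along a perturbation of the $\beta$-path''). But $\widehat{\phi}(x_t^g)=\phi(\mu_t^g)$ is by definition a $\mathbb P^\beta$-average and hence $\mathcal G^W$-measurable; as a function of $\beta$ it is \emph{constant}, so this derivative is zero and cannot produce $\partial_\mu\phi(\mu_t^g)(x_t^g(u))\,\partial_u x_t^g(u)$. The paper instead first uses $\partial_\mu\phi=\partial_v\!\left\{\frac{\delta\phi}{\delta m}\right\}$ to pass to~\eqref{reprendre_egalite}, and then applies Girsanov in $\beta$ on $[s,t]$ to the quantity $\left[\frac{\delta\phi}{\delta m}\right](\mu_t^g)\bigl(x_t^g(u)\bigr)\,H_s^{g,\eps}(u)$. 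Two facts make this work: (i)~$\mu_t^g$ is $\mathcal G^W$-measurable, so the random function $v\mapsto\left[\frac{\delta\phi}{\delta m}\right](\mu_t^g)(v)$ is frozen under the $\beta$-shift and only the argument $x_t^g(u)$ moves; and (ii)~$H_s^{g,\eps}(u)$ depends on $\beta$ only through $\beta|_{[0,s]}$, which the shift on $[s,t]$ leaves untouched. Point~(ii) requires the Yamada--Watanabe/progressive measurability machinery of Lemma~\ref{lemma:prog_measurability}, which you do not mention but is the technical backbone here. A related correction: the zero-average version $\left[\frac{\delta\phi}{\delta m}\right]$ is not \emph{produced} by rotation invariance during the IBP --- it is inserted harmlessly beforehand (its $\partial_v$ coincides with that of $\frac{\delta\phi}{\delta m}$), and its real role is in the later bootstrap, not in this step.
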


\subsection{Progressive measurability}
\label{parag:prog_measurability}

We start by showing  that Yamada-Watanabe Theorem applies here, \textit{i.e.\!}  that the processes $(x_t^g)_{t\in [0,T]}$, $(\mu_t^g)_{t\in [0,T]}$ and $(H_t^{g,\eps})_{t\in [0,T]}$ can all be written as progressively measurable functions of $u$ and the noises $(W^k)_{k \in \Z}$ and $\beta$. 

For that purpose, let $(\Theta, \mathcal B(\Theta))$ be the canonical space defined by $\Theta= \mathcal C([0,T], \C)^\Z \times \mathcal C([0,T], \R)$ and $\mathcal B(\Theta)= \mathcal B(\mathcal C([0,T], \C)^\Z) \otimes \mathcal B(\mathcal C([0,T], \R))$. 
Let $\mathbf P$ be the probability measure on $(\Theta, \mathcal B(\Theta))$ defined as  the distribution of $((W^k)_{k \in \Z}, \beta)$ on $\Omega^W \times \Omega^\beta$. 
Let $\mathcal B_t(\mathcal C([0,T], \R)):=\sigma(\mathbf x(s);0\leq s \leq t)$; in other words the process $(\mathcal B_t(\mathcal C([0,T], \R)))_{t\in [0,T]}$ is the canonical filtration on $(\mathcal C([0,T], \R), \mathcal B(\mathcal C([0,T], \R) ))$. Similarly, let $(\mathcal B_t(\mathcal C([0,T], \C)^\Z))_{t\in [0,T]}$ be the canonical filtration on $(\mathcal C([0,T], \C)^\Z, \mathcal B(\mathcal C([0,T], \C)^\Z ))$. 
Let $(\widehat{\mathcal B}_t(\Theta))_{t\in [0,T]}$ be the augmentation of the filtration $(\mathcal B_t(\mathcal C([0,T], \C)^\Z)\otimes \mathcal B_t(\mathcal C([0,T], \R) ))_{t\in [0,T]}$ by the null sets of $\mathbf P$. 
Those notations are inspired by the textbook~\cite[pp.308-311]{karatzasshreve91}.  
We denote  elements of $\Theta$ in bold, e.g. $((\mathbf {w}^k)_{k\in \Z}, \mathbf b) \in \Theta$.

\begin{lemme}
\label{lemma:prog_measurability}
Let $\theta$, $\eps$, $g$, $f$ and $h$ be as in Proposition~\ref{prop:estim 2}. 
Then 
\begin{itemize}
\item[$(a)$] there is a $\mathcal B(\R) \otimes \mathcal B(\Theta)/\mathcal B(\mathcal C([0,T], \R))$-measurable function
\begin{align*}
\mathcal X: \R \times \Theta &\to \mathcal C([0,T], \R) \\
(u,(\mathbf {w}^k)_{k\in \Z}, \mathbf b) & \mapsto \mathcal X(u,(\mathbf {w}^k)_{k\in \Z}, \mathbf b)
\end{align*}
which is, for every fixed $t\in [0,T]$, $\mathcal B(\R) \otimes \widehat{\mathcal B}_t(\Theta) /\mathcal B_t(\mathcal C([0,T], \R))$-measurable,
such that $\mathcal X$ is continuous in $u$ for $\mathbf P$-almost every fixed $((\mathbf {w}^k)_{k\in \Z}, \mathbf b) \in \Theta$ and  such that $\mathbb P^W \otimes \mathbb P^\beta$-almost surely, for every $u\in \R$ and for every $t\in [0,T]$, 
\begin{align}
\label{egalite_(a)}
x_t^g(u)= \mathcal X_t (u,(W^k)_{k \in \Z}, \beta);
\end{align}
\item[$(b)$] there is a $\mathcal B(\mathcal C([0,T], \C)^\Z) /\mathcal B(\mathcal C([0,T], \mathcal P_2(\R)))$-measurable function
\begin{align*}
\mathcal P: \mathcal C([0,T], \C)^\Z &\to \mathcal C([0,T], \mathcal P_2(\R)) \\
(\mathbf {w}^k)_{k\in \Z} & \mapsto \mathcal P((\mathbf {w}^k)_{k\in \Z})
\end{align*}
which is, for every fixed $t\in [0,T]$, $\mathcal B_t(\mathcal C([0,T], \C)^\Z) /\mathcal B_t(\mathcal C([0,T], \mathcal P_2(\R)))$-measurable,
such that $\mathbb P^W$-almost surely,  for every $t\in [0,T]$, 
\begin{align}
\label{egalite_(b)}
\mu_t^g= \mathcal P_t ((W^k)_{k \in \Z});
\end{align}
\item[$(c)$] there is a progressively-measurable function  $\mathcal H:[0,T] \times \R \times \Theta \to \R$, \textit{i.e.\!} for every $t\in [0,T]$, 
\begin{align*}
[0,t] \times \R \times \Theta &\to \R \\
(s,u,(\mathbf {w}^k)_{k\in \Z}, \mathbf b) & \mapsto \mathcal H_s(u,(\mathbf {w}^k)_{k\in \Z}, \mathbf b)
\end{align*}
is $\mathcal B[0,t] \otimes \mathcal B(\R) \otimes \widehat{\mathcal B}_t(\Theta) /\mathcal B(\R)$-measurable, 
such that $\mathbb P^W \otimes \mathbb P^\beta$-almost surely, for every $u\in \R$ and for every $t\in [0,T]$, 
\begin{align}
\label{egalite_(c)}
H_t^{g,\eps}(u)= \mathcal H_t (u,(W^k)_{k \in \Z}, \beta).
\end{align}
\end{itemize}
\end{lemme}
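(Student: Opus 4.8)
The plan is to deduce all three statements from the functional form of the Yamada--Watanabe theorem (see~\cite[Chapter~5]{karatzasshreve91}), applied to the parametric SDE~\eqref{eq_SDE_z}, and then to carry the measurability and adaptedness through the space-operations (parametrization in $u$, differentiation in $u$, inversion of the flow, convolution in the space variable, composition) out of which $\mu^g_t$ and $H^{g,\eps}_t$ are built. For $(a)$, I would first fix $u\in\R$: the SDE~\eqref{eq_SDE_z} started at $g(u)$ is strongly well-posed by Proposition~\ref{prop_para}, so Yamada--Watanabe furnishes a Borel map $\Phi_{g(u)}\colon\Theta\to\mathcal C([0,T],\R)$ that is progressively measurable for the augmented canonical filtration $(\widehat{\mathcal B}_t(\Theta))_{t\in[0,T]}$ and satisfies $Z^{g(u)}_\cdot=\Phi_{g(u)}((W^k)_{k\in\Z},\beta)$ almost surely. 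Since $g$ is deterministic and continuous, $(u,((\mathbf{w}^k)_k,\mathbf b))\mapsto\Phi_{g(u)}((\mathbf{w}^k)_k,\mathbf b)$ is jointly Borel; and the moment estimate~\eqref{différence Ztx Zty} together with Kolmogorov's continuity theorem lets me fix a single $\mathbf P$-full event on which $u\mapsto\Phi_{g(u)}(\cdot)$ is (Hölder-)continuous on compacts, hence---using the periodicity $x^g_t(u+1)=x^g_t(u)+2\pi$ of Proposition~\ref{prop:stabilite} to reduce to $u\in[0,1]$---uniformly in $u\in\R$. Setting $\mathcal X_t(u,\cdot):=\Phi_{g(u)}(\cdot)_t$ and invoking $Z^{g(u)}_t=x^g_t(u)$ from~\eqref{egalite Zx} gives~\eqref{egalite_(a)}.

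\textbf{Proof of $(b)$.} Conditionally on the common noise $W$---equivalently, under $\Leb_{[0,1]}\otimes\mathbb P^\beta$---the measure $\mu^g_t$ is the image of $\Leb_{[0,1]}$ tensored with the Wiener measure on the $\mathbf b$-factor of $\Theta$ under the map $(u,\mathbf b)\mapsto\mathcal X_t(u,(\mathbf{w}^k)_k,\mathbf b)$, so I would take $\mathcal P_t$ to be exactly this pushforward. Joint measurability of $\mathcal X$, Fubini's theorem, and the measurability of the operation "measure $\mapsto$ its pushforward" on $\mathcal P_2(\R)$ then give that $\mathcal P_t$ is $\mathcal B_t(\mathcal C([0,T],\C)^\Z)$-measurable: integrating out $\mathbf b$ converts $\widehat{\mathcal B}_t(\Theta)$-measurability into $\mathcal B_t(\mathcal C([0,T],\C)^\Z)$-measurability up to $\mathbf P$-null sets. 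Continuity of $t\mapsto\mathcal P_t$ in the metric $W_2^\tor$ follows from the continuity of $t\mapsto x^g_t(u)$ and the uniform moment bounds of Lemma~\ref{lem_control_moments} via dominated convergence, and~\eqref{egalite_(b)} is the definition of $\mu^g_t$.

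\textbf{Proof of $(c)$.} It is enough to check that each factor of $H^{g,\eps}_s(u)=(A^g_s-A^{g,\eps}_s)(x^g_s(u))/\partial_u x^g_s(u)$ is a progressively measurable functional of $((W^k)_k,\beta)$. The derivative $\partial_u x^g_s(u)$ is given by the explicit exponential formula~\eqref{exponential explicit partial xtg} (alternatively it is the $\mathcal C([0,T]\times\R)$-limit of difference quotients of $\mathcal X$, and the estimate~\eqref{différence pZtx pZty} again supplies one full event of continuity in $u$), hence is continuous in $(s,u)$ and progressively measurable; since $u\mapsto x^g_s(u)$ is continuous and strictly increasing, the inverse $F^g_s=(x^g_s)^{-1}$ depends measurably and adaptedly on the noise, so by~\eqref{def:Agt} the function $A^g_s=(\partial_u x^g_s)\circ F^g_s\cdot(h\circ F^g_s)$ is progressively measurable, and convolution in the space variable with the fixed Gaussian kernel $\varphi_\eps$ (definition~\eqref{def:Aeps}) preserves this, giving $A^{g,\eps}_s$. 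Composing with $x^g_s$, subtracting, and dividing by the strictly positive $\partial_u x^g_s$---each a $\mathcal B[0,t]\otimes\mathcal B(\R)\otimes\widehat{\mathcal B}_t(\Theta)/\mathcal B(\R)$-measurable operation---then yields the required $\mathcal H$, and~\eqref{egalite_(c)} holds by construction.

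\textbf{Main obstacle.} The delicate point is not any single assertion but the bookkeeping: one must keep the three requirements---joint measurability in $(s,u,\text{noise})$, $(\widehat{\mathcal B}_t(\Theta))$-adaptedness, and path-continuity in $u$---valid through every operation, and in particular produce one $\mathbf P$-full event on which $u\mapsto\mathcal X(u,\cdot)$ and $u\mapsto\partial_u\mathcal X(u,\cdot)$ are continuous for \emph{all} $u\in\R$ simultaneously; this is exactly where the Kolmogorov estimates~\eqref{différence Ztx Zty} and~\eqref{différence pZtx pZty} and the periodicity reduction to $u\in[0,1]$ are used.
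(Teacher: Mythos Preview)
Your approach is essentially the same as the paper's---Yamada--Watanabe for~$(a)$, pushforward under integration over $(\mathbf b,u)$ for~$(b)$, and composition of measurable operations (derivative, inverse flow, convolution) for~$(c)$---and your identification of the ``main obstacle'' is exactly right.

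One step in~$(a)$ needs tightening. You write that ``since $g$ is deterministic and continuous, $(u,((\mathbf w^k)_k,\mathbf b))\mapsto\Phi_{g(u)}((\mathbf w^k)_k,\mathbf b)$ is jointly Borel''. This does not follow: Yamada--Watanabe produces, for each fixed initial point~$x$, a Borel functional $\Phi_x$ of the noise, but says nothing about how $\Phi_x$ varies with~$x$; continuity of $g$ alone cannot upgrade a family of separately measurable maps to a jointly measurable one. The paper fixes this by applying Yamada--Watanabe only for $u\in\Q$, then using the almost-sure continuity of $u\mapsto x^g_\cdot(u)$ (Proposition~\ref{prop:exist, uniq, cont, growth}) to \emph{define} $\mathcal X(u,\cdot)$ for irrational $u$ as the limit along rationals on a single $\mathbf P$-full event (and $0$ off it). Joint measurability then follows because $\mathcal X$ is the pointwise limit of the step-function approximations $\mathcal X_n:=\sum_{k\in\Z}\mathcal X^{k/n}\mathds 1_{[k/n,(k+1)/n)}(u)$, each of which is jointly measurable by construction. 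Your Kolmogorov argument establishes continuity of the modification on the \emph{original} probability space, but one still needs this countable-parameter trick to transfer it to a jointly measurable functional on the canonical space.

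For~$(c)$, the paper also makes the measurability of the inverse $F^g_t=(x^g_t)^{-1}$ explicit via the formula $\widetilde{\mathbf F}^g_t(x)=\int_0^1\mathds 1_{\{\mathbf x^g_t(v)-\mathbf x^g_t(0)\leq x\}}\,\mathrm dv$, which reduces the question to Fubini; your one-line ``the inverse depends measurably and adaptedly on the noise'' is correct but would benefit from this kind of concrete representation.
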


\begin{proof}
Consider the canonical space $(\Theta, \mathcal B(\Theta), (\widehat{\mathcal B}_t(\Theta))_{t\in [0,T]}, \mathbf P)$. By Proposition~\ref{prop:exist, uniq, cont, growth},  there is a strong and pathwise unique solution to~\eqref{eq_SDE_diff_torus} with initial condition $x_0^g=g$. Therefore, for every fixed $u \in \R$, there is a unique solution $(\mathbf{x}^g_t(u))_{t\in [0,T]}$ to
\begin{align*}
\mathbf{x}^g_t(u)=g(u)+\sum_{k \in \Z} \int_0^t f_k \Re \left(e^{-ik \mathbf{x}^g_s(u)} \mathrm d\mathbf {w}^k_s \right)+ \mathbf{b}_t.
\end{align*}

\textbf{Proof of $(a)$.}
By Yamada-Watanabe Theorem,  the law of $(x^g, (W^k)_{k \in \Z}, \beta)$ under $\mathbb P^W \otimes \mathbb P^\beta$ is equal to the law of $(\mathbf x^g, (\mathbf w^k)_{k \in \Z}, \mathbf b)$ under $\mathbf P$. This result is proved in~\cite[Prop.\! 5.3.20]{karatzasshreve91} for a finite-dimensional noise, but the proof is the same for the infinite-dimensional noise $((\mathbf {w}^k)_{k\in \Z}, \mathbf b) \in \Theta$. 
By a corollary to this theorem (see~\cite[Coro.\! 5.3.23]{karatzasshreve91}), it follows that for every $u\in \Q$, there is a $\mathcal B(\Theta)/\mathcal B(\mathcal C([0,T], \R))$-measurable function
\begin{align*}
\mathcal X^u:  \Theta &\to \mathcal C([0,T], \R) \\
((\mathbf {w}^k)_{k\in \Z}, \mathbf b) & \mapsto \mathcal X^u((\mathbf {w}^k)_{k\in \Z}, \mathbf b)
\end{align*}
which is, for every fixed $t\in [0,T]$, $\widehat{\mathcal B}_t(\Theta) /\mathcal B_t(\mathcal C([0,T], \R))$-measurable,
 such that $\mathbf P$-almost surely, for every $t\in [0,T]$, 
\begin{align}
\label{eq:karatzas_shreve}
\mathbf{x}^g_t(u)= \mathcal X^u_t ((\mathbf {w}^k)_{k\in \Z}, \mathbf b).
\end{align}

Moreover, again by Proposition~\ref{prop:exist, uniq, cont, growth}, there is a $\mathbf P$-almost sure event $A \in \mathcal B(\Theta)$  such that for every $((\mathbf {w}^k)_{k\in \Z}, \mathbf b) \in A$, the function $(t,u) \mapsto \mathbf{x}^g_t(u)$ is continuous on $[0,T] \times \R$. Up to modifying the almost-sure event $A$, we may assume that for every $((\mathbf {w}^k)_{k\in \Z}, \mathbf b) \in A$ and for every $u \in \Q$, equality~\eqref{eq:karatzas_shreve} holds. Therefore, we can define a continuous function in the variable $u \in \R$ by extending $u \in \Q \mapsto \mathcal X^u$. More precisely, define for every $u \in \R$, $((\mathbf {w}^k)_{k\in \Z}, \mathbf b) \in \Theta$, 
\begin{align*}
\mathcal X (u, (\mathbf {w}^k)_{k\in \Z}, \mathbf b)=
\left\{
\begin{aligned}
&\lim_{\substack{ u_n \to u \\ (u_n)_n \in \Q^\N}}  \mathcal X^{u_n} ( (\mathbf {w}^k)_{k\in \Z}, \mathbf b) \quad  & & \text{ if } ((\mathbf {w}^k)_{k\in \Z}, \mathbf b) \in A, \\
&0  \quad & &\text{ otherwise.}
\end{aligned}
\right.
\end{align*}
In the latter definition, the limit exists and for every $((\mathbf {w}^k)_{k\in \Z}, \mathbf b) \in A$,   $\mathcal X_t (u, (\mathbf {w}^k)_{k\in \Z}, \mathbf b)= \mathbf x_t^g(u)$ holds for any $t\in [0,T]$ and  $u \in \R$. 
By construction, for every $((\mathbf {w}^k)_{k\in \Z}, \mathbf b) \in \Theta$, $u \in \R \mapsto \mathcal X (u, (\mathbf {w}^k)_{k\in \Z}, \mathbf b) \in \mathcal C([0,T], \R)$ is continuous. 
It remains to show that $\mathcal X$ is progressively-measurable. 
Fix $t\in [0,T]$. 
By construction of $\mathcal X^u$, we know that for every $u\in \Q$, 
\begin{align*}
[0,t] \times \Theta &\to \R \\
(s,(\mathbf {w}^k)_{k\in \Z}, \mathbf b ) & \mapsto  \mathcal X_s^u((\mathbf {w}^k)_{k\in \Z}, \mathbf b)
\end{align*}
is $\mathcal B[0,t] \otimes \widehat{\mathcal B}_t(\Theta)/ \mathcal B(\R)$-measurable. 
Since $\mathcal X$ is the limit of  $\mathcal X_n:= \sum_{k \in \Z} \mathcal X^{k/n} \mathds 1_{\{ u \in [\frac{k}{n}, \frac{k+1}{n})\}}$, we deduce that for every $t\in [0,T]$, 
\begin{align*}
[0,t] \times \R \times \Theta &\to \R \\
(s,u,(\mathbf {w}^k)_{k\in \Z}, \mathbf b ) & \mapsto  \mathcal X_s(u,(\mathbf {w}^k)_{k\in \Z}, \mathbf b)
\end{align*}
is $\mathcal B[0,t] \otimes \mathcal B(\R) \otimes \widehat{\mathcal B}_t(\Theta)/ \mathcal B(\R)$-measurable. 

Recall that $\mathcal L^{\mathbb P^W \otimes \mathbb P^\beta} (x^g, (W^k)_{k \in \Z}, \beta)= \mathcal L^{\mathbf P} (\mathbf x^g, (\mathbf w^k)_{k \in \Z}, \mathbf b)$. 
Since $\mathbb P$-almost surely, for every $u \in \R$ and for every $t\in [0,T]$, 
$\mathbf x_t^g(u)= \mathcal X_t (u, (\mathbf {w}^k)_{k\in \Z}, \mathbf b)$, we deduce that $\mathbb P^W \otimes \mathbb P^\beta$-almost surely,   for every $u \in \R$ and for every $t\in [0,T]$, equality~\eqref{egalite_(a)} holds. It completes the proof of~$(a)$.

\textbf{Proof of $(b)$.}
This step is equivalent to find $\mathcal P:\mathcal C([0,T], \C)^\Z \to \mathcal C([0,T], \mathcal P_2(\R))$ such that for every bounded measurable function $\Upsilon:\R \to \R$, the function 
\begin{align*}
\langle \Upsilon,\mathcal P \rangle:  \mathcal C([0,T], \C)^\Z &\to \mathcal C([0,T],\R) \\
(\mathbf {w}^k)_{k\in \Z} & \mapsto \langle \Upsilon, \mathcal P((\mathbf {w}^k)_{k\in \Z}) \rangle = \int_\R \Upsilon (x) \mathrm d \mathcal P((\mathbf {w}^k)_{k\in \Z}) (x)
\end{align*}
is $\mathcal B_t(\mathcal C([0,T], \C)^\Z) /\mathcal B_t(\mathcal C([0,T], \R))$-measurable for every fixed $t\in [0,T]$.
Define $\mathcal P$ by duality: for every $\Upsilon:\R \to \R$ bounded and measurable, 
\begin{align*}
 \langle \Upsilon,  \mathcal P((\mathbf {w}^k)_{k\in \Z}) \rangle := \int_{\mathcal C([0,T],\R)} \int_0^1 \Upsilon( \mathcal X(v,(\mathbf {w}^k)_{k\in \Z}, \mathbf b ))  \; \mathrm dv \;  \mathrm d \mu_{\mathrm{Wiener}} (\mathbf b),
\end{align*}
where $\mu_{\mathrm{Wiener}}$ denotes the Wiener measure on $\mathcal C([0,T],\R)$. 
Thus  $\mathbb P^W$-almost surely, for every $t \in [0,T]$, for every $\Upsilon:\R \to \R$ bounded and measurable, 
\begin{align*}
\langle \Upsilon,  \mathcal P_t((W^k)_{k\in \Z}) \rangle
&=\mathbb E^\beta \left[ \int_0^1 \Upsilon( \mathcal X_t(v,(W^k)_{k\in \Z}, \beta ))  \mathrm dv  \right] = \langle \Upsilon, \mu_t^g \rangle,
\end{align*}
where the last equality follows from Definition~\ref{def:mesure_mu}.
Thus we proved equality~\eqref{egalite_(b)}. 

Moreover, for every $t\in [0,T]$,  by composition of two measurable functions, 
\begin{align*}
[0,t] \times \R \times \Theta &\to \R  \\
(s,u,(\mathbf {w}^k)_{k\in \Z}, \mathbf b ) & \mapsto \Upsilon( \mathcal X_s(u,(\mathbf {w}^k)_{k\in \Z}, \mathbf b ))
\end{align*}
is $\mathcal B[0,t] \otimes \mathcal B(\R) \otimes \widehat{\mathcal B}_t(\Theta)/ \mathcal B(\R)$-measurable. 
By Fubini's Theorem, it follows that for every $t\in [0,T]$, 
\begin{align*}
[0,t] \times \mathcal C([0,T], \C)^\Z  &\to \R \\
(s,(\mathbf {w}^k)_{k\in \Z} ) & \mapsto  \int_{\mathcal C([0,T],\R)}  \int_0^1 \Upsilon( \mathcal X_s(v,(\mathbf {w}^k)_{k\in \Z}, \mathbf b ))  \;\mathrm dv \; \mathrm d \mu_{\mathrm{Wiener}} (\mathbf b),
\end{align*}
is $\mathcal B[0,t] \otimes \mathcal B_t( \mathcal C([0,T], \C)^\Z)/\mathcal B(\R)$-measurable. 
This completes the proof of $(b)$.

\textbf{Proof of $(c)$.}
Define, on the canonical space $(\Theta, \mathcal B(\Theta))$, $\mathbf F_t^g= (\mathbf x_t^g)^{-1}$ and
\begin{align*}
\mathbf A_t^g&:= \partial_u \mathbf x_t^g(\mathbf F_t^g(\cdot)) h(\mathbf F_t^g(\cdot)) ; \\
\mathbf A_t^{g,\eps} &:= \int_\R \mathbf A_t^g(\cdot-y)   \varphi_\eps (y) \mathrm dy ; \\
\mathbf H_t^{g,\eps}(u)&:= \frac{1}{\partial_u \mathbf x_t^g(u)} (\mathbf A_t^g-\mathbf A_t^{g,\eps})(\mathbf x_t^g(u)).
\end{align*}
In order to prove that $\mathbf H^{g, \eps}$ can be written as a progressively measurable function of $u$ and $((\mathbf w^k)_{k \in \Z}, \mathbf b)$, we will prove successively that this property  holds for $\partial_u \mathbf x^g$, $\mathbf F^g$, $\mathbf A^g$ and $\mathbf A^{g,\eps}$ and we will deduce the result for $\mathbf H^{g, \eps}$ by composition of progressively measurable functions. 

Let us start with $\partial_u \mathbf x^g$. 
By Proposition~\ref{prop:differentiability}, since $g\in \mathbf G^{1+\theta}$ and $\alpha > \frac{3}{2}+\theta$, $\mathbf P$-almost surely, for every $t\in [0,T]$, the map $u\mapsto \mathbf x_t^g(u)$ is of class $\mathcal C^1$. 
Thus there exists  a $\mathbf P$-almost-sure event $A \in \mathcal B(\theta)$ such that for every $((\mathbf w^k)_{k \in \Z}, \mathbf b) \in A$, $ \mathbf x_t^g(u)=\mathcal X_t (u, (\mathbf {w}^k)_{k\in \Z}, \mathbf b)$ holds for every $(t,u) \in [0,T] \times \R$ and such that $u \mapsto \mathbf x_t^g(u)$ belongs to $\mathcal C^1$. 
Define for every $((\mathbf w^k)_{k \in \Z}, \mathbf b) \in A$, for every $(t,u) \in [0,T] \times \R$, 
\begin{align*}
\partial_u\mathcal X_t(u,(\mathbf w^k)_{k \in \Z}, \mathbf b):= \limsup_{\eta \searrow 0} \frac{\mathcal X_t(u+\eta,(\mathbf w^k)_{k \in \Z}, \mathbf b)-\mathcal X_t(u,(\mathbf w^k)_{k \in \Z}, \mathbf b)}{\eta}.
\end{align*}
Thus for every $((\mathbf w^k)_{k \in \Z}, \mathbf b) \in A$ and for every $(t,u) \in [0,T] \times \R$, $\partial_u \mathbf x_t^g(u)=\partial_u\mathcal X_t(u,(\mathbf w^k)_{k \in \Z}, \mathbf b)$.
Moreover, by progressive-measurability of $\mathcal X$, it follows from the definition of $\partial_u \mathcal X$ is also progressively measurable; more precisely, for every $t\in [0,T]$, 
\begin{align*}
[0,t] \times \R \times \Theta &\to \R \\
(s,u,(\mathbf {w}^k)_{k\in \Z}, \mathbf b ) & \mapsto \partial_u \mathcal X_s(u,(\mathbf {w}^k)_{k\in \Z}, \mathbf b)
\end{align*}
is $\mathcal B[0,t] \otimes \mathcal B(\R) \otimes \widehat{\mathcal B}_t(\Theta)/ \mathcal B(\R)$-measurable. 

Now, consider $\mathbf F^g$. 
Define for every $x\in [0,2\pi]$
\begin{align}
\label{defin:widetilde Ftg}
\widetilde{\mathbf F}_t^g(x):=\int_0^1 \mathds 1_{\{\mathbf x_t^g(v)-\mathbf x_t^g(0) \leq x\}} \mathrm dv.
\end{align}
Thus we have for every $x\in [\mathbf x_t(0),\mathbf x_t(0)+2\pi]$
\begin{align*}
\widetilde{\mathbf F}_t^g(x-\mathbf x_t^g(0))=\int_0^1 \mathds 1_{\{\mathbf x_t^g(v) \leq x\}} \mathrm dv
=\int_0^1 \mathds 1_{\{v\leq \mathbf F_t^g(x)\}} \mathrm dv=\mathbf F_t^g(x). 
\end{align*}
Therefore, since for every $x\in \R$, $\mathbf F_t^g(x+2\pi)=\mathbf F_t^g(x)+1$, we have
\begin{align*}
\mathbf F_t^g(x)=\sum_{k \in \Z} \mathds 1_{\{ x-2\pi k \in [\mathbf x_t(0),\mathbf x_t(0)+2\pi) \}} \left( \widetilde{\mathbf F}_t^g(x-2\pi k -\mathbf x_t^g(0))+k \right).
\end{align*}
Hence it is sufficient to prove that we can write $\widetilde{\mathbf F}_t^g$ as a progressively measurable function of $x$ and $((\mathbf w^k)_{k \in \Z}, \mathbf b)$. 
Recall that $\mathbf P$-almost surely, $u \mapsto \mathbf x^g (u) = \mathcal X(u,(\mathbf w^k)_{k \in \Z}, \mathbf b)$ is continuous. Thus there is $\mathcal I$ such that $\mathbf P$-almost surely, for every $v \in [0,1]$, for every $x \in [0,2\pi]$, 
$\mathds 1_{\{\mathbf x_\cdot^g(v)-\mathbf x_\cdot^g(0) \leq x\}} =\mathcal I_\cdot (v,x,(\mathbf w^k)_{k \in \Z}, \mathbf b)$
and such that for every $t\in [0,T]$, 
\begin{align*}
[0,t] \times [0,1] \times [0,2\pi] \times \Theta &\to \R \\
(s,v,x,(\mathbf {w}^k)_{k\in \Z}, \mathbf b ) & \mapsto  \mathcal I_s(v,x,(\mathbf {w}^k)_{k\in \Z}, \mathbf b)
\end{align*}
is $\mathcal B[0,t] \otimes \mathcal B([0,1] \times [0,2\pi]) \otimes \widehat{\mathcal B}_t(\Theta)/ \mathcal B(\R)$-measurable. 
It follows from Fubini's Theorem and from~\eqref{defin:widetilde Ftg} that 
for every $t\in [0,T]$, 
\begin{align*}
[0,t]  \times [0,2\pi] \times \Theta &\to \R \\
(s,x,(\mathbf {w}^k)_{k\in \Z}, \mathbf b ) & \mapsto  \int_0^1 \mathcal I_s(v,x,(\mathbf {w}^k)_{k\in \Z}, \mathbf b) \mathrm dv = \widetilde{\mathbf F}_s^g(x)
\end{align*}
is $\mathcal B[0,t] \otimes \mathcal B( [0,2\pi]) \otimes \widehat{\mathcal B}_t(\Theta)/ \mathcal B(\R)$-measurable.

Let us conclude with  $\mathbf A^g$, $\mathbf A^{g,\eps}$ and  $\mathbf H^{g, \eps}$. 
First, remark that $\mathbf A^g$ is obtained by products and compositions of $\partial_u \mathbf x^g$, $\mathbf F^g$ and $h$, where $h$ is a $\mathcal C^1$-function. Thus $x \mapsto \mathbf A^g(x)$ is a progressively measurable function of $x$ and $((\mathbf {w}^k)_{k\in \Z}, \mathbf b)$. 
It follows also that $(x,y) \mapsto \mathbf A^g (x-y) \varphi_\eps (y)$ is a progressively measurable function of $x$, $y$ and $(\mathbf {w}^k)_{k\in \Z}, \mathbf b$. By Fubini's Theorem, we deduce that $x \mapsto \mathbf A^{g, \eps}(x)$ is a progressively measurable function of $x$ and $((\mathbf {w}^k)_{k\in \Z}, \mathbf b)$. 
Again by products and compositions, it follows that there is a progressively measurable function $\mathcal H$  such that $\mathbf P$-almost surely, for every $u\in \R$ and for every $t\in [0,T]$, 
\begin{align*}
\mathbf H_t^{g,\eps}(u) = \mathcal H_t (u, (\mathbf {w}^k)_{k\in \Z}, \mathbf b). 
\end{align*}
It follows that $\mathbf P^W \otimes \mathbf P^\beta$-almost surely, equality~\eqref{egalite_(c)} holds. It completes the proof of $(c)$.
\end{proof}

\subsection{Idiosyncratic noise}
\label{parag:idiosyncratic noise}

Coming back to equality~\eqref{I_2_avec_H}, and applying the  relation $\partial_\mu \phi(\mu_t^g) = \partial_v \left\{\frac{\delta \phi}{\delta m} (\mu_t^g)\right\}$ (see Proposition~\ref{prop:link_derivatives} in appendix), we have
\begin{align*}
I_2&= \frac{1}{t} \Ewb{ \int_0^1 \!\!\int_0^t \partial_v \left\{\frac{\delta \phi}{\delta m} (\mu_t^g)\right\} (x_t^g(u)) \; \partial_u x_t^g(u)  \;  H^{g,\eps}_s(u) \;\mathrm ds \mathrm du } \\
&=\frac{1}{t}\int_0^1 \!\!\int_0^t \Ewb{  \partial_u \left\{\frac{\delta \phi}{\delta m} (\mu_t^g) (x_t^g(\cdot)) \right\} (u)\; H^{g,\eps}_s(u)  }\mathrm ds \mathrm du.
\end{align*}
By definition~\eqref{notation moyenne}, $\left[\frac{\delta \phi}{\delta m}\right]$ is equal to $\frac{\delta \phi}{\delta m}$ up to a constant, so their derivatives are equal and
\begin{align}
\label{reprendre_egalite}
I_2=\frac{1}{t}\int_0^1 \!\!\int_0^t \Ewb{  \partial_u \left\{\left[\frac{\delta \phi}{\delta m}\right] (\mu_t^g) (x_t^g(\cdot)) \right\} (u) \; H^{g,\eps}_s(u)  }\mathrm ds \mathrm du.
\end{align}

In the following lemma, we prove that $I_2$ can be expressed in terms of $\frac{\delta \phi}{\delta m}$ instead of its derivative. This key step is, as shown below, a consequence of Girsanov's Theorem applied with respect to the idiosyncratic noise $\beta$.

\begin{lemme}
\label{lemme:traitement de I2}
Let $\theta \in (0,1)$, $g \in \mathbf G^{1+\theta}$ and $f$ be of order $\alpha>\frac{3}{2}+\theta$.
Let $h \in \Delta^1$ and $\eps >0$. 
Fix $u\in [0,1]$ and $s<t \in [0,T]$.  
Thus the  following equality holds true
\begin{multline}
\label{34}
\Ewb{\partial_u \left\{\left[ \frac{\delta \phi}{\delta m} \right] (\mu_t^g) (x_t^g(\cdot)) \right\}(u) \; H^{g,\eps}_s(u)} \\
=\Ewb{\left[ \frac{\delta \phi}{\delta m} \right](\mu_t^g)(x_t^g(u)) \;H^{g,\eps}_s(u) \; \frac{1}{t-s}\int_s^t \partial_u x_r^g(u) \mathrm d\beta_r}.
\end{multline}
\end{lemme}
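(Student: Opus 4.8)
The plan is to use an integration-by-parts formula with respect to the idiosyncratic noise $\beta$, holding the common noise $W=(W^k)_{k\in\Z}$ fixed. First I would condition on $\mathcal G^W$, so that the only randomness left is that of $\beta$ (and, if present, the $\mathcal G_0$-randomness of $g,h$). Under this conditioning, the family of particle trajectories is driven by $\beta$ only through the term $\beta_t$, which enters additively. The key structural observation is that, for fixed $\omega^W$, the map $\beta_\cdot\mapsto x_t^g(u)$ is, up to the $W$-dependent flow, a translation: perturbing $\beta$ by an absolutely continuous shift $\rho\int_0^\cdot\dot\zeta_r\,\mathrm dr$ shifts each $x_t^g(u)$ by $\rho\,\partial_u x_t^g(u)\cdot(\text{something})$ via the same Kunita-type expansion used for $I_1$ — more precisely, by Lemma~\ref{lem_Kunita} applied with the common noise frozen, $Z_t^{x+\rho\zeta_t}$ picks up the correction $\rho\int_0^t\partial_x Z_s^{\,\cdot}\,\dot\zeta_s\,\mathrm ds$. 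Since $\partial_x Z_t^{g(u)}=\partial_u x_t^g(u)/g'(u)$ by \eqref{liens entre les dérivées}, differentiating in $\rho$ at $\rho=0$ produces exactly the spatial derivative $\partial_u\{\,[\tfrac{\delta\phi}{\delta m}](\mu_t^g)(x_t^g(\cdot))\,\}(u)$ that appears on the left-hand side of \eqref{34}.

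Concretely, I would fix $u\in[0,1]$ and $s<t$, and introduce a perturbation of $\beta$ localized to the interval $[s,t]$: replace $\beta_r$ by $\beta_r+\rho\,\gamma_r$ where $\gamma_r=0$ for $r\le s$, $\gamma_r=\tfrac{r-s}{t-s}$ for $r\in[s,t]$, and $\gamma_r=1$ for $r\ge t$ (so $\dot\gamma_r=\tfrac{1}{t-s}\mathds 1_{[s,t]}(r)$). The reason $H^{g,\eps}_s(u)$ is frozen (not perturbed) is that it depends on the trajectory only up to time $s$, where $\gamma\equiv 0$; so in the expansion of $\mathbb E^\beta[\,[\tfrac{\delta\phi}{\delta m}](\mu_t^{g,\rho})(x_t^{g,\rho}(u))\,H^{g,\eps}_s(u)\,]$ in $\rho$, the $H^{g,\eps}_s(u)$ factor is inert. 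Girsanov's theorem with respect to $\beta$ alone: the shift $\rho\gamma$ corresponds to the Cameron–Martin density $\mathcal D^\rho=\exp(\rho\int_s^t\tfrac{1}{t-s}\mathrm d\beta_r-\tfrac{\rho^2}{2}\tfrac{1}{t-s})$, which is a genuine $\mathbb P^\beta$-martingale (the integrand is deterministic and bounded, so Novikov is trivial). Hence $\mathbb E^\beta[\,\widehat\phi\text{-type functional of }(x^g)\cdot\mathcal D^\rho\,]$ is independent of $\rho$; differentiating at $\rho=0$ and matching the two ways of computing the derivative (one producing the spatial derivative against $\partial_u x_t^g(u)/g'(u)$ via the chain rule, the other producing the stochastic-integral factor $\int_s^t\tfrac{1}{t-s}\mathrm d\beta_r$ from $\tfrac{\mathrm d}{\mathrm d\rho}\mathcal D^\rho|_{\rho=0}$) yields the identity; multiplying through by $g'(u)$ — which is absorbed since $H^{g,\eps}_s(u)$ carries a $1/\partial_u x_s^g(u)$ and the chain rule on $x_t^g=Z_t^{g(u)}$ gives the factor $\partial_u x_t^g(u)$, not $\partial_x Z_t^{g(u)}$ — gives precisely \eqref{34} with $\partial_u x_r^g(u)$ in the stochastic integral. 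The progressive measurability established in Lemma~\ref{lemma:prog_measurability} is what legitimizes freezing $\omega^W$ and treating everything as an honest functional of $(\beta,u)$, and in particular lets us invoke uniqueness in law under the shifted measure.

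The main obstacle I anticipate is justifying the differentiation under the expectation at $\rho=0$ and, relatedly, the precise bookkeeping of which quantities are perturbed. One must control the increment $\mathbb E^\beta[\,|\,\text{functional}(x^{g,\rho})-\text{functional}(x^g)\,|^2\,]$ uniformly, using the Lipschitz regularity of $\widehat\phi$ (assumption $(\phi2)$ / Remark~\ref{rem:phi2_implique_lipschitz}) together with moment bounds on $\partial_u x_t^g$ and $1/\partial_u x_t^g$ from the appendix (inequalities~\eqref{Lp norm 7}, \eqref{Lp norm 8}), exactly as was done in the proof of Lemma~\ref{lemme 31} for the common-noise Girsanov argument. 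A cleaner route, avoiding a delicate dominated-convergence argument, is to note that the quantity $\mathbb E^\beta[\,\text{functional}(x^{g,\rho})\,\mathcal D^\rho\,]$ is \emph{exactly} constant in $\rho$ (not merely differentiable), so it suffices to identify its $\rho$-derivative at $0$ via a finite-difference / Taylor expansion with an $\mathcal O(\rho^2)$-remainder controlled as above; this is the same device used in \eqref{derivee_nulle}--\eqref{derivee_nulle_bis}. I would also take care that $[\tfrac{\delta\phi}{\delta m}](\mu_t^g)(\cdot)$ is $\mathcal C^1$ in its spatial argument (inherited from the $\phi$-assumptions, applied to $P_t$-regularity as in Proposition~\ref{prop:phi assumptions Pt phi}) so that the chain rule $\partial_u\{[\tfrac{\delta\phi}{\delta m}](\mu_t^g)(x_t^g(\cdot))\}(u)=\partial_v\{[\tfrac{\delta\phi}{\delta m}](\mu_t^g)\}(x_t^g(u))\,\partial_u x_t^g(u)$ is valid, and that all the integrands are genuinely integrable so that Fubini (switching $\mathbb E^\beta$ with the $\mathrm ds$-integral) is justified.
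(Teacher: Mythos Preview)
Your high-level strategy is correct and matches the paper's: apply Girsanov with respect to $\beta$ only, use that $\mu_t^g$ is $\mathcal G^W$-measurable (so unaffected by the shift) and that $H_s^{g,\eps}(u)$ is $\mathcal G_s$-measurable (so unaffected by a shift supported on $[s,t]$), then differentiate at $\rho=0$. The progressive-measurability lemma is invoked for exactly the reason you state.

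However, there is a genuine gap in your concrete implementation. You propose the \emph{deterministic} shift $\gamma_r$ with $\dot\gamma_r=\tfrac{1}{t-s}\mathds 1_{[s,t]}(r)$ and Cameron--Martin density $\mathcal D^\rho=\exp\big(\rho\int_s^t\tfrac{1}{t-s}\,\mathrm d\beta_r-\tfrac{\rho^2}{2(t-s)}\big)$. Differentiating $\mathcal D^\rho$ at $\rho=0$ produces the factor $\tfrac{1}{t-s}(\beta_t-\beta_s)$, \emph{not} the factor $\tfrac{1}{t-s}\int_s^t\partial_u x_r^g(u)\,\mathrm d\beta_r$ that appears on the right of~\eqref{34}. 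Multiplying by the constant $g'(u)$ cannot produce the $r$-dependent integrand $\partial_u x_r^g(u)$. Moreover, with your deterministic shift the derivative $\tfrac{\mathrm d}{\mathrm d\rho}\big|_{\rho=0}x_t^{g,\rho}(u)$ is \emph{not} $\partial_x Z_t^{g(u)}$; it is the Duhamel expression $\int_s^t \tfrac{\partial_u x_t^g(u)}{\partial_u x_r^g(u)}\cdot\tfrac{1}{t-s}\,\mathrm dr$, so the left-hand side comes out wrong as well. The identity you would obtain is a valid Bismut-type formula, but it is not~\eqref{34}.

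What the paper actually does---and what your reference to Lemma~\ref{lem_Kunita} hints at but does not carry through---is to perturb the \emph{initial condition}: set $\xi_r=\tfrac{1}{t-s}\int_0^r\mathds 1_{[s,t]}(z)\,\mathrm dz$ and consider $x_r^\nu(u):=Z_r^{g(u)+\nu\xi_r}$. By Kunita's expansion this process solves the original SDE with $\beta$ replaced by $\beta+\nu\int_0^\cdot \partial_x Z_z^{g(u)+\nu\xi_z}\dot\xi_z\,\mathrm dz$, i.e.\ with a \emph{random} drift. The Girsanov density therefore has the random integrand $\partial_x Z_r^{g(u)+\nu\xi_r}\dot\xi_r$, and at $\nu=0$ this yields precisely $\tfrac{1}{t-s}\int_s^t\partial_x Z_r^{g(u)}\,\mathrm d\beta_r=\tfrac{1}{g'(u)}\cdot\tfrac{1}{t-s}\int_s^t\partial_u x_r^g(u)\,\mathrm d\beta_r$. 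On the other side, $\tfrac{\mathrm d}{\mathrm d\nu}\big|_{\nu=0}Z_t^{g(u)+\nu\xi_t}=\partial_x Z_t^{g(u)}\cdot\xi_t=\partial_x Z_t^{g(u)}$ exactly, since $\xi_t=1$. Multiplying both sides by $g'(u)$ then gives~\eqref{34}. (The paper also localizes by a stopping time $\sigma^\nu$ to ensure the exponential martingale is honest, and controls a remainder $R(\nu)=\mathcal O(\nu^2)$ coming from the localization; these are the technical points your outline anticipates but would need to adapt to the random shift.)

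In short: the Girsanov shift must be random with integrand $\partial_x Z_r^{g(u)}\dot\xi_r$; your deterministic $\gamma$ gives the wrong formula on both sides.
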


\begin{proof}
Fix $u\in [0,1]$ and $s<t \in [0,T]$.
Define, for every $r\in [0,T]$, $\xi_r:=
\frac{1}{t-s}\int_0^r \mathds 1_{\{z \in [s,t] \}} \mathrm dz$.

For every $\nu \in [-1,1]$, denote by $(x_r^\nu)_{r\in [0,T]}$ the process $(x_r^{g+\nu \xi_r})_{r\in [0,T]}$. 
By Proposition~\ref{prop:kunita_identities}, $\mathbb P^W \otimes \mathbb P^\beta$-almost surely, $x_r^\nu(u)=x_r^{g+\nu \xi_r}(u)=Z_r^{g(u)+\nu \xi_r}$.
Apply Kunita's expansion (Lemma~\ref{lem_Kunita}) to $x=g(u)$ and $\zeta_t=\xi_t$. We obtain for every $u\in \R$, $\mathbb P^W \otimes \mathbb P^\beta$-almost surely for every $r\in [0,T]$ and every $\nu \in [-1,1]$:
\begin{align*}
x_r^\nu (u) 
=  g(u)+\sum_{k\in \Z} f_k \int_0^r \Re \left( e^{-ik x_z^\nu(u)} \mathrm dW_z^k \right) + \beta_r + \nu \int_0^r \partial_x Z_z^{g(u)+\nu \xi_z} \dot{\xi}_z \mathrm dz.
\end{align*}
Since both terms of the last equality are almost surely continuous with respect to $u\in \R$, that equality holds almost surely for every $u \in \R$. 

For every $\nu \in [-1,1]$,  define the following stopping time
\begin{align*}
\sigma^\nu:=\inf \left\{ r\geq 0: \left|\nu \int_0^r \partial_x Z_z^{g(u)+\nu \xi_z} \dot{\xi}_z \mathrm d\beta_z  \right| \geq 1 \right\} \wedge T. 
\end{align*}
Define the process $(y_r^\nu)_{r\in [0,T]}$ as the solution to 
\begin{align*}
\mathrm dy_r^\nu (u) 
&= \sum_{k\in \Z} f_k \Re \left( e^{-ik y_r^\nu(u)} \mathrm dW_r^k \right) + \mathrm d\beta_r + \nu \mathds 1_{\{ r \leq \sigma^\nu \}} \partial_x Z_r^{g(u)+\nu \xi_r} \dot{\xi}_r \mathrm dr,
\end{align*}
and $\beta^\nu_r:= \beta_r + \nu \int_0^r \mathds 1_{\{ z \leq \sigma^\nu \}} \partial_x Z_z^{g(u)+\nu \xi_z} \dot{\xi}_z \mathrm dz$. 
Let us define for every $r\in [0,T]$
\begin{align*}
\mathcal E^\nu_r=\exp \left(-\nu \int_0^{r\wedge \sigma^\nu} \partial_x Z_z^{g(u)+\nu \xi_z} \dot{\xi}_z \mathrm d\beta_z -\frac{\nu^2}{2} \int_0^{r\wedge \sigma^\nu} \left| \partial_x Z_z^{g(u)+\nu \xi_z} \dot{\xi}_z\right|^2 \mathrm d z\right).
\end{align*}
By definition of $\sigma^\nu$, we have $\mathcal E^\nu_r \leq \exp \left(-\nu \int_0^{r\wedge \sigma^\nu} \partial_x Z_z^{g(u)+\nu \xi_z} \dot{\xi}_z \mathrm d\beta_z \right)\leq \exp \left( 1 \right)$. In particular, 
$(\mathcal E^\nu_r)_{r\in [0,T]}$ is a $\mathbb P^W \otimes \mathbb P^\beta$-martingale. Define $\mathbb P^\nu$ as the absolutely continuous probability measure with respect to $\mathbb P^W \otimes \mathbb P^\beta$ with density $\frac{\mathrm d \mathbb P^\nu}{\mathrm d(\mathbb P^W \otimes \mathbb P^\beta)}= \mathcal E^\nu_T$.
Thus by Girsanov's Theorem, the law under $\mathbb P^\nu$ of $((W^k)_{k\in \Z},\beta^\nu)$ is equal to the law under $\mathbb P^W \otimes \mathbb P^\beta$ of $((W^k)_{k\in \Z},\beta)$. 
It follows that $(\Omega, \mathcal G, (\mathcal G_t)_{t\in [0,T]}, \mathbb P^\nu, y^\nu, (W^k)_{k \in \Z}, \beta^\nu)$ is a weak solution to equation~\eqref{eq_SDE_diff_torus}.

By Lemma~\ref{lemma:prog_measurability} and  Yamada-Watanabe Theorem, 
\begin{align*}
\mathcal L^{\mathbb P^\nu} (y^\nu, (W^k)_{k \in \Z}, \beta^\nu)=
\mathcal L^{\mathbb P^W \otimes \mathbb P^\beta} (x^g, (W^k)_{k \in \Z}, \beta)= \mathcal L^{\mathbf P} (\mathbf x^g, (\mathbf w^k)_{k \in \Z}, \mathbf b),
\end{align*}
and $\mathbb P^\nu$-almost surely, for every $u\in \R$ and  $t\in [0,T]$, 
\begin{align}
\label{condition_d}
y^\nu_t(u)= \mathcal X_t (u,(W^k)_{k \in \Z}, \beta^\nu).
\end{align}

Moreover, we claim that $\mathbb E^\nu \left[\left[ \frac{\delta \phi}{\delta m} \right](\mu_t^g) (y_t^\nu(u)) H^{g,\eps}_s(u) \right]$ does not depend on $\nu$. Indeed, by~\eqref{condition_d}
\begin{multline*}
F(\nu):=\mathbb E^\nu \left[\left[ \frac{\delta \phi}{\delta m} \right](\mu_t^g) (y_t^\nu(u)) H^{g,\eps}_s(u) \right]
=\mathbb E^\nu \left[\left[ \frac{\delta \phi}{\delta m} \right](\mu_t^g) \big(\mathcal X_t (u,(W^k)_{k \in \Z}, \beta^\nu)\big) H^{g,\eps}_s(u) \right] \\
=\Ewb{\left[ \frac{\delta \phi}{\delta m} \right](\mu_t^g) \big(\mathcal X_t (u,(W^k)_{k \in \Z}, \beta^\nu)\big) H^{g,\eps}_s(u) \frac{\mathrm d \mathbb P^\nu}{\mathrm d(\mathbb P^W \otimes \mathbb P^\beta)}}.
\end{multline*}
Furthermore, by Lemma~\ref{lemma:prog_measurability}
\begin{align*}
F(\nu)&=\Ewb{\left[ \frac{\delta \phi}{\delta m} \right]\big(\mathcal P_t ((W^k)_{k \in \Z})\big ) \; \big(\mathcal X_t (u,(W^k)_{k \in \Z}, \beta^\nu)\big) \; \mathcal H_s (u,(W^k)_{k \in \Z}, \beta) \; \frac{\mathrm d \mathbb P^\nu}{\mathrm d(\mathbb P^W \otimes \mathbb P^\beta)}}\\
&=\mathbb E^\nu \left[\left[ \frac{\delta \phi}{\delta m} \right]\big(\mathcal P_t ((W^k)_{k \in \Z})\big ) \; \big(\mathcal X_t (u,(W^k)_{k \in \Z}, \beta^\nu)\big) \; \mathcal H_s (u,(W^k)_{k \in \Z}, \beta) \right].
\end{align*}
Moreover,  the processes $(\beta_r)$ and $(\beta^\nu_r)$ are equal on the interval $[0,s]$, because $\xi_r\equiv 0$ on $[0,s]$. 
Since $(\mathcal H_r)_{r\in [0,T]}$ is progressively measurable, then $\mathbb P^\nu$-almost surely, $\mathcal H_s (u,(W^k)_{k \in \Z}, \beta)= \mathcal H_s (u,(W^k)_{k \in \Z}, \beta^\nu)$. 
Therefore, 
\begin{align*}
F(\nu)&=\mathbb E^\nu \left[\left[ \frac{\delta \phi}{\delta m} \right]\big(\mathcal P_t ((W^k)_{k \in \Z})\big ) \; \big(\mathcal X_t (u,(W^k)_{k \in \Z}, \beta^\nu)\big) \; \mathcal H_s (u,(W^k)_{k \in \Z}, \beta^\nu) \right] \\
&=  \Ewb{\left[ \frac{\delta \phi}{\delta m} \right]\big(\mathcal P_t ((W^k)_{k \in \Z})\big ) \; \big(\mathcal X_t (u,(W^k)_{k \in \Z}, \beta)\big) \; \mathcal H_s (u,(W^k)_{k \in \Z}, \beta)},
\end{align*}
since  the law of $((W^k)_{k\in \Z},\beta^\nu)$ under $\mathbb P^\nu$ is equal to the law of $((W^k)_{k\in \Z},\beta)$ under $\mathbb P^W \otimes \mathbb P^\beta$.
The last term of that equality does not depend on $\nu$ anymore, so  we get finally
\begin{align}
\label{derivee_nulle_ter}
\frac{\mathrm d}{\mathrm d\nu}_{\vert \nu=0} F(\nu) =
\frac{\mathrm d}{\mathrm d\nu}_{\vert \nu=0} \mathbb E^\nu \left[\left[ \frac{\delta \phi}{\delta m} \right](\mu_t^g) (y_t^\nu(u)) H^{g,\eps}_s(u) \right]=0.
\end{align}
Furthermore, 
\begin{align}
\label{décomposition avec reste}
\mathbb E^\nu \Big[\Big[ \frac{\delta \phi}{\delta m} \Big](\mu_t^g) (y_t^\nu(u)) H^{g,\eps}_s(u) \Big]
&=\Ewb{\Big[ \frac{\delta \phi}{\delta m} \Big](\mu_t^g) (y_t^\nu(u)) H^{g,\eps}_s(u) \mathcal E^\nu_t } \notag\\
&=\Ewb{\Big[ \frac{\delta \phi}{\delta m} \Big](\mu_t^g) (x_t^\nu(u)) H^{g,\eps}_s(u)\mathcal E^\nu_t } 
+R(\nu), 
\end{align}
where $R(\nu)= \Ewb{ \mathds 1_{\{ \sigma^\nu <T  \}} \Big( \Big[ \frac{\delta \phi}{\delta m} \Big](\mu_t^g) (y_t^\nu(u))  - \Big[ \frac{\delta \phi}{\delta m} \Big](\mu_t^g) (x_t^\nu(u))   \Big) H^{g,\eps}_s(u)\mathcal E^\nu_t } $; we used here the fact that   $\mathds 1_{\{ \sigma^\nu =T \}} (x_t^\nu(u)-y_t^\nu(u))=0$. 
Let us show that $R(\nu)= \mathcal O(|\nu|^2)$.
By Hölder's inequality and by the fact that $\mathcal E_t^\nu \leq \exp(1)=e$, we have
\begin{align}
\label{gestion du petit o de nu}
|R(\nu)|
&\leq 2 e
   (\mathbb P^W \otimes \mathbb P^\beta) \Big[\sigma^\nu <T\Big]^{1/4} \Ewb{H^{g,\eps}_s(u)^2}^{1/2} 
  \Ewb{ \sup_{v\in \R} \Big|\Big[ \frac{\delta \phi}{\delta m} \Big](\mu_t^g) (v)\Big|^4}^{\frac{1}{4}}.
\end{align}
We control the different terms appearing on the r.h.s.\! of~\eqref{gestion du petit o de nu}.
By Markov's inequality, by Burkholder-Davis-Gundy inequality  and by  inequality~\eqref{borne sur pZtx}, for every $\nu \in [-1,1]$
\begin{align*}
\mathbb P^W \otimes \mathbb P^\beta \Big[ \sigma^\nu < T \Big]
&\leq \Ewb{\sup_{r \leq T}  \Big|\nu \int_0^r \partial_x Z_z^{g(u)+\nu \xi_z} \frac{\mathds 1_{\{z \in [s,t] \}}}{t-s} \mathrm d\beta_z  \Big|^8}\\
&\leq C |\nu |^8 \; \Ewb{  \Big|\int_s^t \Big|\partial_x Z_r^{g(u)+\nu \xi_r}  \Big|^2  \frac{1}{(t-s)^2}\mathrm dr  \Big|^4} \\
&\leq \frac{C}{(t-s)^5} |\nu |^8 \; \Ewb{ \int_s^t \Big|\partial_x Z_r^{g(u)+\nu \xi_r}  \Big|^8  \mathrm dr  }\leq C |\nu |^8,
\end{align*}
where $C$ is a constant depending on $s$ and $t$ changing from line to line.

Let us show that $\Ewb{H^{g,\eps}_s(u)^2}<+\infty$. Recall that $H_s^{g,\eps}(u):= \frac{(A_s^g-A_s^{g,\eps})(x_s^g(u))}{\partial_u x_s^g(u)}$. Thus 
\begin{align*}
\Ewb{H^{g,\eps}_s(u)^2}^{1/2}
&\leq \Ewb{\left\| \frac{1}{\partial_u x_s^g} \right\|^4_{L_\infty}}^{1/4}
\Ewb{\|A_s^g - A_s^{g,\eps}\|^4_{L_\infty}}^{1/4}.
\end{align*}
By~\eqref{Lp norm 8}, $\Ewb{\left\| \frac{1}{\partial_u x_s^g} \right\|^4_{L_\infty}}$ is finite. 
Moreover, since $A_s^{g,\eps} = A_s^g \ast \varphi_\eps$, we have  
$\|A_s^g - A_s^{g,\eps}\|_{L_\infty} \leq  C \eps \|\partial_x A_s^g \|_{L_\infty}$, where 
$C= \int_\R |y|\varphi(y)\mathrm dy$. 
Using inequality~\eqref{norme 1 Asg} and an analogue to~\eqref{norme 1 Asg_bis} with exponent $4$ instead of $2$, we check that $\Ewb{\|\partial_x A_s^g \|_{L_\infty}}$ is finite. Thus there is $C$ such that $\Ewb{H^{g,\eps}_s(u)^2}\leq C$.

Then show that $\Ewb{ \sup_{v\in \R} \Big|\Big[ \frac{\delta \phi}{\delta m} \Big](\mu_t^g) (v)\Big|^4}$ is finite. 
By definition~\eqref{notation moyenne}, for every $v \in \R$, 
\begin{align}
\label{ineq 1}
\Big|\Big[ \frac{\delta \phi}{\delta m} \Big](\mu_t^g) (v)\Big|
\leq \mathbb E^\beta \left[ \int_0^1 \Big| \frac{\delta \phi}{\delta m}(\mu_t^g) (v) - \frac{\delta \phi}{\delta m} (\mu_t^g) (x_t^g(u))  \Big|  \mathrm du  \right].
\end{align}
By inequality~\eqref{Lipschitz derivee de Lions 2}, there is a $C>0$ such that $\mathbb P^W $-almost surely for every $x\in [0,2\pi]$, 
\begin{align}
\label{eq:35ab}
\left|  \partial_v \left\{ \frac{\delta \phi}{\delta m}(\mu_t^g)\right\}(x)\right| 
=\left|  \partial_\mu \phi (\mu_t^g)(x)\right| 
\leq C(1+2\pi) + C \mathbb E^\beta \left[\int_0^1 |x_t^g (u')| \mathrm du'\right] .
\end{align}
Thus there is $C>0$ such that $\mathbb P^W $-almost surely, for every $v, v' \in [0,2\pi]$, 
\begin{align*}
\left|   \frac{\delta \phi}{\delta m}(\mu_t^g)(v) -  \frac{\delta \phi}{\delta m}(\mu_t^g)(v')\right|
\leq  C   + C \mathbb E^\beta \left[\int_0^1 |x_t^g (u')| \mathrm du'\right].
\end{align*}
By Proposition~\ref{prop:integrale nulle}, $v\mapsto \frac{\delta \phi}{\delta m}(\mu_t^g)(v) $ is $2\pi$-periodic, thus the latter inequality holds for every $v, v' \in \R$. Combining that inequality with~\eqref{ineq 1}, we get for every $v \in \R$, 
\begin{align*}
\Big|\Big[ \frac{\delta \phi}{\delta m} \Big](\mu_t^g) (v)\Big|
\leq  C   + C \mathbb E^\beta \left[\int_0^1 |x_t^g (u')| \mathrm du'\right].
\end{align*}
This leads to
\begin{align}
\label{ineq 2}
\Ewb{ \sup_{v\in \R} \Big|\Big[ \frac{\delta \phi}{\delta m} \Big](\mu_t^g) (v)\Big|^4}
\leq C + C\Ewb{ \int_0^1 |x_t^g (u')| \mathrm du'  } ,
\end{align}
which is finite. Thus $\Ewb{ \sup_{v\in \R} \Big|\Big[ \frac{\delta \phi}{\delta m} \Big](\mu_t^g) (v)\Big|^4} \leq C$, whence we finally deduce, in view of inequality~\eqref{gestion du petit o de nu}, that $|R(\nu)| \leq C |\nu|^2$.

Thus $R(\nu)= \mathcal O(|\nu|^2)$. It follows from~\eqref{derivee_nulle_ter} and~\eqref{décomposition avec reste}  that
\begin{align*}
\frac{\mathrm d}{\mathrm d\nu}_{\vert \nu=0}\Ewb{\left[ \frac{\delta \phi}{\delta m} \right](\mu_t^g) (x_t^\nu(u)) \;H^{g,\eps}_s(u)\;\mathcal E^\nu_t }=0
\end{align*}
By~\eqref{ineq 2},  $\Big(\left[ \frac{\delta \phi}{\delta m} \right](\mu_t^g) (x_t^\nu(u))\Big)_{\nu\in [-1,1]}$ is uniformly integrable. 
Using  inequality~\eqref{eq:35ab}, we prove in the same way that $\Big( \partial_v \Big\{\left[ \frac{\delta \phi}{\delta m} \right](\mu_t^g) \Big\}(x_t^\nu(u))\Big)_{\nu\in [-1,1]}$ is also uniformly integrable. Recall that $x_t^\nu(u)=Z_t^{g(u)+\nu \xi_t}$ and that, by inequality~\eqref{borne sur pZtx}, $(\partial_x Z_t^{g(u)+\nu \xi_t})_{\nu\in [-1,1]}$ is uniformly integrable. Thus we get by differentiation:
\begin{align*}
0&= \frac{\mathrm d}{\mathrm d\nu}_{\vert \nu=0} \Ewb{\left[ \frac{\delta \phi}{\delta m} \right](\mu_t^g) (Z_t^{g(u)+\nu \xi_t}) \;  H^{g,\eps}_s(u) \; \mathcal E^\nu_t} \notag  \\
&= \Ewb{\partial_v \Big\{\left[ \frac{\delta \phi}{\delta m} \right](\mu_t^g) \Big\} (Z_t^{g(u)}) \; \partial_x Z_t^{g(u)} \; \xi_t \; H^{g,\eps}_s(u)}  \\
&\quad -  \Ewb{\left[ \frac{\delta \phi}{\delta m} \right](\mu_t^g) (Z_t^{g(u)}) \; H^{g,\eps}_s(u)\int_0^t \partial_x Z_r^{g(u)} \dot{\xi}_r \mathrm d\beta_r} \notag
\end{align*}

Using  $Z_t^{g(u)}=x_t^g(u)$ and $\partial_x Z_t^{g(u)} = \frac{\partial_u x_t^g(u)}{g'(u)}$ and recalling that $\xi_r:=\frac{1}{t-s}\int_0^r \mathds 1_{\{z \in [s,t] \}} \mathrm dz$, we have proved that
\begin{multline*}
\Ewb{\partial_v \Big\{\left[ \frac{\delta \phi}{\delta m} \right](\mu_t^g) \Big\} (x_t^g(u)) \frac{\partial_u x_t^g(u)}{g'(u)}   \; H^{g,\eps}_s(u)}
\\= \Ewb{\left[ \frac{\delta \phi}{\delta m} \right](\mu_t^g) (x_t^g(u)) H^{g,\eps}_s(u)\frac{1}{t-s}\int_s^t \frac{\partial_u x_r^g(u)}{g'(u)}   \mathrm d \beta_r}.
\end{multline*}
We multiply both sides by $g'(u)$ and we obtain  equality~\eqref{34}, since $\partial_u \left\{\left[ \frac{\delta \phi}{\delta m} \right] (\mu_t^g) (x_t^g(\cdot)) \right\}(u) =\partial_v \Big\{\left[ \frac{\delta \phi}{\delta m} \right](\mu_t^g) \Big\} (x_t^g(u)) \; \partial_u x_t^g(u)$.
\end{proof}

\subsection{Conclusion of the analysis}
\label{parag:conclusion_I2}

We conclude the proof of Proposition~\ref{prop:estim 2}.
\begin{proof}[Proof (Proposition~\ref{prop:estim 2})]
Putting together equalities~\eqref{reprendre_egalite} and~\eqref{34} and definition~\eqref{defin:Ktgeps} of $K_t^{g,\eps}$, 
\begin{align*}
I_2
= \frac{1}{t} \Ewb{\int_0^1 \left[\frac{\delta \phi}{\delta m}\right](\mu_t^g)(x_t^g(u)) K_t^{g,\eps}(u) \mathrm du}.
\end{align*}
By Cauchy-Schwarz inequality, 
\begin{align*}
|I_2| \leq \frac{1}{t}   \Ewb{\left\| K_t^{g,\eps} \right\|_{L_\infty}^2   }^{1/2} 
\Ewb{\left| \int_0^1 \left[ \frac{\delta \phi}{\delta m} \right] (\mu_t^g) (x_t^g(u)) \frac{K_t^{g,\eps} (u)}{\left\| K_t^{g,\eps} \right\|_{L_\infty}}\mathrm du\right|^2   }^{1/2}.
\end{align*}
It remains to estimate $ \Ewb{\left\| K_t^{g,\eps} \right\|_{L_\infty}^2   }$. For every $u\in [0,1]$, 
\begin{align*}
\left|K_t^{g,\eps}(u)\right|
&\leq  \int_0^t \left\| A_s^g-A_s^{g,\eps} \right\|_{L_\infty} \frac{1}{|\partial_u x_s^g(u)|} \frac{1}{t-s}\left|\int_s^t \partial_u x_r^g(u) \mathrm d\beta_r\right|\mathrm ds
\end{align*}
By inequality~\eqref{norme 1 Asg},
\begin{align*}
\left\| A_s^g-A_s^{g,\eps} \right\|_{L_\infty} \leq C \eps \left\| \partial_x A_s^g \right\|_{L_\infty}
\leq C \eps \|h\|_{\mathcal C^1}\left(1+\|\partial^{(2)}_u x_s^g\|_{L_\infty}\left\|\frac{1}{\partial_u x_s^g} \right\|_{L_\infty}\right).
\end{align*} 
Thus we obtain
\begin{align*}
\left\| K_t^{g,\eps} \right\|_{L_\infty}
&\leq C\eps  \|h\|_{\mathcal C^1} 
\left\{ 1+ \textstyle\sup_{r\leq T}\left\|\partial^{(2)}_u x_r^g \right\|_{L_\infty} \right\}
\left\{\textstyle\sup_{r\leq T} \left\| \frac{1}{\partial_u x_r^g}  \right\|_{L_\infty}+\textstyle\sup_{r\leq T}\left\| \frac{1}{\partial_u x_r^g}  \right\|_{L_\infty}^2\right\}
\\
&\quad \quad\cdot \int_0^t \frac{1}{t-s}\left\| \int_s^t \partial_u x_r^g(\cdot) \mathrm d\beta_r  \right\|_{L_\infty} 
\mathrm ds.
\end{align*}
By Hölder's equality, we obtain
\begin{align}
\label{eqprop:8.2}
\Ewb{\left\| K_t^{g,\eps} \right\|_{L_\infty}^2 }^{1/2} 
&\leq C\eps \|h\|_{\mathcal C^1} E_1E_2E_3,
\end{align}
where
\begin{align*}
E_1&:= 1+ \Ewb{\textstyle \sup_{r\leq T} \|\partial^{(2)}_u x_r^g  \|_{L_\infty}^8 }^{1/8};\\
E_2&:= \Ewb{\textstyle\sup_{r\leq T} \|\frac{1}{\partial_u x_r^g} \|_{L_\infty}^8 }^{1/8}+\Ewb{\textstyle\sup_{r\leq T} \|\frac{1}{\partial_u x_r^g} \|_{L_\infty}^{16} }^{1/8}; \\
E_3&:= \Ewb{\left( \int_0^t \frac{1}{t-s}\left\| \int_s^t \partial_u x_r^g(\cdot) \mathrm d\beta_r  \right\|_{L_\infty} 
\mathrm ds \right)^4}^{1/4}.
\end{align*}

Recall that  $g$ belongs to $\mathbf G^{3+\theta}$ and $f$ is of order $\alpha>\frac{7}{2}+\theta$.  Thus by~\eqref{Lp norm 7} and by~\eqref{Lp norm 8}
\begin{align*}
E_1 & \leq C(1+ \|g''' \|_{L_8} + \|g''\|_{L_\infty}^3+\|g'\|_{L_\infty}^3) ; \\
E_2&\leq C (1+\|g''\|_{L_\infty}^4+\|g'\|_{L_\infty}^4 + \| \textstyle\frac{1}{g'}\|_{L_\infty}^8) .
\end{align*}
Furthermore, $E_3 \leq E_{3,1}+E_{3,2}$, where 
\begin{align*}
E_{3,1}&:=  \Ewb{\Big( \int_0^t \frac{1}{t-s}\Big| \int_s^t \partial_u x_r^g(0) \mathrm d\beta_r  \Big|\mathrm ds \Big)^4}^{1/4}; \\
E_{3,2}&:= \Ewb{\Big( \int_0^t \frac{1}{t-s} \int_0^1 \Big| \int_s^t \partial^{(2)}_u x_r^g(v) \mathrm d\beta_r  \Big|\mathrm ds \mathrm dv\Big)^4}^{1/4}.
\end{align*}
By Hölder's inequality, we have
\begin{align*}
E_{3,1}
&\leq \Ewb{\Big( \int_0^t \frac{1}{|t-s|^{1/2}}\mathrm ds \Big)^3
\int_0^t \frac{1}{|t-s|^{5/2}}\Big| \int_s^t \partial_u x_r^g(0) \mathrm d\beta_r  \Big|^4 \mathrm ds }^{1/4} \\
&\leq C \; t^{3/8}  \Big(\int_0^t \frac{1}{|t-s|^{5/2}} \Ewb{\Big| \int_s^t \partial_u x_r^g(0) \mathrm d\beta_r  \Big|^4}\mathrm ds\Big)^{1/4}.
\end{align*}
By Burkholder-Davis-Gundy inequality, it follows that
\begin{align*}
E_{3,1}
&\leq C \; t^{3/8} \Big(   \int_0^t \frac{|t-s|^2}{|t-s|^{5/2}}\mathrm ds \Big)^{1/4} \Ewb{\sup_{r\leq T} |\partial_u x_r^g(0)|^4 }^{1/4}\leq C\sqrt{t} \|g'\|_{L_\infty},
\end{align*}
where the last inequality holds by~\eqref{Lp norm 2}. 
By the same computation, 
\begin{align*}
E_{3,2}
&\leq C \sqrt{t}\Ewb{\sup_{r\leq T} \int_0^1 |\partial_u^{(2)} x_r^g(v)|^4 \mathrm dv }^{1/4} \leq C \sqrt{t}\;(1+\|g''\|_{L_\infty}+\|g'\|^2_{L_\infty}),
\end{align*}
where the last inequality holds by~\eqref{Lp norm 5}. 
We deduce that $E_3 \leq C \sqrt{t}\;(1+\|g''\|_{L_\infty}+\|g'\|^2_{L_\infty})$.
By inequality~\eqref{eqprop:8.2} and the  estimates on $E_i$, for $i=1,2,3$, we finally get:
\begin{align*}
\Ewb{\left\| K_t^{g,\eps} \right\|_{L_\infty}^2 }^{1/2} 
&\leq C \sqrt{t}\eps \|h\|_{\mathcal C^1} C_2(g),
\end{align*}
where  $C_2(g) =  1+ \|g'''\|_{L_8}^3 + \|g''\|_{L_\infty}^{12}+\|g'\|_{L_\infty}^{12}+ \| \textstyle\frac{1}{g'}\|^{24}_{L_\infty} $. 
\end{proof}

As a conclusion of sections~\ref{sec:analysis_I1} and~\ref{sec:analysis_I2}, we have proved the following inequality.
\begin{coro}
\label{coro:estim_I}
Let $\phi$, $\theta$ and $f$ be as in Theorem~\ref{theo:Bismut pour phi assumptions}. 
Let $g$ and $h$ be $\mathcal G_0$-measurable random variables with values respectively in $\mathbf G^{3+\theta}$ and~$\Delta^1$. 
Let $(K_t^{g,\eps})_{t\in [0,T]}$ be defined by~\eqref{defin:Ktgeps}.
Then there is $C>0$ independent of  $g$, $h$ and $\theta$ such that $\mathbb P^0$-almost surely, for every $t\in (0,T]$ and $\eps \in (0,1)$, 
\begin{align}
\label{inegalite_I}
\left|\frac{\mathrm d}{\mathrm d\rho}_{\vert \rho=0} P_t \phi (\mu_0^{g+\rho g'  h}) \right|
&\leq  C \; \frac{\| \phi \|_{L_\infty}}{\eps^{3+2\theta}\sqrt{t}}    C_1(g) \|h\|_{\mathcal C^1}  \notag \\
&\quad + \frac{C}{\sqrt{t}}\;\eps \|h\|_{\mathcal C^1}   C_2(g) \;
\Ewb{\left| \int_0^1 \left[ \frac{\delta \phi}{\delta m} \right] (\mu_t^g) (x_t^g(u)) \frac{K_t^{g,\eps} (u)}{\left\| K_t^{g,\eps} \right\|_{L_\infty}}\mathrm du\right|^2   }^{1/2}, 
\end{align}
where $C_1(g)=1+\|g'''\|_{L_4}^2 + \|g''\|_{L_\infty}^{6} + \|g'\|_{L_\infty}^8 + \left\| \frac{1}{g'} \right\|_{L_\infty}^{8}$ and $C_2(g) =   1+ \|g'''\|_{L_8}^3 + \|g''\|_{L_\infty}^{12}+\|g'\|_{L_\infty}^{12}+ \left\| \frac{1}{g'} \right\|^{24}_{L_\infty}$. 
\end{coro}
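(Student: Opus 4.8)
The plan is to assemble the estimate directly from the three results already established in sections~\ref{sec:preparation}, \ref{sec:analysis_I1} and~\ref{sec:analysis_I2}, so that the corollary requires essentially no new computation. First I would invoke Proposition~\ref{prop_split}, which holds under the assumptions of Lemma~\ref{lemme:derivee_du_semigroupe} and in particular for $\mathcal G_0$-measurable $g$ and $h$ with values in $\mathbf G^{3+\theta}$ and $\Delta^1$: for every $t\in[0,T]$ and every $\eps\in(0,1)$ it gives the decomposition
\begin{align*}
\frac{\mathrm d}{\mathrm d\rho}_{\vert \rho=0} P_t \phi (\mu_0^{g+\rho g'  h}) =I_1+I_2,
\end{align*}
with $I_1$ and $I_2$ the two explicit expectations displayed there. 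The triangle inequality then yields $\big|\frac{\mathrm d}{\mathrm d\rho}_{\vert \rho=0} P_t \phi (\mu_0^{g+\rho g'  h})\big|\leq |I_1|+|I_2|$, and it only remains to bound each term.

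For $|I_1|$ I would apply Proposition~\ref{prop:Girsanov-Fourier}, which gives $|I_1|\leq C\,\frac{\|\phi\|_{L_\infty}}{\eps^{3+2\theta}\sqrt t}\,\widetilde C_1(g)\,\|h\|_{\mathcal C^1}$ with $\widetilde C_1(g)=1+\|g'''\|_{L_4}^2+\|g''\|_{L_\infty}^6+\|g'\|_{L_\infty}^6+\|\tfrac1{g'}\|_{L_\infty}^8$, the constant $C$ being independent of $g$, $h$ and $\theta$. For $|I_2|$ I would apply Proposition~\ref{prop:estim 2}, which gives exactly the second term on the right-hand side of~\eqref{inegalite_I}, with the constant $C_2(g)$ displayed there. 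To match the constant $C_1(g)=1+\|g'''\|_{L_4}^2+\|g''\|_{L_\infty}^6+\|g'\|_{L_\infty}^8+\|\tfrac1{g'}\|_{L_\infty}^8$ written in the statement of the corollary, I would use that $\|g'\|_{L_\infty}^6\leq 1+\|g'\|_{L_\infty}^8$, so $\widetilde C_1(g)\leq 2\,C_1(g)$; enlarging the universal constant once and for all and adding the two bounds then produces~\eqref{inegalite_I}.

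Finally, concerning the randomness of $g$ and $h$: since $g$ and $h$ are $\mathcal G_0$-measurable and $((W^k)_{k\in\Z},\beta)$ is independent of $\mathcal G_0$, one fixes $\omega^0$ in the $\mathbb P^0$-almost-sure event on which $g(\omega^0)\in\mathbf G^{3+\theta}$ and $h(\omega^0)\in\Delta^1$; the two component bounds hold deterministically for $g(\omega^0),h(\omega^0)$ with constants not depending on $\omega^0$, so the inequality~\eqref{inegalite_I} holds $\mathbb P^0$-almost surely. I do not expect a genuine obstacle at this stage, the substance being in the two preceding propositions; the only minor point to watch is the purely cosmetic discrepancy between the two slightly different expressions for ``$C_1(g)$'' used along the way, which is absorbed into the universal constant as explained above.
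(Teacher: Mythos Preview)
Your proposal is correct and follows essentially the same approach as the paper: the corollary is stated there simply ``as a conclusion of sections~\ref{sec:analysis_I1} and~\ref{sec:analysis_I2}'', i.e.\ by combining Proposition~\ref{prop_split} with Propositions~\ref{prop:Girsanov-Fourier} and~\ref{prop:estim 2} exactly as you do. Your remark on the cosmetic discrepancy in $C_1(g)$ is apt (the paper itself carries slightly different exponents between Lemma~\ref{lemme:decomposition Fourier} and Proposition~\ref{prop:Girsanov-Fourier}), and your handling of the $\mathcal G_0$-measurability by fixing $\omega^0$ mirrors the argument in the proof of Lemma~\ref{lemme:derivee_du_semigroupe}.
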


\section{Proof of the main theorem}
\label{sec:end_proof}

Essentially, Corollary~\ref{coro:estim_I} states that we can control the gradient of $P_t \phi$ by the gradient of $\phi$. 
By iterating the inequality over successive time steps, we will conclude  the proof of Theorem~\ref{theo:Bismut pour phi assumptions}.

\begin{defin}
\label{def:K_t}
Let $\mathcal K_t$ be the set of $\mathcal G_t$-measurable random variables taking  their values $\mathbb P$-almost surely  in the set of continuous $1$-periodic functions $k:\R \to \R$ satisfying $\|k\|_{L_\infty}=1$. 
\end{defin}

\begin{prop}
\label{prop:decalage de s}
Let $\phi$, $\theta$,  $f$ and $g$ be as in Theorem~\ref{theo:Bismut pour phi assumptions}. Let $t,s \in [0,T]$ such that $t+s \leq T$. For  each $\mathcal G_s$-measurable function $h$ with values in $\Delta^1$ satisfying $\mathbb P$-almost surely $\|h\|_{\mathcal C^1} \leq 4$, there exists $C_g>0$  independent of $s$, $t$ and $h$ such that 
\begin{multline}
\label{passage s a t}
\E{\left|\int_0^1 \left[\frac{\delta P_{t}\phi}{\delta m}\right] (\mu_{s}^g) (x_{s}^g(u)) \; h' (u) \mathrm du \right| ^2 }^{\frac{1}{2}}
\leq C_g\frac{ \| \phi \|_{L_\infty}}{t^{2+\theta}} \\
 + \frac{1}{2^{3+\theta}} \sup_{k \in \mathcal K_{t+s}}\E{\left| \int_0^1 \left[ \frac{\delta  \phi}{\delta m} \right] (\mu_{t+s}^g) (x_{t+s}^g(u)) \; k(u)  \mathrm du\right|^2  }^{\frac{1}{2}}.
\end{multline}
\end{prop}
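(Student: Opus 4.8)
The strategy is to apply Corollary~\ref{coro:estim_I} with the test function $P_t\phi$ in place of $\phi$ and the initial time shifted to $s$, exploiting both the semi-group property and the fact that $P_t\phi$ again satisfies the $\phi$-assumptions (Proposition~\ref{prop:phi assumptions Pt phi}). First I would rewrite the left-hand side of~\eqref{passage s a t}: by the third equality in Lemma~\ref{lemme:derivee_du_semigroupe} applied to $P_t\phi$ at the point $g$ with perturbation direction $h$ (conditionally on $\mathcal G_s$, using that $x_s^g$ is itself an admissible $\mathcal G_s$-measurable initial condition in $\mathbf G^{3+\theta'}$), the integral $\int_0^1 [\frac{\delta P_t\phi}{\delta m}](\mu_s^g)(x_s^g(u))\,h'(u)\,\mathrm du$ equals $-\frac{\mathrm d}{\mathrm d\rho}\big|_{\rho=0} P_{t}\phi(\mu_0^{x_s^g + \rho (x_s^g)' h_\ast})$ for the appropriate rescaled direction, or directly is minus the Fréchet derivative tested against $h$. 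Then I would apply Corollary~\ref{coro:estim_I} with $g\rightsquigarrow x_s^g$ (a $\mathcal G_s$-measurable $\mathbf G^{3+\theta'}$-valued random variable, so the $\mathbb P^0$-a.s.\ statement becomes a conditional statement given $\mathcal G_s$), and with $\phi\rightsquigarrow \phi$, $t\rightsquigarrow t$; note $P_{t}\phi$ has $\|P_t\phi\|_{L_\infty}\le\|\phi\|_{L_\infty}$. This produces two terms: an $\eps^{-(3+2\theta)}t^{-1/2}$ term times $C_1(x_s^g)\|h\|_{\mathcal C^1}$, and an $\eps\, t^{-1/2}$ term times $C_2(x_s^g)\|h\|_{\mathcal C^1}$ times the conditional $L^2$-norm of $\int_0^1[\frac{\delta\phi}{\delta m}](\mu_{t+s}^g)(x_{t+s}^g(u))\frac{K_{t}^{x_s^g,\eps}(u)}{\|K_t^{x_s^g,\eps}\|_{L_\infty}}\,\mathrm du$.

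Next I would take $L^2$-norms over $\mathcal G_s$ (i.e.\ integrate the square and take expectation), apply Minkowski and the tower property, and bound $\mathbb E[C_1(x_s^g)^2]$, $\mathbb E[C_2(x_s^g)^2]$ by constants depending only on $\|g'''\|,\dots,\|1/g'\|$ via the moment estimates~\eqref{Lp norm 1}--\eqref{Lp norm 8} listed in the appendix — this is exactly the mechanism by which the $x_s^g$-dependence is converted into a $g$-dependent constant $C_g$. For the second term, the normalized process $K_t^{x_s^g,\eps}(\cdot)/\|K_t^{x_s^g,\eps}\|_{L_\infty}$ is, for fixed $\omega$, a continuous $1$-periodic function of sup-norm $1$ that is $\mathcal G_{t+s}$-measurable; hence it lies in $\mathcal K_{t+s}$, and the conditional $L^2$-norm is bounded by $\sup_{k\in\mathcal K_{t+s}}\mathbb E[|\int_0^1[\frac{\delta\phi}{\delta m}](\mu_{t+s}^g)(x_{t+s}^g(u))k(u)\,\mathrm du|^2]^{1/2}$. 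So after taking expectations the bound reads
\[
\text{LHS} \;\le\; \frac{C\,\|\phi\|_{L_\infty}}{\eps^{3+2\theta}\sqrt t}\,\widetilde C_1(g)\|h\|_{\mathcal C^1} \;+\; C\,\frac{\eps}{\sqrt t}\,\widetilde C_2(g)\|h\|_{\mathcal C^1}\,\sup_{k\in\mathcal K_{t+s}}\mathbb E\!\left[\Big|\int_0^1\Big[\tfrac{\delta\phi}{\delta m}\Big](\mu_{t+s}^g)(x_{t+s}^g(u))\,k(u)\,\mathrm du\Big|^2\right]^{1/2}.
\]
Using $\|h\|_{\mathcal C^1}\le 4$ absorbs that factor into the constant.

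Finally, I would optimize over $\eps\in(0,1)$: the prefactor of the $\sup_{k}$ term is $C\widetilde C_2(g)\eps t^{-1/2}\cdot 4$, and I want it to equal $2^{-(3+\theta)}$, so choose $\eps = \eps(t) := 2^{-(3+\theta)}\sqrt t/(4C\widetilde C_2(g))$, clipped to $(0,1)$ (legitimate for $t\le T$ since the quantity is $\le 1$ after possibly enlarging $C_g$; if the clip is active the term is even smaller). With this choice the first term becomes $C\|\phi\|_{L_\infty}\widetilde C_1(g)\cdot(4C\widetilde C_2(g))^{3+2\theta}/(2^{-(3+\theta)(3+2\theta)}\,t^{(3+2\theta)/2}\cdot\sqrt t)\cdot 4 = C_g\|\phi\|_{L_\infty}\, t^{-(2+\theta)}$, since $(3+2\theta)/2 + 1/2 = 2+\theta$ — which is precisely where the exponent $2+\theta$ in the theorem comes from. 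This yields~\eqref{passage s a t} with $C_g$ polynomial in $\|g'''\|,\|g''\|,\|g'\|,\|1/g'\|$ and independent of $s,t,h$.

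\textbf{Main obstacle.} The delicate point is the measurability/conditioning argument needed to apply Corollary~\ref{coro:estim_I} at the random, $\mathcal G_s$-measurable initial condition $x_s^g$: one must check that conditionally on $\mathcal G_s$ the process $(x_{s+r}^g)_{r\ge0}$ is driven by Brownian motions $(W^k,\beta)$ independent of $\mathcal G_s$ that satisfy the same hypotheses, that $x_s^g\in\mathbf G^{3+\theta'}$ a.s.\ (Propositions~\ref{prop:stabilite}, \ref{prop:differentiability}), and — crucially — that $\mu_{t+s}^g$ and $x_{t+s}^g$ computed from this shifted system coincide with the original ones, which rests on the progressive-measurability results of Lemma~\ref{lemma:prog_measurability} and uniqueness. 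The second subtlety is verifying that $K_t^{x_s^g,\eps}/\|K_t^{x_s^g,\eps}\|_{L_\infty}$ is genuinely $\mathcal G_{t+s}$-measurable and continuous $1$-periodic with unit sup-norm so that it is a legitimate element of $\mathcal K_{t+s}$ (including handling the null event where the denominator vanishes). Everything else is bookkeeping with the appendix moment bounds and the choice of $\eps$.
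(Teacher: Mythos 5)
Your plan matches the paper's at the structural level: apply Corollary~\ref{coro:estim_I} with the shifted, $\mathcal G_s$-measurable initial condition $x_s^g$, verify that the normalized $K$-process belongs to $\mathcal K_{t+s}$, absorb $\|h\|_{\mathcal C^1}\le 4$, convert the random prefactors $C_i(x_s^g)$ into $g$-dependent constants via the moment bounds of Lemma~\ref{lem_control_moments}, and choose $\eps\propto\sqrt t$ so that the blow-up exponent becomes $2+\theta$. The exponent bookkeeping, the $\mathcal K_{t+s}$-membership argument, and the clipping of $\eps$ to $(0,1)$ are all sound and the last point is even treated more carefully than in the paper.

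There is, however, a genuine gap in the treatment of the second term, and it is exactly the point the paper is careful about. You take $\eps$ to be a \emph{deterministic} function of $t$ and $g$ (denoted $\eps(t)=2^{-(3+\theta)}\sqrt t/(4C\widetilde C_2(g))$) and then claim that, after taking $L^2$-norms, the second term factors into $C\frac{\eps}{\sqrt t}\|h\|_{\mathcal C^1}\,\widetilde C_2(g)\cdot\sup_{k\in\mathcal K_{t+s}}\mathbb E[|\int[\frac{\delta\phi}{\delta m}](\mu_{t+s}^g)(x_{t+s}^g(u))k(u)\,du|^2]^{1/2}$. But before the expectation is taken, the second term in Corollary~\ref{coro:estim_I} reads $\frac{C\eps}{\sqrt t}\|h\|_{\mathcal C^1}\,C_2(x_s^g)\cdot\big(\mathbb E[|\cdot|^2\,|\,\mathcal G_s]\big)^{1/2}$, so its full $L^2$-norm is controlled by $\frac{C\eps}{\sqrt t}\|h\|_{\mathcal C^1}\big(\mathbb E[C_2(x_s^g)^2\cdot\mathbb E[|\cdot|^2\,|\,\mathcal G_s]]\big)^{1/2}$. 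Both factors inside this last expectation are $\mathcal G_s$-measurable and not independent; the product does \emph{not} decouple as $\mathbb E[C_2(x_s^g)^2]\cdot\sup_{k}\mathbb E[|\cdot|^2]$, and Cauchy--Schwarz or Hölder destroys the $L^2$ structure over $\mathcal K_{t+s}$ that you need to iterate. The paper circumvents this by taking $\eps_s$ to be $\mathcal G_s$-\emph{measurable}, namely $\eps_s\propto\sqrt t/(\|h\|_{\mathcal C^1}C_2(x_s^g))$, so that the product $\eps_s\,C_2(x_s^g)\,\|h\|_{\mathcal C^1}$ cancels to a deterministic constant; then the coefficient of the $\sup_{k}$ term is a fixed number (ultimately $2^{-(3+\theta)}$), the full expectation separates trivially, and the $\mathcal K_{t+s}$-supremum applies. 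With a deterministic $\eps$ this cancellation is lost and the displayed bound does not follow, so the $\mathcal G_s$-measurability of $\eps$ is not a bookkeeping convenience but the load-bearing idea here.
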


\begin{proof}
By equality~\eqref{formules_derivees_semigroupe}, 
\begin{align}
\frac{\mathrm d}{\mathrm d\rho}_{\vert \rho=0} P_t \phi (\mu_0^{g+\rho g'  h}) 
&=-\int_0^1 \frac{\delta P_t \phi}{\delta m}  (\mu_0^g)(g(u)) \; h'(u) \mathrm du \notag\\
&= -\int_0^1 \left[ \frac{\delta P_t \phi}{\delta m} (\mu_0^g) (g(u)) - \int_0^1\frac{\delta P_t \phi}{\delta m} (\mu_0^g) (g(u')) \mathrm du' \right] h'(u) \mathrm du \notag
\\
&=-\int_0^1 \left[\frac{\delta P_t \phi}{\delta m}\right]  (\mu_0^g)(g(u)) \; h'(u) \mathrm du,
\label{deriv_h}
\end{align}
where the second equality follows from the fact that $h$ is $1$-periodic and the last equality follows from~\eqref{notation moyenne}. 
Apply now inequality~\eqref{inegalite_I} with $\eps_0=\frac{1}{2^{\frac{7}{2}+\theta}}  \frac{\sqrt{t}}{C \|h\|_{\mathcal C^1} C_2(g)} $.
For every $\mathcal G_0$-measurable $g$ and $h$, 
\begin{multline*}
\left|\int_0^1 \left[\frac{\delta P_t\phi}{\delta m}\right] (\mu_0^g) (g(u)) \;h' (u) \mathrm du \right| 
\leq C\frac{  \| \phi \|_{L_\infty}}{t^{2+\theta}}C_3(g) \|h\|_{\mathcal C^1}^{4+2\theta}\\
 + \frac{1}{2^{\frac{7}{2}+\theta}} \;
\Ewb{\left| \int_0^1 \left[ \frac{\delta \phi}{\delta m} \right] (\mu_t^g) (x_t^g(u)) \frac{K_t^{g,\eps_0} (u)}{\left\| K_t^{g,\eps_0} \right\|_{L_\infty}}\mathrm du\right|^2   }^{\frac{1}{2}},
\end{multline*}
where $C_3(g)=C_1(g) C_2(g)^{3+2\theta}$. Moreover, $\Ewb{\cdot}=\E{\cdot \vert \mathcal G_0}$, since for any random variable $X$ on $\Omega$ and any $\mathcal G_0$-measurable  $Y$, $\E{XY}= \mathbb E^0 \Ewb{XY}=\mathbb E^0\left[ \Ewb{X} Y \right]$. 
Thus it follows from the latter inequality that:
\begin{multline*}
\E{\left|\int_0^1 \left[\frac{\delta P_t\phi}{\delta m}\right] (\mu_0^g) (g(u)) \;h' (u) \mathrm du \right|^2  \Big \vert \mathcal G_0 }
\leq C \frac{\| \phi \|_{L_\infty}^2}{t^{4+2\theta}}C_3(g)^2 \|h\|_{\mathcal C^1}^{8+4\theta}\\
+ \frac{2}{2^{7+2\theta}} \;
\E{\left| \int_0^1 \left[ \frac{\delta \phi}{\delta m} \right] (\mu_t^g) (x_t^g(u)) \frac{K_t^{g,\eps_0} (u)}{\left\| K_t^{g,\eps_0} \right\|_{L_\infty}}\mathrm du\right|^2 \Big \vert \mathcal G_0  }.
\end{multline*}
Now, consider a deterministic function $g$ and a $\mathcal G_s$-measurable $h$, where $s  \leq T -t$. 
Then, repeating the whole argument with  the $\mathcal G_s$-measurable variables $x_s^g$ and  $h$ instead of  $g$ and a $\mathcal G_0$-measurable $h$, respectively, we get:
\begin{multline*}
\E{\left|\int_0^1 \left[\frac{\delta P_t\phi}{\delta m}\right] (\mu_s^g) (x_s^g(u)) \;h' (u) \mathrm du \right|^2  \Big \vert \mathcal G_s }
\leq C\frac{ \| \phi \|_{L_\infty}^2}{t^{4+2\theta}}C_3(x_s^g)^2 \|h\|_{\mathcal C^1}^{8+4\theta}\\
+ \frac{1}{2^{6+2\theta}} \;
\E{\Bigg| \int_0^1 \left[ \frac{\delta \phi}{\delta m} \right] (\mu_{t+s}^{s,x_s^g}) (x_{t+s}^{s,x_s^g}(u)) \frac{K_{t+s}^{s,x_s^g,\eps_s} (u)}{\| K_{t+s}^{s,x_s^g,\eps_s} \|_{L_\infty}}\mathrm du\Bigg|^2 \Big \vert \mathcal G_s  },
\end{multline*}
where 
$x_{t+s}^{s,x_s^g}(u)$ denotes the value at time $t+s$ and at point $u$ of the unique solution to~\eqref{eq_SDE_diff_torus} which is equal to $x_s^g$ at time $s$ and where $\eps_s$ is $\mathcal G_s$-measurable. 
By strong uniqueness of~\eqref{eq_SDE_diff_torus}, we have the following flow property: $x_{t+s}^{s,x_s^g}=x_{t+s}^g$ and  $\mu_{t+s}^{s,x_s^g}=\mu_{t+s}^g$. Therefore, 
\begin{multline*}
\E{\left|\int_0^1 \left[\frac{\delta P_t\phi}{\delta m}\right] (\mu_s^g) (x_s^g(u)) \;h' (u) \mathrm du \right|^2  \Big \vert \mathcal G_s }
\leq C\frac{\| \phi \|_{L_\infty}^2}{t^{4+2\theta}}C_3(x_s^g)^2 \|h\|_{\mathcal C^1}^{8+4\theta}\\
+ \frac{1}{2^{6+2\theta}} \;
\E{\Bigg| \int_0^1 \left[ \frac{\delta \phi}{\delta m} \right] (\mu_{t+s}^g) (x_{t+s}^g(u)) \frac{K_{t+s}^{s,x_s^g,\eps_s} (u)}{\| K_{t+s}^{s,x_s^g,\eps_s} \|_{L_\infty}}\mathrm du\Bigg|^2 \Big \vert \mathcal G_s  }.
\end{multline*}
Remark that $u \mapsto K_{t+s}^{s,x_s^g,\eps_s} (u) / \| K_{t+s}^{s,x_s^g,\eps_s}\|_{L_\infty}$ belongs to $\mathcal K_{t+s}$. Thus, taking the expectation of the latter inequality, there is $C>0$ so that for every $\mathcal G_s$-measurable function $h$ satisfying $\|h\|_{\mathcal C^1} \leq 4$
\begin{multline*}
\E{\left|\int_0^1 \left[\frac{\delta P_t\phi}{\delta m}\right] (\mu_s^g) (x_s^g(u)) \;h' (u) \mathrm du \right|^2  }^{\frac{1}{2}}
\leq C\frac{\| \phi \|_{L_\infty}}{t^{2+\theta}} \E{ C_3(x_s^g)^2}^{\frac{1}{2}} \\
+ \frac{1}{2^{3+\theta}} \; \sup_{k \in \mathcal K_{t+s}}
\E{\left| \int_0^1 \left[ \frac{\delta \phi}{\delta m} \right] (\mu_{t+s}^g) (x_{t+s}^g(u)) \; k(u) \;\mathrm du\right|^2  }^{\frac{1}{2}}.
\end{multline*}
In order to prove inequality~\eqref{passage s a t}, it remains to show that there is $C_g$ such that $\E{C_3(x_s^g)^2}\leq C_g$. Since  $C_3(g)=C_1(g) C_2(g)^{3+2\theta}$, we have:
\begin{align*}
\E{C_3(x_s^g)^2}
&=\mathbb E \Big[ \Big(1+\|\partial_u^{(3)} x_s^g\|_{L_4}^2 + \|\partial_u^{(2)} x_s^g\|_{L_\infty}^{6} + \|\partial_u x_s^g\|_{L_\infty}^8 + \| \textstyle\frac{1}{\partial_u x_s^g} \|_{L_\infty}^{8} \Big) ^2  \notag \\
& \quad  \quad \quad\cdot  \left(  1+ \|\partial_u^{(3)} x_s^g\|_{L_8}^3 + \|\partial_u^{(2)} x_s^g\|_{L_\infty}^{12}+\|\partial_u x_s^g\|_{L_\infty}^{12}+ \| \textstyle\frac{1}{\partial_u x_s^g}\|^{24}_{L_\infty} \right) ^{6+4\theta} \Big].
\end{align*}
We refer to~\eqref{Lp norm 5}, \eqref{Lp norm 7} and~\eqref{Lp norm 8} to argue that  the r.h.s.\! is bounded by a  uniform constant in $s \in [0,T]$  and depending polynomially on $\|g'''\|_{L_\infty}$, $\|g''\|_{L_\infty}$, $\|g'\|_{L_\infty}$ and $\| \textstyle\frac{1}{g'}\|_{L_\infty}$. The constant is finite since $g$ belongs to $\mathbf G^{3+\theta}$.  
\end{proof}

\begin{coro}
\label{coro:2t_et_t}
Let $\phi$, $\theta$, $f$ and $g$ satisfy the same assumption as in Proposition~\ref{prop:decalage de s}. 
Let $t,s \in [0,T]$ such that $2t+s \leq T$. 
For any $h:\R \to \R$ be a $\mathcal G_s$-measurable random variable with values in $\Delta^1$ satisfying $\mathbb P$-almost surely $\|h\|_{\mathcal C^1} \leq 4$, there exists  $C_g>0$ independent of $s$, $t$ and $h$ such that 
\begin{multline}
\label{ineg_coro}
\E{\left|\int_0^1 \left[\frac{\delta P_{2t}\phi}{\delta m}\right] (\mu_{s}^g) (x_{s}^g(u)) \; h' (u) \mathrm du \right| ^2 }^{\frac{1}{2}}
\leq C_g\frac{ \| \phi \|_{L_\infty}}{t^{2+\theta}} \\
 + \frac{1}{2^{3+\theta}} \sup_{k \in \mathcal K_{t+s}}\E{\left| \int_0^1 \left[ \frac{\delta  P_t \phi}{\delta m} \right] (\mu_{t+s}^g) (x_{t+s}^g(u)) \; k(u)  \mathrm du\right|^2  }^{\frac{1}{2}}.
\end{multline}
\end{coro}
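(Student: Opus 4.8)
The plan is to deduce Corollary~\ref{coro:2t_et_t} from Proposition~\ref{prop:decalage de s} applied with $P_t\phi$ in place of $\phi$, exploiting the semigroup identity $P_t\circ P_t=P_{2t}$. First I would check that $\psi:=P_t\phi$ is again an admissible test function: since $\phi$ satisfies the $\phi$-assumptions and $f$ is of order $\alpha=\frac{7}{2}+\theta>\frac{5}{2}$, Proposition~\ref{prop:phi assumptions Pt phi} guarantees that $\psi$ satisfies the $\phi$-assumptions as well. Then I would record the two elementary facts needed for the substitution. On the one hand, $P_t$ is a contraction for the supremum norm: for $\mu=\Leb_{[0,1]}\circ X^{-1}$ we have $|P_t\phi(\mu)|=|\mathbb E^W[\widehat\phi(Z_t^X)]|\leq\|\phi\|_{L_\infty}$, hence $\|\psi\|_{L_\infty}\leq\|\phi\|_{L_\infty}$. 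On the other hand, $P_t\psi=P_t(P_t\phi)=P_{2t}\phi$; this is the semigroup (Markov) property of $(P_t)_{t\in[0,T]}$, which follows from the strong uniqueness of equation~\eqref{eq_SDE_diff_torus} together with the flow property $x_{t+s}^{s,x_s^g}=x_{t+s}^g$ already used in the proof of Proposition~\ref{prop:decalage de s}.

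Next I would apply Proposition~\ref{prop:decalage de s} with $\psi$ in place of $\phi$, keeping the same $s$, $t$ and $h$; this is legitimate since $2t+s\leq T$ implies $t+s\leq T$, and $h$ is $\mathcal G_s$-measurable with $\|h\|_{\mathcal C^1}\leq 4$. The proposition then yields
\begin{multline*}
\E{\left|\int_0^1 \left[\frac{\delta P_{t}\psi}{\delta m}\right] (\mu_{s}^g) (x_{s}^g(u)) \; h' (u) \mathrm du \right| ^2 }^{\frac{1}{2}}
\leq C_g\frac{ \| \psi \|_{L_\infty}}{t^{2+\theta}} \\
 + \frac{1}{2^{3+\theta}} \sup_{k \in \mathcal K_{t+s}}\E{\left| \int_0^1 \left[ \frac{\delta  \psi}{\delta m} \right] (\mu_{t+s}^g) (x_{t+s}^g(u)) \; k(u)  \mathrm du\right|^2  }^{\frac{1}{2}},
\end{multline*}
and it then suffices to substitute $P_t\psi=P_{2t}\phi$, $\psi=P_t\phi$ and $\|\psi\|_{L_\infty}\leq\|\phi\|_{L_\infty}$ in this estimate to recover exactly inequality~\eqref{ineg_coro}.

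The only point that requires a little care — and it is minor — is that the constant $C_g$ provided by Proposition~\ref{prop:decalage de s} must not deteriorate when $\phi$ is replaced by $P_t\phi$. Looking at its proof, $C_g$ comes solely from the moment bounds~\eqref{Lp norm 5}, \eqref{Lp norm 7} and~\eqref{Lp norm 8} on the derivatives of $x_s^g$ (through $\E{C_3(x_s^g)^2}$) and from the universal constant of Corollary~\ref{coro:estim_I}; hence $C_g$ depends only on $g$ — polynomially in $\|g'''\|_{L_\infty}$, $\|g''\|_{L_\infty}$, $\|g'\|_{L_\infty}$ and $\|\frac{1}{g'}\|_{L_\infty}$ — and not on $t$, $s$, $h$ or on the particular test function. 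Since the dependence of Proposition~\ref{prop:decalage de s} on the test function is entirely captured by its $L_\infty$-norm (controlled above via the contraction property) and by its zero-average linear functional derivative in the last supremum term (which, for $\psi=P_t\phi$, becomes precisely $\left[\frac{\delta P_t\phi}{\delta m}\right]$), the substitution causes no loss and no further difficulty arises.
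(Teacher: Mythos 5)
Your proposal is correct and takes the same route as the paper: the paper's proof of Corollary~\ref{coro:2t_et_t} is precisely the observation that one applies~\eqref{passage s a t} with $P_t\phi$ in place of $\phi$, then uses $P_t(P_t\phi)=P_{2t}\phi$ and $\|P_t\phi\|_{L_\infty}\leq\|\phi\|_{L_\infty}$. Your additional remarks — that $P_t\phi$ still satisfies the $\phi$-assumptions by Proposition~\ref{prop:phi assumptions Pt phi} and that $C_g$ is uniform in the test function — are natural and correct elaborations of the same argument.
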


\begin{proof}
We get the above inequality by applying~\eqref{passage s a t} to $P_t \phi$ instead of $\phi$. We note that $P_t (P_t \phi)=P_{2t} \phi$ and that $ \| P_t \phi \|_{L_\infty} \leq  \| \phi \|_{L_\infty}$. 
\end{proof}

Fix $t_0 \in (0,T]$. 
For every $t\in (0,t_0]$,  define 
\begin{align*}
S_t:=\sup_{k \in \mathcal K_{t_0-t}}\E{\left| \int_0^1 \left[ \frac{\delta P_t \phi}{\delta m} \right] (\mu_{t_0-t}^g) (x_{t_0-t}^g(u)) \; k(u) \;  \mathrm du\right|^2  }^{\frac{1}{2}},
\end{align*}
where $ \mathcal K_{t_0-t}$ is defined by Definition~\ref{def:K_t}. 

\begin{prop}
\label{prop:passage_sup 2t a t}
Let $\phi$, $\theta$, $f$ and $g$ be as in Theorem~\ref{theo:Bismut pour phi assumptions}. 
For every $t\in (0,\frac{t_0}{2}]$, we have:
\begin{align}
\label{ineq:S2t}
S_{2t} \leq  C_g\frac{\| \phi \|_{L_\infty}}{t^{2+\theta}} + \frac{1}{2^{3+\theta}}S_t.
\end{align}
\end{prop}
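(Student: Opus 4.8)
The plan is to derive \eqref{ineq:S2t} by combining Corollary~\ref{coro:2t_et_t} with the definition of $S_t$ and a supremum argument over the test functions $k$. First I would fix $t \in (0, t_0/2]$ and set $s := t_0 - 2t$, so that $2t + s = t_0 \leq T$ and also $t + s = t_0 - t$. Then, for an arbitrary $k \in \mathcal K_{t_0 - 2t} = \mathcal K_s$, I want to produce a $1$-periodic $\mathcal C^1$-function $h$ with $h' $ related to $k$ so that I can feed $h$ into~\eqref{ineg_coro}. The natural choice is to set $h(u) := \int_0^u k(v)\,\mathrm dv - u\int_0^1 k(v)\,\mathrm dv$, which is $1$-periodic (the correction term kills the drift since $\int_0^1 k = \overline{k}$ and $h(1) - h(0) = \int_0^1 k - \int_0^1 k = 0$), of class $\mathcal C^1$ because $k$ is continuous, and satisfies $h'(u) = k(u) - \overline{k}$. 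Since $\|k\|_{L_\infty} = 1$, we have $|\overline k| \leq 1$, so $\|h'\|_{L_\infty} \leq 2$; and $\|h\|_{L_\infty} \leq \|k\|_{L_\infty} + |\overline k| \leq 2$, giving $\|h\|_{\mathcal C^1} \leq 4$. Moreover $h$ inherits $\mathcal G_s$-measurability from $k \in \mathcal K_s$.

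Next I would observe that, because $\left[\frac{\delta P_{2t}\phi}{\delta m}\right](\mu_s^g)(\cdot)$ is the zero-average version of the linear functional derivative (see~\eqref{notation moyenne}), integrating it against the constant $\overline k$ gives zero: $\int_0^1 \left[\frac{\delta P_{2t}\phi}{\delta m}\right](\mu_s^g)(x_s^g(u))\,\overline k\,\mathrm du = \overline k \int_0^1 \left[\frac{\delta P_{2t}\phi}{\delta m}\right](\mu_s^g)(x_s^g(u))\,\mathrm du$, and one should check this last integral vanishes. Actually the cleaner route: by definition of the zero-average derivative and the fact that $x_s^g$ pushes $\Leb_{[0,1]}\otimes\mathbb P^\beta$ forward onto $\mu_s^g$, we get $\mathbb E^\beta\!\int_0^1 \left[\frac{\delta P_{2t}\phi}{\delta m}\right](\mu_s^g)(x_s^g(u))\,\mathrm du = 0$ almost surely in $\Omega^W$, hence also after taking full expectation. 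Combined with the argument already used in the proof of Lemma~\ref{lemme:derivee_du_semigroupe} (periodicity of the functional derivative), this gives $\int_0^1 \left[\frac{\delta P_{2t}\phi}{\delta m}\right](\mu_s^g)(x_s^g(u))\,k(u)\,\mathrm du = \int_0^1 \left[\frac{\delta P_{2t}\phi}{\delta m}\right](\mu_s^g)(x_s^g(u))\,h'(u)\,\mathrm du$ inside the $L^2$-norm; so applying Corollary~\ref{coro:2t_et_t} with this $h$ yields
\begin{align*}
\E{\left|\int_0^1 \left[\tfrac{\delta P_{2t}\phi}{\delta m}\right](\mu_s^g)(x_s^g(u))\, k(u)\,\mathrm du\right|^2}^{1/2}
\leq C_g \frac{\|\phi\|_{L_\infty}}{t^{2+\theta}}
+ \frac{1}{2^{3+\theta}} \sup_{k' \in \mathcal K_{t+s}} \E{\left|\int_0^1 \left[\tfrac{\delta P_t\phi}{\delta m}\right](\mu_{t+s}^g)(x_{t+s}^g(u))\, k'(u)\,\mathrm du\right|^2}^{1/2}.
\end{align*}
The right-hand side no longer depends on $k$, and since $t + s = t_0 - t$ the supremum on the right is exactly $S_t$ by definition. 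Taking the supremum over all $k \in \mathcal K_s = \mathcal K_{t_0 - 2t}$ on the left produces precisely $S_{2t}$, and we obtain $S_{2t} \leq C_g \frac{\|\phi\|_{L_\infty}}{t^{2+\theta}} + \frac{1}{2^{3+\theta}} S_t$, which is~\eqref{ineq:S2t}.

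I expect the main obstacle to be the measurability and integrability bookkeeping rather than any hard estimate: one must verify that the function $h$ built from $k$ is genuinely $\mathcal G_s$-measurable with values in $\Delta^1$ (so that Corollary~\ref{coro:2t_et_t} truly applies), and one must be careful that the identification of $\int_0^1 [\cdots] k\,\mathrm du$ with $\int_0^1 [\cdots] h'\,\mathrm du$ uses the zero-average normalization together with the $2\pi$-periodicity of $\frac{\delta P_{2t}\phi}{\delta m}(\mu_s^g)(\cdot)$ — the latter being guaranteed by Proposition~\ref{prop:phi assumptions Pt phi} (which ensures $P_{2t}\phi$ still satisfies the $\phi$-assumptions) and Proposition~\ref{prop:integrale nulle}, exactly as in the proof of Lemma~\ref{lemme:derivee_du_semigroupe}. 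A minor point to keep track of is that the constant $C_g$ from Corollary~\ref{coro:2t_et_t} is independent of $s$ and $t$, which is needed to make the bound uniform over $t \in (0, t_0/2]$; this is already asserted in the statement of that corollary.
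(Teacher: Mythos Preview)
Your proposal is correct and follows essentially the same route as the paper: define $h$ as the antiderivative of $k-\overline k$, check $\|h\|_{\mathcal C^1}\leq 4$ and $\mathcal G_s$-measurability, apply Corollary~\ref{coro:2t_et_t} with $s=t_0-2t$, use the zero-average normalisation~\eqref{notation moyenne} to pass from $h'$ back to $k$, and take the supremum over $k\in\mathcal K_{t_0-2t}$. The paper justifies the zero-average step simply ``by definition~\eqref{notation moyenne}'', whereas you expand a bit more on the pushforward and periodicity; otherwise the arguments are identical.
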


\begin{proof}
Fix $t\in (0,\frac{t_0}{2}]$ and $k \in \mathcal K_{t_0-2t}$. Hence $k$ is a continuous $1$-periodic function and a $\mathcal G_{t_0-2t}$-measurable random variable so that $\mathbb P$-almost surely, $\|k \|_{L_\infty}=\sup_{u \in [0,1]}|k(u)|=1$. 

Let us denote by $h$ the map defined for every $u \in \R$ by $h(u):= \int_0^u (k(v)-\overline{k}) \mathrm dv$, where $\overline{k}=\int_0^1 k(v) \mathrm dv$. 
We check that $h$ is a $\mathcal G_{t_0-2t}$-measurable  $1$-periodic $\mathcal C^1$-function.
 Moreover, $\|h\|_{L_\infty}\leq 2$ and $\|\partial_u h\|_{L_\infty} \leq 2$; thus $\|h\|_{\mathcal C^1}\leq 4$. Therefore, the assumptions of Corollary~\ref{coro:2t_et_t} are satisfied, with $s=t_0-2t$. We apply~\eqref{ineg_coro}  with $s=t_0-2t$:
\begin{align*}
\E{\left|\int_0^1 \left[\frac{\delta P_{2t}\phi}{\delta m}\right] (\mu_{t_0-2t}^g) (x_{t_0-2t}^g(u)) \; h' (u) \;\mathrm du \right| ^2 }^{\frac{1}{2}}
\leq C_g\frac{ \| \phi \|_{L_\infty}}{t^{2+\theta}} + \frac{1}{2^{3+\theta}}S_t. 
\end{align*}
Moreover, $h'(u)=k(u)-\overline{k}$ and 
by definition~\eqref{notation moyenne}, $\int_0^1 \left[\frac{\delta P_{2t}\phi}{\delta m}\right] (\mu_{t_0-2t}^g) (x_{t_0-2t}^g(u)) \cdot \overline{k} \;\mathrm du =0$. Thus
\begin{align*}
\E{\left|\int_0^1 \left[\frac{\delta P_{2t}\phi}{\delta m}\right] (\mu_{t_0-2t}^g) (x_{t_0-2t}^g(u)) \; k(u) \; \mathrm du \right| ^2 }^{\frac{1}{2}}
\leq C_g\frac{ \| \phi \|_{L_\infty}}{t^{2+\theta}} + \frac{1}{2^{3+\theta}}S_t, 
\end{align*}
and by taking the supremum over all $k$ in $\mathcal K_{t_0-2t}$, we get $S_{2t} \leq  C_g\frac{ \| \phi \|_{L_\infty}}{t^{2+\theta}} + \frac{1}{2^{3+\theta}}S_t$. 
\end{proof}

We complete the proof of Theorem~\ref{theo:Bismut pour phi assumptions}. 

\begin{proof}[Proof (Theorem~\ref{theo:Bismut pour phi assumptions})]
It follows from Proposition~\ref{prop:passage_sup 2t a t} that for every $t\in (0,\frac{t_0}{2}]$, 
\begin{align*}
(2t)^{2+\theta} S_{2t} \leq 2^{2+\theta} C_g \| \phi \|_{L_\infty} + \frac{1}{2} t^{2+\theta} S_t. 
\end{align*}
Therefore, denoting by $\mathbf S:=\sup_{t\in (0,t_0]} t^{2+\theta} S_t$, we have $\mathbf S \leq 2^{2+\theta} C_g \| \phi \|_{L_\infty} + \frac{1}{2} \mathbf S$. Since $\mathbf S$ is finite, we obtain $\mathbf S \leq 2^{3+\theta} C_g \| \phi \|_{L_\infty}$.  Thus for every $t_0 \in (0,T]$, $t_0^{2+\theta} S_{t_0} \leq 2^{3+\theta} C_g \| \phi \|_{L_\infty}$. 
Therefore, for any deterministic $1$-periodic function $k:\R\to \R$ and for every $t\in (0,T]$, we have
\begin{align*}
\left| \int_0^1 \left[ \frac{\delta P_t \phi}{\delta m} \right] (\mu_0^g) (g(u)) \; k(u)\;  \mathrm du\right|\leq  C_g \frac{\| \phi \|_{L_\infty}}{t^{2+\theta}}\|k\|_{L_\infty}.
\end{align*}
Let $h \in \Delta^1$. Thus $k=\partial_u \left( \frac{h}{g'} \right)$ is a $1$-periodic function and we deduce that 
\begin{align*}
\left| \int_0^1 \left[ \frac{\delta P_t \phi}{\delta m} \right] (\mu_0^g) (g(u))\; \partial_u \left( \frac{h}{g'} \right) (u) \; \mathrm du\right|
\leq  C_g \frac{\| \phi \|_{L_\infty}}{t^{2+\theta}}\left\|\partial_u \left( \frac{h}{g'} \right)\right\|_{L_\infty} 
&\leq C_g \frac{\| \phi \|_{L_\infty}}{t^{2+\theta}}\left\| h\right\|_{\mathcal C^1},
\end{align*}
for a new constant $C_g$. 
Applying equality~\eqref{deriv_h} with $\frac{h}{g'}$ instead of $g'$, we obtain
\begin{align*}
\left|\frac{\mathrm d}{\mathrm d\rho}_{\vert \rho=0} P_t \phi (\mu_0^{g+\rho  h})\right| 
= \left| \int_0^1 \left[ \frac{\delta P_t \phi}{\delta m} \right] (\mu_0^g) (g(u))\; \partial_u \left( \frac{h}{g'} \right) (u) \; \mathrm du\right|
\leq C_g \frac{\| \phi \|_{L_\infty}}{t^{2+\theta}}\left\| h\right\|_{\mathcal C^1},
\end{align*}
 which concludes the proof of the theorem. 
\end{proof}

\appendix

\section{Appendix}

\subsection{Density functions and quantile functions on the torus}
\label{subsec:density and quantile}

We define the set of positive densities on the torus.

\begin{defin}
\label{defin_density}
Let $\mathcal P^+$ be the set of continuous functions $p:\tor \to \R$ such that for every $x \in \tor$, $p(x)>0$ and $\int_\tor p =1$.  $\mathcal P^+$ can also be seen as the set of $2\pi$-periodic and continuous functions $p:\R \to (0,+\infty)$  such that $\int_0^{2\pi} p(x) \mathrm dx=1$. 
\end{defin}

Let $p \in \mathcal P^+$ and $x_0 \in \tor$ be an arbitrary point on the torus. Define a cumulative distribution function (c.d.f.) $F_0:\R \to \R$ by $F_0(x)=\int_{x_0}^x p(y) \mathrm dy$, 
 for each $x\in \R$. 
Since $p$ is $2\pi$-periodic and $\int_0^{2\pi} p = 1$, $F_0$ satisfies $F_0(x+2\pi)=F_0(x)+1$ for each $x\in \R$.
It follows from the continuity and  from the positivity of $p$ that  $F_0$ is a $\mathcal C^1$-function and for every $x\in \R$, $F_0'(x)=p(x)>0$, so that $F_0$ is strictly increasing. Therefore, the inverse function $g_0:=F_0^{-1}:\R \to \R$ is well defined. The following properties of $g_0$ are straightforward:
\begin{itemize}
\item[-] for every $x\in \R$, $g_0 \circ F_0(x)=x$ and for every $u\in \R$, $F_0\circ g_0 (u)=u$;
\item[-] $g_0$ is a strictly increasing $\mathcal C^1$-function and for each $u\in \R$, $g_0'(u)=\frac{1}{p(g_0(u))}$;
\item[-] $g_0(0)=x_0$ and for every $u \in \R$, $g_0(u+1)=g_0(u)+2\pi$ (we  say that $g_0$ is pseudo-periodic);
\item[-] $g_0':\R \to \R$ is positive everywhere and is a $1$-periodic function. 
\end{itemize}

\begin{prop}
\label{prop:bijection densite quantile}
There is a one-to-one correspondence between the set $\mathcal P^+$ and the set $\mathbf G^1$ of Definition~\ref{defin_g}.  
\end{prop}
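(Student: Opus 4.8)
The plan is to construct explicitly the two maps realizing the correspondence and check they are mutually inverse, using only the elementary facts about c.d.f.\! and quantile functions collected just before the statement. First I would define the map $\Phi:\mathcal P^+ \to \mathbf G^1$ as follows: given $p \in \mathcal P^+$, fix once and for all a reference point $x_0 \in \tor$ (say $x_0 = 0$), form the c.d.f.\! $F_0(x) = \int_{x_0}^x p(y)\,\mathrm dy$, and set $g_0 = F_0^{-1}$. By the bulleted properties recalled above, $g_0$ is a strictly increasing $\mathcal C^1$-function with $g_0(u+1) = g_0(u) + 2\pi$ and $g_0' > 0$, so $g_0 \in \mathscr G^1$; define $\Phi(p)$ to be the equivalence class $[g_0] \in \mathbf G^1$. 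The key point needing verification here is that $\Phi(p)$ does not depend on the choice of $x_0$: changing $x_0$ to $x_1$ replaces $F_0$ by $F_1 = F_0 - F_0(x_1)$, hence $g_1 = F_1^{-1} = F_0^{-1}(\cdot + F_0(x_1)) = g_0(\cdot + c)$ with $c = F_0(x_1)$, so $g_0 \sim g_1$ and the class is well-defined.

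Next I would define the inverse map $\Psi:\mathbf G^1 \to \mathcal P^+$. Given a class $[g] \in \mathbf G^1$ with representative $g \in \mathscr G^1$, the function $u \mapsto g(u)$ is a strictly increasing $\mathcal C^1$-bijection of $\R$ with $g(u+1) = g(u)+2\pi$, so $g^{-1}$ is well-defined and we set $\Psi([g])(x) := \dfrac{1}{g'(g^{-1}(x))}$ for $x \in \tor$, viewed as a $2\pi$-periodic function on $\R$. I need to check this is well-posed and lands in $\mathcal P^+$: the formula is independent of the representative because if $g_2(\cdot) = g_1(\cdot + c)$ then $g_2^{-1}(x) = g_1^{-1}(x) - c$ and $g_2'(g_2^{-1}(x)) = g_1'(g_1^{-1}(x))$; the function is continuous and strictly positive since $g' > 0$ is continuous and $1$-periodic and $g^{-1}$ is continuous; it is $2\pi$-periodic because $g^{-1}(x+2\pi) = g^{-1}(x) + 1$ and $g'$ is $1$-periodic; and the normalization $\int_0^{2\pi} \Psi([g])(x)\,\mathrm dx = 1$ follows from the change of variables $x = g(u)$, $\mathrm dx = g'(u)\,\mathrm du$, which turns the integral into $\int_0^1 \mathrm du = 1$ (using $g(1) = g(0) + 2\pi$).

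Finally I would verify $\Psi \circ \Phi = \mathrm{id}_{\mathcal P^+}$ and $\Phi \circ \Psi = \mathrm{id}_{\mathbf G^1}$. For the first: starting from $p$, we have $g_0 = F_0^{-1}$ with $F_0' = p$, hence $g_0'(u) = 1/F_0'(g_0(u)) = 1/p(g_0(u))$, so $\Psi([g_0])(x) = 1/g_0'(g_0^{-1}(x)) = 1/g_0'(F_0(x)) = p(g_0(F_0(x))) = p(x)$. For the second: starting from $[g]$, set $p = \Psi([g])$, so $p(x) = 1/g'(g^{-1}(x))$; the c.d.f.\! built from $p$ with base point $x_0 = g(0)$ is $F_0(x) = \int_{g(0)}^x \frac{\mathrm dy}{g'(g^{-1}(y))}$, and the substitution $y = g(v)$ gives $F_0(x) = \int_0^{g^{-1}(x)} \mathrm dv = g^{-1}(x)$, whence $F_0^{-1} = g$ and $\Phi(p) = [g]$. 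I do not expect any serious obstacle in this argument; the only mildly delicate points are the well-definedness checks (independence of $x_0$ in $\Phi$ and independence of the representative in $\Psi$), which are exactly where the equivalence relation $\sim$ and the $1$-periodicity of $g'$ are used, and making sure the change-of-variables computations are stated cleanly. I would conclude by noting that the appendix paragraph preceding the statement already records all the elementary facts invoked, so the proof is essentially a matter of assembling them.

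\begin{proof}
Fix the reference point $x_0=0 \in \tor$. We define two maps and show they are mutually inverse bijections.

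\emph{Definition of $\Phi:\mathcal P^+ \to \mathbf G^1$.} Given $p \in \mathcal P^+$, let $F_0(x)=\int_0^x p(y)\,\mathrm dy$ for $x \in \R$. As recalled above, $F_0$ is a strictly increasing $\mathcal C^1$-function with $F_0(x+2\pi)=F_0(x)+1$, and $g_0:=F_0^{-1}:\R\to\R$ belongs to $\mathscr G^1$, with $g_0'(u)=1/p(g_0(u))$. We set $\Phi(p):=[g_0]\in \mathbf G^1$. If instead we had chosen a base point $x_1$, the resulting c.d.f.\! would be $F_1=F_0-F_0(x_1)$ and $g_1:=F_1^{-1}=g_0(\cdot+c)$ with $c=F_0(x_1)$, so $g_0\sim g_1$ and $\Phi(p)$ is independent of the choice of base point, hence well-defined.

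\emph{Definition of $\Psi:\mathbf G^1 \to \mathcal P^+$.} Given $[g]\in \mathbf G^1$ with representative $g\in\mathscr G^1$, the map $g:\R\to\R$ is a strictly increasing $\mathcal C^1$-bijection with $g(u+1)=g(u)+2\pi$, so $g^{-1}$ is well-defined and satisfies $g^{-1}(x+2\pi)=g^{-1}(x)+1$. Set $\Psi([g])(x):=\dfrac{1}{g'(g^{-1}(x))}$, regarded as a $2\pi$-periodic function on $\R$. This is independent of the representative: if $g_2(\cdot)=g_1(\cdot+c)$ then $g_2^{-1}(x)=g_1^{-1}(x)-c$ and $g_2'(g_2^{-1}(x))=g_1'(g_1^{-1}(x))$. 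The function $\Psi([g])$ is continuous and strictly positive since $g'>0$ is continuous; it is $2\pi$-periodic since $g^{-1}(x+2\pi)=g^{-1}(x)+1$ and $g'$ is $1$-periodic; and, using the substitution $x=g(u)$, $\mathrm dx=g'(u)\,\mathrm du$,
\begin{align*}
\int_0^{2\pi} \Psi([g])(x)\,\mathrm dx = \int_0^{1} \frac{g'(u)}{g'(u)}\,\mathrm du = 1,
\end{align*}
where we used $g(0)=g^{-1}(0)\cdot$\,-\,preimages and $g(1)=g(0)+2\pi$. Hence $\Psi([g])\in\mathcal P^+$.

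\emph{$\Psi\circ\Phi=\mathrm{id}$.} Starting from $p\in\mathcal P^+$, we have $g_0=F_0^{-1}$ with $F_0'=p$, so $g_0'(u)=1/p(g_0(u))$ and therefore
\begin{align*}
\Psi(\Phi(p))(x)=\frac{1}{g_0'(g_0^{-1}(x))}=\frac{1}{g_0'(F_0(x))}=p\big(g_0(F_0(x))\big)=p(x).
\end{align*}

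\emph{$\Phi\circ\Psi=\mathrm{id}$.} Starting from $[g]\in\mathbf G^1$, let $p=\Psi([g])$, so $p(x)=1/g'(g^{-1}(x))$. Building the c.d.f.\! with base point $x_0=g(0)$ and substituting $y=g(v)$,
\begin{align*}
\widetilde F_0(x)=\int_{g(0)}^{x}\frac{\mathrm dy}{g'(g^{-1}(y))}=\int_{0}^{g^{-1}(x)}\mathrm dv=g^{-1}(x),
\end{align*}
so $\widetilde F_0^{-1}=g$ and $\Phi(p)=[\widetilde F_0^{-1}]=[g]$. Since the value of $\Phi$ does not depend on the base point, this proves $\Phi\circ\Psi=\mathrm{id}$.

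Thus $\Phi$ and $\Psi$ are mutually inverse, establishing the claimed one-to-one correspondence.
\end{proof}
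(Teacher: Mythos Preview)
Your proof is correct and follows essentially the same route as the paper: both build the map $\mathcal P^+ \to \mathbf G^1$ via the inverse c.d.f.\ and invert it via $[g]\mapsto 1/g'(g^{-1}(\cdot))$; the paper packages this as injectivity plus surjectivity of a single map $\iota$, whereas you explicitly construct both maps and verify the two compositions are identities, but the underlying computations are identical. One cosmetic remark: the clause after your normalization integral (``$g(0)=g^{-1}(0)\cdot$\,-\,preimages'') is garbled---what you actually need there is that the integrand is $2\pi$-periodic, so $\int_0^{2\pi}=\int_{g(0)}^{g(1)}$, making the substitution $x=g(u)$, $u\in[0,1]$, legitimate.
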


\begin{proof}
Let $\iota:\mathcal P^+ \to \mathbf G^1$ be the map such that for every $p\in \mathcal P^+$, $\overline{g}=\iota(p)$ is the equivalence class
given by the above construction. 
We show that $\iota$ is one-to-one. 

First, $\iota$ is injective. Indeed, let $p_1, p_2 \in \mathcal P^+$ such that $\iota(p_1)=\iota(p_2)$. Let $x_0 \in \tor$ and define, for $i=1,2$, $F_i(x) =\int_{x_0}^x p_i(y) \mathrm dy$ and $g_i=F_i^{-1}$. 
Then by construction $\overline{g_1}=\iota(p_1)=\iota(p_2)=\overline{g_2}$. Therefore, there is $c\in \R$ such that $g_2(\cdot)=g_1(\cdot+c)$. Thus for every $x\in \R$, 
\begin{align*}
F_1(x)=F_1(g_2\circ F_2(x))=F_1(g_1(F_2(x)+c))=F_2(x)+c. 
\end{align*}
Thus $F_1$ and $F_2$ share the same derivative: $p_1=p_2$. 

Second, $\iota$ is surjective. Let $\overline{g}\in \mathbf G^1$ and $g$ be a representative of the class $\overline{g}$. It is a $\mathcal C^1$-function such that $g'(u)>0$ for every $u\in \R$ and, since $g(u+1)=g(u)+2\pi$ for every $u\in \R$, $g'$ is $1$-periodic. 
Define $F:=g^{-1}:\R \to \R$. In particular, $F$ is a $\mathcal C^1$-function such that $F'>0$ and for every $x\in \R$, $F(x+2\pi)=F(x)+1$. Thus $p:= F'$ is a continuous function with  values in $(0,+\infty)$ and for every $x\in \R$, $p(x)=\frac{1}{g'(F(x))}$. Thus for every $x\in \R$, $p(x+2\pi)=p(x)$ and $\int_0^{2\pi} p=1$. 
Therefore $p$ belongs to $\mathcal P^+$. 
We check that $\overline{g}=\iota(p)$. Let $x_0$ be an arbitrary point in~$\tor$, $F_0$ be defined by $F_0(x)=\int_{x_0}^x p(y) \mathrm dy$ and $g_0:=F_0^{-1}$. Since $F_0'=p=F'$, there is $c\in \R$ such that $F_0(\cdot)=F(\cdot)+c$. Therefore, $g_0(\cdot)=g(\cdot+c)$, whence $g_0 \sim g$. This completes the proof. 
\end{proof}

\subsection{Properties of the diffusion on the torus}
\label{app_well_posed}

We prove in this paragraph  several properties of the diffusion constructed in Paragraph~\ref{parag:diff_torus} of the main text. First, let us show propositions~\ref{prop:exist, uniq, cont, growth} and~\ref{prop:differentiability}. 

\begin{proof}[Proof (Proposition~\ref{prop:exist, uniq, cont, growth})]
Strong existence and uniqueness hold by a fixed-point argument: we refer to the proof of~\cite[Prop.\! 3]{marx20}. The additional Brownian motion $(\beta_t)_{t\in [0,T]}$ does not add any difficulty to that proof, since it does not depend  on the initial condition $g$ nor on the variable~$u$. 
By a standard application of Kolmorogov's Lemma, we also obtain the existence of a version in $\mathcal C(\R \times [0,T])$, see the proof of~\cite[Prop.\! 5]{marx20}. Moreover, the fact that the map $u \mapsto x^g_t(u)$ is strictly increasing is obtained by the study of the process $(x^g_t(u_2)-x^g_t(u_1))_{t \in [0,T]}$ for every $u_1<u_2$ as in~\cite[Prop.\! 6]{marx20}. The fact that it holds $\mathbb P^W \otimes \mathbb P^\beta$-almost surely, for every $0\leq u_1<u_2 \leq 1$ follows from the continuity of  $x^g$, see~\cite[Cor.\! 7]{marx20}.
\end{proof}
 
\begin{proof}[Proof (Proposition~\ref{prop:differentiability})]
Assume that $g$ belongs to $\mathscr G^{1+\theta}$ and that $f$ is of order $\alpha>\frac{3}{2}+\theta$. This second assumption ensures that $\sum_{k\in \Z} \crochetk^{2+2\theta} |f_k|^2$ converges. By differentiating formally (w.r.t.\! variable  $u$) equation~\eqref{eq_SDE_diff_torus}, consider a solution $(z_t(u))_{t \in [0,T], u\in \R}$ to:
\begin{align}
\label{eq_derivative}
z_t(u)=g'(u) + \sum_{k \in \Z} f_k \int_0^t z_s(u) \Re \left(-ik e^{-ik x_s^g(u)} \mathrm dW^k_s \right).
\end{align}
Using the fact that $g'$ is $\theta$-Hölder continuous and that  $\sum_{k\in \Z} \crochetk^{2+2\theta} |f_k|^2 <+\infty$, we prove by standard arguments that for each $u \in \R$, the solution $(z_t(u))_{t \in [0,T]}$ exists, is unique and that the map $u \mapsto z_\cdot(u) \in L_2(\Omega, \mathcal C[0,T])$ is $\theta'$-Hölder continuous for each $\theta'<\theta$. 

Furthermore, for each $u \in \R$,  $\frac{x^g_\cdot(u+\eps)-x^g_\cdot(u)}{\eps} \rightarrow_{\eps \to 0} z_\cdot (u) $ in $L_2(\Omega, \mathcal C[0,T])$. Indeed, define for each $t \in [0,T]$ and $\eps \neq 0$, $E_\eps(t) := \Ewb{\sup_{s\leq t} | \frac{x^g_s(u+\eps)-x^g_s(u)}{\eps} -z_s(u) |^2}$. 
We easily get a constant $C$ depending on $\|g\|_{\mathcal C^{1+\theta}}$ and on $\sum_{k\in \Z} \crochetk^{2+2\theta} |f_k|^2$ such that for each $t \in [0,T]$, $E_\eps (t) \leq C |\eps|^{2\theta} + C\int_0^t E_\eps (s) \mathrm ds$. By Gronwall's Lemma, it follows that $E_\eps(T) \leq C |\eps|^{2\theta}$, thus $E_\eps(T) \to 0$. Therefore, using the continuity of $z$, we get almost surely for every $u \in \R$, for every $\eps \neq 0$ and for every $t \in [0,T]$:
\begin{align*}
\frac{x^g_t(u+\eps)-x^g_t(u)}{\eps} = \int_0^1 z_t(u+\lambda \eps) \mathrm d\lambda.
\end{align*}
Thus almost surely, $\partial_u x^g_t(u)=z_t(u)$ for every $u \in \R$ and $t \in [0,T]$ and furthermore it is given by the exponential form~\eqref{exponential explicit partial xtg}. The statements for higher derivatives are obtained similarly. For a detailed version of this proof with every computation of the inequalities mentioned above, see~\cite[Lemmas II.12 and II.13]{marx_thesis}. 
\end{proof}

By the previous proof, we know that $\partial_u x^g$ satisfies equation~\eqref{eq_derivative}. It follows that we can control the $L_p$-norms of $\partial_u x^g$ and of higher derivatives with respect to the initial condition $g$. 

\begin{lemme}
\label{lem_control_moments}
Let $\theta \in (0,1)$, $j\geq 1$, $\kappa >0$ and let $g$ be a $\mathcal G_0$-measurable random variable belonging $\mathbb P^0$-a.s. to $\mathbf G^{\kappa}$. Assume that $f$ is of order $\alpha>\kappa+\frac{1}{2}$.
Then there are constants\footnote{The constants $C_p$ and $C_{p,j}$ appearing in this lemma are independent of $\theta$ and of $g$.} $C_p$ and $C_{p,j}$ such that $\mathbb P^0$-a.s.
\begin{align}
\label{Lp norm 1}
\text{if } \kappa \geq 1+\theta, \quad &\Ewb{\sup_{t\leq T}    \left\| \partial_u x_t^g \right\|_{L_p[0,1]}^p }
\leq C_p \;\|g'\|_{L_p[0,1]}^p,\\
\label{Lp norm 2}
\text{if } \kappa \geq 1+\theta, \quad&\Ewb{\sup_{t\leq T}    \left| \partial_u x_t^g(0) \right|^p }
\leq C_p \; g'(0)^p, \\
\label{Lp norm 5}
\text{if } \kappa \geq j+\theta, \quad&\Ewb{\sup_{t\leq T}    \left\| \partial_u^{(j)} x_t^g \right\|_{L_p[0,1]}^p}
\leq C_{p,j} \left\{ 1+ \|\partial_u^{(j)} g\|_{L_p[0,1]}^p + \sum_{k=1}^{j-1} \|\partial_u^{(k)} g\|_{L_\infty[0,1]}^{jp} \right\},\\
\label{Lp norm 7}
\text{if } \kappa \geq j+1+\theta, \quad&\Ewb{\sup_{t\leq T}   \left\| \partial_u^{(j)} x_t^g\right\|_{L_\infty}^p }
\leq C_{p,j} \left\{ 1 + \|\partial_u^{(j+1)} g\|_{L_p}^p +  \sum_{k=1}^{j} \|\partial_u^{(k)} g\|_{L_\infty}^{(j+1)p} \right\},\\
\label{Lp norm 8}
\text{if }  \kappa \geq 2+\theta, \quad&\Ewb{\sup_{t\leq T}   \left\| \frac{1}{\partial_u x_t^g} \right\|_{L_\infty}^p }
\leq C_p \left\{ 1+ \frac{1}{g'(0)^p} + \left\|  \frac{1}{g'} \right\|_{L_{4p}}^{4p} + \| g'' \|_{L_{2p}}^{2p} + \|g'\|_{L_\infty}^{4p} \right\}.
\end{align}
\end{lemme}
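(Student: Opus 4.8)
The plan is to establish each of the five inequalities by the same circle of ideas, working from the explicit exponential formula~\eqref{exponential explicit partial xtg} and the SDE~\eqref{eq_derivative} satisfied by $\partial_u x^g$, together with its differentiated analogues for higher derivatives. The key structural observation is that $\partial_u x_t^g(u) = g'(u) \mathcal{E}_t(u)$, where $\mathcal{E}_t(u) = \exp\big(\sum_k f_k \int_0^t \Re(-ik e^{-ikx_s^g(u)} \mathrm dW_s^k) - \frac{t}{2}\sum_k f_k^2 k^2\big)$ is a stochastic exponential whose exponent is a martingale with deterministic quadratic variation $t\sum_k f_k^2 k^2$; the condition $\alpha > \kappa + \tfrac12 \geq \tfrac32$ guarantees $\sum_k f_k^2 k^2 < \infty$, so this exponent has finite exponential moments of all orders uniformly in $u$ and $t\leq T$. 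Thus $\Ewb{\sup_{t\leq T} |\mathcal{E}_t(u)|^p}$ and $\Ewb{\sup_{t\leq T}|\mathcal{E}_t(u)|^{-p}}$ are bounded by constants depending only on $p$, $T$, and $f$, uniformly in $u$. Integrating $|g'(u)|^p\,\Ewb{\sup_t |\mathcal{E}_t(u)|^p}$ over $u\in[0,1]$ via Fubini gives~\eqref{Lp norm 1}, and evaluating at $u=0$ gives~\eqref{Lp norm 2}; the only subtlety is justifying the exchange of $\sup_t$ and $\E{\cdot}$, which is handled by Burkholder--Davis--Gundy applied to the martingale in the exponent (or, more directly, by noting $\sup_t \mathcal{E}_t \leq \exp(\sup_t |M_t|)$ and bounding exponential moments of $\sup_t |M_t|$).

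For~\eqref{Lp norm 5} and~\eqref{Lp norm 7}, one differentiates equation~\eqref{eq_derivative} repeatedly. The $j$-th derivative $\partial_u^{(j)} x_t^g$ solves a \emph{linear} SDE whose homogeneous part is again driven by $\int_0^t \Re(-ike^{-ikx_s^g}\mathrm dW_s^k)$ (so its solution operator is the same stochastic exponential $\mathcal{E}_t$), with an inhomogeneous forcing term that is a polynomial expression in the lower-order derivatives $\partial_u^{(1)}x_t^g,\dots,\partial_u^{(j-1)}x_t^g$ and in $\partial_u^{(j)}g$, the coefficients involving $\sum_k k^{m} f_k \int_0^\cdot e^{-ikx_s^g}\mathrm dW_s^k$ type stochastic integrals which have finite moments provided $\alpha$ is large enough (this is where $\alpha > j + \tfrac12 + \theta$, resp.\ $\alpha > j+1+\tfrac12+\theta$, enters: one needs $\sum_k k^{2(j+\theta)} f_k^2 < \infty$, resp.\ one more power for the $L_\infty$ estimate via a Sobolev/Kolmogorov-type embedding). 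By Duhamel's formula, $\partial_u^{(j)} x_t^g(u)$ is then $\mathcal{E}_t(u)$ times $\partial_u^{(j)}g(u)$ plus a convolution of $\mathcal{E}$ against the forcing; applying Hölder and Gronwall, then taking $L_p$ norms in $u$ and expectations, and inducting on $j$, yields the claimed bounds — the polynomial combinations $\sum_{k=1}^{j-1}\|\partial_u^{(k)}g\|_{L_\infty}^{jp}$ arise precisely from the products of lower-order derivatives. The $L_\infty$ version~\eqref{Lp norm 7} is obtained from the $L_p$ version at level $j+1$ via $\|\partial_u^{(j)} x_t^g\|_{L_\infty} \lesssim \|\partial_u^{(j)} x_t^g\|_{L_p} + \|\partial_u^{(j+1)} x_t^g\|_{L_p}$ (a one-dimensional embedding on the periodic interval), which explains the shift to $\kappa \geq j+1+\theta$.

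For~\eqref{Lp norm 8}, one writes $\frac{1}{\partial_u x_t^g(u)} = \frac{1}{g'(u)}\mathcal{E}_t(u)^{-1}$ and notes that $\mathcal{E}_t^{-1}$ is again a stochastic exponential of the same type (with sign flipped), so pointwise-in-$u$ moment bounds are immediate. The difficulty is passing to the supremum over $u\in\R$ inside the expectation: here one invokes Kolmogorov's continuity criterion for the random field $u\mapsto \frac{1}{\partial_u x_t^g(u)}$, controlling its increments in $L_{2p}$ using the spatial regularity of $x^g$ (which requires $g\in\mathbf G^{2+\theta}$ and uses $\partial_u^{(2)} x^g$, explaining the appearance of $\|g''\|_{L_{2p}}$ and $\|\frac{1}{g'}\|_{L_{4p}}$ after applying Hölder to split the increment $\frac{1}{\partial_u x_t^g(u)} - \frac{1}{\partial_u x_t^g(u')}$ into a ratio of a second-difference against a product of first derivatives). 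The $\|g'\|_{L_\infty}^{4p}$ term comes from controlling $\partial_u^{(2)} x^g$ via~\eqref{Lp norm 5} with $j=2$. I expect the main obstacle to be exactly this last point — bookkeeping the precise powers of the various norms of $g$ when converting pointwise moment bounds to uniform-in-$u$ bounds via Kolmogorov, and making sure the Hölder exponents in the splitting match the required integrability of each factor — rather than any conceptual difficulty; the stochastic-analysis inputs (BDG, Gronwall, Novikov-free exponential moment estimates because the quadratic variation is deterministic) are all routine. Since these are classical computations, I would refer the reader to~\cite[Lemmas II.12, II.13 and the surrounding estimates]{marx_thesis} for the fully detailed verification.
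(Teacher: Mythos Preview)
Your proposal is correct, and the overall architecture (induction on the order of the derivative, linear SDE for each $\partial_u^{(j)} x^g$, Sobolev/embedding to pass from $L_p$ to $L_\infty$) matches the paper's. There are, however, two genuine differences in technique worth noting.

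For~\eqref{Lp norm 1} and~\eqref{Lp norm 2}, you work directly from the exponential formula $\partial_u x_t^g(u)=g'(u)\mathcal E_t(u)$ and exploit that the exponent has \emph{deterministic} quadratic variation $t\sum_k f_k^2 k^2$, so all exponential moments of $\mathcal E_t$ and $\mathcal E_t^{-1}$ are finite by elementary Gaussian/Doob arguments. The paper instead writes the linear SDE~\eqref{eq_derivative} for $\partial_u x^g$, localises by a stopping time $\sigma^M=\inf\{t:\|\partial_u x_t^g\|_{L_p}^p\geq M\}$, applies BDG and Gronwall to the stopped process, and passes to the limit $M\to\infty$ by Fatou. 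Your route is shorter here because it uses the special structure (deterministic bracket); the paper's route is more generic SDE machinery. One small wording issue: the ``subtlety'' you mention is not exchanging $\sup_t$ and $\E{\cdot}$ but rather $\sup_t$ and $\int_0^1\mathrm du$, and that direction is simply $\sup_t\int\leq\int\sup_t$.

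For~\eqref{Lp norm 8}, you propose Kolmogorov's criterion on the random field $u\mapsto 1/\partial_u x_t^g(u)$ to upgrade pointwise moment bounds to $L_\infty$. The paper instead applies It\^o's formula to $1/\partial_u x_t^g$ (obtaining another linear SDE with the same structure), gets $L_p[0,1]$ and pointwise-at-$0$ bounds by BDG\,+\,Gronwall exactly as for~\eqref{Lp norm 1}--\eqref{Lp norm 2}, and then uses the elementary $1$-periodic embedding $\|f\|_{L_\infty}\leq |f(0)|+\int_0^1|\partial_u f|$ together with $\partial_u(1/\partial_u x_t^g)=-\partial_u^{(2)}x_t^g/(\partial_u x_t^g)^2$ and Cauchy--Schwarz. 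This is the same embedding used for~\eqref{Lp norm 7}, and it is considerably lighter than Kolmogorov; it also makes the provenance of the specific exponents in the right-hand side of~\eqref{Lp norm 8} transparent. Your Kolmogorov route would work but is heavier than needed.
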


\begin{proof}
The proofs of those five inequalities are pretty similar. Let us show inequality~\eqref{Lp norm 1} in details. 
Let $M>0$ be chosen large enough so that $\int_0^1 |g'(v)|^p \mathrm dv <M$. Define the stopping time $\sigma^M:= \inf \{ t \geq 0: \int_0^1| \partial_u x_t ^g(v) |^p \mathrm dv \geq M  \}$. Since $\partial_u x^g$  satisfies~\eqref{eq_derivative}, it follows from Burkholder-Davis-Gundy inequality that for every $t\in [0,T]$, 
\begin{multline}
\mathbb E^W \mathbb E^\beta \bigg[\sup_{s\leq t\wedge \sigma^M}  \int_0^1 \left| \partial_u x_s^g(v) \right|^p \mathrm dv  \bigg]\\
\begin{aligned}
&\leq C_p \|g'\|_{L_p[0,1]}^p 
+C_p \int_0^1\mathbb E^W \mathbb E^\beta \bigg[   \bigg|  \sum_{k \in \Z} f_k^2 \int_0^{t \wedge \sigma^M} |\partial_u x_s^g(v)|^2 \;|(-ik) e^{-ik x_s^g(v)}|^2 \mathrm ds \bigg|^{p/2}   \bigg]\mathrm dv\\
&\leq C_p \|g'\|_{L_p[0,1]}^p 
+C_p \Big(\sum_{k \in \Z} f_k^2 k^2  \Big)^{p/2} T^{p/2-1} \int_0^t\mathbb E^W \mathbb E^\beta \bigg[   \sup_{r\leq s \wedge \sigma^M} \int_0^1 |\partial_u x_r^g(v)|^p  \mathrm dv    \bigg]\mathrm ds.
\end{aligned}
\label{calcul norme Lp bis}
\end{multline}
Since $f$ is of order $\alpha> \frac{3}{2}$, $\sum_{k \in \Z} f_k^2 k^2$ converges. 
By Gronwall's Lemma, we deduce that there is a constant $C_p$ such that for every $M >\|g'\|_{L_p[0,1]}^p$, 
\begin{align}
\label{gronwall_arrete}
\Ewb{\sup_{t\leq \sigma^M}  \int_0^1  \left| \partial_u x_t^g(v) \right|^p \mathrm dv}
\leq C_p \|g'\|_{L_p[0,1]}^p.
\end{align}
Moreover, 
\begin{align*}
\mathbb P^W \otimes \mathbb P^\beta \left[ \sigma^M <T \right]
\leq \mathbb P^W \otimes \mathbb P^\beta \left[ \sup_{t\leq \sigma^M}  \int_0^1  \left| \partial_u x_t^g(v) \right|^p \mathrm dv \geq M \right] 
\leq \frac{C_p}{M} \|g'\|_{L_p[0,1]}^p, 
\end{align*}
hence we deduce $\mathbb P^W \otimes \mathbb P^\beta \left[ \bigcup_{M} \{ \sigma^M = T\} \right]=1$. Thus,  we let $M$ tend to $+\infty$ in~\eqref{gronwall_arrete} and inequality~\eqref{Lp norm 1} follows by Fatou's Lemma.

Inequality~\eqref{Lp norm 2} is obtained by writing equation~\eqref{eq_derivative} at $u=0$ and by mimicking the previous proof. 
We get inequality~\eqref{Lp norm 5} by induction over $j \geq 1$; we write for each $j\geq 1$ the equation satisfied by $\partial_u^{(j)} x^g$ and we apply successively Burkholder-Davis-Gundy inequality and Gronwall's Lemma. Similarly, we  also prove that $\mathbb P^0$-a.s.
\begin{align}
\label{Lp norm 6}
\Ewb{\sup_{t\leq T}   \left| \partial_u^{(j)} x_t^g(0) \right|^p }
\leq C_{p,j} \left\{ 1+ |\partial_u^{(j)} g(0)|^p + \sum_{k=1}^{j-1} \|\partial_u^{(k)} g\|_{L_\infty[0,1]}^{jp} \right\}. 
\end{align}
Moreover, since for every $1$-periodic function $f$,  $\|f\|_{L_\infty} \leq |f(0)|+ \int_0^1 |\partial_u f(v)| \mathrm dv $, we have for every $p \geq 2$, 
\begin{align*}
\Ewb{\sup_{t\leq T}   \left\| \partial_u^{(j)} x_t^g\right\|_{L_\infty}^p }
\leq C_p \Ewb{\sup_{t\leq T}  | \partial_u^{(j)} x_t^g (0)|^p }
+ C_p \Ewb{\sup_{t\leq T}  \int_0^1 \left| \partial_u^{(j+1)} x_t^g(v) \right|^p  \mathrm dv },
\end{align*}
which together with~\eqref{Lp norm 5} and~\eqref{Lp norm 6} leads to inequality~\eqref{Lp norm 7}. 

Furthermore, recall that $\mathbb P^W \otimes \mathbb P^\beta$-a.s., for every $t\in [0,T]$ and for every $v \in [0,1]$, $\partial_u x_t^g(v)>0$. 
By Itô's formula, it follows from equation~\eqref{eq_derivative} that
\begin{align*}
\frac{1}{\partial_u x_t^g(v) }
&= \frac{1}{g'(v)} - \sum_{k \in \Z} f_k \int_0^t \frac{1}{\partial_u x_s^g(v)} \Re \left(-ik e^{-ik x_s^g(v)} \mathrm dW^k_s \right) +\sum_{k \in \Z} f_k^2 k^2 \int_0^t \frac{1}{\partial_u x_s^g(v)} \mathrm ds.
\end{align*}
Again, applying successively Burkholder-Davis-Gundy inequality and Gronwall's Lemma, 
we get $\mathbb P^0$-a.s.
\begin{align*}
\Ewb{\sup_{t\leq T}  \int_0^1  \frac{1}{\left| \partial_u x_t^g(v) \right|^p} \mathrm dv}
&\leq C_p \left\|\frac{1}{g'}\right\|_{L_p[0,1]}^p, \\
\Ewb{\sup_{t\leq T}    \frac{1}{\left| \partial_u x_t^g(0) \right|^p} }
&\leq \frac{C_p}{g'(0)^p}.
\end{align*}
Then, using once more the $1$-periodicity of $\partial_u x^g$ and $\|f\|_{L_\infty} \leq |f(0)|+ \int_0^1 |\partial_u f(v)| \mathrm dv $, we obtain
\begin{align*}
\Ewb{\sup_{t\leq T}   \left\| \frac{1}{\partial_u x_t^g} \right\|_{L_\infty}^p }
\leq C_p \Ewb{\sup_{t\leq T}    \frac{1}{|\partial_u x_t^g(0)|^p} }
+C_p \Ewb{\sup_{t\leq T} \int_0^1   \left| \frac{\partial_u^{(2)}x_t^g(v)}{\partial_u x_t^g(v)^2} \right|^p \mathrm dv }.
\end{align*}
and we finally prove~\eqref{Lp norm 8} by using previous inequalities together with Cauchy-Schwarz inequality. 
\end{proof}

We obtain the same estimates for the solution to the parametric SDE~\eqref{eq_SDE_z}. 

\begin{prop}
\label{prop_para}
Let $f$ be of order $\alpha> \frac{3}{2}+\theta$ for some $\theta \in (0,1)$. Then there is a collection $(Z_t^x)_{t\in [0,T], x\in \R}$ such that for each $x\in \R$, $(Z_t^x)_{t\in [0,T]}$ is the unique solution to~\eqref{eq_SDE_z}. Moreover, 
$\mathbb P^W \otimes\mathbb P^\beta$-almost surely, the map $x\mapsto Z_t^x$ is differentiable on $\R$ for every $t\in [0,T]$ and there is a continuous version of the map continuous version of the map $(t,x)\in [0,T] \times \R \mapsto \partial_x Z_t^x $. 
Furthermore, for every $p \geq 2$, there is  $C_p>0$ such that for every $x,y \in \R$, 
\begin{align}
\label{différence Ztx Zty}
\Ewb{\sup_{t\leq T}\left|Z_t^x - Z_t^y \right|^p} &\leq C_p |x-y|^p \\
\label{borne sur pZtx}
\Ewb{\sup_{t\leq T}| \partial_x Z_t^x|^p} &\leq C_p, \\
\label{différence pZtx pZty}
\Ewb{\sup_{t\leq T}| \partial_x Z_t^x - \partial_x Z_t^y|^p} &\leq C_p |x-y|^{p \theta}. 
\end{align}
If $\alpha> \frac{5}{2}$, then~\eqref{différence pZtx pZty} holds with $\theta$ equal to $1$.  
\end{prop}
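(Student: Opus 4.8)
The plan is to treat \eqref{eq_SDE_z} as a standard (non-McKean-Vlasov) SDE with a state-dependent but bounded-in-Fourier-norm diffusion coefficient, and to obtain the four estimates by the now-routine scheme: Burkholder-Davis-Gundy to bound the stochastic integral, followed by Gronwall's lemma, combined with Kolmogorov's continuity criterion for the joint regularity in $(t,x)$. First I would establish strong existence and pathwise uniqueness for each fixed $x$: since $f$ is of order $\alpha>\frac32+\theta$, the sum $\sum_{k\in\Z} f_k^2 k^2$ converges, so the coefficient $z\mapsto \sum_k f_k\Re(e^{-ikz}\cdot)$ is globally Lipschitz in $z$ (uniformly in the noise), and a fixed-point argument in $L_2(\Omega,\mathcal C[0,T])$ as in \cite[Prop.\ 3]{marx20} applies verbatim, the extra term $\beta_t$ being harmless. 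Uniqueness gives the flow $(Z^x_t)$ with the identities \eqref{egalite Zx}, \eqref{liens entre les dérivées} following from Proposition~\ref{prop:kunita_identities} once continuity in $x$ is proved.

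Next I would prove \eqref{différence Ztx Zty}. Writing $D_t:=Z_t^x-Z_t^y$, one has $D_t = (x-y) + \sum_k f_k\int_0^t\Re\big((e^{-ikZ_s^x}-e^{-ikZ_s^y})\mathrm dW_s^k\big)$; applying BDG and using $|e^{-ika}-e^{-ikb}|\le |k|\,|a-b|$ together with $\sum_k f_k^2k^2<\infty$ gives $\Ewb{\sup_{s\le t}|D_s|^p}\le C_p|x-y|^p + C_p\int_0^t\Ewb{\sup_{r\le s}|D_r|^p}\mathrm ds$, and Gronwall closes it. For the differentiability of $x\mapsto Z_t^x$ and \eqref{borne sur pZtx}, I would differentiate \eqref{eq_SDE_z} formally to get the linear equation $\partial_x Z_t^x = 1 + \sum_k f_k\int_0^t \partial_x Z_s^x\,\Re(-ike^{-ikZ_s^x}\mathrm dW_s^k)$, exactly analogous to \eqref{eq_derivative} with initial value $1$ instead of $g'$; the same BDG+Gronwall argument (justifying the differentiation by showing the difference quotients converge in $L_2(\Omega,\mathcal C[0,T])$, as in the proof of Proposition~\ref{prop:differentiability}) yields existence of $\partial_x Z^x$ and the bound $\Ewb{\sup_{t\le T}|\partial_x Z_t^x|^p}\le C_p$, with the constant independent of $x$ precisely because the initial value is the constant $1$. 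Finally \eqref{différence pZtx pZty}: subtracting the linear equations for $\partial_x Z^x$ and $\partial_x Z^y$, the driving difference splits into a term proportional to $\partial_x Z^x-\partial_x Z^y$ (handled by Gronwall) and a source term of the form $\partial_x Z_s^y\,(e^{-ikZ_s^x}-e^{-ikZ_s^y})$, whose $L_p$-norm is controlled using \eqref{différence Ztx Zty}, $\Ewb{\sup|\partial_x Z^y|^p}\le C_p$, and Hölder's inequality; the Hölder exponent $\theta$ enters through the convergence $\sum_k f_k^2 k^{2+2\theta}<\infty$, which needs $\alpha>\frac32+\theta$, whereas if $\alpha>\frac52$ one gets the full Lipschitz bound by using $\sum_k f_k^2 k^4<\infty$. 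The continuous version of $(t,x)\mapsto \partial_x Z_t^x$ then follows from \eqref{différence pZtx pZty} and Kolmogorov's lemma.

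The main obstacle, as usual in these arguments, is not any single estimate but the careful justification that the formal derivative in $x$ is the genuine derivative: one must show $\frac{Z^{x+\eps}_\cdot - Z^x_\cdot}{\eps}\to \partial_x Z^x_\cdot$ in $L_2(\Omega,\mathcal C[0,T])$ uniformly enough to pass to the limit, and that the resulting process has a version jointly continuous in $(t,x)$. This is done exactly as in the proof of Proposition~\ref{prop:differentiability} — define $E_\eps(t):=\Ewb{\sup_{s\le t}|\frac{Z^{x+\eps}_s-Z^x_s}{\eps}-\partial_x Z^x_s|^2}$, derive $E_\eps(t)\le C|\eps|^{2\kappa}+C\int_0^t E_\eps(s)\mathrm ds$ for a suitable exponent using Lipschitz continuity of $s\mapsto(\text{the driving vector field})$, and apply Gronwall; I would simply refer to \cite[Lemmas II.12--II.13]{marx_thesis} for the detailed computations, since the parametric SDE \eqref{eq_SDE_z} is structurally identical to \eqref{eq_SDE_diff_torus} with the roles of the parameter $u$ and the starting point $x$ interchanged.
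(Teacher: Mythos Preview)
Your proposal is correct and matches the paper's approach essentially point for point: the paper likewise refers the existence, uniqueness, differentiability, and the bounds \eqref{différence Ztx Zty}, \eqref{borne sur pZtx} back to the arguments already given for $(x_t^g(u))$ in Lemma~\ref{lem_control_moments} and Propositions~\ref{prop:exist, uniq, cont, growth}--\ref{prop:differentiability}, and spells out only \eqref{différence pZtx pZty} via the same BDG + splitting + H\"older + Gronwall scheme you describe, with the exponent $\theta$ entering through $\sum_k f_k^2|k|^{2+2\theta}<\infty$ and the continuous version obtained from Kolmogorov's lemma.
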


\begin{proof}
Let us focus on the proof of inequality~\eqref{différence pZtx pZty}, which is the only one which is new  with respect to what has been proved in Lemma~\ref{lem_control_moments} for $(x_t^g(u))_{t\in [0,T]}$. 
For every $x,y \in \R$, 
\begin{multline*}
\Ewb{\sup_{s\leq t}| \partial_x Z_s^x - \partial_x Z_s^y|^p}
\leq C_p \Ewb{\sup_{s\leq t}\left| \sum_{k \in \Z} f_k \!\int_0^s (\partial_x Z_r^x-\partial_x Z_r^y) \Re \left(-ik e^{-ik Z_r^x} \mathrm dW^k_r \right) \right|^p} \\
 + C_p \Ewb{\sup_{s\leq t}\left| \sum_{k \in \Z} f_k \int_0^s \partial_x Z_r^y \Re \left(-ik (e^{-ik Z_r^x}-e^{-ik Z_r^y}) \mathrm dW^k_r \right) \right|^p}.
\end{multline*}
By Burkholder-Davis-Gundy inequality,  we deduce that
\begin{align*}
\Ewb{\sup_{s\leq t}| \partial_x Z_s^x - \partial_x Z_s^y|^p}
&\leq C_p  \Big( \sum_{k \in \Z} f_k^2k^2 \Big)^{p/2}  \int_0^t\Ewb{\sup_{r\leq s}|\partial_x Z_r^x-\partial_x Z_r^y  |^p} \mathrm dr \\
&\quad + C_p \Big( \sum_{k \in \Z} f_k^2 |k|^{2+2\theta} \Big)^{p/2} \int_0^t \Ewb{|\partial_x Z_s^y|^p  \left| Z_s^x - Z_s^y\right|^{p \theta}}\mathrm ds.
\end{align*}
Furthermore, by~\eqref{différence Ztx Zty} and~\eqref{borne sur pZtx}, we obtain for every $s\in [0,T]$ and for every $x$ and $y$,  
\begin{align*}
\Ewb{|\partial_x Z_s^y|^p  \left| Z_s^x - Z_s^y\right|^{p \theta}}
\leq \Ewb{ |\partial_x Z_s^y|^{\frac{p}{1-\theta}}  }^{1-\theta} \Ewb{\left| Z_s^x - Z_s^y\right|^{p} }^{\theta}
\leq C_{p,\theta} |x-y|^{p\theta}. 
\end{align*}
Since $f$ is of order $\alpha>\frac{3}{2}+\theta$,  the series $ \sum_{k \in \Z} f_k^2|k|^{2+2\theta}$ is finite. By Gronwall's Lemma, we deduce~\eqref{différence pZtx pZty}. 
By Kolmogorov's Lemma, it follows from~\eqref{différence pZtx pZty} (with $p$ larger than $\frac{1}{\theta}$) that there is a continuous version of $(t,x) \mapsto \partial_x Z_t^x$. 
\end{proof}

\begin{proof}[Proof (Proposition~\ref{prop:kunita_identities})]
Fix $g \in \mathbf G^1$ and $u\in \R$. The processes $(Z_t^{g(u)})_{t\in [0,T]}$ and $(x_t^g(u))_{t\in [0,T]}$ both satisfy the same SDE~\eqref{eq_SDE_diff_torus} with same initial condition. Since $f$ is of order $\alpha> \frac{3}{2}$, pathwise uniqueness holds for this equation. Hence for every $u\in \R$, $Z_t^{g(u)}=x_t^g(u)$ holds almost surely. Moreover, since $u\in \R \mapsto x_t^g(u)$ and $x \in \R\mapsto Z_t^x$ are $\mathbb P^W \otimes \mathbb P^\beta$-almost surely continuous, and $g$ is continuous, we deduce that $Z_t^{g(u)}=x_t^g(u)$ holds almost surely for every $u\in \R$. 

Moreover, differentiating the equations~\eqref{eq_SDE_diff_torus} and~\eqref{eq_SDE_z}, we have $\mathbb P^W \otimes \mathbb P^\beta$-almost surely, for every $u\in [0,1]$ and for every $t\in [0,T]$, 
\begin{align*}
\partial_u x_t^g(u) &=g'(u) \exp\Big( \sum_{k \in \Z} f_k \int_0^t  \Re \left(-ik e^{-ik x_s^g(u)} \mathrm dW^k_s \right)-\frac{t}{2} \sum_{k \in \Z} f_k^2 k^2 \Big); \\
\partial_x Z_t^{g(u)} &=\exp\Big( \sum_{k \in \Z} f_k \int_0^t  \Re \Big(-ik e^{-ik Z_s^{g(u)}} \mathrm dW^k_s \Big)-\frac{t}{2} \sum_{k \in \Z} f_k^2 k^2 \Big).
\end{align*}
Using equality~\eqref{egalite Zx}, we get: $\partial_u x_t^g(u) =g'(u)\partial_x Z_t^{g(u)}$. 
\end{proof}

\subsection{Differential calculus on the Wasserstein space}
\label{app_diff_calculus}

Let us recall a few results about differentiation of real-valued functions $\phi$ defined on $\mathcal P_2(\R)$. We refer to~\cite{lions_college}, \cite[Chap.\! 5]{carmonadelarue18} or \cite{cardaliaguet13} for a complete introduction to those differential calculus. 

\textbf{Lions-derivative or L-derivative. }
Let $(\Omega, \mathcal F, \mathbb P)$ be a probability space rich enough 
so that for any probability measure $\mu$ on any Polish space, we can construct on $(\Omega, \mathcal F, \mathbb P)$ a random variable with distribution~$\mu$; a sufficient condition is that $(\Omega, \mathcal F, \mathbb P)$ is Polish and atomless.  Let $L_2(\Omega)$ be the set of square integrable random variables on $(\Omega, \mathcal F, \mathbb P)$, modulo the equivalence relation of almost sure equality. 
For any $\phi: \mathcal P_2(\R) \to \R$, we define $\widehat{\phi}:L_2(\Omega) \to \R$ by $\widehat{\phi}(X)= \phi( \mathcal L(X))$, where $\mathcal L(X)$ denotes the law of $X$. If $\widehat{\phi}:L_2(\Omega) \to \R$ is  Fréchet-differentiable,  there is a measurable function $\partial_\mu \phi (\mu): \R \to \R$, called L-derivative of $\phi$,  such that for every $X$ with distribution $\mu$,  $D \widehat{\phi}(X) = \partial_\mu \phi (\mu) (X)$.

\textbf{Linear functional derivative. }
Basically, it is nothing but the notion of differentiability we would use for $\phi: \mathcal M(\R) \to \R$ if it were defined on the whole ${\mathcal M}(\R)$, where $\mathcal M(\R)$ is the linear space of signed 
measures on $\R$. Note that a subset $K$ of $\mathcal P_2(\R)$ is said to be bounded if there is $M$ such that for every $\mu \in K$, $\int_\R |x|^2 \mathrm d\mu(x) \leq M$. 
A function $\phi: \mathcal P_2(\R) \to \R$ is said to have a linear functional derivative if there exists a function 
\begin{align*}
\frac{\delta \phi}{\delta m}: \mathcal P_2(\R) \times \R &\to \R \\
(m,v) &\mapsto \frac{\delta \phi}{\delta m} (m) (v),
\end{align*}
jointly continuous in $(m,v)$, such that for any bounded subset $K$ of $\mathcal P_2(\R)$, the function $v \mapsto\frac{\delta \phi}{\delta m}(m) (v)$ is at most of quadratic growth in $v$ uniformly in $m$ for $m \in K$, and such that for all $m,m' \in \mathcal P_2(\R)$, $\phi(m') - \phi (m)
= \int_0^1 \!\!\int_\R  \frac{\delta \phi}{\delta m} (\lambda m'+(1-\lambda) m) (v)  \; \mathrm d (m'-m) (v) \mathrm d\lambda$.
Note that $\frac{\delta \phi}{\delta m}$ is uniquely defined up to an additive constant only. 

\textbf{Link between both derivatives. }
\begin{prop}
\label{prop:link_derivatives}
Let $\phi: \mathcal P_2(\R) \to \R$ be L-differentiable on $\mathcal P_2(\R)$, such that the  Fréchet derivative of its lifted function $D \widehat{\phi}:L_2(\Omega) \to L_2(\Omega)$ is uniformly Lipschitz-continuous. Assume also that for each $\mu \in \mathcal P_2(\R)$, there is a version of $v \in \R \mapsto\partial_\mu \phi (\mu) (v)$ such that the map $(v,\mu) \in \R \times \mathcal P_2(\R) \mapsto\partial_\mu \phi (\mu) (v)$ is continuous. 

Then $\phi$ has a linear functional derivative and for every $\mu \in \mathcal P_2(\R)$, 
\begin{align*}
\partial_\mu \phi (\mu) (\cdot) = \partial_v \left\{ \frac{\delta\phi}{\delta m}(\mu) \right\}(\cdot). 
\end{align*}
\end{prop}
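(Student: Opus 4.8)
The plan is to \emph{construct} a linear functional derivative by integrating the L-derivative in the space variable, and then to \emph{verify} the defining integral identity through a one-parameter differentiation along the segment $m_\lambda:=(1-\lambda)m+\lambda m'$, $\lambda\in[0,1]$. Concretely, I would set
\[
\frac{\delta\phi}{\delta m}(m)(v):=\int_0^v\partial_\mu\phi(m)(w)\,\mathrm dw,\qquad m\in\mathcal P_2(\R),\ v\in\R,
\]
which makes sense because, by assumption, $w\mapsto\partial_\mu\phi(m)(w)$ has a continuous, hence locally bounded, version. Joint continuity of $(m,v)\mapsto\frac{\delta\phi}{\delta m}(m)(v)$ then follows from the joint continuity of $(m,v)\mapsto\partial_\mu\phi(m)(v)$ and dominated convergence on compact subsets of $\mathcal P_2(\R)\times\R$. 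For the at-most-quadratic growth in $v$, uniform on bounded sets, I would invoke the standard fact (cf.\ \cite[Chap.\ 5]{carmonadelarue18}) that the Lipschitz continuity of $D\widehat\phi$ on $L_2(\Omega)$ forces $v\mapsto\partial_\mu\phi(m)(v)$ to be Lipschitz with a constant independent of $m$; together with the continuity in $m$ this yields $|\partial_\mu\phi(m)(v)|\le C_K(1+|v|)$ for $m$ in a bounded set $K$, and integration gives the desired bound. The identity $\partial_v\{\frac{\delta\phi}{\delta m}(m)\}=\partial_\mu\phi(m)$ holds by construction, up to the irrelevant additive constant.

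The heart of the proof is the identity $\phi(m')-\phi(m)=\int_0^1\!\int_\R\frac{\delta\phi}{\delta m}(m_\lambda)(v)\,\mathrm d(m'-m)(v)\,\mathrm d\lambda$. I would obtain it by showing that $\lambda\mapsto\phi(m_\lambda)$ is $\mathcal C^1$ with derivative $\int_\R\frac{\delta\phi}{\delta m}(m_\lambda)(v)\,\mathrm d(m'-m)(v)$ (its continuity in $\lambda$ being immediate from the joint continuity of $\frac{\delta\phi}{\delta m}$), and then applying the fundamental theorem of calculus. Since, for small $\eta>0$, $m_{\lambda\pm\eta}$ is a convex combination of $m_\lambda$ with $m'$, resp.\ $m$, it suffices to prove, for arbitrary $\mu,\nu\in\mathcal P_2(\R)$,
\[
\left.\frac{\mathrm d}{\mathrm d\delta}\right|_{\delta=0^+}\phi\big((1-\delta)\mu+\delta\nu\big)=\int_\R\frac{\delta\phi}{\delta m}(\mu)(v)\,\mathrm d(\nu-\mu)(v).
\]
Here I would realise $(1-\delta)\mu+\delta\nu$ as the law of $Y_\delta:=(1-B_\delta)X+B_\delta X'$, with $X\sim\mu$, $X'\sim\nu$ independent and $B_\delta$ a Bernoulli$(\delta)$ variable independent of $(X,X')$, all defined on the atomless space $(\Omega,\mathcal F,\mathbb P)$. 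Writing $\widehat\phi(Y_\delta)-\widehat\phi(X)=\int_0^1\langle D\widehat\phi(X+tB_\delta(X'-X)),B_\delta(X'-X)\rangle\,\mathrm dt$, then conditioning on $B_\delta$ and exploiting its independence of $(X,X')$ extracts a factor $\delta=\mathbb E[B_\delta]$ and leaves
\[
\frac{\widehat\phi(Y_\delta)-\widehat\phi(X)}{\delta}=\int_0^1\mathbb E\Big[\partial_\mu\phi\big((1-\delta)\mu+\delta\,\mathcal L(X+t(X'-X))\big)\big(X+t(X'-X)\big)(X'-X)\Big]\mathrm dt.
\]
Letting $\delta\to0$ and using the continuity of $\partial_\mu\phi$ in both arguments together with dominated convergence (the domination, uniform in $\delta,t\in[0,1]$, coming from the Lipschitz bound on $D\widehat\phi$ and the compactness in $\mathcal P_2(\R)$ of the image of $(\delta,t)$), the limit equals $\int_0^1\mathbb E[\partial_\mu\phi(\mu)(X+t(X'-X))(X'-X)]\,\mathrm dt$. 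Finally, continuity of $w\mapsto\partial_\mu\phi(\mu)(w)$ and the fundamental theorem of calculus give $\int_0^1\partial_\mu\phi(\mu)(X+t(X'-X))(X'-X)\,\mathrm dt=\frac{\delta\phi}{\delta m}(\mu)(X')-\frac{\delta\phi}{\delta m}(\mu)(X)$, and taking expectations (recall $X\sim\mu$, $X'\sim\nu$ independent) produces $\int_\R\frac{\delta\phi}{\delta m}(\mu)(v)\,\mathrm d(\nu-\mu)(v)$, as required.

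The step I expect to be the main obstacle is precisely this limit passage $\delta\to0$: the Bernoulli perturbation $Y_\delta$ is only at distance $O(\sqrt\delta)$ from $X$ in $L_2(\Omega)$, so a plain first-order Taylor expansion of $\widehat\phi$ would not isolate the correct $O(\delta)$ term; one genuinely needs the mean-value form of the increment together with the full strength of the hypotheses (Lipschitz $D\widehat\phi$ and joint continuity of $\partial_\mu\phi$) to carry out the dominated-convergence argument. The auxiliary regularity facts about $\partial_\mu\phi$ (Lipschitz continuity in $v$, uniform linear growth, joint continuity of the antiderivative) are classical but must be quoted with care.
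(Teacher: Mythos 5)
The paper does not actually prove this proposition: it is stated in the appendix as a recalled fact, with the reader referred to \cite{lions_college}, \cite[Chap.\ 5]{carmonadelarue18} and \cite{cardaliaguet13}, so there is no in-paper argument to compare against. Your proof is correct and is, in essence, the classical one from those references: define $\frac{\delta\phi}{\delta m}(m)(\cdot)$ as the antiderivative of $\partial_\mu\phi(m)(\cdot)$, then verify the defining increment identity along $m_\lambda=(1-\lambda)m+\lambda m'$ by realizing $(1-\delta)\mu+\delta\nu$ as the law of the Bernoulli coupling $Y_\delta=(1-B_\delta)X+B_\delta X'$. Your diagnosis of the delicate step is exactly right: since $\|Y_\delta-X\|_{L_2(\Omega)}=O(\sqrt\delta)$, a first-order Taylor expansion of $\widehat\phi$ around $X$ would not isolate the $O(\delta)$ contribution, whereas the exact integral form of the increment combined with the independence of $B_\delta$ from $(X,X')$ extracts the factor $\delta=\mathbb E[B_\delta]$ without any remainder estimate. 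Two details deserve to be written out rather than cited: the uniform linear-growth bound $|\partial_\mu\phi(m)(v)|\leq C_K(1+|v|)$ for $m$ in a bounded set $K\subset\mathcal P_2(\R)$ requires, in addition to the $v$-Lipschitz estimate coming from Lipschitz $D\widehat\phi$, a pinning bound such as $\big(\int_\R|\partial_\mu\phi(m)(v)|^2\,\mathrm dm(v)\big)^{1/2}\leq\|D\widehat\phi(0)\|_{L_2}+C\big(\int_\R v^2\,\mathrm dm(v)\big)^{1/2}$, itself a direct consequence of the Lipschitz property of $D\widehat\phi$; and the continuity of $\lambda\mapsto\int_\R\frac{\delta\phi}{\delta m}(m_\lambda)(v)\,\mathrm d(m'-m)(v)$ relies on a dominated-convergence step using that growth bound and the second moments of $m$ and $m'$, not merely on the joint continuity of $\frac{\delta\phi}{\delta m}$.
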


\subsection{Functions of probability measures on the torus}
\label{subsec:functions_proba_measures}

We use in this paper some well-known properties of the L-derivative, which are recalled in this paragraph. Moreover, since we work  in the particular case of probability measures on the torus, it leads naturally to the periodicity of the L-derivative, see Proposition~\ref{prop:periodicite de la derivee} and foll. Finally, we prove Proposition~\ref{prop:phi assumptions Pt phi}.

\begin{lemme}
\label{lemme:derivee de Lions}
Let $\phi:\mathcal P_2(\R)\to \R$ be a function satisfying the $\phi$-assumptions. 
Then there is a constant $C>0$ such that for every $\mu \in \mathcal P_2(\R)$, we can redefine $\partial_\mu \phi(\mu)(\cdot)$ on a $\mu$-negligible set in such a way that for every $v,v' \in \R$, 
\begin{align}
\label{Lipschitz derivee de Lions 1}
\left| \partial_\mu \phi (\mu) (v)- \partial_\mu \phi(\mu) (v') \right| \leq C|v-v'|,
\end{align}
and $(\mu,v) \mapsto \partial_\mu \phi (\mu) (v)$ is continuous at any point $(\mu,v)$ such that $v$ belongs to the support of~$\mu$. 
Furthermore, there is $C>0$ such that for every $\mu \in \mathcal P_2(\R)$, for every $v \in \R$, 
\begin{align}
\label{Lipschitz derivee de Lions 2}
\left| \partial_\mu \phi (\mu) (v) \right| \leq C(1+|v|) + C \int_\R |x| \mathrm d\mu (x). 
\end{align}
\end{lemme}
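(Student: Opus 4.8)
The plan is to prove the three assertions in the order: first the Lipschitz bound~\eqref{Lipschitz derivee de Lions 1} together with the joint continuity on the support, and then to deduce the linear growth bound~\eqref{Lipschitz derivee de Lions 2}. Only assumptions $(\phi 2)$ and $(\phi 3)$ are used; $\tor$-stability and boundedness play no role here. The whole argument is the classical regularity statement for $L$-derivatives of $\mathcal C^{1,1}$ functionals, see~\cite[Chapter 5]{carmonadelarue18}, which could also be cited directly; I sketch it for completeness.

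\emph{Lipschitz property.} Fix $\mu \in \mathcal P_2(\R)$ and realize $\mu$ as the law of some $X \in L_2(\Omega)$ on an atomless probability space. For $v, v'$ in the support of $\mu$, choose $r, r' > 0$ small with $\mu(B(v,r)) = \mu(B(v',r'))$ (possible off the at most countable set of atoms of $\mu$, hence for $\mu$-a.e.\ $v,v'$), set $G = \{X \in B(v,r)\}$, $G' = \{X \in B(v',r')\}$, and pick a measure-preserving bijection $\sigma : G \to G'$. Composing $X$ with the measure-preserving involution of $\Omega$ that exchanges $G$ and $G'$ through $\sigma$ yields $X^\sigma$ with $\mathcal L(X^\sigma) = \mu$. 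Since $D\widehat{\phi}(X) = \partial_\mu\phi(\mu)(X)$ and $D\widehat{\phi}(X^\sigma) = \partial_\mu\phi(\mu)(X^\sigma)$ \emph{at the same measure $\mu$} by $(\phi 2)$, assumption $(\phi 3)$ gives $\|D\widehat{\phi}(X) - D\widehat{\phi}(X^\sigma)\|_{L_2}^2 \le C\|X - X^\sigma\|_{L_2}^2$, which after the change of variables $\omega \mapsto \sigma(\omega)$ on $G$ and a Jensen inequality reads
\[
\left| \frac{1}{\mu(B(v,r))}\int_{B(v,r)} \partial_\mu\phi(\mu)\,\mathrm d\mu \;-\; \frac{1}{\mu(B(v',r'))}\int_{B(v',r')} \partial_\mu\phi(\mu)\,\mathrm d\mu \right| \leq C\big(|v-v'| + r + r'\big).
\]
Letting $r \to 0$ (hence $r' \to 0$) and invoking the Lebesgue differentiation theorem for the Radon measure $\mu$ on $\R$ (legitimate since $\partial_\mu\phi(\mu) \in L_2(\mu)$ by $(\phi 2)$), one gets $|\partial_\mu\phi(\mu)(v) - \partial_\mu\phi(\mu)(v')| \le C|v-v'|$ for $\mu$-a.e.\ pair $(v,v')$. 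The redefinition of $\partial_\mu\phi(\mu)$ announced in the statement is then obtained by restricting to a full-$\mu$-measure set on which this holds, extending by uniform continuity to its closure (which is dense in $\mathrm{supp}(\mu)$), and finally extending to all of $\R$ by a $C$-Lipschitz extension; this modifies $\partial_\mu\phi(\mu)$ only on a $\mu$-negligible set.

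\emph{Joint continuity on the support.} This follows from continuity of $D\widehat{\phi}$ (a consequence of $(\phi 3)$) and the uniform Lipschitz bound just obtained. Given $\mu_n \to \mu$ in $\mathcal P_2(\R)$ and $v_n \to v \in \mathrm{supp}(\mu)$, choose a coupling with $X_n \sim \mu_n$, $X \sim \mu$, $X_n \to X$ in $L_2$; then $\partial_\mu\phi(\mu_n)(X_n) = D\widehat{\phi}(X_n) \to D\widehat{\phi}(X) = \partial_\mu\phi(\mu)(X)$ in $L_2$. For $r>0$, the events $\{|X_n - v_n| < r\} \cap \{|X - v| < 2r\}$ have probability bounded away from $0$ for $n$ large (because $v \in \mathrm{supp}(\mu)$), and on them the Lipschitz bound controls $|\partial_\mu\phi(\mu_n)(v_n) - \partial_\mu\phi(\mu_n)(X_n)|$ and $|\partial_\mu\phi(\mu)(X) - \partial_\mu\phi(\mu)(v)|$ by $Cr$ and $2Cr$; conditioning on that event and using the $L_2$ convergence yields $\limsup_n |\partial_\mu\phi(\mu_n)(v_n) - \partial_\mu\phi(\mu)(v)| \le 3Cr$, and $r \to 0$ concludes.

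\emph{Growth bound.} This is then immediate: for any $v$ and any $x$, \eqref{Lipschitz derivee de Lions 1} gives $|\partial_\mu\phi(\mu)(v)| \le |\partial_\mu\phi(\mu)(x)| + C(|v| + |x|)$; integrating in $x$ against $\mu$, applying Cauchy--Schwarz and the uniform bound $\sup_{\mu}\int_\R |\partial_\mu\phi(\mu)(x)|^2\,\mathrm d\mu(x) < \infty$ from $(\phi 2)$ gives $|\partial_\mu\phi(\mu)(v)| \le C(1+|v|) + C\int_\R |x|\,\mathrm d\mu(x)$ with a constant independent of $\mu$. The main obstacle is the Lipschitz step: constructing the measure-preserving rearrangement $X^\sigma$ with the correct localization around $v$ and $v'$, matching the radii, passing to the limit via Lebesgue differentiation for a general Radon measure, and the bookkeeping of the $\mu$-negligible modification and of the final Lipschitz extension; the continuity and growth parts are then routine.
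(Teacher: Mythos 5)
Your derivation of the growth bound~\eqref{Lipschitz derivee de Lions 2} from~\eqref{Lipschitz derivee de Lions 1} and $(\phi 2)$ is exactly the paper's argument, and for~\eqref{Lipschitz derivee de Lions 1} and the joint continuity the paper simply cites~\cite[Prop.\ 5.36 and Cor.\ 5.38]{carmonadelarue18} --- the very result whose proof you sketch via rearrangements. The sketch as written glosses over the case where $\mu$ has atoms (the countable atom set need not be $\mu$-negligible, so ``for $\mu$-a.e.\ $v,v'$'' does not follow), but you flag it explicitly as a sketch of the cited regularity theorem, and the paper does not re-prove it either, so the approach is essentially the same.
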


\begin{proof}
By~\cite[Proposition 5.36]{carmonadelarue18}, inequality~\eqref{Lipschitz derivee de Lions 1} is a consequence of assumption $(\phi 3)$. The proof of the  continuity of $(\mu,v) \mapsto \partial_\mu \phi (\mu) (v)$ at any point where $v$ belongs to the support of $\mu$ is given in~\cite[Corollary 5.38]{carmonadelarue18}. 
Moreover, it follows by~\eqref{Lipschitz derivee de Lions 1} that
\begin{align*}
\left| \partial_\mu \phi (\mu) (v)- \int_\R \partial_\mu \phi(\mu) (v') \mathrm d\mu(v') \right|
&\leq \int_\R \left| \partial_\mu \phi (\mu) (v)- \partial_\mu \phi(\mu) (v')  \right|\mathrm d\mu(v')  \\
&\leq  C \int_\R \left| v-v' \right|\mathrm d\mu(v') 
\leq C |v| + C \int_\R \left| v' \right|\mathrm d\mu(v') .
\end{align*}
By assumption $(\phi2)$, $|  \int_\R \partial_\mu \phi(\mu) (v') \mathrm d\mu(v') | \leq (\int_\R | \partial_\mu \phi(\mu) (v')|^2 \mathrm d\mu (v'))^{\frac{1}{2}} \leq C$. Thus we deduce inequality~\eqref{Lipschitz derivee de Lions 2}. 
\end{proof}

\begin{prop}
\label{prop:periodicite de la derivee}
Let $\phi:\mathcal P_2(\R)\to \R$ be a function satisfying the $\phi$-assumptions. 
Let $\mu \in \mathcal P_2(\R)$.
Then, up to redefining $\partial_\mu \phi (\mu)(\cdot)$ on a $\mu$-negligible set, the map $v \mapsto \partial_\mu \phi(\mu) (v)$ is $2\pi$-periodic. 
\end{prop}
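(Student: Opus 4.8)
The plan is to exploit the $\tor$-stability of $\phi$, which by Definition~\ref{defin:tor_stable} means $\phi(\mu) = \phi(\nu)$ whenever $\mu \sim \nu$, i.e.\! whenever $\mu$ and $\nu$ agree modulo $2\pi\Z$. The key observation is that translating a measure by $2\pi$ leaves it in the same equivalence class: if $\mu_{2\pi}$ denotes the push-forward of $\mu$ by the map $x \mapsto x + 2\pi$, then $\mu_{2\pi} \sim \mu$, hence $\widehat\phi(X + 2\pi) = \phi(\mu_{2\pi}) = \phi(\mu) = \widehat\phi(X)$ for any $X \in L_2(\Omega)$ with law $\mu$. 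So the lifted function $\widehat\phi$ is invariant under adding the constant $2\pi$.

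First I would differentiate this invariance. Since $\phi$ satisfies the $\phi$-assumptions, $\widehat\phi$ is Fréchet differentiable with $D\widehat\phi(X) = \partial_\mu\phi(\mathcal L(X))(X)$. Fix $\mu \in \mathcal P_2(\R)$ and take $X$ with law $\mu$; for any $Y \in L_2(\Omega)$ the identity $\widehat\phi(X + 2\pi + sY) = \widehat\phi(X + sY)$ holds for all $s$, and differentiating at $s = 0$ gives $\E{\partial_\mu\phi(\mu_{2\pi})(X + 2\pi)\, Y} = \E{\partial_\mu\phi(\mu)(X)\, Y}$. Since $\mu_{2\pi} = \mu$ as elements of $\mathcal P_2(\R)$ is \emph{not} true (translation genuinely moves the measure), I must be a bit more careful: rather I use that $\widehat\phi$ is constant along the curve $s \mapsto X + s$ composed with period-$2\pi$ shifts — more cleanly, consider $\psi(s) := \widehat\phi(X + s)$ which satisfies $\psi(s + 2\pi) = \psi(s)$, hence $\psi'(s + 2\pi) = \psi'(s)$; and $\psi'(s) = \E{\partial_\mu\phi(\mathcal L(X+s))(X + s)}$. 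This gives periodicity of an integrated quantity but not yet of $v \mapsto \partial_\mu\phi(\mu)(v)$ pointwise, so I would instead argue directly as follows. Applying the chain rule to $\widehat\phi(X + 2\pi \mathbf 1) = \widehat\phi(X)$ with the Lipschitz Fréchet derivative (Lemma~\ref{lemme:derivee de Lions}) one obtains, for $\mu$-almost every $v$, the relation $\partial_\mu\phi(\mu^{2\pi})(v + 2\pi) = \partial_\mu\phi(\mu)(v)$ where $\mu^{2\pi}$ is the translate; then noticing $\partial_\mu\phi$ is being evaluated at the law of $X + 2\pi$, and re-expressing via the continuous version provided by Lemma~\ref{lemme:derivee de Lions}, the two sides collapse to $\partial_\mu\phi(\mu)(v+2\pi) = \partial_\mu\phi(\mu)(v)$ on the support of $\mu$.

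Then I would upgrade from "$\mu$-a.e." / "on the support of $\mu$" to genuine $2\pi$-periodicity of the continuous representative. By Lemma~\ref{lemme:derivee de Lions}, for each fixed $\mu$ there is a representative of $\partial_\mu\phi(\mu)(\cdot)$ that is globally Lipschitz on $\R$ (inequality~\eqref{Lipschitz derivee de Lions 1}), hence continuous everywhere, and it agrees with the "canonical" jointly continuous version at points of $\mathrm{Supp}(\mu)$. Since $\mathrm{Supp}(\mu) + 2\pi\Z = \mathrm{Supp}(\mu)$ is not automatic — but the identity $\partial_\mu\phi(\mu)(v + 2\pi) = \partial_\mu\phi(\mu)(v)$ established on $\mathrm{Supp}(\mu)$ extends to all of $\R$ by continuity of the Lipschitz representative, because both $v \mapsto \partial_\mu\phi(\mu)(v+2\pi)$ and $v \mapsto \partial_\mu\phi(\mu)(v)$ are continuous functions on $\R$ agreeing on a set which (together with its $2\pi$-translates) is dense enough: more precisely it suffices that they agree on one point, say any $v_0 \in \mathrm{Supp}(\mu)$, combined with the a.e.\! equality coming from the Fréchet-level identity $D\widehat\phi(X+2\pi) = D\widehat\phi(X)$ which gives $\partial_\mu\phi(\mu)(v+2\pi) = \partial_\mu\phi(\mu)(v)$ for $\mu$-a.e.\! $v$. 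Two Lipschitz functions equal $\mu$-a.e.\! need not be equal everywhere, so the cleanest route is: redefine $\partial_\mu\phi(\mu)$ to be the Lipschitz representative, observe the $\mu$-a.e.\! identity, then note that the function $g(v) := \partial_\mu\phi(\mu)(v+2\pi) - \partial_\mu\phi(\mu)(v)$ is Lipschitz, vanishes $\mu$-a.e., hence vanishes on $\mathrm{Supp}(\mu)$; and finally observe that $g$ is itself $2\pi$-periodic-difference-telescoping — $g(v) + g(v+2\pi) + \dots$ — which together with boundedness of $g$ (consequence of~\eqref{Lipschitz derivee de Lions 2}) forces $g \equiv 0$.

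The main obstacle I anticipate is precisely this last measure-theoretic upgrade: the $\phi$-assumptions only pin down $\partial_\mu\phi(\mu)$ up to a $\mu$-null set, and the $\tor$-stability of $\phi$ only directly controls $\widehat\phi$ and its Fréchet derivative, which again see $\partial_\mu\phi(\mu)$ only $\mu$-a.e. Getting an \emph{everywhere} statement requires combining the Lipschitz regularity from Lemma~\ref{lemme:derivee de Lions}, the continuity of the canonical version at support points, and the telescoping/boundedness argument to kill the periodic discrepancy $g$. Once $2\pi$-periodicity of $\partial_\mu\phi(\mu)$ is established, the corresponding statement for $\frac{\delta\phi}{\delta m}$ (used elsewhere, e.g.\! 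Proposition~\ref{prop:integrale nulle}) follows by integrating via Proposition~\ref{prop:link_derivatives}, but that is outside the scope of the present statement.
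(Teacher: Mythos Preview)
Your proposal contains a genuine gap at the central step. From $\widehat\phi(X+2\pi)=\widehat\phi(X)$ you correctly obtain $D\widehat\phi(X+2\pi)=D\widehat\phi(X)$, but this reads
\[
\partial_\mu\phi\bigl(\mathcal L(X+2\pi)\bigr)(X+2\pi)=\partial_\mu\phi(\mu)(X)\qquad\text{a.s.},
\]
i.e.\ the L-derivative on the left is evaluated at the \emph{translated} measure $\mu_{2\pi}$, not at $\mu$. You recognise this, but the sentence ``the two sides collapse to $\partial_\mu\phi(\mu)(v+2\pi)=\partial_\mu\phi(\mu)(v)$'' is never justified, and indeed there is no reason for $\partial_\mu\phi(\mu_{2\pi})(\cdot)$ and $\partial_\mu\phi(\mu)(\cdot)$ to coincide. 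Your telescoping attempt does not rescue this: with $g(v)=\partial_\mu\phi(\mu)(v+2\pi)-\partial_\mu\phi(\mu)(v)$ one has $\sum_{k=0}^{n-1}g(v+2k\pi)=\partial_\mu\phi(\mu)(v+2n\pi)-\partial_\mu\phi(\mu)(v)$, and inequality~\eqref{Lipschitz derivee de Lions 2} only gives linear growth of the right-hand side in $n$, which is perfectly compatible with $g$ being a nonzero bounded function. So neither the ``collapse'' nor the telescoping forces $g\equiv 0$.

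The paper resolves exactly this difficulty by replacing the deterministic shift $2\pi$ with a \emph{random} shift $2K^\delta\pi$, where $K^\delta$ is Bernoulli$(\delta)$ independent of $X$. Then $\mathcal L(X+2K^\delta\pi)\to\mu$ in $W_2$ as $\delta\to 0$, and continuity of $(\nu,x)\mapsto\partial_\mu\phi(\nu)(x)$ at support points (Lemma~\ref{lemme:derivee de Lions}) lets one pass to the limit and recover $\partial_\mu\phi(\mu)(v+2\pi)=\partial_\mu\phi(\mu)(v)$ --- first for $\mu$ with full support, then for general $\mu$ by approximating with $X+a_nZ$, $Z\sim\mathcal N(0,1)$, and an Arzel\`a--Ascoli argument. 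The key idea you are missing is this ``small-$\delta$'' perturbation that brings the two measures together while preserving the identity~\eqref{egalite:+2Kpi}.
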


\begin{proof}
Let $X \in L_2(\Omega)$ be a random variable with distribution $\mu$. 
For any $Y \in L_2(\Omega)$ and for any $\Z$-valued random variable $K$, we have
\begin{align*}
\frac{\mathrm d}{\mathrm d\eps}_{\vert \eps=0} \widehat{\phi} (X+2K\pi + \eps Y)
=\E{D \widehat{\phi}(X+2 K \pi)\; Y}=\E{\partial_\mu \phi(\mathcal L(X+2K \pi)) (X+2 K \pi) \;Y}.
\end{align*}
Moreover, for any $\eps$, $\widehat{\phi} (X+2K\pi + \eps Y)=\widehat{\phi} (X + \eps Y)$ because $\mathcal L(X+2K\pi + \eps Y)\sim \mathcal L (X+\eps Y)$. Therefore, 
\begin{align*}
\frac{\mathrm d}{\mathrm d\eps}_{\vert \eps=0} \widehat{\phi} (X+2K\pi + \eps Y)
=\frac{\mathrm d}{\mathrm d\eps}_{\vert \eps=0} \widehat{\phi} (X + \eps Y)
=\E{\partial_\mu \phi(\mathcal L(X)) (X)\; Y}.
\end{align*}
Thus for any $Y \in L_2(\Omega)$, $\E{\partial_\mu \phi(\mathcal L(X+2K \pi)) (X+2 K \pi) Y}=\E{\partial_\mu \phi(\mathcal L(X)) (X) Y}$. We deduce that for any $\Z$-valued random variable $K$, almost surely, 
\begin{align}
\label{egalite:+2Kpi}
\partial_\mu \phi(\mathcal L(X+2K \pi)) (X+2 K \pi)=\partial_\mu \phi(\mathcal L(X))(X).
\end{align}

\textbf{First, assume that the support of $\mu=\mathcal L(X)$ is equal to $\R$.}
For every $\delta\in (0,1)$, let $K^\delta$ be a random variable on $(\Omega,\mathcal F, \mathbb P)$  independent of $X$ with a Bernoulli distribution of parameter~$\delta$. Thus it follows from~\eqref{egalite:+2Kpi} that
\begin{align*}
1&= (1-\delta) \;\P{\partial_\mu \phi(\mathcal L(X+2K^\delta \pi)) (X)=\partial_\mu \phi(\mathcal L(X))(X)} \\
&\quad + \delta \;\P{\partial_\mu \phi(\mathcal L(X+2K^\delta \pi)) (X+2  \pi)=\partial_\mu \phi(\mathcal L(X))(X)}.
\end{align*}
We deduce that  $\P{\partial_\mu \phi(\mathcal L(X+2K^\delta \pi)) (X+2  \pi)=\partial_\mu \phi(\mathcal L(X))(X)}=1$ for any $\delta \in (0,1)$. 
Since the support of $\mathcal L(X)$ is equal to $\R$, it follows from Lemma~\ref{lemme:derivee de Lions} that $(\nu,x) \mapsto \partial_\mu \phi(\nu)(x)$ is continuous at $(\mu,x)$ for every $x\in \R$.  
Moreover $\mathcal L(X+2K^\delta \pi)$ tends in $L_2$-Wasserstein distance to $\mathcal L(X)=\mu$ when $\delta \to 0$. 
So, there exists an event $\widetilde{\Omega}$ of probability one such that for every $\omega \in \widetilde{\Omega}$ and every $\delta \in (0,1) \cap \Q$, $\partial_\mu \phi(\mathcal L(X+2K^\delta \pi)) (X(\omega)+2  \pi)=\partial_\mu \phi(\mu)(X(\omega))$. Thus for every $\omega \in \widetilde{\Omega}$, $\partial_\mu \phi(\mu) (X(\omega)+2  \pi)=\partial_\mu \phi(\mu)(X(\omega))$.
Since $\P{\widetilde{\Omega}}=1$ and the support of $\mu$ is~$\R$, we deduce that  $\partial_\mu \phi(\mu) (x+2  \pi)=\partial_\mu \phi(\mu)(x)$ holds with every $x$ in  a dense subset of $\R$. By continuity, of $\partial_\mu \phi(\mu)(\cdot)$, the last equality holds for every $x\in \R$. We deduce that $\partial_\mu \phi(\mu)(\cdot)$ is $2\pi$-periodic. 

\textbf{Then,  consider a general $\mu \in \mathcal P_2(\R)$.}
Let $Z$ be a random variable on  $(\Omega,\mathcal F, \mathbb P)$  independent of $X$ with normal distribution $\mathcal N(0,1)$ and  $(a_n)_{n \in \N}$ be a sequence such that for all $n \in \N$, $a_n \in (0,1)$ and $a_n \to_{n \to +\infty} 0$.   For every $n \in \N$,  the support of the distribution $\mathcal L(X+a_n Z)$ is equal to $\R$. 
Thus for every $n \in \N$, $v \mapsto \partial_\mu \phi(\mathcal L(X+a_n Z))(v)$ is $2\pi$-periodic. 

By~\eqref{Lipschitz derivee de Lions 1}, the sequence of continuous functions $( \partial_\mu \phi(\mathcal L(X+a_n Z)))_{n \in \N}$ is equicontinuous. Furthermore, \eqref{Lipschitz derivee de Lions 2} implies that for every $v\in [0,2\pi]$,
\begin{align*}
\left| \partial_\mu \phi (\mathcal L(X+a_n Z)) (v) \right| \leq C(1+|v|) + C\E{|X+a_n Z|}.
\end{align*}
Since $(a_n)_{n \in \N}$ is bounded by $1$ and $X \in L_2(\Omega)$, there exists $C>0$ such that for every $n\in \N$ and for every $v\in [0,2\pi]$
\begin{align}
\label{uniformement borne}
| \partial_\mu \phi (\mathcal L(X+a_n Z)) (v)| \leq C(1+|v|) \leq C(1+2\pi).
\end{align}
Recall that $v \mapsto \partial_\mu \phi(\mathcal L(X+a_n Z))(v)$ is $2\pi$-periodic for every $n \in \N$. 
Thus the sequence $( \partial_\mu \phi(\mathcal L(X+a_n Z)))_{n \in \N}$ is uniformly bounded on $\R$.
By Arzela-Ascoli's Theorem, up to extracting a subsequence, $( \partial_\mu \phi(\mathcal L(X+a_n Z)))_{n \in \N}$ converges uniformly to a limit $u:\R \to \R$. In particular, $u$ is a $2\pi$-periodic function.

Moreover, we prove that the following quantity tends to zero. Let $Y \in L_2(\Omega)$. 
\begin{multline*}
\Big| \E{\partial_\mu \phi(\mathcal L(X+a_n Z)) (X+a_n Z) Y } - \E{u(X) Y} \Big|  \\ \begin{aligned}
&\leq \Big| \E{\partial_\mu \phi(\mathcal L(X+a_n Z)) (X+a_n Z) Y } -  \E{\partial_\mu \phi(\mathcal L(X+a_n Z)) (X) Y } \Big|\\
&\quad + \Big| \E{\partial_\mu \phi(\mathcal L(X+a_n Z)) (X) Y } - \E{u(X) Y} \Big| \\
&\leq C a_n \E{|Z Y|} + \Big| \E{\partial_\mu \phi(\mathcal L(X+a_n Z)) (X) Y } - \E{u(X) Y} \Big|,
\end{aligned}
\end{multline*}
by inequality~\eqref{Lipschitz derivee de Lions 1}. 
Since $ZY$ is integrable, $C a_n \E{|Z Y|} \to_{n \to +\infty} 0$. 
Moreover, let us show that $\left| \E{\partial_\mu \phi(\mathcal L(X+a_n Z)) (X) Y } - \E{u(X) Y} \right|  \to_{n \to +\infty} 0$. Remark that $( \partial_\mu \phi(\mathcal L(X+a_n Z)))_{n \in \N}$ converges uniformly to $u$, hence it converges pointwise to $u$. Moreover, we have a uniform integrability property. Indeed, by $2\pi$-periodicity of $\partial_\mu \phi(\mathcal L(X+a_n Z))$ and by~\eqref{uniformement borne}
\begin{align*}
\E{ |  \partial_\mu \phi(\mathcal L(X+a_n Z)) (X) Y  |^{3/2}}
&\leq \E{ |  \partial_\mu \phi(\mathcal L(X+a_n Z)) (X)  |^6}^{1/4} \E{|Y|^2}^{\frac{3}{4}} \\
&=\E{ |  \partial_\mu \phi(\mathcal L(X+a_n Z)) (\accolade{X})  |^6}^{1/4} \E{|Y|^2}^{\frac{3}{4}} \leq C \E{|Y|^2}^{\frac{3}{4}}. 
\end{align*}
Since $Y$ is square integrable, $\sup_{n \in \N} \E{ |  \partial_\mu \phi(\mathcal L(X+a_n Z)) (X) Y  |^{3/2}} <+\infty$. Thus the sequence $(\partial_\mu \phi(\mathcal L(X+a_n Z)) (X) Y)_{n\in \N}$ is uniformly integrable. By Fatou's Lemma, if follows that $\E{|u(X) Y|^{3/2}} <+\infty$, thus we conclude that $ \E{\partial_\mu \phi(\mathcal L(X+a_n Z)) (X) Y } \to_{n \to + \infty} \E{u(X) Y}$. Therefore, 
\begin{align*}
\E{\partial_\mu \phi(\mathcal L(X+a_n Z)) (X+a_n Z) Y } \to_{n \to + \infty} \E{u(X) Y}.
\end{align*}
On the other hand, 
\begin{align*}
\E{\partial_\mu \phi(\mathcal L(X+a_n Z)) (X+a_n Z) Y }=\E{D \widehat{\phi}(X+a_nZ) Y} \underset{n \to +\infty}{\longrightarrow} \E{D \widehat{\phi}(X) Y}
\end{align*}
because $D\widehat{\phi}$ is Lipschitz by assumption $(\phi3)$. We deduce that $\E{u(X) Y}=\E{D \widehat{\phi}(X) Y}$ for every $Y \in L_2(\Omega)$, hence almost surely, $u(X)=\partial_\mu \phi (\mu)(X)$. Recall that $u$ is continuous and $2\pi$-periodic. Therefore, up to redefining $ \partial_\mu \phi (\mu)(\cdot)$ on a $\mu$-negligible set, $v \mapsto \partial_\mu \phi (\mu)(v)$ is continuous and $2\pi$-periodic. 
\end{proof}

 For every $\mu \in \mathcal P_2(\R)$, we define $\widetilde{\mu}$ the measure satisfying $\widetilde{\mu}(A)=\mu(A+2\pi \Z)$ for every $A \in \mathcal B[0,2\pi]$. Clearly, $\widetilde{\mu}$ belongs to $\mathcal P(\tor)$ and $\widetilde{\mu}=\widetilde{\nu}$ for every $\mu \sim \nu$ in the sense of Definition~\ref{defin:tor_stable}.

\begin{coro}
Let $\phi:\mathcal P_2(\R)\to \R$ be a function satisfying the $\phi$-assumptions. 
Let $\mu \in \mathcal P_2(\R)$ and assume that $\widetilde{\mu}$  has a density belonging to $\mathcal P^+$ in the sense of Definition~\ref{defin_density}. 
Then there is a unique $2 \pi$-periodic  and continuous version of $\partial_\mu \phi (\mu) (\cdot)$. Furthermore, for every $v\in [0,2\pi]$, $\partial_\mu \phi (\mu) (v)=\partial_\mu \phi (\widetilde{\mu}) (v)$.
\label{coro:unique version periodique}
\end{coro}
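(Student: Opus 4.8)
The plan is to derive Corollary~\ref{coro:unique version periodique} from Proposition~\ref{prop:periodicite de la derivee} together with the standing hypothesis on the support of $\mu$. First I would recall that, by Proposition~\ref{prop:periodicite de la derivee}, there is a version of $v\mapsto \partial_\mu\phi(\mu)(v)$ that is $2\pi$-periodic, and by Lemma~\ref{lemme:derivee de Lions} this version is Lipschitz-continuous on $\R$, hence continuous. The key extra ingredient here is that $\widetilde{\mu}$ has a density in $\mathcal P^+$, i.e.\! a density that is continuous and strictly positive on the whole torus; pulling this back to $\mu$ means that the support of $\mu$ is all of $\R$. By the continuity statement in Lemma~\ref{lemme:derivee de Lions}, on the support of $\mu$ the L-derivative is uniquely pinned down (two continuous functions agreeing $\mu$-a.e.\! on $\R$ with $\mathrm{supp}\,\mu=\R$ must coincide everywhere), which gives uniqueness of the $2\pi$-periodic continuous version.

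Next I would establish the identification $\partial_\mu\phi(\mu)(v)=\partial_\mu\phi(\widetilde{\mu})(v)$ for $v\in[0,2\pi]$. The natural approach is to use the $\tor$-stability of $\phi$ from assumption $(\phi1)$: if $X\in L_2(\Omega)$ has law $\mu$, then $\accolade{X}$ (the representative in $[0,2\pi)$) has law $\widetilde{\mu}$, and $\widehat{\phi}(X)=\widehat{\phi}(\accolade{X})$ because $\phi$ is $\tor$-stable. Differentiating the lifted functional along a perturbation $Y\in L_2(\Omega)$ and using that $X$ and $\accolade{X}$ differ by the $2\pi\Z$-valued random variable $X-\accolade{X}$, one gets, exactly as in the proof of Proposition~\ref{prop:periodicite de la derivee} (see~\eqref{egalite:+2Kpi}), that $\partial_\mu\phi(\mu)(X)=\partial_\mu\phi(\widetilde\mu)(\accolade X)$ almost surely. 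Since the $2\pi$-periodic continuous version of $\partial_\mu\phi(\mu)$ satisfies $\partial_\mu\phi(\mu)(X)=\partial_\mu\phi(\mu)(\accolade X)$ a.s., and since $\accolade X$ has law $\widetilde\mu$ whose support is all of $[0,2\pi]$, the two continuous functions $v\mapsto\partial_\mu\phi(\mu)(v)$ and $v\mapsto\partial_\mu\phi(\widetilde\mu)(v)$ agree on a dense subset of $[0,2\pi]$, hence everywhere on $[0,2\pi]$ by continuity.

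The main obstacle I anticipate is handling the measure-zero ambiguity inherent in the L-derivative cleanly: the equalities one obtains from differentiating the lift hold only $\mu$-almost everywhere, and one must genuinely use the strict positivity of the density (so that $\mathrm{supp}\,\mu=\R$, equivalently $\mathrm{supp}\,\widetilde\mu=\tor$) to upgrade them to everywhere-equalities of the canonical continuous representatives. A secondary technical point is making sure the version of $\partial_\mu\phi(\widetilde\mu)$ referred to on the torus is the continuous one; this again follows from Lemma~\ref{lemme:derivee de Lions} applied to $\widetilde\mu$, whose density is in $\mathcal P^+$, so that $(\nu,v)\mapsto\partial_\mu\phi(\nu)(v)$ is continuous at every $(\widetilde\mu,v)$. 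Once these continuity-versus-a.e.\! issues are managed, the rest is bookkeeping.

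Concretely, the steps in order are: (i) invoke Proposition~\ref{prop:periodicite de la derivee} and Lemma~\ref{lemme:derivee de Lions} to get a $2\pi$-periodic continuous version of $\partial_\mu\phi(\mu)(\cdot)$; (ii) use $\widetilde\mu\in\mathcal P^+$ to conclude $\mathrm{supp}\,\mu=\R$, giving uniqueness of that version; (iii) take $X$ with $\mathcal L(X)=\mu$, write $X=\accolade X+2\pi K$ with $K$ a $\Z$-valued random variable, and differentiate $\widehat\phi(X+\eps Y)=\widehat\phi(\accolade X+\eps Y)$ to obtain $\partial_\mu\phi(\mu)(X)=\partial_\mu\phi(\widetilde\mu)(\accolade X)$ a.s.; (iv) combine with $2\pi$-periodicity and the density of $\mathrm{supp}\,\widetilde\mu$ in $[0,2\pi]$ to conclude $\partial_\mu\phi(\mu)(v)=\partial_\mu\phi(\widetilde\mu)(v)$ for all $v\in[0,2\pi]$.
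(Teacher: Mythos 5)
Your approach mirrors the paper's, and the identification argument in steps (iii)--(iv) is exactly right: it is the same differentiation of $\widehat\phi(X+\eps Y)=\widehat\phi(\accolade{X}+\eps Y)$, i.e.\! equality~\eqref{egalite:+2Kpi} applied with $K=\frac{1}{2\pi}(\accolade{X}-X)$, followed by the $2\pi$-periodicity from Proposition~\ref{prop:periodicite de la derivee} and the support of $\widetilde\mu$.

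The gap is in step~(ii) and the surrounding discussion: the inference that $\widetilde\mu\in\mathcal P^+$ implies $\mathrm{supp}\,\mu=\R$ is false. For example, take $\mu$ with density $\frac{1}{2\pi}\mathds 1_{[0,2\pi]}$; then $\widetilde\mu$ is the uniform measure on the torus with constant density $\frac{1}{2\pi}\in\mathcal P^+$, yet $\mathrm{supp}\,\mu=[0,2\pi]$, not $\R$. Since the L-derivative is defined only $\mu$-a.e.\!, a continuous version of $\partial_\mu\phi(\mu)(\cdot)$ is a priori pinned down only on $\mathrm{supp}\,\mu$, and that set need not be dense in $\R$. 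The correct repair is exactly what the paper (and your own step~(iv)) uses: $2\pi$-periodicity transports $\mu$-a.e.\! equality to $\widetilde\mu$-a.e.\! equality on $[0,2\pi]$, and the assumption that the density of $\widetilde\mu$ lies in $\mathcal P^+$ gives $\mathrm{supp}\,\widetilde\mu=[0,2\pi]$; continuity then determines the version on $[0,2\pi]$, and periodicity extends it to $\R$. So replace the claim $\mathrm{supp}\,\mu=\R$ by $\mathrm{supp}\,\widetilde\mu=[0,2\pi]$, and derive uniqueness from that together with periodicity; the rest of your outline is sound.
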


\begin{proof}
Let $X\in L_2(\Omega)$ with distribution $\mu$. 
Then the law of $\accolade{X}$ is $\widetilde{\mu}$, seen as an element of $\mathcal P_2 (\R)$ with support included in $[0,2\pi]$. Since the density of $\widetilde{\mu}$ belongs to $\mathcal P^+$, the support of $\widetilde{\mu}$ is equal to $[0,2\pi]$.

Furthermore, by equality~\eqref{egalite:+2Kpi} applied to $K=\frac{1}{2\pi} (\accolade{X}-X)$, \textit{i.e} $X+2K \pi =\accolade{X}$,  the following equality holds almost surely: $\partial_\mu \phi (\mathcal L(\accolade{X})) (\accolade{X})=\partial_\mu \phi (\mathcal L(X)) (X)$.
By Proposition~\ref{prop:periodicite de la derivee}, $\partial_\mu \phi (\mathcal L(X)) (\cdot)$ is $2\pi$-periodic, so $\partial_\mu \phi (\mathcal L(X)) (X)=\partial_\mu \phi (\mathcal L(X)) (\accolade{X})$.
Since the support of $\mathcal L(\accolade{X})$ is equal to $[0,2\pi]$, we deduce that for every $v\in [0,2\pi]$, $\partial_\mu \phi (\mathcal L(X)) (v)=\partial_\mu \phi (\mathcal L(\accolade{X})) (v)$.
This shows that  there is a unique $2\pi$-periodic and continuous version $\partial_\mu \phi (\mathcal L(X)) (\cdot)$. 
\end{proof}

In the light of Proposition~\ref{prop:link_derivatives}, we prove that the linear functional derivative is also $2\pi$-periodic:

\begin{prop}
\label{prop:integrale nulle}
Let $\phi:\mathcal P_2(\R)\to \R$ be a function satisfying the $\phi$-assumptions. Let $\mu \in \mathcal P_2(\R)$ be such that $\widetilde{\mu}$ has a density belonging to~$\mathcal P^+$ in the sense of Definition~\ref{defin_density}. 
Then $\int_\tor \partial_\mu \phi (\mu) (v) \mathrm dv=0$.  In other words, $v \mapsto \frac{\delta \phi}{\delta m}(\mu) (v)$ is $2\pi$-periodic.
\end{prop}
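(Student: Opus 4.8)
The plan is to exploit the defining property of the linear functional derivative together with the $2\pi$-periodicity of $\partial_\mu\phi(\mu)(\cdot)$ established in Proposition~\ref{prop:periodicite de la derivee} and Corollary~\ref{coro:unique version periodique}. By Proposition~\ref{prop:link_derivatives}, which applies because $\phi$ satisfies the $\phi$-assumptions (so that $D\widehat\phi$ is uniformly Lipschitz by $(\phi 3)$ and $(v,\mu)\mapsto\partial_\mu\phi(\mu)(v)$ is continuous where $v$ lies in the support of $\mu$, by Lemma~\ref{lemme:derivee de Lions}), we have $\partial_\mu\phi(\mu)(\cdot)=\partial_v\{\frac{\delta\phi}{\delta m}(\mu)\}(\cdot)$. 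Thus the claim ``$v\mapsto\frac{\delta\phi}{\delta m}(\mu)(v)$ is $2\pi$-periodic'' is equivalent to $\int_0^{2\pi}\partial_\mu\phi(\mu)(v)\,\mathrm dv=0$: indeed, $\frac{\delta\phi}{\delta m}(\mu)(v+2\pi)-\frac{\delta\phi}{\delta m}(\mu)(v)=\int_v^{v+2\pi}\partial_\mu\phi(\mu)(w)\,\mathrm dw$, which by periodicity of $\partial_\mu\phi(\mu)(\cdot)$ is independent of $v$ and equals $\int_0^{2\pi}\partial_\mu\phi(\mu)(w)\,\mathrm dw$. So it suffices to prove that this integral vanishes.

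First I would use the fact that for a $2\pi$-periodic function, the average $\frac{1}{2\pi}\int_0^{2\pi}\partial_\mu\phi(\mu)(v)\,\mathrm dv$ can be computed against \emph{any} probability measure of full support on the torus. More precisely, the key computation is to differentiate along a measure-preserving family of shifts. Let $X\in L_2(\Omega)$ have law $\mu$. For $c\in\R$, the translated variable $X+c$ has law $\mu$ shifted by $c$; since $\partial_\mu\phi(\mu)(\cdot)$ is $2\pi$-periodic and $\widetilde\mu$ has positive density on the torus, one has the identity coming from $\tor$-stability of $\phi$: $\widehat\phi(X+c)$, viewed through the induced map on $\mathcal P(\tor)$, is invariant under the rotation by $c$ of the density — no, more carefully: the cleanest route is to consider the family $\rho\mapsto\widehat\phi(X+\rho Y_0)$ for a suitable $Y_0$. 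Take $Y_0\equiv 1$ (the constant random variable). Then $\frac{\mathrm d}{\mathrm d\rho}_{|\rho=0}\widehat\phi(X+\rho)=\E{\partial_\mu\phi(\mu)(X)}=\int_\R\partial_\mu\phi(\mu)(x)\,\mathrm d\mu(x)$. On the other hand, adding a constant $\rho$ to $X$ changes $\mu$ into its translate $\mu\star\delta_\rho$, and the induced measure on the torus $\widetilde{\mu\star\delta_\rho}$ is the rotation of $\widetilde\mu$; since $\phi$ is $\tor$-stable... but rotation does change $\widetilde\mu$, so this does not immediately give $0$. The resolution is to instead integrate $\partial_\mu\phi(\mu)$ against Lebesgue measure on $\tor$ directly and relate it to a derivative of $\phi$ along a path of measures that stays in a fixed class.

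Here is the step I expect to be the main obstacle and how I would handle it: one shows $\int_0^{2\pi}\partial_\mu\phi(\mu)(v)\,\mathrm dv=0$ by a ``conservation of mass'' argument. Consider any $\mathcal C^1$ vector field on $\tor$, i.e. a $2\pi$-periodic $\mathcal C^1$ function $\psi$, and the associated flow $\Phi^s$ pushing $\mu$ forward; since $\phi$ is $\tor$-stable, if the flow is a pure rotation ($\psi\equiv$ const) then $\mu^s:=\Phi^s_\#\mu$ satisfies $\widetilde{\mu^s}$ = rotation of $\widetilde\mu$, so $\phi(\mu^s)$ is \emph{not} constant in general; but the derivative $\frac{\mathrm d}{\mathrm ds}_{|s=0}\phi(\mu^s)=\int\partial_\mu\phi(\mu)(v)\psi(v)\,\mathrm d\mu(v)$. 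The correct trick uses $(\phi 1)$: $\phi$ depends only on $\widetilde\mu$. Write $\mu=\text{law of }g(U)$ where $g\in\mathscr G^1$ is a quantile representative and $U\sim\Leb_{[0,1]}$, and use that $\phi(\mu_0^{g(\cdot+c)})=\phi(\mu_0^g)$ for all $c\in\R$ because $g(\cdot+c)\sim g$ and these are $\tor$-equivalent (Definition~\ref{defin_g}, Definition~\ref{defin:tor_stable}). Differentiating $c\mapsto\widehat\phi(g(U+c))$ at $c=0$ gives $0=\int_0^1\partial_\mu\phi(\mu)(g(u))\,g'(u)\,\mathrm du=\int_0^{2\pi}\partial_\mu\phi(\mu)(v)\,\mathrm dv$ after the change of variables $v=g(u)$, $\mathrm dv=g'(u)\,\mathrm du$ (valid since $g$ is a $\mathcal C^1$ bijection from $[0,1]$ onto an interval of length $2\pi$), using the $2\pi$-periodicity of $\partial_\mu\phi(\mu)$ to reduce the integral over $[g(0),g(0)+2\pi]$ to $[0,2\pi]$. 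The justification that $c\mapsto\widehat\phi(g(U+c))$ is differentiable with the stated derivative follows from the Fréchet differentiability of $\widehat\phi$ (which holds by $(\phi 2)$) applied to the $\mathcal C^1$ curve $c\mapsto g(\cdot+c)\in L_2([0,1])$; this is the only technical point requiring care, and it is routine given the assumptions. This completes the proof.
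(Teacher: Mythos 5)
Your final argument is correct and is essentially the paper's own proof: the paper introduces the flow $\dot Y_t = 1/p(Y_t)$, solves it to $Y_t = g(F(Y_0)+t)$ with $F(Y_0)\sim\Leb_{[0,1]}$, and differentiates $t\mapsto\widehat\phi(Y_t)$ — which is exactly your computation with $c\mapsto\widehat\phi(g(U+c))$. Both proofs rely on $\tor$-stability to see this map is constant, on Fréchet differentiability of $\widehat\phi$ to take the derivative, and on the change of variables $v=g(u)$ together with the $2\pi$-periodicity of $\partial_\mu\phi(\mu)(\cdot)$ to conclude.
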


\begin{proof}
By Corollary~\ref{coro:unique version periodique}, it is sufficient to prove that $\int_\tor \partial_\mu \phi (\widetilde{\mu}) (v) \mathrm dv=0$. 
Let $Y_0$ be a random variable on $(\Omega, \mathcal F, \mathbb P)$ with distribution equal to $\widetilde{\mu}$. Let $p:\R \to \R$ denote its density, extended by $2\pi$-continuity. By assumption, $p(v)>0$ for every $v \in [0,2\pi]$, hence it holds for every $v \in \R$.  

Define the following ordinary differential equation:
\begin{align*}
\dot{Y}_t = \frac{1}{p(Y_t)},
\end{align*}
with initial condition $Y_0$. 
Denoting by $F:=x \mapsto \int_0^x p(v) \mathrm dv$ and by $g=F^{-1}$ respectively the c.d.f.\! and the quantile function associated to $p$, we have $\frac{\mathrm d}{\mathrm dt} F(Y_t)=1$. Thus for every $t\geq 0$, $F(Y_t)=F(Y_0)+t$ and $Y_t=g(F(Y_0)+t)=g_t(F(Y_0))$, where $g_t(\cdot)=g(\cdot+t)$. 

Fix $t\geq 0$. Since $F(Y_0)$ has a uniform distribution on $[0,1]$, $Y_t=g_t(F(Y_0))$ implies that $g_t$ is the quantile function of the random variable $Y_t$. 
According to Definition~\ref{defin_g}, $g_t \sim g$, thus we deduce that the law of $\accolade{Y_t}$ is  $\widetilde{\mu}$. Since $\phi$ is $\tor$-stable, $\widehat{\phi}(Y_t)=\widehat{\phi}(\accolade{Y_t})=\widehat{\phi}(Y_0)$ for every $t \geq 0$. Thus $\frac{\mathrm d}{\mathrm dt} _{\vert t=0}\widehat{\phi}(Y_t) = 0$. Thus
\begin{align*}
0&=\frac{\mathrm d}{\mathrm dt} _{\vert t=0}\widehat{\phi}(Y_t) = \E{D \widehat{\phi}(Y_0) \dot{Y}_0}=\E{D \widehat{\phi}(Y_0) \frac{1}{p(Y_0)}} \\
&=\int_\R \partial_\mu \phi (\widetilde{\mu}) (v) \frac{1}{p(v)}\mathrm d\widetilde{\mu} (v) 
=\int_0^{2\pi} \partial_\mu \phi (\widetilde{\mu}) (v) \frac{p(v) }{p(v)}\mathrm dv
=\int_0^{2\pi} \partial_\mu \phi (\widetilde{\mu}) (v) \mathrm dv,
\end{align*}
since $p$ is the density of the measure $\widetilde{\mu}$. The statement of the proposition follows. 
\end{proof}

Let us finally prove Proposition~\ref{prop:phi assumptions Pt phi}. 

\begin{proof}[Proof (Proposition~\ref{prop:phi assumptions Pt phi})]
Fix $t \in [0,T]$. Let us check the three assumptions of Definition~\ref{def:phi assumptions}.

\textbf{Assumption $(\phi1)$:}
We start by proving that $P_t\phi$ is $\tor$-stable.
Let $\mu \sim \nu$ in the sense of Definition~\ref{defin:tor_stable} and $X, Y \in L_2[0,1]$ satisfy $\mathcal L_{[0,1]} (X)= \mu$ and $\mathcal L_{[0,1]} (Y)= \nu$. Recall that $\accolade{x}$ denotes the unique number in~$[0,2\pi)$ such that $x- \accolade{x} \in 2\pi \Z$. Since $\mathcal L_{[0,1]} (X) \sim \mathcal L_{[0,1]} (Y)$, we have $\mathcal L_{[0,1]} (\accolade{X}) = \mathcal L_{[0,1]} (\accolade{Y})$. 
By Proposition~\ref{prop:stabilite}, it follows that  $\mathbb P^W $-almost surely,
the laws of $\accolade{Z_t^X}$ and of $\accolade{Z_t^Y}$ under $[0,1] \times \Omega^\beta$ are equal. 
Therefore, $\mathbb P^W $-almost surely, $\widehat{\phi}(\accolade{Z_t^X})=\widehat{\phi}(\accolade{Z_t^Y})$.
Since $\phi$ is $\tor$-stable, it implies that $\mathbb P^W $-almost surely, $\widehat{\phi}(Z_t^X)=\widehat{\phi}(Z_t^Y)$.
By Definition~\ref{defin:semi-group P_t}, $P_t\phi(\mu)=\widehat{P_t\phi}(X)=\mathbb E^W [ \widehat{\phi}(Z_t^X)] =\mathbb E^W [ \widehat{\phi}(Z_t^Y)]=\widehat{P_t\phi}(Y)=P_t\phi(\nu)$. Thus $P_t \phi$ is $\tor$-stable. 

By Definition~\ref{defin:semi-group P_t}, it is clear that $P_t\phi$ is bounded on $\mathcal P_2(\R)$, because $\phi$ is bounded. 
Furthermore, $P_t\phi$ is continuous on $\mathcal P_2(\R)$, and  even Lipschitz-continuous. Indeed, let $\mu, \nu \in \mathcal P_2(\R)$ and $X, Y \in L_2[0,1]$ be the quantile functions respectively associated with $\mu$ and $\nu$: $\mu =\mathcal L_{[0,1]} (X)$ and $\nu=\mathcal L_{[0,1]} (Y)$; in other words, $X$ (resp. $Y$) is the increasing rearrangement of $\mu$ (resp. $\nu)$. A classical result in optimal transportation (see e.g. \cite[Theorem 2.18]{villani03}) states that $(X,Y)$ realises the optimal coupling in the definition of the $L_2$-Wasserstein distance: $W_2(\mu,\nu)^2= \int_0^1 |X(u)-Y(u)|^2 \mathrm du$. 

By Remark~\ref{rem:phi2_implique_lipschitz}, $\widehat{\phi}$ is Lipschitz-continuous, thus:
\begin{align*}
|P_t\phi(\mu) - P_t \phi (\nu) |
=|\widehat{P_t\phi}(X) - \widehat{P_t\phi}(Y) |
&\leq \mathbb E^W \left[ |\widehat{\phi} (Z_t^X) -  \widehat{\phi} (Z_t^Y)|\right] \\
&\leq   \|\widehat{\phi}\|_{\operatorname{Lip}} \; \mathbb E^W \left[ \|Z_t^X- Z_t^Y\|_{L_2([0,1] \times \Omega^\beta)}\right]\\
&\leq   \|\widehat{\phi}\|_{\operatorname{Lip}} \; \Ewb{\int_0^1 \big|Z_t^{X(u)}-Z_t^{Y(u)} \big|^2 \mathrm du}^{\frac{1}{2}}.
\end{align*}
By~\eqref{différence Ztx Zty}, we have
\begin{align*}
\Ewb{\int_0^1 \big|Z_t^{X(u)}-Z_t^{Y(u)} \big|^2 \mathrm du}
&\leq C_2  \int_0^1 |X(u)-Y(u) |^2 \mathrm du
=W_2(\mu,\nu)^2.
\end{align*}
Thus $|P_t\phi(\mu) - P_t \phi (\nu) | \leq C W_2(\mu,\nu)$; in particular, $P_t\phi$ is continuous. 

\textbf{Assumption $(\phi2)$ and proof of equality~\eqref{frechet_derivative}:}
Let us prove that $P_t \phi$ is $L$-differentiable.  
Let $\mu, \nu \in \mathcal P_2(\R)$ and  $X, Y \in L_2[0,1]$ such that $\mathcal L_{[0,1]} (X)= \mu$ and $\mathcal L_{[0,1]} (Y)= \nu$. We prove that the Fréchet derivative of $\widehat{P_t\phi}$ at point $X$ is given by
\begin{align*}
\partial_\mu P_t\phi (\mu) (X) \cdot Y= D \widehat{P_t \phi}(X) \cdot Y
&= \int_0^1 \Ewb{D \widehat{\phi}(Z_t^{X})_u \; \partial_x Z_t^{X(u)} } Y(u)  \mathrm du,
\end{align*}
which implies~\eqref{frechet_derivative} using identities~\eqref{egalite Zx} and~\eqref{liens entre les dérivées}.

Assume that $\|Y\|_{L_2} \leq 1$.  We compute
\begin{align*}
\widehat{P_t \phi}(X+ Y)-\widehat{P_t \phi}(X)
&= \mathbb E^W \left[\widehat{\phi}(Z_t^{X+ Y})-\widehat{\phi}(Z_t^X)   \right] 
= \mathbb E^W \left[ \int_0^1  \frac{\mathrm d}{\mathrm d\lambda} \widehat{\phi}(Z_t^{X+\lambda  Y} )  \mathrm d\lambda  \right] \\
&= \mathbb E^W \left[ \int_0^1  D \widehat{\phi} (Z_t^{X+\lambda  Y} ) \cdot \partial_x   Z_t^{X+\lambda  Y} Y     \mathrm d\lambda  \right] \\
&= \mathbb E^W \left[ \int_0^1 \mathbb E^\beta \left[\int_0^1 D \widehat{\phi} (Z_t^{X+\lambda  Y})_u \;\partial_x Z_t^{(X+\lambda  Y)(u)} Y(u)   \mathrm du \right]   \mathrm d\lambda \right],
\end{align*}
since $\widehat{\phi}$ is Fréchet-differentiable. 
Therefore
\begin{align*}
\left|\widehat{P_t \phi}(X + Y)-\widehat{P_t \phi}(X)
-\int_0^1 \Ewb{D \widehat{\phi} (Z_t^X)_u \; \partial_x Z_t^{X(u)} } Y(u)  \mathrm du
  \right| \leq |D_1| + |D_2|, 
\end{align*}
where
\begin{align*}
D_1&:= \Ewb{ \int_0^1 \!\! \int_0^1 \left( D \widehat{\phi} (Z_t^{X+\lambda  Y})_u  -D \widehat{\phi} (Z_t^X)_u \right) \partial_x Z_t^{X(u)} Y(u)  \;\mathrm d\lambda \mathrm du} ; \\
D_2&:=  \Ewb{\int_0^1 \!\! \int_0^1  D \widehat{\phi} (Z_t^{X+\lambda  Y})_u  \left(\partial_x Z_t^{(X+\lambda  Y)(u)} -\partial_x Z_t^{X(u)} \right)Y(u) \;\mathrm d\lambda \mathrm du} .
\end{align*}

Let us start by estimating $D_1$. By Cauchy-Schwarz inequality, 
\begin{align*}
|D_1| 
&\leq \Ewb{ \int_0^1 \!\! \int_0^1 \left| D \widehat{\phi} (Z_t^{X+\lambda  Y})_u  -D \widehat{\phi} (Z_t^X)_u \right|^2 \mathrm d\lambda \mathrm du}^\frac12
\Ewb{ \int_0^1   |\partial_x Z_t^{X(u)} Y(u)|^2   \mathrm du}^\frac12 \!\!.
\end{align*}
On the one hand, by assumption $(\phi3)$ and by~\eqref{différence Ztx Zty}, we have (with constants modified from a line to the next):
\begin{multline*}
\Ewb{ \int_0^1 \!\! \int_0^1 \left| D \widehat{\phi} (Z_t^{X+\lambda  Y})_u  -D \widehat{\phi} (Z_t^X)_u\right|^2 \mathrm d\lambda \mathrm du} \\
\begin{aligned}
&\leq C \; \Ewb{ \int_0^1 \!\! \int_0^1 \left|Z_t^{(X+\lambda  Y)(u)} - Z_t^{X(u)} \right|^2 \mathrm d\lambda \mathrm du} \\
&\leq C \int_0^1 \!\! \int_0^1  \lambda^2  |Y(u)|^2 \mathrm d\lambda \mathrm du 
\leq C \|Y \|_{L_2}^2. 
\end{aligned}
\end{multline*}
On the other hand
\begin{align*}
\Ewb{ \int_0^1  |\partial_x Z_t^{X(u)} Y(u)|^2  \mathrm du}
=\int_0^1 \Ewb{  |\partial_x Z_t^{X(u)}|^2 } |Y(u)|^2   \mathrm du.
\end{align*}
By~\eqref{borne sur pZtx}, there is $C>0$ such that for every $u \in [0,1]$, $\mathbb E^W \mathbb E^\beta \left[|\partial_x Z_t^{X(u)}|^2\right]\leq C$. Thus $\Ewb{ \int_0^1  |\partial_x Z_t^{X(u)} Y(u)|^2  \mathrm du}
\leq C  \|Y \|_{L_2}^2$.
Finally, we get
\begin{align}
\label{D1}
|D_1| \leq C  \|Y \|_{L_2}^2. 
\end{align}

Moreover,  compute $D_2$. 
By Cauchy-Schwarz inequality, $|D_2| \leq |D_{2,1}|^{\frac{1}{2}}\cdot |D_{2,2}|^{\frac{1}{2}}$
where
\begin{align*}
D_{2,1}&:= \Ewb{\int_0^1 \!\! \int_0^1 \Big| D \widehat{\phi} (Z_t^{X+\lambda  Y})_u \; Y(u)\Big|^2  \mathrm d\lambda \mathrm du}; \\
D_{2,2}&:= \Ewb{\int_0^1 \!\! \int_0^1  \left|\partial_x Z_t^{(X+\lambda  Y)(u)} -\partial_x Z_t^{X(u)} \right|^2\mathrm d\lambda \mathrm du}.
\end{align*}
On the one hand, 
\begin{align*}
D_{2,1}
= \int_0^1 \!\! \int_0^1 \Ewb{\Big| D \widehat{\phi} (Z_t^{X+\lambda  Y})_u \Big|^2 } |Y(u)|^2  \mathrm d\lambda \mathrm du.
\end{align*}
Recall that $\phi$ is $\tor$-stable. It follows that  for any random variables $U,V \in L_2([0,1] \times \Omega^\beta)$, $D\widehat{\phi}(U) \cdot V = \lim_{\eps \to 0} \frac{\widehat{\phi}(U+\eps V) -\widehat{\phi}(U)}{\eps}=\lim_{\eps \to 0} \frac{\widehat{\phi}(\accolade{U} +\eps V) -\widehat{\phi}(\accolade{U})}{\eps}=D\widehat{\phi}(\accolade{U}) \cdot V$. Hence for every $U \in L_2([0,1] \times \Omega^\beta)$, $D\widehat{\phi}(U)=D\widehat{\phi}(\accolade{U})$.

Let us denote by $\xi:=\mathcal L_{[0,1] \times \Omega^\beta} (\accolade{Z_t^{X+\lambda Y}})$. Then
\begin{align*}
\Ewb{\Big| D \widehat{\phi} (\accolade{ Z_t^{X+\lambda Y}})_u \Big|^2}
=\Ewb{\Big| \partial_\mu \phi (\xi) (\accolade{ Z_t^{(X+\lambda Y)(u)}}) \Big|^2 } 
\leq \mathbb E^W \left[ \sup_{v \in \R}  | \partial_\mu \phi (\xi) (v)|^2 \right].
\end{align*}
By Proposition~\ref{prop:periodicite de la derivee}, $v \mapsto \partial_\mu \phi (\xi)(v)$ is $2\pi$-periodic. By inequality~\eqref{Lipschitz derivee de Lions 2}, it follows that
\begin{align*}
 \sup_{v \in \R}  | \partial_\mu \phi (\xi) (v)| 
= \sup_{v \in [0,2\pi]}  | \partial_\mu \phi (\xi) (v)| 
&\leq  C(1+2\pi) + C \int_\R |x| \mathrm d\xi (x) \\
&= C(1+2\pi) + C \mathbb E^\beta \left[ \int_0^1 |\accolade{Z_t^{(X+\lambda Y)(u)}}| \mathrm du \right] \leq  C, 
\end{align*}
since $\accolade{Z_t^{(X+\lambda Y)(u)}}$ takes values in $[0,2\pi)$. 
Thus $\mathbb E^W \left[ \sup_{v \in \R}  | \partial_\mu \phi (\xi) (v)|^2 \right] \leq C$, where $C$ is independent of $X$, $\lambda$ and $Y$. 
We deduce that $D_{2,1} \leq  \int_0^1 \!\! \int_0^1 C |Y(u)|^2  \mathrm d\lambda \mathrm du \leq  C \|Y\|_{L_2}^2$.

On the other hand, since $f$ is of order $\alpha> \frac{5}{2}$, inequality~\eqref{différence pZtx pZty} holds with $\theta=1$. Thus $D_{2,2} \leq  C \int_0^1 \!\! \int_0^1 \lambda^2  |Y(u)|^2 \mathrm d\lambda \mathrm du \leq C  \|Y\|_{L_2}^2$. 
We finally obtain that $|D_2| \leq C  \|Y \|_{L_2}^2$. 
It follows from the latter inequality and from~\eqref{D1} that for every $\|Y\|_{L_2} \leq 1$, 
\begin{align*}
\left|\widehat{P_t \phi}(X+ Y)-\widehat{P_t \phi}(X)
-\int_0^1 \Ewb{D \widehat{\phi} (Z_t^X)_u \; \partial_x Z_t^{X(u)} } Y(u)  \mathrm du
  \right|  \leq C   \|Y \|_{L_2}^2. 
\end{align*}
Thus $\widehat{P_t \phi}$ is Fréchet-differentiable at point $X$ and the derivative is given by~\eqref{frechet_derivative}. 

Moreover,  prove that 
\begin{align*}
\sup_{\mu \in \mathcal P_2(\R)} \int_\R |\partial_\mu P_t\phi (\mu) (x) |^2 \mathrm d\mu(x)
<+\infty.
\end{align*}
Observe that $\sup_{\mu \in \mathcal P_2(\R)} \int_\R |\partial_\mu P_t\phi (\mu) (x) |^2 \mathrm d\mu(x)
=\sup_{X \in L_2[0,1]} \int_0^1 |D \widehat{ P_t\phi} (X)_u |^2 \mathrm du$. 
Let us apply~\eqref{frechet_derivative} with $Y=D \widehat{ P_t\phi} (X)$. We obtain
\begin{align}
\label{eq:23d}
\int_0^1 |D \widehat{ P_t\phi} (X)_u |^2 \mathrm du 
&=\Ewb{\int_0^1 D \widehat{\phi} (Z_t^X)_u \; \partial_x Z_t^{X(u)} D \widehat{ P_t\phi} (X)_u \mathrm du} \notag \\
&\leq  \Ewb{\int_0^1 |D\widehat{\phi}(Z_t^X)_u|^2  \mathrm du}^\frac12
 \Ewb{\int_0^1 |\partial_x Z_t^{X(u)} D \widehat{ P_t\phi} (X)_u|^2  \mathrm du}^\frac12.
\end{align}
By~\eqref{borne sur pZtx}, 
\begin{align}
\label{eq:23e}
 \Ewb{\int_0^1 |\partial_x Z_t^{X(u)} D \widehat{ P_t\phi} (X)_u|^2  \mathrm du}
& =\int_0^1 \Ewb{|\partial_x Z_t^{X(u)}|^2}  |D \widehat{ P_t\phi} (X)_u |^2  \mathrm du \notag\\
&\leq C \int_0^1  |D \widehat{ P_t\phi} (X)_u|^2  \mathrm du.
\end{align}
It follows from~\eqref{eq:23d} and~\eqref{eq:23e} that
\begin{align*}
 \int_0^1 |D \widehat{ P_t\phi} (X)_u |^2 \mathrm du 
&\leq C\; \Ewb{\int_0^1 |D\widehat{\phi}(Z_t^X)_u|^2  \mathrm du} =C \;\mathbb E^W \left[ \int_\R |\partial_\mu \phi (\xi) (x)|^2  \mathrm d\xi(x) \right],
\end{align*}
where $\xi=\mathcal L_{[0,1] \times \Omega^\beta} (Z_t^{X(u)})$. The last term is bounded by a constant independent of $\xi$ because by assumption $(\phi 2)$, $\sup_{\mu \in \mathcal P_2(\R)} \int_\R |\partial_\mu \phi (\mu) (x) |^2 \mathrm d\mu(x)<+\infty$. 

\textbf{Assumption $(\phi3)$:}
Let us prove that for every $X_1, X_2, Y \in L_2[0,1]$, 
\begin{align}
\label{eq:23f}
|D\widehat{P_t \phi}(X_1) \cdot Y - D\widehat{P_t \phi}(X_2) \cdot Y|
\leq C \|X_1-X_2\|_{L_2[0,1]} \|Y\|_{L_2[0,1]}.
\end{align}
By  formula~\eqref{frechet_derivative}, 
 $|D\widehat{P_t \phi}(X_1) \cdot Y - D\widehat{P_t \phi}(X_2) \cdot Y| \leq |D_3| +|D_4|$, where
\begin{align*}
D_3&:= \Ewb{ \int_0^1 ( D\widehat{\phi}(Z_t^{X_1})_u-D\widehat{\phi}(Z_t^{X_2})_u) \partial_x Z_t^{X_1(u)} Y(u) \mathrm du }   ; \\
D_4&:= \Ewb{ \int_0^1  D\widehat{\phi}(Z_t^{X_2})_u (\partial_x Z_t^{X_1(u)}-\partial_x Z_t^{X_2(u)}) Y(u) \mathrm du }  .
\end{align*}
Up to replacing $X$ and $X+\lambda Y$  by $X_1$ and $X_2$, $D_3$ and $D_4$ are equivalent to $D_1$ and $D_2$. Thus we get by the same computations as for $D_1$ and $D_2$:
\begin{align*}
|D_3| &\leq C \| X_1-X_2 \|_{L_2[0,1]} \| Y\|_{L_2[0,1]} \\
|D_4| &\leq C \| X_1-X_2 \|_{L_2[0,1]} \| Y\|_{L_2[0,1]}. 
\end{align*}
This completes the proofs of~\eqref{eq:23f} and of the proposition. 
\end{proof}

\nocite{KSI}
\nocite{KSII}
\nocite{KSIII}
\nocite{norris86}
\nocite{nualart06}

\bibliographystyle{alpha}

\bibliography{ref_victor}

\end{document}